\newif\ifpara%
\newbox\caixa%
\newbox\caixaaux%
\newif\ifnvazia%
\newif\ifvazia%
\newif\ifcompara%
\newif\ifdiferentes%
\newif\ifajusta%
\newif\ifajustadist
\def\objPartida{}%
\def\objChegada{}%
\def\objNulo{}%
\def\!vazia{:}
\def\!pilhanvazia#1{\let\arg=#1%
\if:\arg\ \nvaziafalse\vaziatrue \else \nvaziatrue\vaziafalse\fi}
\def\!coloca#1#2{\edef\pilha{#1.#2}}
\def\!guarda(#1)(#2,#3)(#4,#5,#6){\def\id{#1}%
\xaux=#2%
\yaux=#3%
\alt=#4%
\larg=#5%
\prof=#6%
}
\def\!topaux#1.#2:{\!guarda#1}
\def\!topo#1{\expandafter\!topaux#1}
\def\!popaux#1.#2:{\def\pilha{#2:}}
\def\!retira#1{\expandafter\!popaux#1}
\def\!comparaaux#1#2{\let\argA=#1\let\argB=#2%
\ifx\argA\argB\comparatrue\diferentesfalse\else\comparafalse\diferentestrue\fi}
\def\!compara#1#2{\!comparaaux{#1}{#2}}
\def\!absoluto#1#2{\n=#1%
  \ifnum \n > 0
    #2=\n
  \else
    \multiply \n by -1
    #2=\n
  \fi}
\def\solidarrow{0}
\def\atright{-1}
\def\!ajusta#1#2#3#4#5#6{\aux=#5%
  \let\auxobj=#6%
  \ifcase \tipografo    
    \ifnum\number\aux=10
      \ajustadisttrue 
    \else
      \ajustadistfalse  
    \fi
  \else  
   \ajustadistfalse
  \fi
  \ifajustadist
   %
   \loop%
     \!topo{\pilha}%
     \!retira{\pilha}%
     \!compara{\id}{\auxobj}%
     \ifcompara\nvaziafalse \else\!pilhanvazia\pilha \fi%
     \ifnvazia%
   \repeat%
   \let\pilha=\pilhaaux%
   \ifvazia%
    \ifdiferentes%
     \larg=1310720
     \prof=655360%
     \alt=655360%
    \fi%
   \fi%
   \divide\larg by 131072
   \divide\prof by 65536
   \divide\alt by 65536
   \ifnum\number\y=\number\yl
    \advance\larg by 3
    \ifnum\number\larg>\aux
     #5=\larg
    \fi
   \else
    \ifnum\number\x=\number\xl
     \ifnum\number\yl>\number\y
      \ifnum\number\alt>\aux
       #5=\alt
      \fi
     \else
      \advance\prof by 5
      \ifnum\number\prof>\aux
       #5=\prof
      \fi
     \fi
    \else
     \auxqx=\x
     \advance\auxqx by -\xl
     \!absoluto{\auxqx}{\auxqx}%
     \auxqy=\y
     \advance\auxqy by -\yl
     \!absoluto{\auxqy}{\auxqy}%
     \ifnum\auxqx>\auxqy
      \ifnum\larg<10
       \larg=10
      \fi
      \advance\larg by 3
      #5=\larg
     \else
      \ifnum\yl>\y
       \ifnum\larg<10
        \larg=10
       \fi
      \advance\alt by 6
       #5=\alt
      \else
      \advance\prof by 11
       #5=\prof
      \fi
     \fi
    \fi
   \fi
\fi} 
\def\!raiz#1#2{\n=#1%
  \m=1%
  \loop
    \aux=\m%
    \advance \aux by 1%
    \multiply \aux by \aux%
    \ifnum \aux < \n%
      \advance \m by 1%
      \paratrue%
    \else\ifnum \aux=\n%
      \advance \m by 1%
      \paratrue%
       \else\parafalse%
       \fi
    \fi
  \ifpara%
  \repeat
#2=\m}
\def\!ucoord#1#2#3#4#5#6#7{\aux=#2%
  \advance \aux by -#1%
  \multiply \aux by #4%
  \divide \aux by #5%
  \ifnum #7 = -1 \multiply \aux by -1 \fi%
  \advance \aux by #3%
#6=\aux}
\def\!quadrado#1#2#3{\aux=#1%
  \advance \aux by -#2%
  \multiply \aux by \aux%
#3=\aux}
\def\!distnomemor#1#2#3#4#5#6{\setbox0=\hbox{#5}%
  \aux=#1
  \advance \aux by -#3
  \ifnum \aux=0
     \aux=\wd0 \divide \aux by 131072
     \advance \aux by 3
     #6=\aux
  \else
     \aux=#2
     \advance \aux by -#4
     \ifnum \aux=0
        \aux=\ht0 \advance \aux by \dp0 \divide \aux by 131072
        \advance \aux by 3
        #6=\aux%
     \else
     #6=3
     \fi
   \fi
}
\def\begindc#1{\!ifnextchar[{\!begindc{#1}}{\!begindc{#1}[30]}}
\def\!begindc#1[#2]{\beginpicture
  \let\pilha=\!vazia
  \setcoordinatesystem units <1pt,1pt>
  \expansao=#2
  \ifcase #1
    \distanciaobjmor=10
    \tipoarco=0         
    \tipografo=0        
  \or
    \distanciaobjmor=2
    \tipoarco=0         
    \tipografo=1        
  \or
    \distanciaobjmor=1
    \tipoarco=2         
    \tipografo=2        
  \or
    \distanciaobjmor=8
    \tipoarco=0         
    \tipografo=3        
  \or
    \distanciaobjmor=8
    \tipoarco=2         
    \tipografo=4        
  \fi}
\def\enddc{\endpicture}
\def\mor{%
  \!ifnextchar({\!morxy}{\!morObjA}}
\def\!morxy(#1,#2){%
  \!ifnextchar({\!morxyl{#1}{#2}}{\!morObjB{#1}{#2}}}
\def\!morxyl#1#2(#3,#4){%
  \!ifnextchar[{\!mora{#1}{#2}{#3}{#4}}{\!mora{#1}{#2}{#3}{#4}[\number\distanciaobjmor,\number\distanciaobjmor]}}%
\def\!morObjA#1{%
 \def\objPartida{#1}%
 \loop%
    \!topo\pilha%
    \!retira\pilha%
    \!compara{\id}{\objPartida}%
    \ifcompara \nvaziafalse \else \!pilhanvazia\pilha \fi%
   \ifnvazia%
 \repeat%
 \ifvazia%
  \ifdiferentes%
   Error: Incorrect label specification%
   \xaux=1%
   \yaux=1%
  \fi%
 \fi%
 \let\pilha=\pilhaaux%
 \!ifnextchar({\!morxyl{\number\xaux}{\number\yaux}}{\!morObjB{\number\xaux}{\number\yaux}}}
\def\!morObjB#1#2#3{%
  \x=#1
  \y=#2
 \def\objChegada{#3}%
 \loop
    \!topo\pilha %
    \!retira\pilha%
    \!compara{\id}{\objChegada}%
    \ifcompara \nvaziafalse \else \!pilhanvazia\pilha \fi
   \ifnvazia
 \repeat
 \ifvazia
  \ifdiferentes%
   Error: Incorrect label specification
   \xaux=\x%
   \advance\xaux by \x%
   \yaux=\y%
   \advance\yaux by \y%
  \fi
 \fi
 \let\pilha=\pilhaaux
 \!ifnextchar[{\!mora{\number\x}{\number\y}{\number\xaux}{\number\yaux}}{\!mora{\number\x}{\number\y}{\number\xaux}{\number\yaux}[\number\distanciaobjmor,\number\distanciaobjmor]}}
\def\!mora#1#2#3#4[#5,#6]#7{%
  \!ifnextchar[{\!morb{#1}{#2}{#3}{#4}{#5}{#6}{#7}}{\!morb{#1}{#2}{#3}{#4}{#5}{#6}{#7}[1,\number\tipoarco] }}
\def\!morb#1#2#3#4#5#6#7[#8,#9]{\x=#1%
  \y=#2%
  \xl=#3%
  \yl=#4%
  \multiply \x by \expansao%
  \multiply \y by \expansao%
  \multiply \xl by \expansao%
  \multiply \yl by \expansao%
  \!quadrado{\number\x}{\number\xl}{\auxa}%
  \!quadrado{\number\y}{\number\yl}{\auxb}%
  \d=\auxa%
  \advance \d by \auxb%
  \!raiz{\d}{\d}%
  \auxa=#5
  \!compara{\objNulo}{\objPartida}%
  \ifdiferentes
   \!ajusta{\x}{\xl}{\y}{\yl}{\auxa}{\objPartida}%
   \ajustatrue
   \def\objPartida{}
  \fi
  \guardaauxa=\auxa
  \!ucoord{\number\x}{\number\xl}{\number\x}{\auxa}{\number\d}{\xa}{1}%
  \!ucoord{\number\y}{\number\yl}{\number\y}{\auxa}{\number\d}{\ya}{1}%
  \auxa=\d%
  \auxb=#6
  \!compara{\objNulo}{\objChegada}%
  \ifdiferentes
   \!ajusta{\x}{\xl}{\y}{\yl}{\auxb}{\objChegada}%
   \def\objChegada{}
  \fi
  \advance \auxa by -\auxb%
  \!ucoord{\number\x}{\number\xl}{\number\x}{\number\auxa}{\number\d}{\xb}{1}%
  \!ucoord{\number\y}{\number\yl}{\number\y}{\number\auxa}{\number\d}{\yb}{1}%
  \xmed=\xa%
  \advance \xmed by \xb%
  \divide \xmed by 2
  \ymed=\ya%
  \advance \ymed by \yb%
  \divide \ymed by 2
  \!distnomemor{\number\x}{\number\y}{\number\xl}{\number\yl}{#7}{\dnm}%
  \!ucoord{\number\y}{\number\yl}{\number\xmed}{\number\dnm}{\number\d}{\xc}{-#8}%
  \!ucoord{\number\x}{\number\xl}{\number\ymed}{\number\dnm}{\number\d}{\yc}{#8}%
\ifcase #9            
  \arrow <4pt> [.2,1.1] from {\xa} {\ya} to {\xb} {\yb}
\or                   
  \setdashes
  \arrow <4pt> [.2,1.1] from {\xa} {\ya} to {\xb} {\yb}
  \setsolid
\or                   
  \setlinear
  \plot {\xa} {\ya}  {\xb} {\yb} /
\or                   
  \setdashes
  \setlinear
  \plot {\xa} {\ya}  {\xb} {\yb} /
  \setsolid
\or                   
  \setdots
  \setlinear
  \plot {\xa} {\ya}  {\xb} {\yb} /
  \setsolid
\or                   
  \auxa=\guardaauxa
  \advance \auxa by 3%
 \!ucoord{\number\x}{\number\xl}{\number\x}{\number\auxa}{\number\d}{\xa}{1}%
 \!ucoord{\number\y}{\number\yl}{\number\y}{\number\auxa}{\number\d}{\ya}{1}%
 \!ucoord{\number\y}{\number\yl}{\number\xa}{3}{\number\d}{\xd}{-1}%
 \!ucoord{\number\x}{\number\xl}{\number\ya}{3}{\number\d}{\yd}{1}%
  \arrow <4pt> [.2,1.1] from {\xa} {\ya} to {\xb} {\yb}
  \circulararc -180 degrees from {\xa} {\ya} center at {\xd} {\yd}
\or                   
  \auxa=3
 \!ucoord{\number\y}{\number\yl}{\number\xa}{\number\auxa}{\number\d}{\xmed}{-1}%
 \!ucoord{\number\x}{\number\xl}{\number\ya}{\number\auxa}{\number\d}{\ymed}{1}%
 \!ucoord{\number\y}{\number\yl}{\number\xa}{\number\auxa}{\number\d}{\xd}{1}%
 \!ucoord{\number\x}{\number\xl}{\number\ya}{\number\auxa}{\number\d}{\yd}{-1}%
  \arrow <4pt> [.2,1.1] from {\xa} {\ya} to {\xb} {\yb}
  \setlinear
  \plot {\xmed} {\ymed}  {\xd} {\yd} /
\or                   
  \arrow <4pt> [.2,1.1] from {\xa} {\ya} to {\xb} {\yb}
  \setlinear
  \arrow <6pt> [0,.72] from {\xa} {\ya} to {\xb} {\yb}
\fi
\auxa=\xl
\advance \auxa by -\x%
\ifnum \auxa=0
  \put {#7} at {\xc} {\yc}
\else
  \auxb=\yl
  \advance \auxb by -\y%
  \ifnum \auxb=0 \put {#7} at {\xc} {\yc}
  \else
    \ifnum \auxa > 0
      \ifnum \auxb > 0
        \ifnum #8=1
          \put {#7} [rb] at {\xc} {\yc}
        \else
          \put {#7} [lt] at {\xc} {\yc}
        \fi
      \else
        \ifnum #8=1
          \put {#7} [lb] at {\xc} {\yc}
        \else
          \put {#7} [rt] at {\xc} {\yc}
        \fi
      \fi
    \else
      \ifnum \auxb > 0
        \ifnum #8=1
          \put {#7} [rt] at {\xc} {\yc}
        \else
          \put {#7} [lb] at {\xc} {\yc}
        \fi
      \else
        \ifnum #8=1
          \put {#7} [lt] at {\xc} {\yc}
        \else
          \put {#7} [rb] at {\xc} {\yc}
        \fi
      \fi
    \fi
  \fi
\fi
}
\def\modifplot(#1{\!modifqcurve #1}
\def\!modifqcurve(#1,#2){\x=#1%
  \y=#2%
  \multiply \x by \expansao%
  \multiply \y by \expansao%
  \!start (\x,\y)
  \!modifQjoin}
\def\!modifQjoin(#1,#2)(#3,#4){\x=#1%
  \y=#2%
  \xl=#3%
  \yl=#4%
  \multiply \x by \expansao%
  \multiply \y by \expansao%
  \multiply \xl by \expansao%
  \multiply \yl by \expansao%
  \!qjoin (\x,\y) (\xl,\yl)             
  \!ifnextchar){\!fim}{\!modifQjoin}}
\def\!fim){\ignorespaces}
\def\setaxy(#1{\!pontosxy #1}
\def\!pontosxy(#1,#2){%
  \!maispontosxy}
\def\!maispontosxy(#1,#2)(#3,#4){%
  \!ifnextchar){\!fimxy#3,#4}{\!maispontosxy}}
\def\!fimxy#1,#2){\x=#1%
  \y=#2
  \multiply \x by \expansao
  \multiply \y by \expansao
  \xl=\x%
  \yl=\y%
  \aux=1%
  \multiply \aux by \auxa%
  \advance\xl by \aux%
  \aux=1%
  \multiply \aux by \auxb%
  \advance\yl by \aux%
  \arrow <4pt> [.2,1.1] from {\x} {\y} to {\xl} {\yl}}
\def\cmor#1 #2(#3,#4)#5{%
  \!ifnextchar[{\!cmora{#1}{#2}{#3}{#4}{#5}}{\!cmora{#1}{#2}{#3}{#4}{#5}[0] }}
\def\!cmora#1#2#3#4#5[#6]{%
  \ifcase #2
      \auxa=0
      \auxb=1
    \or
      \auxa=0
      \auxb=-1
    \or
      \auxa=1
      \auxb=0
    \or
      \auxa=-1
      \auxb=0
    \fi  
  \ifcase #6  
    \modifplot#1
    \setaxy#1
  \or  
    \setdashes
    \modifplot#1
    \setaxy#1
    \setsolid
  \or  
    \modifplot#1
  \fi  
  \x=#3%
  \y=#4%
  \multiply \x by \expansao%
  \multiply \y by \expansao%
  \put {#5} at {\x} {\y}}
\def\obj(#1,#2){%
  \!ifnextchar[{\!obja{#1}{#2}}{\!obja{#1}{#2}[Nulo]}}
\def\!obja#1#2[#3]#4{%
  \!ifnextchar[{\!objb{#1}{#2}{#3}{#4}}{\!objb{#1}{#2}{#3}{#4}[1]}}
\def\!objb#1#2#3#4[#5]{%
  \x=#1%
  \y=#2%
  \def\!pinta{\normalsize$\bullet$}
  \def\!nulo{Nulo}%
  \def\!arg{#3}%
  \!compara{\!arg}{\!nulo}%
  \ifcompara\def\!arg{#4}\fi%
  \multiply \x by \expansao%
  \multiply \y by \expansao%
  \setbox\caixa=\hbox{#4}%
  \!coloca{(\!arg)(#1,#2)(\number\ht\caixa,\number\wd\caixa,\number\dp\caixa)}{\pilha}%
  \auxa=\wd\caixa \divide \auxa by 131072
  \advance \auxa by 5
  \auxb=\ht\caixa
  \advance \auxb by \number\dp\caixa
  \divide \auxb by 131072
  \advance \auxb by 5
  \ifcase \tipografo    
    \put{#4} at {\x} {\y}
  \or                   
    \ifcase #5 
      \put{#4} at {\x} {\y}
    \or        
      \put{\!pinta} at {\x} {\y}
      \advance \y by \number\auxb  
      \put{#4} at {\x} {\y}
    \or        
      \put{\!pinta} at {\x} {\y}
      \advance \auxa by -2  
      \advance \auxb by -2  
      \advance \x by \number\auxa  
      \advance \y by \number\auxb  
      \put{#4} at {\x} {\y}
    \or        
      \put{\!pinta} at {\x} {\y}
      \advance \x by \number\auxa  
      \put{#4} at {\x} {\y}
    \or        
      \put{\!pinta} at {\x} {\y}
      \advance \auxa by -2  
      \advance \auxb by -2  
      \advance \x by \number\auxa  
      \advance \y by -\number\auxb  
      \put{#4} at {\x} {\y}
    \or        
      \put{\!pinta} at {\x} {\y}
      \advance \y by -\number\auxb  
      \put{#4} at {\x} {\y}
    \or        
      \put{\!pinta} at {\x} {\y}
      \advance \auxa by -2  
      \advance \auxb by -2  
      \advance \x by -\number\auxa  
      \advance \y by -\number\auxb  
      \put{#4} at {\x} {\y}
    \or        
      \put{\!pinta} at {\x} {\y}
      \advance \x by -\number\auxa  
      \put{#4} at {\x} {\y}
    \or        
      \put{\!pinta} at {\x} {\y}
      \advance \auxa by -2  
      \advance \auxb by -2  
      \advance \x by -\number\auxa  
      \advance \y by \number\auxb  
      \put{#4} at {\x} {\y}
    \fi
  \or                   
    \ifcase #5 
      \put{#4} at {\x} {\y}
    \or        
      \put{\!pinta} at {\x} {\y}
      \advance \y by \number\auxb  
      \put{#4} at {\x} {\y}
    \or        
      \put{\!pinta} at {\x} {\y}
      \advance \auxa by -2  
      \advance \auxb by -2  
      \advance \x by \number\auxa  
      \advance \y by \number\auxb  
      \put{#4} at {\x} {\y}
    \or        
      \put{\!pinta} at {\x} {\y}
      \advance \x by \number\auxa  
      \put{#4} at {\x} {\y}
    \or        
      \put{\!pinta} at {\x} {\y}
      \advance \auxa by -2  
      \advance \auxb by -2
      \advance \x by \number\auxa  
      \advance \y by -\number\auxb 
      \put{#4} at {\x} {\y}
    \or        
      \put{\!pinta} at {\x} {\y}
      \advance \y by -\number\auxb 
      \put{#4} at {\x} {\y}
    \or        
      \put{\!pinta} at {\x} {\y}
      \advance \auxa by -2  
      \advance \auxb by -2
      \advance \x by -\number\auxa 
      \advance \y by -\number\auxb 
      \put{#4} at {\x} {\y}
    \or        
      \put{\!pinta} at {\x} {\y}
      \advance \x by -\number\auxa 
      \put{#4} at {\x} {\y}
    \or        
      \put{\!pinta} at {\x} {\y}
      \advance \auxa by -2  
      \advance \auxb by -2
      \advance \x by -\number\auxa 
      \advance \y by \number\auxb  
      \put{#4} at {\x} {\y}
    \fi
   \else 
     \ifnum\auxa<\auxb 
       \aux=\auxb
     \else
       \aux=\auxa
     \fi
     \ifdim\wd\caixa<1em
       \dimen99 = 1em
       \aux=\dimen99 \divide \aux by 131072
       \advance \aux by 5
     \fi
     \advance\aux by -2 
     \multiply\aux by 2 %
     \ifnum\aux<30
       \put{\circle{\aux}} [Bl] at {\x} {\y}
     \else
       \multiply\auxa by 2
       \multiply\auxb by 2
       \put{\oval(\auxa,\auxb)} [Bl] at {\x} {\y}
     \fi
     \put{#4} at {\x} {\y}
   \fi
}
\numberwithin{equation}{section}
\newtheorem{Theorem}{Theorem}[section]
\newtheorem{Lemma}[Theorem]{Lemma}
\newtheorem{Corollary}[Theorem]{Corollary}
\newtheorem{Definition}[Theorem]{Definition}
\theoremstyle{definition}
\newtheorem{Example}[Theorem]{Example}
\newtheorem{Remark}[Theorem]{Remark}
\newfont{\deffont}{cmbxti10}
\newfont{\german}{eufm10}
\newfont{\mymath}{cmr12}
\newcommand\lieg{\mathfrak{g}}
\newcommand\lier{\mathfrak{r}}
\newcommand\lies{\mathfrak{s}}
\renewcommand\qed{\hfill\hbox{\vrule width 4pt height 6pt depth 1.5 pt}}
\newcommand\hook{\mathbin{\raise2.5pt\hbox{\hbox{{\vbox{\hrule height.4pt
width6pt depth0pt}}}\vrule height3pt width.4pt depth0pt}\,}}
\newcommand\extd{d\mspace{2mu}}
\newcommand\cTM{T^*\kern-2ptM}
\newcommand\Ad{\text{\rm Ad}}
\newcommand\Inv{\text{\rm Int}}
\newcommand\semibasic{\text{\bf  sb}}
\newcommand\vess{\text{\german v\german e\german s\german s}}
\newcommand\thetaX{\theta_{\kern -1 pt X}}
\newcommand\thetaY{\theta_Y}
\newcommand\ann{\text{\rm ann}}
\newcommand\CalPf{\mathcal{P}\text{\it \kern -.3pt f}}
\newcommand\Real{\text{\bf  R}}
\newcommand\Ann{\text{\rm Ann}}
\renewcommand\:{\colon}
\newcommand\TM{T\kern -2pt M}
\newcommand\real{\text{\bf R}}
\newcommand\mycap{\hbox{\ $\rlap{\kern -.3pt $\cap$}\raise.8pt\hbox{$\scriptstyle+$}$\ } }
\DeclareMathOperator{\rank}{rank}
\newcommand{\StTag}[1]{ \label{st:#1}
\ifthenelse{\boolean{proofmode}}{\ \marginpar{\quad\scriptsize st:#1} }{}      }
\newcommand{\EqTag}[1]{
\ifthenelse{\boolean{proofmode}}
{ {\label{eq:#1}}
  \stepcounter{equation}
  \tag{\theequation \rlap{\kern 23 pt{\scriptsize eq:#1}}}
}
{\label{eq:#1}}
 }
\newcommand{\EqRef}[1]{\eqref{eq:#1}}
\newcommand{\StRef}[1]{\ref{st:#1}}
\newcolumntype{C}{>\scriptstyle>{$}c <{$} }
\newcolumntype{L}{>\scriptstyle >{$} l <{$} }
\newcommand\CalA{\mathcal{A}}
\newcommand\CalI{\mathcal{I}}
\newcommand\CalL{\mathcal{L}}
\newcommand\CalJ{\mathcal{J}}
\newcommand\CalK{\mathcal{K}}
\newcommand\CalF{\mathcal{F}}
\newcommand\CalS{\mathcal{S}}
\newcommand\CalU{\mathcal{U}}
\newcommand\CalW{\mathcal{W}}
\newcommand\CalV{\mathcal{V}}
\newcommand\barM{
	\hbox{\kern 2.3 true pt
	\vbox{\hrule width 8.5  true pt height .3 true pt \kern .9 true pt
	\hbox{\kern -2.3 true pt $M$}}}}
\newcommand\barU{
	\hbox{\kern .8 true pt
	\vbox{\hrule width 6.5  true pt height .3 true pt \kern .9 true pt
	\hbox{\kern -.8 true pt $U$}}}}
\newcommand\barCalI{
	\hbox{\kern 4.3 true pt
	\vbox{\hrule width 6.5  true pt height .3 true pt \kern .9 true pt
	\hbox{ \kern -4.3 true pt  $\CalI$}}}}
\newcommand\barXi{
	\hbox{\kern 1 true pt
	\vbox{\hrule width 6.5  true pt height .3 true pt \kern .9 true pt
	\hbox{\kern 1 true pt $\Xi$}}}}
\newcommand\Largehat{\smash{\raise -7.5 pt \hbox{\rm\Large\^{}}}}
\newcommand\LARGEhat{\smash{\raise -9.5 pt \hbox{\rm\LARGE\^{}}}}
\newcommand\hugehat{\smash{\raise -7.5 pt \hbox{\rm\huge\^{}}}}
\newcommand\Hugehat{\smash{\raise -7.5 pt \hbox{\rm\Huge\^{}}}}
\newcommand\Largecheck{\smash{\raise -7.5 pt \hbox{\rm\Large\v{}}}}
\newcommand\LARGEcheck{\smash{\raise -9 pt \hbox{\rm\LARGE\v{}}}}
\newcommand\hugecheck{\smash{\raise -7.5 pt \hbox{\rm\huge\v{}}}}
\newcommand\Hugecheck{\smash{\raise -7.5 pt \hbox{\rm\Huge\v{}}}}
\newcommand\hE{{\accentset{\LARGEhat}{E}}}
\newcommand\hH{{\accentset{\LARGEhat}{H}}}
\newcommand\hI{{\accentset{\LARGEhat}{I}}}
\newcommand\hR{{\accentset{\LARGEhat}{R}}}
\newcommand\hS{{\accentset{\LARGEhat}{S}}}
\newcommand\hT{{\accentset{\LARGEhat}{T}}}
\newcommand\hU{{\accentset{\LARGEhat}{U}}}
\newcommand\hV{{\accentset{\LARGEhat}{V}}}
\newcommand\hW{{\accentset{\LARGEhat}{W}}}
\newcommand\hX{{\accentset{\LARGEhat}{X}}}
\newcommand\hY{{\accentset{\LARGEhat}{Y}}}
\newcommand\hZ{{\accentset{\LARGEhat}{Z}}}
\newcommand\homega{\accentset{\Largehat}{\omega}}
\newcommand\htheta{\accentset{\Largehat}{\theta}}
\newcommand\hsigma{\accentset{\Largehat}{\sigma}}
\newcommand{\heta}{\accentset{\Largehat}{\eta}}
\newcommand{\hpi}{\accentset{\Largehat}{\pi}}
\newcommand{\hphi}{\accentset{\Largehat}{\phi}}
\newcommand{\hmu}{\accentset{\Largehat}{\mu}}
\newcommand{\hrho}{\accentset{\Largehat}{\rho}}
\newcommand{\hvarphi}{\accentset{\Largehat}{\varphi}}
\newcommand{\hOmega}{\accentset{\LARGEhat}{\Omega}}
\newcommand{\hGamma}{\accentset{\LARGEhat}{\Gamma}}
\newcommand\hstar{\accentset{\Largehat}{\,*\,}}
\newcommand\hCalU{{\accentset{\LARGEhat}{{\mathcal U}}}}
\newcommand\hCalV{{\accentset{\LARGEhat}{{\mathcal V}}}}
\newcommand\hCalW{{\accentset{\LARGEhat}{{\mathcal W}}}}
\newcommand\cE{{\accentset{\LARGEcheck}{E}}}
\newcommand\cH{{\accentset{\LARGEcheck}{H}}}
\newcommand\cI{{\accentset{\LARGEcheck}{I}}}
\newcommand\cR{{\accentset{\LARGEcheck}{R}}}
\newcommand\cS{{\accentset{\LARGEcheck}{S}}}
\newcommand\cT{{\accentset{\LARGEcheck}{T}}}
\newcommand\cU{{\accentset{\LARGEcheck}{U}}}
\newcommand\cV{{\accentset{\LARGEcheck}{V}}}
\newcommand\cW{{\accentset{\LARGEcheck}{W}}}
\newcommand\cX{{\accentset{\LARGEcheck}{X}}}
\newcommand\cY{{\accentset{\LARGEcheck}{Y}}}
\newcommand\cZ{{\accentset{\LARGEcheck}{Z}}}
\newcommand\comega{\accentset{\Largecheck}{\omega}}
\newcommand\ctheta{\accentset{\Largecheck}{\theta}}
\newcommand\cpi{\accentset{\Largecheck}{\pi}}
\newcommand\ceta{\accentset{\Largecheck}{\eta}}
\newcommand\csigma{\accentset{\Largecheck}{\sigma}}
\newcommand\cphi{\accentset{\Largecheck}{\phi}}
\newcommand\cmu{\accentset{\Largecheck}{\mu}}
\newcommand\crho{\accentset{\Largecheck}{\rho}}
\newcommand\cvarphi{\accentset{\Largecheck}{\varphi}}
\newcommand\cOmega{\accentset{\LARGEcheck}{\Omega}}
\newcommand\cGamma{\accentset{\LARGEcheck}{\Gamma}}
\newcommand\cstar{\accentset{\Largecheck}{\,*\,}}
\newcommand\cCalU{{\accentset{\LARGEcheck}{{\mathcal U}}}}
\newcommand\cCalV{{\accentset{\LARGEcheck}{{\mathcal V}}}}
\newcommand\cCalW{{\accentset{\LARGEcheck}{{\mathcal W}}}}
\newcommand\bfA{\boldsymbol{A}}
\newcommand\bfB{\boldsymbol{B}}
\newcommand\bfC{\boldsymbol{C}}
\newcommand\bfE{\boldsymbol{E}}
\newcommand\bfF{\boldsymbol{F}}
\newcommand\bfG{\boldsymbol{G}}
\newcommand\bfH{\boldsymbol{H}}
\newcommand\bfK{\boldsymbol{K}}
\newcommand\bfM{\boldsymbol{M}}
\newcommand\bfN{\boldsymbol{N}}
\newcommand\bfq{\mathbf{q}}
\newcommand\bfR{\boldsymbol{R}}
\newcommand\bfX{\boldsymbol{X}}
\newcommand\bfY{\boldsymbol{Y}}
\newcommand\bfP{\boldsymbol{P}}
\newcommand\bfQ{\boldsymbol{Q}}			
\newcommand\bfphi{{\boldsymbol{\phi}}}
\newcommand\bftheta{{\boldsymbol{\theta}}}
\newcommand\bfthetaX{{\boldsymbol{\theta_{\kern -1 pt X}}}}
\newcommand\bfthetaY{{\boldsymbol{\theta_Y}}}
\newcommand\bfhE{\boldsymbol{{\hat E}}}
\newcommand\bfhF{\boldsymbol{{\hat F}}} 		
\newcommand\bfhI{\boldsymbol{{\hat I}}}
\newcommand\bfhU{\boldsymbol{{\hat U}}}
\newcommand\bfheta{\boldsymbol{{\hat \eta}}}
\newcommand\bfhiota{\boldsymbol{{\hat \iota}}}
\newcommand\bfhpi{\boldsymbol{{\hat \pi}}}
\newcommand\bfhsigma{\boldsymbol{{\hat \sigma}}}
\newcommand\bfhtheta{\boldsymbol{{\hat \theta}}}
\newcommand\bfhzeta{\boldsymbol{{\hat \zeta}}}
\newcommand\bfcE{\boldsymbol{{\check E}}}
\newcommand\bfcF{\boldsymbol{{\check F}}}
\newcommand\bfcI{\boldsymbol{{\check I}}}
\newcommand\bfcU{\boldsymbol{{\check U}}}
\newcommand\bfceta{\boldsymbol{{\check \eta}}}
\newcommand\bfciota{\boldsymbol{{\check\iota}}}
\newcommand\bfcpi{\boldsymbol{{\check\pi}}}
\newcommand\bfcsigma{\boldsymbol{{\check\sigma}}}
\newcommand\bfctheta{\boldsymbol{{\check \theta}}}
\newcommand\bfczeta{\boldsymbol{{\check \zeta}}}
\newcommand\bfthetapr{{\boldsymbol{\theta'}}}
\newcommand\bfhsigmapr{{\boldsymbol{{{\hat\sigma}'}}}}
\newcommand\bfcsigmapr{{\boldsymbol{{{\check\sigma}'}}}}
\newcommand\bfhetapr{{\boldsymbol{{{\hat\eta}'}}}}
\newcommand\bfcetapr{{\boldsymbol{{{\check\eta}'}}}}
\newcommand\bfvecthetapr{ {\boldsymbol{{{\hat s}'}}}}
\newcommand\bfvectcetapr{ {\boldsymbol{{{\check s}'}}}}
\newcommand\Rtheta{{ \raise 1pt \hbox{$\scriptstyle {\boldsymbol{\theta}}$}}}
\newcommand\Ltheta{{ \lower 1pt \hbox{$\scriptstyle {\boldsymbol{\theta}}$}}}
\newcommand\bfvecttheta{{\boldsymbol{\partial}_{\Ltheta}}}
\newcommand\Rsigma{{ \raise 1pt \hbox{$\scriptstyle \sigma$}}}
\newcommand\Hsigma{{\boldsymbol{\displaystyle{ \hat {\Rsigma }}}}}
\newcommand\bfvecthsigma{{\boldsymbol{\partial}_{\Hsigma}}}
\newcommand\Csigma{{\boldsymbol{\displaystyle{ \check {\Rsigma }}}}}
\newcommand\bfvectcsigma{{\boldsymbol{\partial}_{\Csigma}}}
\newcommand\Reta{{ \raise 1pt \hbox{$\scriptstyle \eta$}}}
\newcommand\Heta{{\boldsymbol{\displaystyle{ \hat {\Reta }}}}}
\newcommand\bfvectheta{{\boldsymbol{\partial}_{\Heta}}}
\newcommand\Ceta{{\boldsymbol{\displaystyle{ \check {\Reta }}}}}
\newcommand\bfvectceta{{\boldsymbol{\partial}_{\Ceta}}}
\newcommand\vecthsigma{\partial_{{\displaystyle \hat {\raise 1.3pt \hbox{$\scriptstyle \sigma$}}}^a}}
\newcommand\vectcsigma{\partial_{{\displaystyle \check {\raise 1.3pt \hbox{$\scriptstyle \sigma$}}}^\alpha}}
\newcommand\bfone{ {\boldsymbol{1}}}
\newcommand\bfhR{{\mathbf{\hat R}}}
\newcommand\bfcR{{\mathbf{\check R}}}
\newcommand\bfhS{{\boldsymbol{ {\hat S}}}}
\begin{document}

\title{
	Superposition Formulas for
\\
	Exterior Differential Systems
}

\author{ Ian Anderson \\ Dept of Math. and Stat. \\ Utah State University  \and Mark Fels  \\Dept of Math. and Stat. \\ Utah State University   \and Peter Vassiliou \\  Dept. of Math and Stat. \\ University of Canberra}
\maketitle
\newpage
\setcounter{page}{1}
\section{Introduction}
	
	In this  paper  we use the method of symmetry reduction for exterior differential
	systems to obtain a  far-reaching generalization of  Vessiot's integration method \cite{vessiot:1939a}, \cite{vessiot:1942a} for
	Darboux integrable, partial differential equations.
	This group-theoretic approach provides deep insights into this classical method; uncovers the fundamental geometric
	invariants of Darboux integrable systems;  provides for their algorithmic integration;
	and has  applications well beyond  those currently found in the literature. In particular,
        our integration method is  applicable to systems of hyperbolic  PDE  such as the Toda lattice equations,
	 \cite{leznov-saveliev:1980a}, \cite{leznov-saveliev:1983a}, \cite{sokoliv-ziber:1995a}, 2-dimensional  wave maps
	\cite{barbashov-nesterenko-chervyakov:1982a}  and systems of overdetermined PDE such as those studied
	by Cartan \cite{cartan:1911a}.

	Central to our generalization of  Vessiot's work is the novel concept  of  a {\deffont superposition formula} for
	an exterior differential system
	$\CalI$ on a manifold $M$. A superposition formula for $\CalI$ is a pair of differential systems
	$\hCalW$, $\cCalW$, defined on  manifolds $M_1$ and $M_2$, and a mapping
\begin{equation}
	\Sigma \: M_1 \times M_2 \to M
\EqTag{Intro1}
\end{equation}
	such that
\begin{equation}
	\Sigma^*(\CalI) \subset \pi^*_1(\hCalW)  + \pi^*_2(\cCalW).          %
\EqTag{Intro2}
\end{equation}
	Here   $\pi^*_1(\hCalW +\pi^*_2(\cCalW)$
	is the differential system
	generated by the pullbacks of $\hCalW$ and $\cCalW$ to  the  product manifold $M_1 \times M_2 $ by the canonical projection maps
	$\pi_1$ and $\pi_2$.
	It is then
	clear that if  $\hphi\: N_1 \to M_1$ and $\cphi\: N_2 \to M_2$
	are integral manifolds for $\hCalW$ and $\cCalW$,  then
\begin{equation}
	\phi= \Sigma\circ (\hphi,\, \cphi) \:
	N_1 \times N_2 \to M
\EqTag{Intro3}
\end{equation}
	is an (possibly non-immersed) integral manifold of $\CalI$.

	In this article we  shall
\newcounter{fig}
\begin{list}{\bfseries [\roman{fig}]}{\usecounter{fig}
\setlength{\topsep}{1 pt}
\setlength{\itemsep}{0pt}
\setlength{\leftmargin}{20 pt}
\setlength{\listparindent}{15 pt} }
\item 	establish general sufficiency conditions (in terms of geometric invariants of the differential system $\CalI$)
	for the existence of a superposition formula;
\item  	establish general sufficiency conditions under which
	the superposition formula gives {\it all \/} local integral manifolds of $\CalI$  in terms of the integral manifolds of
	$\pi^*_1(\hCalW)  + \pi^*_2(\cCalW)$ on $M_1 \times M_2$ (in which case we say that the superposition formula is surjective);
\item	provide an algorithmic procedure for finding the superposition formula; and
\item demonstrate the effectiveness of our approach  with an extensive number  of examples and applications.
\end{list}

	Differential systems  admitting superposition formula are easily constructed by symmetry reduction.
	To briefly describe this construction,
	let $G$ be a symmetry group of a differential system $\CalW$ on a manifold $N$.  We assume that
	the quotient space $M = N/G$  of $N$ by the orbits of $G$ has a smooth manifold structure for which
	the projection map  $\bfq \colon N \to M$ is smooth.    We  then define the {\deffont $G$ reduction of $\CalW$ or
	quotient of $\CalW$ by $G$}  as the differential system on $M$  given by
\begin{equation}
         \CalW/G =  \{\, \omega \in \Omega^*(M) \, | \, \bfq^*(\omega) \in \CalW\, \}.
\end{equation}
	The traditional application of symmetry reduction has been to integrate $\CalW$ by integrating $\CalW/G$. See, for example, \cite{anderson-fels:2005a}.

	But now suppose that differential systems $\hCalW$ and $\cCalW$ on manifolds $M_1$ and $M_2$ have a common symmetry group $G$.
	Define the differential system $\CalW = \pi^*_1(\hCalW)  + \pi^*_2(\cCalW)$ on $M_1\times M_2$  and let $G$ act on $M_1\times M_2$ by the diagonal action.
	Then, by definition, the quotient map  $\bfq \colon M_1\times M_2 \to M = (M_1\times M_2)/G$
	defines a superposition formula for the quotient differential system $\CalW/G$ on $M$.
	In this paper we discover the means by which the inverse process to  symmetry reduction is possible,
	that is, {\it we show  how certain general classes of differential systems $\CalI$ on $M$  can be identified with a quotient system
	$\CalW/G$  in which case the integral manifolds of $\CalI$ can  then be found from those of $\hCalW$  and $\cCalW$. }

\par
	To intrinsically describe the class of differential systems for which we shall construct superposition formulas, we first introduce the
	definition of a decomposable differential system.

\begin{Definition}
\StTag{Intro1}	
	An exterior differential system  $\CalI$ on $M$ is   {\deffont decomposable of type $[p,q]$}, where
	$p,q \geq 2$, if about each point $x \in M$ there is a  coframe
\begin{equation}
	\tilde \theta^1,\ \ldots,\ \tilde \theta^r,\
	\hsigma^1,\ \dots,\ \hsigma^{p}, \
	\csigma^1,\ \dots,\ \csigma^{q},
\EqTag{Intro4}
\end{equation}
	such that $\CalI$ is algebraically generated by  1-forms and 2-forms
\begin{equation}
	\CalI = \{\, \tilde \theta^1, \ \dots, \ \tilde \theta^r, \ \hOmega^1,\ \dots,\ \hOmega^{s},\ \cOmega^1,\  \dots, \ \cOmega^{t} \, \},
\EqTag{Intro5}	
\end{equation}
	where $s, t \geq  1$,
	$\hOmega^a \in \Omega^2(\hsigma^1,\dots,\hsigma^{p})$, and
	$\cOmega^\alpha \in \Omega^2(\csigma^1,\dots,\csigma^{q})$.
	The differential systems algebraically generated by
\begin{equation}
	\hCalV =\{\,\tilde \theta^i,\, \hsigma^a, \,\cOmega^\alpha\, \}
	\quad\text{and}\quad
	\cCalV = \{\, \tilde \theta^i,\, \csigma^\alpha, \,\hOmega^a\, \}
\EqTag{Intro6}
\end{equation}
	are called the associated {\deffont singular differential systems} for $\CalI$ with respect to the decomposition \EqRef{Intro5}.
\end{Definition}
\par
	 With the goal of constructing superposition formulas, we have found it most natural
	to focus on the case where $\CalI$ is decomposable (but not necessarily Pfaffian)  and $\hCalV$ and $\cCalV$ are
	(constant rank) Pfaffian\footnote{We use the term Pfaffian system  to designate  either a constant rank subbundle $V$  of $T^*M$ or  the differential system,
	denoted by the corresponding calligraphic letter   $\CalV$, generated by the sections of $V$.}.
	All of the examples we consider are of this type.  Note that any class $r$ hyperbolic differential system, as
	defined in   \cite{	bryant-griffiths-hsu:1995a},  is a
	decomposable differential system   and that the associated characteristic  Pfaffian systems coincide, for $r>0$,
	with the singular systems\EqRef{Intro6}.

	The definition of a Darboux integrable, decomposable  differential system is given in terms of its singular systems.
	For any Pfaffian system $V$, let $V^{(\infty)}$
	denoted the largest integrable subbundle of $V$. The rank of $V^{(\infty)}$
	gives the number of functionally independent first integrals for $V$. By definition,
	a scalar second order partial differential equation in the plane is Darboux integrable if the associated singular
	Pfaffian systems $\hV$ and $\cV$ each admit at least 2 (functionally independent) first integrals. Thus,
	in order to generalize the definition of Darboux integrablity,
	we  must determine the required number of functionally independent first integrals necessary to
	integrate a general decomposable Pfaffian system. We do this with the following definitions.
\begin{Definition}
\StTag{Intro2}
	 A pair of Pfaffian systems $\hV$ and $\cV$ define a
	{\deffont Darboux pair} if  the following conditions hold.
\par
\smallskip
\noindent
{\bf[i]}
\vskip -30pt
\begin{equation}\hV  + \cV^{(\infty)} = \cTM
	\quad\text{and}\quad
        \cV  + \hV^{(\infty)} = \cTM.
\EqTag{Intro7}
\end{equation}
\par
\smallskip
\noindent
{\bf[ii]}
\vskip -30pt
\begin{equation}
 \hV^{(\infty)} \cap \cV^{\infty} = \{\,0\, \}.
\EqTag{Intro8}
\end{equation}
\par
\smallskip
\noindent
{\bf[iii]}
\vskip-30pt
\begin{equation}
	\extd\omega  \in  \Omega^2(\hV) + \Omega^2(\cV)\quad \text{for  all\quad $\omega\in \Omega^1(\hV \cap \cV)$}\ .
	\EqTag{Intro9}
\end{equation}
\end{Definition}

\begin{Definition}
\StTag{Intro4}
	Let $\CalI$ be a decomposable differential system and assume that the  associated singular systems
	$\hV$ and $\cV$ are Pfaffian. Then $\CalI$ is said to be {\deffont Darboux integrable} if $\{\, \hV,\,  \cV\,\}$
	define a Darboux pair.	
\end{Definition}
	Property {\bf [i]} of Definition \StRef{Intro2}
	is the critical one -- it will insure that there are a sufficient number of first integrals  to construct
	a superposition formula. Property {\bf[ii]} is  a technical condition which states
	simply 	that $\hV$ and $\cV$ share no common  integrals -- this condition can always be
	satisfied by  restricting  $\hV$ and $\cV$ to a level set of any common integrals.
	The  form of the structure equations for $\hV \cap \cV$
	required by property {\bf[iii]} is always satisfied when $\hV$ and $\cV$ are the singular Pfaffian systems for a
	decomposable differential system $\CalI$.

	Our main result can now be stated.
\begin{Theorem}
\StTag{Intro5}
	Let $\CalI$ be a decomposable differential system on $M$ whose associated singular Pfaffian systems
	$\{\, \hV, \, \cV \,\}$ define a Darboux pair. Then there are Pfaffian systems $W_1$  and $W_2$ on  manifolds $M_1$ and $M_2$
	which admit a common Lie group $G$ of symmetries and such that
\par
\smallskip
\noindent
{\bf [i]} the manifold $M$ can be identified (at least locally) as the quotient of $M_1\times M_2$ by the diagonal action of the group $G$;
\par
\smallskip
\noindent
{\bf[ii]}
\vskip-30pt
\begin{equation}
	\CalI = (\pi^*_1(\hCalW +\pi^*_2(\cCalW))/G; \quad\text{and}
\end{equation}
\par
\smallskip
\noindent
{\bf[iii]} the  quotient map $\bfq\colon M_1\times M_2 \to M$ defines a surjective superposition formula for $\CalI$.
\end{Theorem}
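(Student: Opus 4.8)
The plan is to invert the symmetry-reduction construction described in the introduction: I will manufacture the group $G$, the manifolds $M_1,M_2$, and the Pfaffian systems $\hCalW,\cCalW$ intrinsically from the Darboux pair $(\hV,\cV)$, and then check that reducing $\pi^*_1(\hCalW)+\pi^*_2(\cCalW)$ by the diagonal $G$-action returns $\CalI$. The logical skeleton is thus to produce the data of a superposition formula and then show it reconstructs $\CalI$ exactly.

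First I would extract the two foliations and their leaf spaces. Since $\hV^{(\infty)}$ and $\cV^{(\infty)}$ are integrable, each is locally spanned by the differentials of a maximal set of independent first integrals, giving submersions $\hat s\colon M\to B_1$ and $\check s\colon M\to B_2$ onto the respective leaf spaces. Condition {\bf[ii]}, $\hV^{(\infty)}\cap\cV^{(\infty)}=\{0\}$, guarantees that these first integrals are jointly independent, so $(\hat s,\check s)$ is itself a submersion and the two foliations are transverse. Condition {\bf[i]} then pins down the relevant dimensions: $\hV+\cV^{(\infty)}=\cTM$ forces the fibres of $\check s$ to be complementary to $\hV^\perp$, and symmetrically for $\hat s$. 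The candidates for $M_1$ and $M_2$ are built from $B_1,B_2$ together with the transverse data carried by $\hV$ and $\cV$.

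Second, and this is the heart of the argument, I would construct the common symmetry group $G$. Setting $K=\hV\cap\cV$, condition {\bf[iii]} says that for every $\omega\in\Omega^1(K)$ the $2$-form $\extd\omega$ lies in $\Omega^2(\hV)+\Omega^2(\cV)$, i.e.\ carries no mixed $\hsigma\wedge\csigma$ component. Reducing the structure equations of $K$ modulo $\hV^{(\infty)}+\cV^{(\infty)}$, the surviving mixed structure functions are forced by {\bf[iii]} to be constant, and they are the structure constants of a finite-dimensional Lie algebra $\lieg$, the Vessiot algebra. I would then exhibit $\lieg$ as acting on $M$ through the vector fields dual to this reduced coframe, preserving both $\hV$ and $\cV$, and integrate it to a local Lie group $G$. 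The manifolds $M_1$ and $M_2$ are realized as $G$-manifolds by adjoining to $B_1,B_2$ the directions along which $\lieg$ acts, so that $\hat s$ and $\check s$ factor through $M_1$ and $M_2$.

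Finally I would verify the three conclusions. The transversality supplied by condition {\bf[i]} shows the diagonal $G$-orbits in $M_1\times M_2$ have the correct dimension and that $\bfq$ realizes $M$ as their local quotient, giving {\bf[i]}; matching the generators of $\CalI$ against those of $\pi^*_1(\hCalW)+\pi^*_2(\cCalW)$ modulo the $G$-action then gives {\bf[ii]}. For surjectivity {\bf[iii]}, I would show every local integral manifold of $\CalI$ lifts through $\bfq$, using $\hV+\cV^{(\infty)}=\cTM$ to supply exactly enough first integrals to reconstruct the lift. The main obstacle is the construction and integration of $G$ in the previous paragraph: one must prove that $\lieg$ is genuinely finite-dimensional, that it closes as a Lie algebra rather than a merely pointwise vector space, and that the single diagonal action it generates simultaneously realizes both $\hV$ and $\cV$ as the singular systems of the quotient. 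Establishing surjectivity is the second delicate point, and it is precisely property {\bf[i]} of the Darboux-pair definition that makes it succeed.
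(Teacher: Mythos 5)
You have correctly identified the overall architecture the paper uses -- manufacture $G$, $M_1$, $M_2$, $W_1$, $W_2$ intrinsically from the Darboux pair and then verify that the diagonal quotient reconstructs $\CalI$ -- but the central step of your second paragraph is asserted rather than proved, and as stated it is not correct. Condition {\bf [iii]} of the Darboux-pair definition only guarantees the absence of mixed terms $\hsigma^a\wedge\csigma^\alpha$ in $\extd\omega$ for $\omega\in\Omega^1(\hV\cap\cV)$; it does \emph{not} force the surviving structure functions to be constant. In the paper those functions are a priori first integrals of $\hV$ (or of $\cV$), and making them constant is the content of Theorem \StRef{Fourth5}, which requires constructing \emph{two} distinct coframes $\{\thetaX^i\}$ and $\{\thetaY^i\}$ spanning the same bundle, exploiting the commutativity of their dual vector fields via the transition matrix $Q^i_j$, and evaluating the resulting functional equation at base points. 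Even after that, reaching the structure equations of Theorem \StRef{CofrAdapt1} requires the fifth adaptation of Section 4.5, which in the non-semisimple case genuinely needs the Levi decomposition, an application of the Frobenius theorem to a linear total differential system, and quadratures -- none of which can be bypassed by "reducing modulo $\hV^{(\infty)}+\cV^{(\infty)}$."

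The second, related gap is your claim that a single Lie algebra of vector fields dual to the reduced coframe preserves \emph{both} $\hV$ and $\cV$. In general no such action exists: the paper constructs two commuting free actions $\hmu$ and $\cmu$ with common orbits, of which one (generated by the $\hX_i$) is a symmetry of $\hV$ only and the other (generated by the $\cX_i$) of $\cV$ only; Remark \StRef{Fifth8} shows that only the center of the Vessiot algebra acts as a symmetry of both. This right/left pair is not a technicality -- it is what makes Theorem \StRef{SuperForm1} work, producing the equivariant map $\rho\colon\CalU\to G$ whose group multiplication is the superposition formula, and it is why the diagonal action on $M_1\times M_2$ uses $\hmu_1$ on one factor and $\cmu_2$ on the other. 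Without this the identification in {\bf [i]} and the generator matching in {\bf [ii]} cannot be carried out. Finally, for surjectivity your appeal to $\hV+\cV^{(\infty)}=\cTM$ gestures in the right direction but misses the mechanism: the paper proves that $\CalW_1+\CalW_2$ is an \emph{integrable extension} of $\hCalV\mycap\cCalV$ (Corollary \StRef{SuperForm8}), so that the Frobenius theorem lifts every integral manifold through $\Sigma$. Your outline would need all three of these ingredients supplied before it constitutes a proof.
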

\par
	The  manifolds  $M_1$ and $M_2$  in Theorem \StRef{Intro5} are simply any maximal integral manifolds for
	$\hV^{(\infty)}$ and $\cV^{(\infty)}$   and the Pfaffian
	systems $W_1$ and $W_2$ are just the restrictions of $\cV$ and $\hV$  to these manifolds.
	But the proper identification of the  Lie group $G$ and its action on $M_1$ and $M_2$ is not so easy to uncover. This is done
	through a sequence of non-trivial coframe adaptations
	and represents the principle technical achievement of the paper (See Theorem \StRef{CofrAdapt1} and Definition \StRef{SuperForm33}) We call  $G$ 	the {\deffont Vessiot group} for the Darboux integrable, differential system $\CalI$. 	
\par	
	The paper is organized as follows. In Section 2 we obtain some simple sufficiency conditions for a differential system to be decomposable
        and we give necessary and sufficient conditions for a Pfaffian system to be decomposable (Theorem \StRef{Dec3}). We  also introduce the initial
	adapted coframes for a Darboux pair (Theorem \StRef{First9}). In Section 3  we answer the question of when the symmetry reduction of a Darboux
pair is also a Darboux pair (Theorem \StRef{SymRed1}) and we use this result to give a general method for constructing Darboux integrable differential systems (Theorem \StRef{SymRed8}).
	Section 4 establishes the sequence of coframe adaptations leading to the definition of the Vessiot group $G$ and its action on $M$
	and hence on $M_1$ and $M_2$ (Theorem \StRef{CofrAdapt1}).
	In Section 5 we construct the superposition formula (Theorem \StRef{SuperForm6}) and prove that it may be identified with the quotient for the diagonal action of the
	Vessiot group. This proves Theorem \StRef{Intro5}.

	A substantial portion of the paper is devoted to examples.
	These examples, given in Section 6, provide closed-form general solutions to a wide range of differential equations,
	explicitly illustrate the various  coframe adaptations of Section 4, and
	underscore the importance of the Vessiot group as the fundamental invariant for Darboux integrable differential systems.
	The first two examples are taken from the classical literature. The equation considered in Example \StRef{Ex1} is
        specifically chosen  to illustrate in complete detail  all the various coframe adaptations used to determine the superposition formula.
	In Example \StRef{Ex2} we integrate all non-linear Darboux equations of the type $u_{xx} = f(u_{yy})$ (with second order invariants)
	and relate the integration of these equations to Cartan's classification of rank 3 Pfaffian systems  in 5 variables \cite{cartan:1910a}.
	This connection between the method of Darboux and Cartan's classification is  apparent
	from our interpretation of the method of Darboux in terms of symmetry reduction of differential systems.

	In Examples \StRef{Ex3} and \StRef{Ex4} we present a number of examples of Darboux integrable equations  where the unknown function takes values
	in a group or in a non-commutative algebra. Example \StRef{Ex3}
	provides us with a system whose Vessiot group is an arbitrary Lie group $G$.

	Some of the simplest examples of  systems of Darboux integrable partial differential equations
	can be constructed by the coupling of a nonlinear Darboux integrable scalar equation to a linear or Moutard-type equation.
	These are presented in  Example \StRef{Ex5}. It is noteworthy that for these equations the Vessiot group is a semi-direct product
	of the Vessiot group for the non-linear equation with an Abelian group.
	The representation theoretical implications of this observation will be further explored elsewhere.

	In Example \StRef{Ex6} we illustrate the computational power of our methods by
	explicitly integrating the $B_2$ Toda lattice system.
	While the general solution to the $A_n$ Toda systems  were known to Darboux, this is, to the best of our knowledge, the  first time
	explicit general solutions to other  Toda systems have been given.  Based on this work we conjecture that the Vessiot group for
	the $\mathfrak{g}$ Toda lattice system, where $\mathfrak{g}$ is any semi-simple Lie group, is the associated Lie group $G$ itself.
	In Example \StRef{Ex7} we integrate a
	wave map system -- this system admits a number of interesting geometric properties which will be explored in detail in a subsequent paper.
 	Finally, in Example \StRef{Ex8}, we  present some original examples of Darboux integrable, non-linear,
	over-determined systems in 3 independent variables.

	The  extensive computations required by all these examples were done using the DifferentialGeometry
	package in Maple 11. This research was supported by NSF grant DMS-0410373 and DMS-0713830.
\newpage

\section{Preliminaries}

\subsection{Decomposable Differential Systems}

	In this section we give some simple necessary conditions, in terms of the notion of
	singular vectors, for a differential system\footnote{We assume that all exterior differential systems are of constant rank.} $\CalI$ to be decomposable (see Definition \StRef{Intro1});
	we give sufficient conditions for a Pfaffian system to be decomposable;  and we
	address the problem  (see Theorem \StRef{Dec6}) of
	re-constructing a decomposable differential system from its associated singular systems.
\par
	Let $\CalI$ be a differential system on $M$. Fix a point $x$ in $M$ and let
\begin{equation*}
	E^1_x(\CalI) = \{\, X\in T_x M \ |\   \theta(X) = 0 \quad \text{for all 1-forms $\theta \in \CalI$} \,\}.
\end{equation*}
	The {\deffont polar equations} determined by a non-zero vector $X \in E^1_x(\CalI)$ are, by definition,
	the linear system of  equations for $Y \in   E^1_x(\CalI)$ given by
\begin{equation*}
	\theta(X,Y) = 0 \quad \text{for all 2-forms $\theta \in \CalI$}.
\end{equation*}
	Then  $X\in  E^1_x(\CalI)$ is said to be
	{\deffont regular} if the rank of its polar equations is maximal and {\deffont singular} otherwise.
\begin{Lemma}
\StTag{Dec1}
	Let $\CalI$ be a decomposable differential system of  type $[p,q]$.
	Then  $ E^1_x(\CalI)$
	decomposes into a direct sum of $p$ and  $q$ dimensional subspaces
\begin{equation}
	 E^1_x(\CalI) = S_1 \oplus S_2
\EqTag{Dec1}
\end{equation}
	such that
\par
\smallskip
\noindent
{\bf[i]}
	every vector in $S_1$ and every vector in $S_2$ is singular; and
\par
\noindent
\smallskip
{\bf[ii]}
	every 2 plane spanned by any pair of vectors $X \in S_1$ and $Y\in S_2$ is an integral 2-plane for $\CalI$.
\end{Lemma}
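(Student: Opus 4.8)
The plan is to read the splitting directly off the adapted coframe furnished by Definition \StRef{Intro1}, and then deduce both assertions from the fact that the two families of $2$-form generators, $\hOmega^a$ and $\cOmega^\alpha$, live in complementary blocks of the exterior algebra. First I would pass to the frame $\{\tilde e_i,\hat e_a,\check e_\alpha\}$ dual to the coframe \EqRef{Intro4}. Since the only $1$-forms in $\CalI$ are $\tilde\theta^1,\dots,\tilde\theta^r$, the space $E^1_x(\CalI)$ is exactly their annihilator, namely $\operatorname{span}\{\hat e_1,\dots,\hat e_p,\check e_1,\dots,\check e_q\}$, of dimension $p+q$. I then set $S_1=\operatorname{span}\{\hat e_1,\dots,\hat e_p\}$ and $S_2=\operatorname{span}\{\check e_1,\dots,\check e_q\}$, so that $E^1_x(\CalI)=S_1\oplus S_2$ with $\dim S_1=p$ and $\dim S_2=q$ by construction. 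The entire argument rests on two interior-product identities: because $\hOmega^a\in\Omega^2(\hsigma^1,\dots,\hsigma^p)$ is built from the $\hsigma$'s, it is annihilated by any vector on which all $\hsigma^i$ vanish, so $V\hook\hOmega^a=0$ for every $V\in S_2$; dually $U\hook\cOmega^\alpha=0$ for every $U\in S_1$.

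Property \textbf{[ii]} is then immediate. For $X\in S_1$ and $Y\in S_2$, every generator of $\CalI$ restricts to zero on $\operatorname{span}\{X,Y\}$: the $1$-forms because $\tilde\theta^i(X)=\tilde\theta^i(Y)=0$, the form $\hOmega^a$ because $\hOmega^a(X,Y)=0$ follows from $Y\hook\hOmega^a=0$, and $\cOmega^\alpha$ because $\cOmega^\alpha(X,Y)=0$ follows from $X\hook\cOmega^\alpha=0$. Hence the $2$-plane is an integral element.

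For property \textbf{[i]} I would compute the polar equations of an arbitrary $Z=X+Y$ with $X\in S_1$, $Y\in S_2$, tested against $W=U+V$. The two identities give $\hOmega^a(Z,W)=\hOmega^a(X,U)$ and $\cOmega^\alpha(Z,W)=\cOmega^\alpha(Y,V)$, so the polar functionals $W\mapsto\hOmega^a(Z,W)$ land in the subspace $S_1^*\subset(E^1_x(\CalI))^*$ of functionals vanishing on $S_2$, while the functionals $W\mapsto\cOmega^\alpha(Z,W)$ land in the complementary subspace $S_2^*$. Because these two subspaces meet only in $0$, the rank of the polar system of $Z$ is additive, equal to $\rho_1(X)+\rho_2(Y)$, where $\rho_1(X)=\rank\{U\mapsto\hOmega^a(X,U)\}$ on $S_1$ and $\rho_2(Y)=\rank\{V\mapsto\cOmega^\alpha(Y,V)\}$ on $S_2$. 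Maximizing over $Z$ then yields maximal polar rank $R_1+R_2$ with $R_1=\max_{X\in S_1}\rho_1(X)$ and $R_2=\max_{Y\in S_2}\rho_2(Y)$. A vector $X\in S_1$, viewed as $Z=X+0$, has polar rank $\rho_1(X)\le R_1<R_1+R_2$ and is therefore singular; the symmetric argument handles $S_2$.

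The step I expect to require the most care is precisely this last strict inequality, i.e.\ establishing $R_2>0$ (and by symmetry $R_1>0$): without it a $\rho_1$-maximizing vector in $S_1$ would have maximal polar rank and be regular. This is where the hypotheses $s,t\ge1$ must be used honestly. I would argue that at least one generator $\cOmega^\alpha$ is a genuinely nonzero $2$-form in the $\csigma$'s, and that its restriction to $S_2\times S_2$ is then nonzero because $\{\csigma^\gamma|_{S_2}\}$ is exactly the dual basis of $S_2$; this forces $\rho_2(Y)\ge1$ for some $Y$, hence $R_2\ge1$. The remaining points are routine: the additivity of the polar rank needs only that the two families of functionals sit in complementary summands of $(E^1_x(\CalI))^*$, which is the content of the interior-product identities and uses no transversality beyond the coframe hypothesis itself.
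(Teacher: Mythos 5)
Your proof is correct and follows the same route as the paper's: pass to the frame dual to the coframe \EqRef{Intro4}, take $S_1$ and $S_2$ to be the spans of the $\hsigma$- and $\csigma$-duals, and observe that the polar equations of $X_1+X_2$ split into a block depending only on $X_1$ and a block depending only on $X_2$. The paper dismisses the singularity claim with a single ``clearly''; your additivity-of-polar-rank argument, together with the observation that $s,t\ge 1$ forces each complementary block to contribute positive rank, is exactly the justification that word is standing in for.
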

\begin{proof}
Let $\{\, \partial_{\tilde \theta^i}, \vecthsigma,\vectcsigma \,\}$
	denote the dual frame to \EqRef{Intro4}. Then $E^1_x(\CalI) = \text{span}\{\, \vecthsigma,\vectcsigma \, \}$
	and the polar equations for  $X = X_1 + X_2$, where $X_1 =   t^a \vecthsigma$ and
	$X_2 =   s^\alpha\vectcsigma$, are
\begin{equation*}
	\hOmega^a(X_1, Y) = 0  \quad\text{and}\quad
	\cOmega^\alpha(X_2, Y)  =0.
\end{equation*}
        Then, clearly,  the vectors $X_1$ and $X_2$ are singular vectors
	and, moreover, the plane spanned by $X_1, X_2$ is an integral 2-plane.
\end{proof}
\begin{Remark}	
	 The differential system $\hCalV$  defined by \EqRef{Intro6} is the smallest differential system containing $\CalI$  and $\Ann(S_2)$
	and it is for this reason that
	we have opted to called  $\hCalV$ (and $\cCalV$) the  singular differential systems associated to the decomposition of $\CalI$.
\end{Remark}	
\par
	The necessary conditions for decomposability given in Lemma \StRef{Dec1} are seldom sufficient.
	However, if  $\CalI$ is a Pfaffian system and  properties {\bf [i]}  and {\bf [ii]} of Lemma \StRef{Dec1} are satisfied,
	then  there exists  a local coframe
\begin{equation*}
	\theta^1,\ldots, \theta^r,\  \hsigma^1,\dots,\hsigma^{p}, \csigma^1,\dots,\csigma^{q}
\end{equation*}
	on $M$ with $I = \{\, \theta^i  \,\}$  and with structure equations
\begin{equation}
	\extd \theta^i \equiv A^i_{ab}\,\hsigma^a\wedge \hsigma^b + B^i_{\alpha\beta} \,\csigma^\alpha \wedge \csigma^\beta\quad \mod I.
\EqTag{Dec2}
\end{equation}	
	At each point $x_0$ of $M$ define linear maps
\begin{equation}
	A = [A^i_{ab}(x_0)] \: \real^r \to \Lambda^2(\real^p)
	\quad
	\text{and}
	\quad
	B =[B^i_{\alpha\beta}(x_0)] \:\real^r \to \Lambda^2(\real^q).
\end{equation}

\begin{Theorem}
\StTag{Dec3}
	Let $I$ be a Pfaffian system. Then $I$ is decomposable if
	properties {\bf [i]}  and {\bf [ii]}  of  Lemma  \StRef{Dec1} are satisfied;
	the matrices $A^i_{ab}$ and $B^i_{\alpha\beta}$  given by \EqRef{Dec2},
	are non-zero, constant rank;  and
\begin{equation}
	\dim\big(\ker(A) + \ker(B)\big) = \rank I.
\EqTag{Dec4}
\end{equation}
\end{Theorem}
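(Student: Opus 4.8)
The plan is to take the structure equations \EqRef{Dec2}, which are already available once properties {\bf[i]} and {\bf[ii]} of Lemma \StRef{Dec1} hold, and to show that hypothesis \EqRef{Dec4} lets us recombine the $1$-forms $\theta^i$ into a new coframe for $I$ whose exterior derivatives split cleanly into a $\hsigma$-block and a $\csigma$-block. The differential system $\CalI$ generated by the Pfaffian system $I$ is algebraically generated by $\theta^1,\dots,\theta^r$ together with the $2$-forms $\extd\theta^i$; modulo $I$ the latter carry no mixed $\hsigma^a\wedge\csigma^\beta$ terms by \EqRef{Dec2}, so the entire problem reduces to the linear-algebra question of splitting the span of $\{\extd\theta^i \bmod I\}$ inside $\Lambda^2(\hsigma)\oplus\Lambda^2(\csigma)$.

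First I would fix a point $x_0$ and read \EqRef{Dec2} as producing the two linear maps $A$ and $B$. Writing $W=\operatorname{span}\{\,\extd\theta^i \bmod I\,\}$ and identifying $\extd\theta^i\bmod I$ with $(A(e_i),B(e_i))$, one checks the two elementary identities
\[
 W\cap\Lambda^2(\hsigma)=A(\ker B),\qquad W\cap\Lambda^2(\csigma)=B(\ker A),
\]
since a combination $\sum_i c_i\,\extd\theta^i$ lies in the $\hsigma$-block exactly when $c\in\ker B$, and symmetrically. Because $\Lambda^2(\hsigma)$ and $\Lambda^2(\csigma)$ meet only in $0$, the sum $A(\ker B)+B(\ker A)$ is automatically direct and is contained in $W$. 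A dimension count using $\dim W=r-\dim(\ker A\cap\ker B)$ then shows
\[
 W=A(\ker B)\oplus B(\ker A)
 \quad\Longleftrightarrow\quad
 \dim(\ker A+\ker B)=r,
\]
which is precisely \EqRef{Dec4}. This equivalence is the heart of the matter: it converts the decomposability requirement into the stated rank condition.

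Granting the splitting, I would then build the new coframe. Set $L=\ker A\cap\ker B$, choose a basis $l_1,\dots,l_m$ of $L$, extend it to a basis $l_1,\dots,l_m,u_1,\dots,u_k$ of $\ker A$ and to a basis $l_1,\dots,l_m,v_1,\dots,v_j$ of $\ker B$; by \EqRef{Dec4} the combined list is a basis of $\real^r$. Defining each $\tilde\theta^i$ as the corresponding combination of the $\theta^l$ gives a new coframe for $I$ for which $\extd\tilde\theta^i\bmod I$ is, respectively, $0$ (for the $l$'s), a pure $\csigma$-block $2$-form $\cOmega=B(u)$, or a pure $\hsigma$-block $2$-form $\hOmega=A(v)$. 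Since $A\neq0$ and $B\neq0$ force $\dim\ker A<r$ and $\dim\ker B<r$, one gets $j=r-\dim\ker A\geq1$ and $k=r-\dim\ker B\geq1$, and the independence of the $v$'s (resp. the $u$'s) modulo $L$ makes $\{A(v_s)\}$ and $\{B(u_\alpha)\}$ linearly independent; these are the required generators $\hOmega^1,\dots,\hOmega^s$ and $\cOmega^1,\dots,\cOmega^t$ with $s,t\geq1$. Thus $\CalI$ is algebraically generated by $\tilde\theta^1,\dots,\tilde\theta^r,\hOmega^a,\cOmega^\alpha$ in the coframe $\tilde\theta^i,\hsigma^a,\csigma^\alpha$, which is exactly decomposability of type $[p,q]$.

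The step I expect to be the main obstacle is not the pointwise algebra but promoting it to a smooth local construction. The constant-rank hypotheses on $A^i_{ab}$ and $B^i_{\alpha\beta}$ guarantee that $\ker A$ and $\ker B$ are smooth subbundles of the trivial bundle $\real^r$; combined with \EqRef{Dec4} they force $\dim(\ker A\cap\ker B)=\dim\ker A+\dim\ker B-r$ to be constant, so $L$ is a smooth subbundle as well. One must then choose the adapted bases $l_i,u_\alpha,v_s$ to vary smoothly with the point over a neighborhood of $x_0$, that is, build smooth local frames compatible with the flag $L\subset\ker A,\ \ker B\subset\real^r$, so that the resulting $\tilde\theta^i$, $\hOmega^a$, $\cOmega^\alpha$ are genuine smooth differential forms rather than merely pointwise objects. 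This is where the constant-rank assumptions earn their keep; everything else is the linear algebra above.
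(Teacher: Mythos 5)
Your proposal is correct and follows essentially the same route as the paper: both arguments choose a basis of $\real^r$ adapted to the flag $\ker(A)\cap\ker(B)\subset\ker(A),\,\ker(B)$ (made possible exactly by \EqRef{Dec4}), use it to recombine the $\theta^i$ into $\tilde\theta^i$ whose structure equations split into a pure $\hsigma$-block, a pure $\csigma$-block, and closed forms, and derive $s,t\geq 1$ from $A,B\neq 0$ together with invertibility of the change of basis. Your added dimension-count identity $W=A(\ker B)\oplus B(\ker A)\iff\EqRef{Dec4}$ and the remark that the constant-rank hypotheses are what make the adapted frame smooth are correct refinements of points the paper leaves implicit, but they do not change the argument.
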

\begin{proof}
	Equation \EqRef{Dec4} implies that
	we can find $r$ linearly independent $r$ dimensional column vectors
	$T^1$,\ldots, $T^{r_1}$,\ldots, $T^{r_2}$, \ldots, $T^r$ such that
\begin{gather*}
	\ker(A) \cap \ker(B)
	= \text{span}\{\,T^1, \dots , T^{r_1} \, \},
\\
	\ker(B)
 	= \text{span}\{\,T^1, \dots, T^{r_1},T^{r_1+1},\dots ,T^{r_2} \,\}, \quad\text{and}
\\
	\ker(A)
 	= \text{span}\{\,T^1, \dots,  T^{r_1}, T^{r_2+1},\dots ,T^{r}\,\}.
\end{gather*}
        The 1-forms $\tilde \theta^i = t^i_j \,\theta^j$, where $T^i = [t^i_j]$,
	then satisfy
\begin{equation}
\begin{alignedat}{2}
	\extd \tilde\theta^i
&	\equiv
	0 \quad
&&	\text{for $i=1\dots r_1$,}
\\
	\extd \tilde \theta^i
&	\equiv  t^i_jA^j_{ab}\,\hsigma^a\wedge \hsigma^b\quad
&&	\text{for $i=r_1 +1\dots r_2$,}	
\\
	\extd \tilde \theta^i
&	\equiv t^i_j B^j_{\alpha\beta} \,\csigma^\alpha \wedge \csigma^\beta
	\quad
&&	\text{for $i=r_2 +1\dots r$,}
\end{alignedat}
\EqTag{Dec5}
\end{equation}
	and the decomposability of $I$ follows by taking $\hOmega^i =t^i_jA^j_{ab}\,\hsigma^a\wedge \hsigma^b$ for $i=r_1 +1\dots r_2$
	and $\cOmega^i =t^i_jB^j_{\alpha\beta}\,\csigma^a\wedge \csigma^b$ for $i=r_2 +1 \dots r$.
	That the integers  $s$ and $t$ in Definition \StRef{Intro1}  satisfy   $s,t  \geq 1$ follows from the fact that
	$A^i_{ab}$ and $B^i_{\alpha\beta}$  are non-zero and that $[t^i_j]$ is invertible.
\end{proof}
\par
\begin{Corollary}
\StTag{Dec4}
	If $\CalI$ is a decomposable Pfaffian system, then the singular systems $\hCalV$ and $\cCalV$ are
	also Pfaffian.
\end{Corollary}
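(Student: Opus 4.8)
The plan is to reuse the adapted coframe and the structure equations \EqRef{Dec5} obtained in the proof of Theorem \StRef{Dec3}. Since $\CalI$ is both decomposable and Pfaffian, its $1$-form part is $I=\text{span}\{\tilde\theta^1,\dots,\tilde\theta^r\}$, and the decomposition \EqRef{Intro5} together with the Pfaffian hypothesis forces each $2$-form generator to be congruent, modulo $I$, to an exterior derivative of the $\tilde\theta^i$. Concretely, after the coframe adaptation of Theorem \StRef{Dec3} we may assume $\extd\tilde\theta^i\equiv 0$ for $i\le r_1$, $\extd\tilde\theta^i\equiv\hOmega^i$ for $r_1<i\le r_2$, and $\extd\tilde\theta^i\equiv\cOmega^i$ for $r_2<i\le r$, exactly as in \EqRef{Dec5}.

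First I would identify the candidate underlying Pfaffian subbundles. For the singular system $\hCalV=\{\tilde\theta^i,\hsigma^a,\cOmega^\alpha\}$ the $1$-form part is the constant rank subbundle $\hV=\text{span}\{\tilde\theta^1,\dots,\tilde\theta^r,\hsigma^1,\dots,\hsigma^p\}$ (of rank $r+p$), and for $\cCalV=\{\tilde\theta^i,\csigma^\alpha,\hOmega^a\}$ it is $\cV=\text{span}\{\tilde\theta^1,\dots,\tilde\theta^r,\csigma^1,\dots,\csigma^q\}$ (of rank $r+q$). These are the subbundles that I claim present the singular systems as Pfaffian systems in the sense of the footnote to Definition \StRef{Intro1}.

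The key step is to absorb the nominal $2$-form generators into the differential system of the $1$-forms. For each generator $\cOmega^\alpha$ of $\hCalV$ (with $r_2<\alpha\le r$), equation \EqRef{Dec5} gives $\extd\tilde\theta^\alpha=\cOmega^\alpha+\tilde\theta^j\wedge\lambda_j$ for suitable $1$-forms $\lambda_j$, hence $\cOmega^\alpha=\extd\tilde\theta^\alpha-\tilde\theta^j\wedge\lambda_j$. Both terms on the right lie in the differential system generated by the sections of $\hV$: the second is a wedge multiple of the generator $\tilde\theta^j$, while the first is the exterior derivative of the generator $\tilde\theta^\alpha$. Thus every $2$-form generator of $\hCalV$ is already accounted for, so $\hCalV$ is generated, as a Pfaffian system, by the $1$-forms of $\hV$. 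Interchanging the roles of the hatted and checked blocks (using $\extd\tilde\theta^i\equiv\hOmega^i$ for $r_1<i\le r_2$) yields the identical conclusion for $\cCalV$ and $\cV$.

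The main obstacle is precisely this absorption argument: one must verify that adjoining the $2$-forms $\cOmega^\alpha$ to the $1$-forms $\{\tilde\theta^i,\hsigma^a\}$ contributes no new $1$-form generators and introduces nothing beyond exterior derivatives of generators already present, so that $\hCalV$ really is the Pfaffian system determined by the constant rank bundle $\hV$. This is exactly where the coframe adaptation of Theorem \StRef{Dec3} is essential: without it the $\extd\tilde\theta^i$ would reduce modulo $I$ to mixed combinations $A^i_{ab}\hsigma^a\wedge\hsigma^b+B^i_{\alpha\beta}\csigma^\alpha\wedge\csigma^\beta$ of both block types, as in \EqRef{Dec2}, and the clean identification of each $\cOmega^\alpha$ with a single $\extd\tilde\theta^\alpha$ — and hence the recognition of $\hV$ and $\cV$ as the underlying subbundles — would be lost.
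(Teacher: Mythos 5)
Your proof is correct and follows essentially the same route as the paper, which simply observes that the claim is immediate from \EqRef{Intro6} and \EqRef{Dec5}: the structure equations \EqRef{Dec5} show that each nominal $2$-form generator $\hOmega^i$ or $\cOmega^i$ is, modulo $I$, the exterior derivative of a $1$-form generator, so it is already contained in the differential ideal generated by the $1$-forms of $\hV$ (respectively $\cV$). Your write-up just makes explicit the absorption step that the paper leaves implicit.
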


\begin{proof} This is immediate from \EqRef{Intro6} and \EqRef{Dec5}.
\end{proof}

	To each decomposable exterior differential system  $\CalI$ we have associated a
	pair of differential systems $\hCalV$ and $\cCalV$ and, when these are Pfaffian,  we
	have defined what it means for $\{\, \hV, \cV\, \}$ to define a Darboux pair.
	In our subsequent analysis, the Darboux pair  $\{\, \hV, \cV\, \}$ will be taken as the fundamental object of study.
	From this viewpoint, it becomes  important to address the problem of
	reconstructing the EDS $\CalI$ from its singular systems.
	To this end, we introduce the following novel construction.
\begin{Definition}
\StTag{Dec5}
	Let $\hCalV$ and $\cCalV$ be two differential systems on a manifold $M$.  Define a new  differential system
	$\CalK = \hCalV \mycap \cCalV$ to be the exterior differential system whose integral elements are precisely
	the integral elements of $\hCalV$,
	the integral elements of $\cCalV$, and  the sum of
	integral elements of $\hCalV$ with  integral elements of $\cCalV$.
\end{Definition}
\par
	This definition is motivated by two observations. First we note that  if $\CalI$ is
	a decomposable exterior differential system  with singular Pfaffian systems $\hV$ and $\cV$
	then in general $\CalI$
	properly contains the Pfaffian system with generators $\hV \cap \cV$ and is
	properly contained in the EDS   $\hCalV  \cap \cCalV$ so that neither of these set-theoretic constructions reproduce $\CalI$.
\par	
	Secondly, one can interprete the EDS $\CalK$ in Definition \StRef{Dec5} as
	satisfying an {\deffont infinitesimal superposition principle} with respect to
	$\{\, \hCalV, \cCalV \,\}$ at the level of integral elements. Of course, this by itself
	does not imply that $\CalK$ admits a superposition principle for its integral manifolds. The
	main results of this article  can then be re-formulated as  follows: if the EDS $\CalK$ admits
	a superposition principle for its integral elements with respect to a Darboux pair
	$\{\, \hV,  \cV\,\}$, then $\CalK$ admits a superposition principle for its integral manifolds.
\par
\begin{Theorem}
\StTag{Dec6}
	If $\CalI$ is a decomposable exterior differential system with
	singular differential systems  $\hCalV$ and $\cCalV$,  then $\CalI = \hCalV \mycap \cCalV$.
\end{Theorem}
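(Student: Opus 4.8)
The plan is to prove the equality by comparing the two systems at the level of integral elements, since $\hCalV \mycap \cCalV$ is defined (Definition \StRef{Dec5}) through its integral elements. First I would pass to the dual frame $\{\partial_{\tilde\theta^i},\vecthsigma,\vectcsigma\}$ of \EqRef{Intro4}, so that $E^1_x(\CalI) = S_1 \oplus S_2$ with $S_1 = \text{span}\{\vecthsigma\}$ and $S_2 = \text{span}\{\vectcsigma\}$, exactly as in the proof of Lemma \StRef{Dec1}, and write $\pi_1,\pi_2$ for the projections of $S_1\oplus S_2$ onto $S_1,S_2$.

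Next I would record the integral elements of the three systems. Because the $1$-form generators of $\hCalV=\{\tilde\theta^i,\hsigma^a,\cOmega^\alpha\}$ force any integral element into $S_2 = \ker\{\tilde\theta^i,\hsigma^a\}$, the integral elements of $\hCalV$ are precisely the subspaces $F\subseteq S_2$ with $\cOmega^\alpha|_F=0$; dually the integral elements of $\cCalV$ are the subspaces $G\subseteq S_1$ with $\hOmega^a|_G=0$. For $\CalI$ itself an integral element is a subspace $E\subseteq S_1\oplus S_2$ on which every $\hOmega^a$ and every $\cOmega^\alpha$ restricts to zero. The decisive structural point, which is really the content of the decomposition \EqRef{Intro5}, is that $\hOmega^a\in\Omega^2(\hsigma)$ annihilates $S_2$ and $\cOmega^\alpha\in\Omega^2(\csigma)$ annihilates $S_1$, so that $\hOmega^a(v,w)=\hOmega^a(\pi_1 v,\pi_1 w)$ and $\cOmega^\alpha(v,w)=\cOmega^\alpha(\pi_2 v,\pi_2 w)$. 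Hence $E$ is an integral element of $\CalI$ if and only if $\pi_1(E)$ is an integral element of $\cCalV$ and $\pi_2(E)$ is an integral element of $\hCalV$.

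With this decoupling the two inclusions are short. In one direction, given $G$ integral for $\cCalV$ and $F$ integral for $\hCalV$, the sum $G\oplus F$ satisfies $\pi_1(G\oplus F)=G$ and $\pi_2(G\oplus F)=F$, so by the criterion above $G\oplus F$, and hence every subspace of it (including the pure pieces $G$ and $F$), is integral for $\CalI$; this shows every integral element listed in Definition \StRef{Dec5} is integral for $\CalI$. In the reverse direction, given any integral element $E$ of $\CalI$, put $G=\pi_1(E)$ and $F=\pi_2(E)$; by the criterion these are integral for $\cCalV$ and $\hCalV$ respectively, and $E\subseteq \pi_1(E)\oplus\pi_2(E)=G+F$. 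Thus every integral element of $\CalI$ is a subspace of a sum $G+F$, so the integral elements of $\CalI$ and of $\hCalV\mycap\cCalV$ coincide, giving $\CalI=\hCalV\mycap\cCalV$.

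The step I expect to need the most care is this last identification, matching the family of Definition \StRef{Dec5} exactly. The subtlety is that a generic integral element $E$ of $\CalI$ need not be a direct sum $G\oplus F$ — it may be a graph over $\pi_1(E)$ — and so is not literally one of the listed sums; what rescues the argument is that integral-element families are automatically closed under passage to subspaces, together with the containment $E\subseteq \pi_1(E)\oplus\pi_2(E)$. I would therefore state explicitly that the integral elements of $\hCalV\mycap\cCalV$ are taken as the subspace-closed family generated by the three types in Definition \StRef{Dec5}, and check that the decoupling criterion then forces the two families to agree. The remaining verifications — that $\hOmega^a$ and $\cOmega^\alpha$ genuinely annihilate the complementary blocks, and that the $1$-form generators cut out $S_1$ and $S_2$ — are immediate from the coframe \EqRef{Intro4} and the definitions \EqRef{Intro6}.
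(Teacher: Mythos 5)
Your argument is correct, and it takes a genuinely different route from the paper's. The paper works at the level of ideals: it first establishes $\hCalV \mycap \cCalV \subset \CalI$ by computing the algebraic generators of the set-theoretic intersection $\hCalV \cap \cCalV$ and then discarding the mixed generators $\hsigma^a \wedge \csigma^\alpha$ on the grounds that the $2$-planes $\text{span}\{\partial_{\hsigma^a}, \partial_{\csigma^\alpha}\}$ must be integral for $\hCalV \mycap \cCalV$; it then establishes $\CalI \subset \hCalV \mycap \cCalV$ by expanding an arbitrary $(r+s)$-form of $\CalI$ against a sum $\hE_r + \cE_s$ and checking term by term that $\hOmega^b(\hX,\hY)$, $\hOmega^b(\hX,\cY)$, $\hOmega^b(\cX,\cY)$, etc., all vanish. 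You work dually, entirely at the level of integral elements. Your decoupling identity $\hOmega^a(v,w) = \hOmega^a(\pi_1 v, \pi_1 w)$ is exactly the computational fact underlying the paper's term-by-term check, but packaging it as the equivalence ``$E$ is integral for $\CalI$ iff $\pi_1(E)$ is integral for $\cCalV$ and $\pi_2(E)$ is integral for $\hCalV$'' makes both inclusions one-line consequences, and it makes explicit the graph-versus-direct-sum subtlety that the paper passes over in silence. What the paper's route buys is explicit generator lists for $\hCalV \cap \cCalV$ and $\hCalV \mycap \cCalV$, which are reused later (Corollary \StRef{First20}, Theorem \StRef{SymRed8}); what yours buys is a transparent reason why the mixed $2$-forms are excluded and a complete description of the integral elements.

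One caution on your last step. Definition \StRef{Dec5} specifies $\hCalV \mycap \cCalV$ only through its integral elements, and an exterior differential system is not in general determined by its integral elements (for instance, every form of degree exceeding the dimension of the maximal integral elements vanishes on all of them without lying in the ideal). So the inference ``the integral-element families coincide, hence $\CalI = \hCalV \mycap \cCalV$'' is exactly as rigorous as the definition permits, no more. To recover the literal ideal-theoretic equality the paper asserts, you would still need something like the paper's first half, namely the containment $\hCalV \mycap \cCalV \subset \hCalV \cap \cCalV$ together with the generator computation; your element-level criterion then supplies everything else.
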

\begin{proof}  We first remark that the differential system $\hCalV \mycap \cCalV$ is always contained in $\hCalV \cap \cCalV$.
	Let $\{\,\tilde \theta^i,\, \hsigma^a,\, \csigma^\alpha\, \}$ be the coframe given by Definition \StRef{Intro1}.
	By definition,  the algebraic generators for $\hCalV$ and $\cCalV$
	are
\begin{equation*}
	\hCalV  = \{\, \tilde \theta^i,\, \hsigma^a,\, \cOmega^\beta \, \}
	\quad\text{and}\quad
	\cCalV 	= \{\, \tilde \theta,\, \csigma^\alpha,\, \hOmega^b\,  \}
\end{equation*}
	from which it is not difficult to show that
\begin{equation*}
	\hCalV \cap \cCalV = \{\, \tilde \theta^i,\, \hOmega^b,\, \cOmega^\beta,\, \hsigma^a \wedge \csigma^\alpha\} .
\end{equation*}
	Since $\partial_{\csigma^\alpha}$ defines a 1-integral element for $\hCalV$ and
	$\partial_{\hsigma^\alpha}$ defines a 1-integral element for $\cCalV$,  the 2-plane spanned by
	$\{\partial_{\csigma^\alpha},\, \partial_{\hsigma^a} \}$ must be a integral element  for  $\hCalV \mycap \cCalV$
	and therefore the forms  $\hsigma^a \wedge \csigma^\alpha \notin   \hCalV \mycap \cCalV$. In view of \EqRef{Intro5},
	we conclude that   $\hCalV \mycap \cCalV \subset \CalI$.
	
	To complete the proof, it suffices to check that every $(r+s)$-form $\omega\in \CalI$
	vanishes on every  plane $\hE_r + \cE_s$, where  $\hE_r$ is an $r$-integral plane of $\hCalV$ and
	$\cE_s$ is an $s$-integral plane of $\cCalV$. The form $\omega$ is a linear combination of the wedge product of the 1-forms in
	\EqRef{Intro5}
	with arbitrary  $(r + s -1)$-forms
	$\rho$ and the  wedge product of the 2-forms in
	\EqRef{Intro5} with arbitrary $(r+s -2)$-forms $\tau$.
	It is clear that  $(\tilde \theta^i \wedge \rho)_{|(\hat E_r + \check E_s)} = 0$. The values of
	$(\hOmega^b \wedge \tau)_{|(\hat E_r + \check E_s)}$ can be calculated as linear combinations of terms involving
\begin{equation*}
	\hOmega^b(\hX, \hY), \quad
	\hOmega^b(\hX, \cY), \quad
	\hOmega^b(\cX, \hY),\quad
	\hOmega^b(\cX, \cY),
\end{equation*}
	where  $\hX, \hY \in \hE_r$ and $\cX, \cY \in \cE_s$.  These terms	
	all vanish and in this way we may complete the proof of the theorem.
\end{proof}

\par
	
\subsection{The first adapted coframes for a Darboux pair}		
	Let \{\,$\hV, \cV \,\}$ be a Darboux pair on a manifold $M$. In addition to the properties listed in
	Definition \StRef{Intro2}, we shall assume that the
	derived systems  $\hV^{(\infty)}$ and  $\cV^{(\infty)}$, as well as the intersections
	$\hV^{(\infty)} \cap\cV$, $\hV \cap \cV^{(\infty)}$ and
	$\hV \cap \cV$ are all (constant rank) subbundles of $T^*\kern -2ptM$.
\par
	To define our initial adapted coframe for the Darboux pair $\{\, \hV, \cV\, \}$,
	we first chose tuples of independent 1-forms $\bfheta$ and $\bfceta$ which satisfy
\begin{subequations}
\label{eq:First4}
\begin{equation}
	\hV^{(\infty)} \cap\cV =  \text{span} \{\bfheta\}
	\quad\text{and}\quad
	\hV \cap\cV^{(\infty)} = \text{span}\{\bfceta\}.
\EqTag{First4a}
\end{equation}
	Property \EqRef{Intro8} implies that the forms $\{\, \bfheta,\, \bfceta\,\}$ are linearly independent.
	Then chose tuples of 1-forms $\bftheta$, $\bfhsigma$ and  $\bfcsigma$  such that
\begin{equation}
\begin{gathered}
	\hV^{(\infty)} = \text{span} \{\, \bfhsigma,\, \bfheta \,\},
	\
	\cV^{(\infty)} = \text{span} \{\, \bfcsigma,\, \bfceta \,\},
	\ 
	\hV \cap \cV  = \text{span} \{\,\bftheta,\, \bfheta,\,\bfceta \,\}
\end{gathered}
\EqTag{First4b}
\end{equation}
	and such that the sets of 1-forms $\{\, \bfhsigma,\, \bfheta \,\}$, $\{\, \bfcsigma,\, \bfceta \,\}$  and
	$\{\,\bftheta,\, \bfheta,\,\bfceta \,\}$ are linearly independent.

\begin{Lemma}
\StTag{First15}
	Let \{\,$\hV, \cV \,\}$ be a Darboux pair on a manifold $M$. Then the
	1-forms $\{\,\bftheta,\, \bfhsigma,\, \bfheta,\, \bfcsigma, \,\bfceta \, \}$
	satisfying \EqRef{First4}
	define a local coframe for $T^*\kern -2pt M$ and
\begin{equation}
	\hV = \text{\rm span} \{\,\bftheta,\, \bfhsigma,\, \bfheta,\,\bfceta \,\}
	\quad\text{and}\quad
	\cV  = \text{\rm span} \{\,\bftheta,\,  \bfheta,\, \bfcsigma,\,\bfceta \,\}.
\EqTag{First4c}
\end{equation}
\end{Lemma}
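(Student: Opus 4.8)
The plan is to reduce the whole statement to (a) the linear independence of the listed one-forms and (b) a single rank count driven by property {\bf[i]} of the Darboux pair (Definition \StRef{Intro2}). I begin with the trivial inclusions. Since $\hV^{(\infty)}\subseteq\hV$ and $\hV\cap\cV=\text{span}\{\bftheta,\bfheta,\bfceta\}$, every form among $\bftheta,\bfhsigma,\bfheta,\bfceta$ lies in $\hV$, so $U_1:=\text{span}\{\bftheta,\bfhsigma,\bfheta,\bfceta\}\subseteq\hV$; symmetrically $U_2:=\text{span}\{\bftheta,\bfheta,\bfcsigma,\bfceta\}\subseteq\cV$. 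Hence if I can show that the five tuples together form a coframe and that $\rank\hV=\rank U_1$ and $\rank\cV=\rank U_2$, then the inclusions are forced to be equalities and \EqRef{First4c} follows.

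For independence, suppose a linear combination of all five tuples vanishes. Transposing the $\bfhsigma$-terms to the other side, the remaining terms all lie in $\cV$ (because $\bftheta,\bfheta\in\hV\cap\cV\subseteq\cV$ and $\bfcsigma,\bfceta\in\cV^{(\infty)}\subseteq\cV$), so the $\bfhsigma$-part lies in $\hV^{(\infty)}\cap\cV=\text{span}\{\bfheta\}$; independence of $\{\bfhsigma,\bfheta\}$ then forces the $\bfhsigma$-coefficients to vanish. By the mirror-image argument, isolating $\bfcsigma$ and using $\hV\cap\cV^{(\infty)}=\text{span}\{\bfceta\}$ kills the $\bfcsigma$-coefficients. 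What survives is a relation among $\{\bftheta,\bfheta,\bfceta\}$, which is independent by the choices in \EqRef{First4b}, so every coefficient is zero.

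For the count, write $\hat v=\rank\hV$, $\check v=\rank\cV$, let $\hat a,\check a$ be the ranks of $\hV^{(\infty)},\cV^{(\infty)}$, and let $t,\hat s,\hat h,\check s,\check h$ be the number of forms in $\bftheta,\bfhsigma,\bfheta,\bfcsigma,\bfceta$, so that $\hat a=\hat s+\hat h$, $\check a=\check s+\check h$, and $\rank(\hV\cap\cV)=t+\hat h+\check h=:c$ by \EqRef{First4b}. The definitions of $\bfheta$ and $\bfceta$ give $\rank(\hV^{(\infty)}\cap\cV)=\hat h$ and $\rank(\hV\cap\cV^{(\infty)})=\check h$. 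Property {\bf[i]} then yields three identities for $n=\rank\cTM$:
\begin{gather*}
n=\hat v+\check a-\check h,\qquad
n=\check v+\hat a-\hat h,\qquad
n=\hat v+\check v-c,
\end{gather*}
the last because $\cV^{(\infty)}\subseteq\cV$ gives $\cTM=\hV+\cV^{(\infty)}\subseteq\hV+\cV$, hence $\hV+\cV=\cTM$. Eliminating $\hat v,\check v$ gives $n=c+\hat a+\check a-\hat h-\check h=t+\hat s+\hat h+\check s+\check h$, which is exactly the number of one-forms in the five tuples; together with independence this shows they form a local coframe. Back-substituting, $\hat v=t+\hat s+\hat h+\check h=\rank U_1$ and $\check v=t+\hat h+\check s+\check h=\rank U_2$, so $\hV=U_1$ and $\cV=U_2$, which is \EqRef{First4c}.

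The argument is largely bookkeeping; the one point that needs care is that property {\bf[i]} must be invoked in both of its forms \EqRef{Intro7}, and combined with $\cV^{(\infty)}\subseteq\cV$ to obtain the third rank identity $\hV+\cV=\cTM$. The linear independence rests on the selections recorded in \EqRef{First4} (which themselves encode property {\bf[ii]} and \EqRef{Intro8}), while property {\bf[iii]} plays no role in this lemma.
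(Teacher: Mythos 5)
Your proof is correct, and while the linear-independence step is essentially the paper's own argument (isolate the $\bfhsigma$- and $\bfcsigma$-components and force them into $\hV^{(\infty)}\cap\cV=\text{span}\{\bfheta\}$ and $\hV\cap\cV^{(\infty)}=\text{span}\{\bfceta\}$), you reach the equalities \EqRef{First4c} by a genuinely different route. The paper expands an arbitrary $\omega\in\hV$ in the five tuples (taking the spanning property as already known from \EqRef{Intro7}) and kills the $\bfcsigma$-coefficient by the same intersection trick; you instead derive both the spanning property and the equalities from a single dimension count, combining the two halves of \EqRef{Intro7} with $\hV+\cV=\cTM$ and the ranks of the intersections fixed by \EqRef{First4}. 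What your route buys is that the spanning assertion is made fully explicit: the paper's one-line claim that \EqRef{Intro7} implies the five tuples span $\cTM$ actually requires decomposing an element of $\hV$ via $\cV+\hV^{(\infty)}=\cTM$ and observing that the $\cV$-component lands in $\hV\cap\cV$, a step the paper leaves to the reader and which your rank bookkeeping absorbs automatically. The cost is carrying five rank parameters through three identities rather than one coefficient elimination. Both arguments use only property {\bf[i]} and the defining relations \EqRef{First4b} (with \EqRef{Intro8} guaranteeing independence of $\{\bfheta,\bfceta\}$); your observation that property {\bf[iii]} plays no role here is consistent with the paper.
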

\end{subequations}
\begin{proof}
	From the definitions \EqRef{First4a} and \EqRef{First4b} it is clear that
	$\text{span} \{\,\bftheta,\, \bfhsigma,\, \bfheta,\,\bfceta \,\} \subset \hV$  and
	$\text{span} \{\,\bftheta,\,  \bfheta,\, \bfcsigma,\,\bfceta \,\} \subset \cV$.
	Property \EqRef{Intro7} then implies that the forms $\{\,\bftheta,\, \bfhsigma,\, \bfheta,\, \bfcsigma, \,\bfceta \, \}$
	span  $T^*\kern -2pt M$.

	To prove the first of \EqRef{First4c}, let
	$\omega = \mathbf a \,  \bftheta + \mathbf b \, \bfhsigma + \mathbf c \,\bfheta + \mathbf d \, \bfcsigma + \mathbf e \,\bfceta $.
	If  $\omega \in \hV$, then the 1-forms $ \mathbf  d \,\bfcsigma \in \hV$ and therefore,  by $\EqRef{First4b}$,
	$\mathbf d \,\bfcsigma \in \hV  \cap \cV^{(\infty)}$.
	Since the forms $\bfceta$ and $\bfcsigma$ are independent, we must have $\mathbf d = 0$ and \EqRef{First4c} is established.
	
	To prove the independence of the forms $\{\,\bftheta,\, \bfhsigma,\, \bfheta,\, \bfcsigma, \,\bfceta \, \}$, set $\omega = 0$.
	Then the argument just given  implies that $\mathbf d = 0$ so that $\mathbf b \bfhsigma \in \hV \cap \cV$. Therefore
	$\mathbf b \bfhsigma \in \hV^{(\infty)} \cap \cV$ and this forces $\mathbf b =0$. The independence of the forms
	$\{\,\bftheta,\, \bfheta,\,\bfceta \,\}$ then gives $ \mathbf a =   \mathbf c = \mathbf e = 0$ and the lemma is proved.
\end{proof}
  	Any local coframe $\{\,\bftheta,\, \bfhsigma,\, \bfheta,\, \bfcsigma, \,\bfceta \, \}$  satisfying \EqRef{First4} is called a
	{\deffont 0-adapted coframe for the Darboux pair $\{\hV,\, \cV\}$.}

	We  shall also need the  definition of a 0-adapted frame as the dual of the 0-adapted coframe. Let
\begin{equation}
\begin{gathered}
	\hH = \ann\  \hV,  \quad
	\cH = \ann\  \cV,  \quad
	\hH ^{(\infty)} =  \ann\  \hV^{(\infty)}, \quad
	\cH ^{(\infty)} =  \ann\ \cV^{(\infty)},
\\[2\jot]
	\text{and}\qquad	
	K =  \hH ^{(\infty)}  \cap \cH ^{(\infty)}.
\end{gathered}
\EqTag{First5}
\end{equation}
	The definition of Darboux integrability can be re-formulated in terms of the distributions $\hH, \cH$
	(\cite{vassiliou:2001a}). If we introduce the dual basis
	$\{\,  \bfvecttheta, \,\bfvecthsigma,\, \bfvectheta,\, \bfvectcsigma,\, \bfvectceta \, \}$
	to the 0-adapted coframe
	$\{\bftheta, \bfhsigma, \bfheta, \bfcsigma, \bfceta \}$, that is,
\begin{equation}
	\bftheta(\bfvecttheta) = \bfone, \quad \bfhsigma(\bfvecthsigma)= \bfone ,\quad \bfheta(\bfvectheta)= \bfone,
	\quad \bfcsigma(\bfvectcsigma)= \bfone, \quad\bfceta(\bfvectceta) = \bfone ,
\end{equation}
	with all others pairings yielding 0, then it readily follows that
\begin{equation}
\begin{gathered}
\begin{aligned}
	\hH &= \text{span} \{\, \bfvectcsigma \, \},
	&\quad
	\hH^{(\infty)} &= \text{span} \{\, \bfvecttheta, \, \bfvectcsigma, \, \bfvectceta \, \},
\\
	\cH &= \text{span} \{\bfvecthsigma\},
	&\quad
	\cH^{(\infty)} & = \text{span} \{\, \bfvecttheta, \, \bfvecthsigma, \, \bfvectheta \,\},
\end{aligned}
\\
	\text{and}\quad K = \text{span} \{\, \bfvecttheta \, \}.
\end{gathered}
\EqTag{First6}
\end{equation}
	Any local frame $\{\,  \bfvecttheta, \,\bfvecthsigma,\, \bfvectheta,\, \bfvectcsigma,\, \bfvectceta \, \}$  satisfying \EqRef{First6}
        is called a 0-adapted frame.
\par	
	We shall use the following lemma repeatedly.
\begin{Lemma}
\StTag{FirstLem1}
	Let $f$ be a real-valued function on $M$.
	If $X(f) = 0$ for all vector fields in $\hH$, then
	$d f  \in  \hV^{(\infty)}$. Likewise, if $X(f) = 0$ for all vector fields in $\cH$, then
	$d f  \in  \cV^{(\infty)}$.
\end{Lemma}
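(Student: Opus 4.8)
The plan is to dualize the conclusion and then reduce the whole statement to the standard correspondence between the derived flag of a Pfaffian system and the Lie-bracket closure of its annihilator. Because every bundle here has constant rank we have $\ann\hH^{(\infty)} = \hV^{(\infty)}$, so $\extd f \in \hV^{(\infty)}$ holds if and only if $Y(f) = 0$ for every vector field $Y$ taking values in $\hH^{(\infty)} = \ann\hV^{(\infty)}$. Thus the first assertion is equivalent to the statement that if $f$ is annihilated by every section of $\hH$, then it is annihilated by every section of $\hH^{(\infty)}$; the second is identical after interchanging the roles of the hatted and checked systems.

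To make this precise I would first record how $\hH^{(\infty)}$ is built from $\hH$. Write the derived flag $\hV = \hV^{(0)} \supseteq \hV^{(1)} \supseteq \cdots \supseteq \hV^{(\infty)}$, where $\hV^{(\infty)}$ (the largest integrable subbundle of $\hV$) is the stabilized member. For $\omega \in \hV^{(k)}$ and vector fields $X,Y$ valued in $\ann\hV^{(k)}$, Cartan's identity gives $\extd\omega(X,Y) = -\omega([X,Y])$, so $\omega$ lies in $\hV^{(k+1)}$ precisely when $\omega$ also annihilates $[X,Y]$. Dualizing, $\ann\hV^{(k+1)} = \ann\hV^{(k)} + [\ann\hV^{(k)},\,\ann\hV^{(k)}]$, and iterating shows that $\hH^{(\infty)} = \ann\hV^{(\infty)}$ is the involutive closure of $\hH$: its sections are generated, as a $C^\infty(M)$-module, by the sections of $\hH$ together with all their iterated Lie brackets.

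With this in hand the proof is a one-line bracket argument, which is really the only content. Let $S$ be the set of all vector fields $X$ on $M$ with $X(f) = 0$. By hypothesis $\hH \subseteq S$. Now $S$ is plainly a $C^\infty(M)$-submodule, and it is closed under Lie bracket, since $[X,Y](f) = X(Y(f)) - Y(X(f)) = 0$ whenever $X,Y \in S$. A submodule containing $\hH$ and closed under bracket must contain the entire bracket-closure of $\hH$, namely $\hH^{(\infty)}$. Hence $Y(f) = 0$ for all $Y \in \hH^{(\infty)}$, which is exactly $\extd f \in \ann\hH^{(\infty)} = \hV^{(\infty)}$.

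The only step requiring care is the identification of $\hH^{(\infty)}$ with the Lie-bracket closure of $\hH$ (equivalently, the smallest involutive distribution containing $\hH$); everything else is formal. Should one wish to sidestep this duality, one could invoke Frobenius to write $\hV^{(\infty)} = \text{span}\{\,\extd g^1,\dots,\extd g^k\,\}$ for local first integrals $g^j$ and argue that $f$ descends to a function of the $g^j$, but establishing that descent again comes down to the same bracket computation, so I would present the argument as above.
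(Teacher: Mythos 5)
Your proof is correct and follows essentially the same route as the paper's: the paper's (very terse) proof likewise passes from $X(f)=0$ on $\hH$ to $Z(f)=0$ on $\hH^{(\infty)}$ and concludes $df\in\ann\hH^{(\infty)}=\hV^{(\infty)}$, implicitly using exactly the facts you spell out, namely that $\hH^{(\infty)}$ is the bracket closure of $\hH$ and that the vector fields annihilating $f$ form a bracket-closed module. Your version simply makes those two supporting points explicit.
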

\begin{proof}
	It suffices to note that if  $X(f) = 0$ for all vector fields $X \in\hH$,
	then $Z(f) = Z \hook d f = 0$
	for all vector fields $Z \in \hH^{(\infty)}$ and therefore
	$d f \in\ann\ \hH^{(\infty)} = \hV^{(\infty)}$.
\end{proof}
\par
	
	Real-valued functions on $M$ with $df \in V$ (or $V^{(\infty)}$) are called
	{\deffont first integrals} for $V$. We denote the algebra of all first integrals for $V$ by $\Inv(V)$.

	Our 1-adapted coframe for a Darboux pair is easily constructed from complete sets
	of functionally independent first integrals for $\hV^{(\infty)}$
        and $\cV^{(\infty)}$, that is, locally defined functions  $\{\,\bfhI \,  \}$  and $\{\,\bfcI \, \}$ such that
\begin{equation*}
	\hV^{(\infty)} = \text{span}\{\,\extd \bfhI \,\}
	\quad\text{and}\quad
	\cV^{(\infty)} = \text{span}\{\,\extd \bfcI \,\}.
\end{equation*}
	Complete the 1-forms $\{\, \bfheta \, \}$ in \EqRef{First4a} to the local basis
	\EqRef{First4b} for $\hV^{(\infty)}$ using forms
\begin{equation}
	\bfhsigma = \extd \bfhI_1
\EqTag{First14}
\end{equation}
	chosen from the  $\{\,\extd \bfhI \,\}$.
	Next let  $\{\,\bfhI_2 \,\}$ be a complementary set of invariants to the set $\{\, \bfhI_1\, \}$, that is,
	$\{\,\bfhI \} = \{\,\bfhI_1,\, \bfhI_2\, \}$.  Because the forms $\bfheta$ belong to
	$\hV^{\infty}$ we can write
\begin{equation*}
	\bfheta =  \bfhR_0\, \extd \bfhI_1 + \bfhS_0\, \extd \bfhI_2.
\end{equation*}
	Since the 1-forms  $\bfheta$ are independent of the 1-forms $\bfhsigma$, the coefficient matrix $\bfhS_0$
	must be invertible and we can therefore adjust our local
	basis of 1-forms  $\{\, \bfheta\,\}$ for $\hV^{(\infty)} \cap \cV$ by setting
\begin{equation}
	\bfheta = \extd \bfhI_2 + \bfhR\, \extd \bfhI_1 = \extd \bfhI_2 + \bfhR\, \bfhsigma.
\EqTag{First15}
\end{equation}
        The  exterior derivatives of these forms are
\begin{equation*}
	\extd \bfheta = \extd \bfhR  \wedge  \bfhsigma = \bfvectcsigma(\bfhR)\,  \bfcsigma \wedge \bfhsigma + \cdots
\end{equation*}
        and therefore, on account of \EqRef{Intro9}, the fact that $\bfheta \in  \hV^{(\infty)} \cap \cV$, and \EqRef{First4c},
	we must have $\bfvectcsigma(\bfhR)  = 0$. Lemma \StRef{FirstLem1}
        implies that $\extd (\bfhR) \in \hV^{(\infty)}$ and consequently
\begin{equation}
	\extd \bfheta =  \bfhE \, \bfheta \wedge  \bfhsigma     + \bfhF  \bfhsigma \wedge  \bfhsigma.
\end{equation}
	Similar arguments are used to modify the  forms  $\bfcsigma$ and $\bfceta$.
	These adaptations are summarized in the  following theorem.
\begin{Theorem}
\StTag{First9}
	Let $\{\, \hV, \cV\}$ be a Darboux pair on  a manifold $M$. Then about each point of $M$ there exists a
	0-adapted coframe   $\{\,\bftheta,\, \bfhsigma,\, \bfheta,\, \bfcsigma, \,\bfceta \, \}$
	with structure equations  	
\begin{gather}
\begin{aligned}
	\extd \bfhsigma
	&= 0,
	&\quad
	\extd \bfheta
	&=  \bfhE \, \bfheta \wedge  \bfhsigma    + \bfhF  \bfhsigma \wedge  \bfhsigma,
\\[1\jot]
	\extd \bfcsigma
	&=0,
	&\quad
	\extd \bfceta
	&=  \bfcE\,  \bfceta \wedge  \bfcsigma     + \bfcF \, \bfcsigma \wedge  \bfcsigma,
\end{aligned}
\EqTag{First10}
\\[1\jot]
	\extd \bftheta
	\equiv
	\bfA\, \bfhsigma \wedge \bfhsigma   +
	\bfB\, \bfcsigma    \wedge \bfcsigma
\quad
	\mod \{\bftheta,\, \bfheta,\,   \bfceta \}.
\EqTag{First11}
\end{gather}
\end{Theorem}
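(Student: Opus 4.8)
The plan is to begin with an arbitrary $0$-adapted coframe $\{\,\bftheta,\, \bfhsigma,\, \bfheta,\, \bfcsigma,\, \bfceta\,\}$, which exists by Lemma \StRef{First15}, and then to renormalize the hat block $\{\,\bfhsigma,\,\bfheta\,\}$ and the check block $\{\,\bfcsigma,\,\bfceta\,\}$ so that the structure equations \EqRef{First10} hold, leaving \EqRef{First11} to drop out of property {\bf[iii]} of the Darboux pair at the very end. Since the two block normalizations act on disjoint sets of coframe elements (and leave $\bftheta$ untouched), they do not interfere with one another, so I would carry out the hat block in detail and obtain the check block by the evident symmetry interchanging $\hV$ and $\cV$.

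For the hat block I would first use that $\hV^{(\infty)}$, being the largest integrable subbundle of $\hV$, is Frobenius integrable and hence spanned by the differentials $\{\extd\bfhI\}$ of a complete set of first integrals. Selecting $\bfhsigma = \extd\bfhI_1$ from these to complete $\bfheta$ to a basis of $\hV^{(\infty)}$, as in \EqRef{First14}, forces $\extd\bfhsigma = 0$ at once. Expanding the $0$-adapted forms $\bfheta$ against $\{\extd\bfhI_1,\,\extd\bfhI_2\}$ and using that the $\extd\bfhI_2$-coefficient is invertible (because $\bfheta$ is independent of $\bfhsigma$), I would replace $\bfheta$ by the invertible combination $\bfheta = \extd\bfhI_2 + \bfhR\,\bfhsigma$ of \EqRef{First15}; since this is a linear change of the original $\bfheta$, the coframe stays $0$-adapted.

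The crux is then the structure equation for $\bfheta$. Differentiating \EqRef{First15} and using $\extd\bfhsigma = 0$ gives $\extd\bfheta = \extd\bfhR \wedge \bfhsigma$, so the whole matter reduces to pinning down $\extd\bfhR$. Here I would invoke that $\bfheta \in \hV^{(\infty)}\cap\cV \subset \hV\cap\cV$, so property {\bf[iii]} (equation \EqRef{Intro9}) forces $\extd\bfheta \in \Omega^2(\hV) + \Omega^2(\cV)$. By the coframe description \EqRef{First4c} we have $\bfcsigma \notin \hV$ and $\bfhsigma \notin \cV$, so no monomial containing both a $\bfcsigma$ and a $\bfhsigma$ factor can occur in $\Omega^2(\hV) + \Omega^2(\cV)$; the coefficient $\bfvectcsigma(\bfhR)$ of the only such monomial appearing in $\extd\bfhR\wedge\bfhsigma$ must therefore vanish. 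This vanishing is exactly the step I expect to be the genuine obstacle, since it is the one essential, non-formal use of the Darboux condition {\bf[iii]} and it depends on correctly isolating $\bfcsigma\wedge\bfhsigma$ as the unique forbidden monomial. Because $\hH = \text{span}\{\bfvectcsigma\}$ by \EqRef{First6}, Lemma \StRef{FirstLem1} converts $\bfvectcsigma(\bfhR) = 0$ into $\extd\bfhR \in \hV^{(\infty)} = \text{span}\{\bfhsigma,\bfheta\}$; substituting this expansion into $\extd\bfhR\wedge\bfhsigma$ and renaming coefficients yields $\extd\bfheta = \bfhE\,\bfheta\wedge\bfhsigma + \bfhF\,\bfhsigma\wedge\bfhsigma$, and the check block gives the second line of \EqRef{First10} verbatim.

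Finally, for \EqRef{First11} I would observe that $\bftheta \in \hV\cap\cV$, so property {\bf[iii]} again gives $\extd\bftheta \in \Omega^2(\hV) + \Omega^2(\cV)$. Reducing modulo the ideal $\{\,\bftheta,\,\bfheta,\,\bfceta\,\}$ and using $\hV = \text{span}\{\,\bftheta,\,\bfhsigma,\,\bfheta,\,\bfceta\,\}$ and $\cV = \text{span}\{\,\bftheta,\,\bfheta,\,\bfcsigma,\,\bfceta\,\}$ from \EqRef{First4c}, every element of $\Omega^2(\hV)$ collapses to a multiple of $\bfhsigma\wedge\bfhsigma$ and every element of $\Omega^2(\cV)$ to a multiple of $\bfcsigma\wedge\bfcsigma$, which is precisely \EqRef{First11}.
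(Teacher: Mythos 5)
Your proposal is correct and follows essentially the same route as the paper: select $\bfhsigma=\extd\bfhI_1$ and $\bfheta=\extd\bfhI_2+\bfhR\,\bfhsigma$ from the first integrals, use property {\bf[iii]} together with \EqRef{First4c} to kill the $\bfcsigma\wedge\bfhsigma$ coefficient $\bfvectcsigma(\bfhR)$, then apply Lemma \StRef{FirstLem1} to conclude $\extd\bfhR\in\hV^{(\infty)}$, with \EqRef{First11} following from {\bf[iii]} reduced modulo $\{\bftheta,\bfheta,\bfceta\}$. The step you flag as the genuine obstacle is exactly the one the paper leans on, and your justification of it is sound.
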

	We remark that the structure equations for the 1-forms  $\bftheta \in \hV \cap \cV$ are a consequence of
	\EqRef{Intro9} and the properties \EqRef{First4} of a 0-adapted coframe.
	A {\deffont 1-adapted coframe} for the Darboux pair $\{\, \hV, \cV\, \}$  is a 0-adapted coframe
        $\{\,\bftheta,\, \bfhsigma,\, \bfheta,\, \bfcsigma, \,\bfceta \, \}$  satisfying the structure equations
	 \EqRef{First10} and \EqRef{First11}.
\begin{Corollary}
\StTag{First20}
	If $\{\,\hV,\, \cV\}$ define a Darboux pair, then $\hCalV \mycap \cCalV$ is always
	a decomposable exterior differential system   for which  $\hV$ and  $\cV$ are  singular Pfaffian systems.
\end{Corollary}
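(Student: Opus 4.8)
The plan is to produce, from the adapted coframe of Theorem \StRef{First9}, an explicit decomposable exterior differential system $\CalI$ whose associated singular Pfaffian systems are exactly $\hV$ and $\cV$, and then to invoke the reconstruction result Theorem \StRef{Dec6} to identify $\CalI$ with $\hCalV \mycap \cCalV$. First I would fix a 1-adapted coframe $\{\bftheta,\, \bfhsigma,\, \bfheta,\, \bfcsigma,\, \bfceta\}$ as furnished by Theorem \StRef{First9}, so that $\hV$ and $\cV$ are described by \EqRef{First4c} and the structure equations \EqRef{First10}--\EqRef{First11} hold. The decisive feature of these equations is that every 2-form produced by exterior differentiation of the generators of $\hV\cap\cV = \mathrm{span}\{\bftheta,\, \bfheta,\, \bfceta\}$, reduced modulo the algebraic ideal of those 1-forms, is either of pure hat-type ($\bfhF\,\bfhsigma\wedge\bfhsigma$ from $\extd\bfheta$ and $\bfA\,\bfhsigma\wedge\bfhsigma$ from $\extd\bftheta$) or of pure check-type ($\bfcF\,\bfcsigma\wedge\bfcsigma$ from $\extd\bfceta$ and $\bfB\,\bfcsigma\wedge\bfcsigma$ from $\extd\bftheta$); crucially, \EqRef{First11} contains no mixed $\bfhsigma\wedge\bfcsigma$ terms.

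I would then take $\{\hOmega^a\}$ to be a maximal independent set spanning the hat-type 2-forms and $\{\cOmega^\alpha\}$ one spanning the check-type 2-forms, and set $\CalI = \{\bftheta,\, \bfheta,\, \bfceta,\, \hOmega^a,\, \cOmega^\alpha\}$. By construction this has precisely the shape \EqRef{Intro5} relative to the coframe, with $\hOmega^a \in \Omega^2(\bfhsigma)$ and $\cOmega^\alpha \in \Omega^2(\bfcsigma)$, so $\CalI$ is decomposable in the sense of Definition \StRef{Intro1}; closure under $\extd$ follows from $\extd\bfhsigma = \extd\bfcsigma = 0$ in \EqRef{First10}. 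Next I would identify the singular systems \EqRef{Intro6} of $\CalI$. The Pfaffian part of $\hCalV = \{\bftheta,\, \bfheta,\, \bfceta,\, \bfhsigma,\, \cOmega^\alpha\}$ is $\mathrm{span}\{\bftheta,\, \bfhsigma,\, \bfheta,\, \bfceta\}$, which by \EqRef{First4c} is exactly $\hV$; moreover the only 2-forms arising from exterior differentiation of the generators of $\hV$ are the check-type forms spanned by $\{\cOmega^\alpha\}$, so the differential system generated by the subbundle $\hV$ is precisely $\hCalV$. Hence $\hV$, and symmetrically $\cV$, is a singular Pfaffian system for $\CalI$.

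Finally, since $\CalI$ is decomposable with singular differential systems $\hCalV$ and $\cCalV$, Theorem \StRef{Dec6} yields $\CalI = \hCalV \mycap \cCalV$; thus $\hCalV \mycap \cCalV$ is decomposable and $\{\hV,\, \cV\}$ are its singular Pfaffian systems, which is the assertion of the corollary. I expect the delicate point to be purely bookkeeping: confirming that the construction actually meets the numerical demands of Definition \StRef{Intro1} (that there be at least two $\bfhsigma$ and two $\bfcsigma$ forms and that at least one $\hOmega^a$ and one $\cOmega^\alpha$ survive as nonzero), and, more substantively, that the hat/check block splitting of \EqRef{First11} really does force the singular systems to be no larger than $\hV$ and $\cV$. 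This clean separation is exactly what the coframe adaptation of Theorem \StRef{First9} was engineered to provide, so the real weight of the argument sits in that theorem rather than in any fresh computation here.
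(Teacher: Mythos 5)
Your proposal is correct and follows essentially the same route as the paper: both rest on the 1-adapted structure equations of Theorem \StRef{First9}, which show that the 2-form generators of $\hCalV$ and $\cCalV$ split cleanly into pure $\bfhsigma$-type and pure $\bfcsigma$-type with no mixed terms, and both then use the reconstruction mechanism of Theorem \StRef{Dec6} to identify $\hCalV \mycap \cCalV$ with the resulting decomposable system. The only organizational difference is that you invoke Theorem \StRef{Dec6} directly on an explicitly constructed decomposable $\CalI$, whereas the paper first lists the generators of $\hCalV \cap \cCalV$ and then repeats the integral-element argument from the proof of that theorem to discard the $\bfhsigma \wedge \bfcsigma$ generators; the two versions carry the same content and the same (minor, correctly flagged) bookkeeping obligations regarding $s,t \geq 1$.
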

\begin{proof}
	
	It follows from Theorem \StRef{First9} that the algebraic generators for $\hCalV$ and $\cCalV$ are
\begin{align*}
	\hCalV
	 & = \{\, \bftheta,\ \bfheta,\ \bfceta,\  \bfhsigma,\ \bfcF \, \bfcsigma \wedge  \bfcsigma,\ \bfB\,\bfcsigma    \wedge \bfcsigma \,\}
	\quad\text{and}
\\[2\jot]
	\cCalV
	 & = \{\, \bftheta,\ \bfheta,\ \bfceta,\   \bfcsigma,\ \bfhF\, \bfhsigma \wedge \bfhsigma,\
	\bfA\, \bfhsigma \wedge \bfhsigma \,\}.
\end{align*}
	From these equations it is not difficult to argue that the differential system  $\hCalV \cap \cCalV$ has generators
\begin{equation*}
	\hCalV \cap \cCalV
	=  \{\, \bftheta,\ \bfheta,\ \bfceta,\  \bfhsigma \wedge  \bfcsigma,\
	\bfhF\, \bfhsigma \wedge \bfhsigma,\   \bfA\, \bfhsigma \wedge \bfhsigma,\
	\bfcF \, \bfcsigma \wedge  \bfcsigma,\ \bfB\, \bfcsigma    \wedge \bfcsigma \,\}.
\EqTag{First32}
\end{equation*}
	A repetition of the arguments used in the proof of  Theorem \StRef{Dec6} shows that
\begin{align}
	\hCalV \mycap \cCalV
	= \{\, \bftheta,\ \bfheta,\ \bfceta,\  \bfhF\, \bfhsigma \wedge \bfhsigma,\
	\bfA\, \bfhsigma \wedge \bfhsigma,\
	\bfcF \, \bfcsigma \wedge  \bfcsigma,\ \bfB\, \bfcsigma    \wedge \bfcsigma \,\}.
\EqTag{First20}
\end{align}
	It is then clear that $\hCalV \mycap \cCalV$ is a decomposable differential system (Definition \StRef{Intro1})
	and that $\hV$ and $\cV$ are singular Pfaffian systems.
\end{proof}

\begin{Example}
        There are many examples of differential equations which can be described either by Pfaffian differential systems or by
	differential systems generated by 1-forms and 2-forms.  Our definition of  Darboux integrability in terms
of decomposable differential system is such that theseequations
	will be Darboux integrable irrespective of their formulation as an exterior differential system.
	The  simplest example, but by no means the only
	example, is  the  wave equation $u_{xy}= 0$. The wave equation can be  encoded as adifferential system on manifold of dimensions 5,  6 and 7 by
\begin{alignat*}{3}
	\CalI_1
&	= \{\, \theta, dp \wedge dx,  dq \wedge dy \, \}, \
&
	\hV_1
& 	= \{\, \theta,  dp,  dx \, \},
&
\cV
& 	= \{\, \theta,  dq,  dy \, \},
\\
	\CalI_2
&	= \{\, \theta,  \theta_x,  dq \wedge dy \, \}, \
&
	\hV_2
& 	= \{\, \theta,   \theta_x, dp,  dx \, \},
&	
	\cV_2
& 	= \{\, \theta,   \theta_x, dq,  dy \, \},
\\
	\CalI_3
&	= \{\, \theta, \theta_x, \theta_y \, \},
&
	\hV_3
& 	= \{\, \theta,   \theta_x, \theta_y, dp,  dx \, \}, \
&	
	\cV_3
& 	= \{\, \theta,   \theta_x, \theta_y, dq,  dy \, \},
\end{alignat*}
	where $\theta = du - p\, dx - q\, dy$, $\theta_x = dp - r\, dx$ and $\theta_y = dq - t\, dy$.
	Each of these satisfy the definition of a  decomposable, Darboux integrable differential system.
	Consistent with Corollary
	\StRef{Dec6}, one easily checks that $\CalI_i = \hCalV_i \mycap \cCalV_i$.

	We remark that for the standard contact system on $J^1(\Real,\Real)$, that is,
	$\CalI_4 =  \{\, \theta,\, dp \wedge dx +  dq \wedge dy \, \}$, the Pfaffian systems
	$\hV_4 = \{\, \theta,\,  dp,\,  dx \, \}$ and
	$ \cV = \{\, \theta, \, dq,\,  dy \, \}$,
	are singular systems  which form a Darboux pair but $\hV_4 \mycap \cV_4 \neq  \CalI_4$.
	Equality fails because $\CalI_4$ is not decomposable. \qed
\end{Example}

\begin{Remark}
\StTag{FirstRem1}
	In very special cases, such as Liouville's equation $u_{xy}= e^u$, one finds that
	$\hV \cap \cV^{(\infty)}=  \hV^{(\infty)}  \cap \cV = \{\,0\,\}$
	so that the vector space sums appearing in \EqRef{Intro7} are direct sum decompositions.
	Under these circumstances, our 0-adapted coframe is simply $\{\,\bftheta, \bfhsigma, \bfcsigma \,\}$.	
	This initial coframe  is  then automatically 2-adapted  (see Theorem \StRef{SecondThm1}) and the
	sequence of coframe adaptations needed to prove  Theorem \StRef{CofrAdapt1}
	can begin with the third coframe adaptation (Section 4.2).
\end{Remark}	
\begin{Remark}
\StTag{FirstRem2}
	We define an {\deffont involution of  a  Darboux pair} $\{\, \hV, \cV \, \}$
	on $M$ to be a diffeomorphism $\Phi \: M \to M$ such that $\Phi^2= \text{\rm id}_M$
	and $\Phi^*(\hV) = \cV$.  For such maps $\Phi^*(\hV^{(\infty)}) = \cV^{(\infty)}$ and therefore, given 1 adapted co-frame elements
	$\bfheta$ and $\bfhsigma$, one may take
\begin{equation*}
	\bfceta = \Phi^*(\bfheta)
	\quad\text{and}\quad
	\bfcsigma = \Phi^*(\bfhsigma).
\end{equation*}
	Such involutions are quite common, especially for differential equations arising in geometric and physical applications, where
	the diffeomorphism $\Phi$ arises from a symmetry of the differential equations in the independent variables.
\end{Remark}
%

\newpage

\section{The group theoretic construction of Darboux pairs}

\subsection{ Direct Sums of Pfaffian Systems and Trivial Darboux Pairs}
	Let $\CalW_1$ and $\CalW_2$ be differential systems on manifolds $M_1$ and $M_2$.  Then the {\deffont direct sum } $\CalW_1 + \CalW_2$
	is the EDS on $M_1 \times M_2$ generated by the pullbacks of $\CalW_1$ and $\CalW_2$ by the canonical projection maps
	$\pi_i \colon M_1\times M_2 \to M_i$.
        The following theorem  shows that if
	$\CalW_1$ and $\CalW_2$ are Pfaffian, then $\CalW_1 + \CalW_2$ is Darboux integrable in the sense of Definition \StRef{Intro4}.

\begin{Theorem}
\StTag{Trivial1}
	Let $W_1$ and $W_2$ be Pfaffian systems on manifolds $M_1$ and $M_2$
	such that $W_1^{(\infty)} = W_2^{(\infty)} = 0$.  Then the direct sum
	$J= W_1 + W_2$ on $M_1\times M_2$ is a decomposable 	
	Pfaffian system. The  associated singular Pfaffian systems
\begin{equation}
	\hW = W_1 \oplus\Lambda^1(M_2)
	\quad\text{and}\quad
	\cW = \Lambda^1(M_1) \oplus W_2
\EqTag{Trivial2}
\end{equation}
	form a Darboux pair and
\begin{equation}
	\CalW_1 +\CalW_2 = \hCalW \mycap \cCalW,
\EqTag{Trivial15}
\end{equation}	
	where $\CalW_1, \CalW_2, \hCalW, \cCalW$ are the differential systems generated by $W_1, W_2, \hW, \cW$.
\end{Theorem}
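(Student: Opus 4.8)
The plan is to manufacture a coframe on $M_1\times M_2$ out of adapted coframes on the two factors, read off the (decoupled) structure equations, and then settle each claim in turn, reducing the final identity \EqRef{Trivial15} to Theorem \StRef{Dec6}. First I would choose locally a coframe $\{\theta^i_1,\sigma^a_1\}$ on $M_1$ with $W_1=\text{span}\{\theta^i_1\}$ and a coframe $\{\theta^\lambda_2,\sigma^\mu_2\}$ on $M_2$ with $W_2=\text{span}\{\theta^\lambda_2\}$, and pull both back along $\pi_1,\pi_2$. Since the exterior derivative of a pullback from $M_i$ is again such a pullback, the structure equations do not mix the factors:
\begin{equation*}
\extd\theta^i_1\equiv\tfrac12 A^i_{ab}\,\sigma^a_1\wedge\sigma^b_1,\qquad
\extd\theta^\lambda_2\equiv\tfrac12 B^\lambda_{\mu\nu}\,\sigma^\mu_2\wedge\sigma^\nu_2
\pmod{J},
\end{equation*}
where $J=W_1+W_2=\text{span}\{\theta^i_1,\theta^\lambda_2\}$. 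This is precisely the normal form \EqRef{Dec2}, under the identification $\csigma=\sigma_1$, $\hsigma=\sigma_2$, $\cOmega=\tfrac12 A^i_{ab}\sigma^a_1\wedge\sigma^b_1$, $\hOmega=\tfrac12 B^\lambda_{\mu\nu}\sigma^\mu_2\wedge\sigma^\nu_2$. The hypothesis $W_1^{(\infty)}=0$ (with $W_1\neq 0$) forces $A\not\equiv 0$, for otherwise $W_1$ would be Frobenius integrable and $W_1^{(\infty)}=W_1\neq0$; symmetrically $B\not\equiv 0$. Hence at least one $\hOmega$ and one $\cOmega$ survive, so $s,t\ge 1$ and $J$ satisfies Definition \StRef{Intro1}: it is a decomposable Pfaffian system of type $[p,q]$ with $p=\dim M_2-\rank W_2$ and $q=\dim M_1-\rank W_1$.

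Next I would identify the singular systems \EqRef{Intro6} with \EqRef{Trivial2}. By definition they are $\hCalV=\{\,\theta^i_1,\theta^\lambda_2,\sigma^\mu_2,\cOmega\,\}$ and $\cCalV=\{\,\theta^i_1,\theta^\lambda_2,\sigma^a_1,\hOmega\,\}$. The key observation is that each curvature generator is, modulo the $1$-forms, the exterior derivative of a $1$-form already lying in the same singular system: $\cOmega^i\equiv\extd\theta^i_1$ with $\theta^i_1\in\hCalV$, and $\hOmega^\lambda\equiv\extd\theta^\lambda_2$ with $\theta^\lambda_2\in\cCalV$. Thus the $2$-form generators lie in the differential closure of the $1$-forms and may be discarded --- this is exactly the mechanism of Corollary \StRef{Dec4} --- leaving the Pfaffian systems $\hW=W_1\oplus\Lambda^1(M_2)$ and $\cW=\Lambda^1(M_1)\oplus W_2$ of \EqRef{Trivial2}.

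The substantive step is to compute the derived flags and then verify the Darboux pair axioms of Definition \StRef{Intro2}. Because $\hW$ is the decoupled sum of $W_1$ on $M_1$ with the completely integrable system $\Lambda^1(M_2)$ on $M_2$, its derived flag splits, $\hW^{(k)}=W_1^{(k)}\oplus\Lambda^1(M_2)$, so that $\hW^{(\infty)}=W_1^{(\infty)}\oplus\Lambda^1(M_2)=\pi_2^*\Lambda^1(M_2)$ by hypothesis; symmetrically $\cW^{(\infty)}=\pi_1^*\Lambda^1(M_1)$. The axioms then become routine: $\hW+\cW^{(\infty)}=\Lambda^1(M_1)\oplus\Lambda^1(M_2)=\cTM$ and likewise for its partner, giving \EqRef{Intro7}; $\hW^{(\infty)}\cap\cW^{(\infty)}=\pi_2^*\Lambda^1(M_2)\cap\pi_1^*\Lambda^1(M_1)=\{0\}$, giving \EqRef{Intro8}; and since $\hW\cap\cW=W_1\oplus W_2=J$, condition \EqRef{Intro9} asks only that $\extd\omega\in\Omega^2(\hW)+\Omega^2(\cW)$ for a section $\omega=\sum f_i\theta^i_1+\sum g_\lambda\theta^\lambda_2$ of $J$. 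Here the terms $f_i\,\extd\theta^i_1\in\Lambda^2(M_1)$ and $g_\lambda\,\extd\theta^\lambda_2\in\Lambda^2(M_2)$ already lie in $\Omega^2(\cW)$ and $\Omega^2(\hW)$, while each remaining term has the form $\beta\wedge\tau$ with $\tau\in J$; decomposing $\beta$ into its $M_1$- and $M_2$-components, a short case-check places $\beta\wedge\tau$ in $\Omega^2(\hW)+\Omega^2(\cW)$. The standing constant-rank hypotheses on $\hW^{(\infty)}$, $\cW^{(\infty)}$, $\hW\cap\cW$ and the mixed intersections hold automatically, as each equals a pullback of $W_1$, $W_2$, $\Lambda^1(M_i)$ or a sum thereof.

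Finally, $J=\CalW_1+\CalW_2$ has now been exhibited as a decomposable Pfaffian system whose singular systems are $\hW$ and $\cW$, so Theorem \StRef{Dec6}, applied with $\CalI=\CalW_1+\CalW_2$, yields $\CalW_1+\CalW_2=\hCalW\mycap\cCalW$, which is \EqRef{Trivial15}. I expect the one genuinely delicate point to be the derived-flag computation of the third paragraph: one must justify that the flag of the decoupled sum really does split --- so that $W_i^{(\infty)}=0$ drives $\hW^{(\infty)}$ and $\cW^{(\infty)}$ all the way down to the full cotangent bundles of the factors --- while keeping careful track of the fact that the coefficients appearing are functions on the entire product, not merely on a single factor.
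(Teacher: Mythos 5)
Your proposal is correct and follows essentially the same route as the paper's proof: decomposability read off from the decoupled structure equations, the identifications $\hW^{(\infty)} = \Lambda^1(M_2)$ and $\cW^{(\infty)} = \Lambda^1(M_1)$ driving properties {\bf [i]} and {\bf [ii]}, a direct expansion of $\extd\omega$ for property {\bf [iii]}, and Theorem \StRef{Dec6} for \EqRef{Trivial15}. The only difference is that you supply details the paper labels ``easy to check,'' in particular the splitting of the derived flag of $\hW$, and you do so correctly.
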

\begin{proof}  It is easy to check that $J$ is decomposable with singular systems \EqRef{Trivial2}.
	Properties {\bf [i]} and {\bf [ii]}  of  Definition  \StRef{Intro2}
        follow directly from \EqRef{Trivial2} and the simple observations that
\begin{equation}
	\hW^{(\infty)} = \Lambda^1(M_2) \quad\text{and}\quad \cW^{(\infty)} = \Lambda^1(M_1).
\EqTag{Trivial4}
\end{equation}
\par
	Let $\omega = \sum_{i}(f_i \, \theta^i_1 + g_i\, \theta^i_2)$,
	where the coefficients $f_i, g_i \in C^{\infty}(M_1\times M_2)$ and the $\theta^i_\ell \in W_\ell$ are
	1-forms in $J$.
	Then to check property  {\bf [iii]}, it suffices to note
	that every summand  in the  exterior derivative
\begin{align*}
	\extd \omega
	=    \sum_{i}d_{1}f_i \wedge \theta^i_1+ d_{2}f_i \wedge \theta^i_1+  f_i \,d_{1}\theta^i_1
   +           d_{1}g_j\wedge \theta^j_2 + d_{2} g_j\wedge  \theta^j_2 + g_i\, d_{2}\theta^i_2,
\end{align*}
	where $d_i = d_{M_i}$, lies either in $\Omega^2(\hW)$ or $\Omega^2(\cW)$. Equation \EqRef{Trivial15} is a
	direct consequence of Theorem \StRef{Dec6}.
\end{proof}
	Darboux pairs of the form \EqRef{Trivial2} are said to be trivial.

\subsection{Symmetry Reduction of Darboux Pairs}

        A Lie group $G$ is  a  {\deffont regular symmetry group of a  differential system $\CalI$} on a manifold $M$ if
	there is a regular  action of  $G$ on $M$ which preserves $\CalI$.
	By the definition of regularity, the quotient space $M/G$  of $M$ by the orbits of $G$
	has a smooth manifold structure such that {\bf [i]} the  projection map $\bfq\colon M \to M/G$ is a
	smooth submersion,  and {\bf [ii]}   local, smooth  cross-sections to $\bfq$  exist
	on some open neighborhood of each point of $M/G$.
	 The {\deffont $G$ reduction of $\CalI$ or
	the quotient of $\CalI$ by $G$} is the differential system on $\barM = M/G$  given by
\begin{equation}
         \CalI/G =  \{\, \omega \in \Omega^*(\barM) \, | \, \bfq^*(\omega) \in \CalI \, \}.
\EqTag{SymRed1}
\end{equation}	
	Details of this construction  and basis facts regarding the $G$ reduction of differential systems are  given in \cite{anderson-fels:2005a}.

	To obtain the main results of this section,  we shall require that $G$ be {\deffont transverse} to $\CalI$ in the sense which we now define.
	Let  $I$ be any $G$ invariant sub-bundle of $T^*M$ and let  $\Gamma$ be the infinitesimal generators for $G$.
	We say that $G$ is transverse to $I$ if  for  each $x\in M$
\begin{equation}
	\Gamma_x  \cap \ann\, I_x  =\{\, 0 \, \}.
\EqTag{SymRed2}
\end{equation}
	We say that $G$ is transverse to the differential system $\CalI$ if $G$ is transverse to the sub-bundle of $T^*M$ whose sections
	are 1-forms in  $\CalI$.  With  this transversality condition, $\CalI/G$ is assured to be a constant rank differential system whose local sections
	can be determined as follows. If $A$ is any sub-bundle of $\Lambda^k(M)$ or sub-algebra of $\Lambda^*(M)$, then
	the sub-bundle or  sub-algebra of {\deffont semi-basic forms in $A$} is defined as
\begin{equation}
	A_{\semibasic}  = \{ \, \omega \in A \, |\, X\hook \omega = 0 \text{ for all $X \in \Gamma$} \,\}.
\end{equation}
	Denote by $\CalI_\semibasic$ the sections of $\CalI$ which are semi-basic, that is, which take values in $\Lambda^*_{\semibasic}(M)$.
	Then one can show (see Lemma 4.1 of \cite{anderson-fels:2005a}) that
\newcommand\smallbarU{
	\hbox{\kern .8 true pt
	\vbox{\hrule width 4.5  true pt height .15 true pt \kern .9 true pt
	\hbox{\kern -.8 true pt \scriptsize $U$}}}}
\begin{equation}
	(\CalI/G)_{|\smallbarU} =  \xi^*(\CalI_\semibasic{}_{|U}),
\EqTag{SymRed3}
\end{equation}
	where $\barU$ is any (sufficiently small) open set of $\barM$, $\xi\:\barU \to M$ is a smooth cross-section of $\bfq$, and
	$U =\bfq^{-1}(\barU)$.  We remark that the $G$ reduction $\CalI/G$ of a Pfaffian system $\CalI$ need not be Pfaffian (but see {\bf [c]} below).

	For the proof of the next theorem  on the $G$ reduction of Darboux pairs,
	we shall require the following elementary facts regarding the reduction of Pfaffian systems.
	Here $I$ and $J$ are Pfaffian systems with regular symmetry  group $G$,  we  assume that $G$ is transverse to $I$, and
	the  $G$ bundle reduction of $I$ is defined as
\begin{equation}
	I/G =  \{\, \omega \in \Lambda^1(\barM) \, | \, \bfq^*(\omega) \in I \, \}.
\EqTag{SymRed4}
\end{equation}
	By transversality, this is a sub-bundle of $T^*\barM$ of dimension $\dim I - q$, where $q$ is the dimension of the orbits of $G$.

\par
\medskip
\noindent
{\bf [a]} If $I\subset J$, then $G$ is transverse to $J$.
\par
\par
\medskip
\noindent
{\bf [b]} If $G$ is transverse to $I \cap J$,  then $(I/G) \cap (J/G)  = (I \cap J)/G$.
\par
\medskip
\noindent
{\bf [c]} If $G$ is transverse to the derived system $I'$, then the quotient differential system $\CalI/G$ is the constant rank
	Pfaffian system determined by $I/G$ and
	$(I/G)' = I'/G$ .
\medskip
\par
\noindent
{\bf [d]} If $G$ is transverse to  $I^{(\infty)}$, then $(I/G)^{(\infty)} =  I^{(\infty)}/G $.
\medskip
\par
	Facts {\bf [a]} and {\bf [b]} are trivial,  while {\bf [c]}
	is  Theorem 5.1 in \cite{anderson-fels:2005a}. Fact {\bf [d]} follows from {\bf [a]} and {\bf [c]} by induction.
	
	Finally, we shall also  use the following  technical observation.   If  $Z$ is a sub-bundle of $I$,  then  the canonical pairing
	$(X, \omega) =  \omega(X)$  on $\Gamma_x \times Z_x$, where $x\in M$, is non-degenerate if and only if
\begin{equation}
	I = \bfq^*(I/G) \oplus Z.
\EqTag{SymRed5}
\end{equation}
	Such bundles $Z$ can always be constructed locally on $G$ invariant open subsets of $M$.

\newcommand\barhV{\accentset{ \raise 2.5pt \vbox{\hrule width 6.5  true pt height .3 true pt}    }{$\hV$}}
\newcommand\barcV{\accentset{ \raise 2.5pt \vbox{\hrule width 6.5  true pt height .3 true pt}    }{$\cV$}}
\newcommand\barhCalV{\accentset{ \raise 2.5pt \vbox{\hrule width 6.5  true pt height .3 true pt}    }{$\hCalV$}}
\newcommand\barcCalV{\accentset{ \raise 2.5pt \vbox{\hrule width 6.5  true pt height .3 true pt}    }{$\cCalV$}}

\newcommand\barhW{\accentset{ \raise 2.5pt \vbox{\hrule width 6.5  true pt height .3 true pt}    }{$\hW$}}
\newcommand\barcW{\accentset{ \raise 2.5pt \vbox{\hrule width 6.5  true pt height .3 true pt}    }{$\cW$}}
\newcommand\barhCalW{\accentset{ \raise 2.5pt \vbox{\hrule width 6.5  true pt height .3 true pt}    }{$\hCalW$}}
\newcommand\barcCalW{\accentset{ \raise 2.5pt \vbox{\hrule width 6.5  true pt height .3 true pt}    }{$\cCalW$}}

\begin{Theorem}
\StTag{SymRed1}
	Let  $\{\,\hW, \cW \, \}$ be a Darboux pair on $M$. Suppose that $G$ is a regular symmetry group  of $\hW$ and $\cW$ and that,
	in addition, $G$ is transverse to  $\hW \cap \cW^{(\infty)}$ and $\hW^{(\infty)} \cap   \cW$.  	Then the quotient
	differential systems $\{\,\hCalW/G, \cCalW/G \, \}$ on $\barM$ are (constant rank) Pfaffian systems and form a Darboux pair.
\end{Theorem}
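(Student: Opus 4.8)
The plan is to verify, in turn, that the two quotients are Pfaffian of constant rank, and then that they satisfy the three defining conditions \EqRef{Intro7}--\EqRef{Intro9} of a Darboux pair, reducing each condition on $\barM$ to its counterpart on $M$ via the bundle-reduction facts {\bf[a]}--{\bf[d]}. Throughout I write $\hH=\ann\hW$, $\cH=\ann\cW$, and let $q$ denote the orbit dimension of $G$.

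First I would assemble the transversality that is needed. Since $\hW^{(\infty)}\cap\cW\subset\hW^{(\infty)}\subset\hW'\subset\hW$ and $\hW\cap\cW^{(\infty)}\subset\cW^{(\infty)}\subset\cW'\subset\cW$, fact {\bf[a]} upgrades the two hypothesized transversalities to transversality of $G$ with each of $\hW^{(\infty)},\hW',\hW,\cW^{(\infty)},\cW',\cW$, and also with $\hW\cap\cW$ (which contains $\hW\cap\cW^{(\infty)}$). Fact {\bf[c]} applied to $\hW$ and $\cW$ then shows that $\hCalW/G$ and $\cCalW/G$ are constant-rank Pfaffian systems, determined by the bundle reductions $\barhW=\hW/G$ and $\barcW=\cW/G$; fact {\bf[d]} gives $\barhW^{(\infty)}=\hW^{(\infty)}/G$ and $\barcW^{(\infty)}=\cW^{(\infty)}/G$. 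By transversality each reduced bundle drops rank by exactly $q$.

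For property \EqRef{Intro7} I would prove the general identity $(I+J)/G=I/G+J/G$ whenever $G$ is transverse to $I$, $J$ and $I\cap J$. The inclusion $\supseteq$ is immediate from $\bfq^*$, and equality follows by a rank count: using fact {\bf[b]} to get $I/G\cap J/G=(I\cap J)/G$, one finds $\operatorname{rank}(I/G+J/G)=\dim(I+J)-q=\operatorname{rank}\big((I+J)/G\big)$. Taking $I=\hW$, $J=\cW^{(\infty)}$ (so $I\cap J=\hW\cap\cW^{(\infty)}$, transverse by hypothesis) together with $\hW+\cW^{(\infty)}=\cTM$ and $(\cTM)/G=T^*\barM$ yields $\barhW+\barcW^{(\infty)}=T^*\barM$; the symmetric choice gives the companion equation. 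Property \EqRef{Intro8} is more direct: if $\omega\in\barhW^{(\infty)}\cap\barcW^{(\infty)}=(\hW^{(\infty)}/G)\cap(\cW^{(\infty)}/G)$, then $\bfq^*\omega$ is a section of $\hW^{(\infty)}\cap\cW^{(\infty)}=\{0\}$, and injectivity of $\bfq^*$ forces $\omega=0$.

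The hard part is property \EqRef{Intro9}. I would first record the fiberwise fact that, for Pfaffian systems $V_1,V_2$ with $V_1+V_2=\cTM$, a $2$-form $\eta$ lies in $\Omega^2(V_1)+\Omega^2(V_2)$ if and only if $\eta(Y_1,Y_2)=0$ for all $Y_1\in\ann V_1$, $Y_2\in\ann V_2$ (the ``only if'' is automatic; the ``if'' uses a basis adapted to $V_1\cap V_2\subset V_1,V_2$ and the spanning hypothesis). On $M$ we have $\hW+\cW=\cTM$ (as $\cW^{(\infty)}\subset\cW$), so property \EqRef{Intro9} for $\{\hW,\cW\}$ says precisely that $\extd\omega(\hY,\cX)=0$ for every $\omega\in\Omega^1(\hW\cap\cW)$ and all $\hY\in\hH$, $\cX\in\cH$. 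On $\barM$ we have $\barhW+\barcW=T^*\barM$ (from \EqRef{Intro7}, just proved), so it suffices to show $\extd\bar\omega(\bar{\hY},\bar{\cX})=0$ for $\bar\omega\in\Omega^1(\barhW\cap\barcW)$ and $\bar{\hY}\in\ann\barhW$, $\bar{\cX}\in\ann\barcW$. The key step is to lift: since $\bfq^*\barhW$ is the bundle of semibasic $G$-invariant forms in $\hW$, transversality gives $\ann(\bfq^*\barhW)=\hH\oplus\Gamma$, so each $\bar{\hY}$ has a lift $\hY\in\hH$ with $\bfq_*\hY=\bar{\hY}$, and likewise each $\bar{\cX}$ a lift $\cX\in\cH$. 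By fact {\bf[b]}, $\barhW\cap\barcW=(\hW\cap\cW)/G$, hence $\omega:=\bfq^*\bar\omega\in\Omega^1(\hW\cap\cW)$; then tensoriality and naturality of $\extd$ give $\extd\bar\omega(\bar{\hY},\bar{\cX})=(\bfq^*\extd\bar\omega)(\hY,\cX)=\extd\omega(\hY,\cX)=0$ by the $M$-side statement. The characterization on $\barM$ then places $\extd\bar\omega$ in $\Omega^2(\barhW)+\Omega^2(\barcW)$, completing \EqRef{Intro9} and the proof. I expect the annihilator-lifting identity $\ann(\bfq^*\barhW)=\hH\oplus\Gamma$ and the fiberwise $2$-form characterization to be the only genuinely technical points.
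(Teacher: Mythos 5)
Your proposal is correct. The reductions of the Pfaffian property and of conditions {\bf[i]} and {\bf[ii]} of Definition \StRef{Intro2} follow the same lines as the paper: transversality is propagated by fact {\bf[a]}, facts {\bf[c]} and {\bf[d]} give the Pfaffian quotients and their derived systems, and condition {\bf[i]} is a rank count through fact {\bf[b]} (your lemma $(I+J)/G=I/G+J/G$ is exactly the paper's dimension computation, packaged as a general identity). Where you genuinely diverge is condition {\bf[iii]}. The paper works at the level of forms upstairs: it chooses a local subbundle $Z\subset\hW^{(\infty)}\cap\cW$ paired nondegenerately with $\Gamma$, writes $\hW=\hW_{\semibasic}\oplus Z$ and $\cW=\cW_{\semibasic}\oplus Z$, deduces $[\Lambda^2(\hW)+\Lambda^2(\cW)]_{\semibasic}=\Lambda^2(\hW_{\semibasic})+\Lambda^2(\cW_{\semibasic})$, and then pulls back by a local cross-section of $\bfq$ using \EqRef{SymRed3}. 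You instead characterize membership of a $2$-form in $\Omega^2(V_1)+\Omega^2(V_2)$, when $V_1+V_2=\cTM$, by its vanishing on pairs from $\ann V_1\times\ann V_2$, and transfer the condition downstairs by lifting annihilator vectors through the identity $\ann\big(\bfq^*(\hW/G)\big)=\hH\oplus\Gamma$; both the duality criterion and the lift are valid (the lift exists because any preimage of $\bar Y$ under $\bfq_*$ lies in $\hH\oplus\Gamma$ and can be corrected by a vertical vector). Your route trades the explicit complement $Z$ and the cross-section for a pointwise duality argument, which makes independence of the choice of local section automatic; the paper's version has the advantage of producing explicit semibasic generators that are reused later, for instance in the proof of Theorem \StRef{SymRed8}.
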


\begin{proof}[Proof of Theorem \StRef{SymRed1}]
	
	Since $\hW^{(\infty)} \cap \cW \subset \hW^{(\infty)} \subset\hW' \subset \hW$ and $G$ is transverse to $\hW^{(\infty)} \cap \cW$,
	it follows that $G$ is transverse to $\hW^{(\infty)}$, $\hW'$ and $\hW$.  Thus,  by {\bf [c]},
	$\hCalV= \hCalW/G$ and, similarly,  $\cCalV = \cCalW/G$ are the Pfaffian systems for $\hV = \hW/G$ and $\cV = \cW/G$.
	By {\bf [d]}
\begin{equation}
	(\hW/G)^{(\infty)} = \hW^{(\infty)}/G
	\quad\text{and}\quad
	(\cW/G)^{(\infty)} = \cW^{(\infty)}/G.
\EqTag{SymReg6}
\end{equation}

	We now verify the three conditions in Definition \StRef{Intro2} for $\{\,\hV, \cV\,\}$ to be a Darboux pair.
	From \EqRef{SymRed4}, \EqRef{SymReg6} and  fact {\bf [b]}
	we deduce that
\begin{align*}
	\dim(\hV)
	&= \dim \hW - q,
\\
	\dim(\hV^{(\infty)}) &= \dim  (\hW^{(\infty)}/G)  = \dim \hW^{(\infty)} - q, \quad \text{and}
\\
	\dim (\hV \cap \hV^{(\infty)})
	&= \dim\big((\hW/G) \cap  (\cW^{(\infty)}/G)\big) = \dim \big((\hW \cap \cW^{(\infty)})/G \big)
\\
	&= \dim(\hW \cap \cW^{(\infty)}) - q.
\end{align*}
	From  these equations and  property {\bf [i]} in the definition of Darboux pair (applied to $\{\,\hW, \cW\,\}$) we calculate
\begin{align*}
	\dim\big(\hV   + \cV^{(\infty)}\big)
&	= \dim(\hV)  +\dim (\cV^{(\infty)}) - \dim(\hV \cap \cV^{(\infty)})
\\
&	= \dim\hW  +\dim \cW^{(\infty)} + \dim(\hW \cap \cW^{(\infty)})  - q
\\
&	= \dim M - q = \dim \barM.
\end{align*}
	This proves that  $\hV   + \cV^{(\infty)} = T^*\barM$. Similar arguments yield  $\hV^{(\infty)} + \cV   = T^*\barM$
	and so $\{\hV, \cV\}$ satisfy property {\bf[i]} of a Darboux pair.
\par	
	To check that  $\{\hV, \cV\}$ satisfy property {\bf [ii]} of a Darboux pair, we
	use {\bf [b]} and \EqRef{SymReg6}  to calculate
\begin{equation}
	\hV^{(\infty)} \cap  \cV^{(\infty)} = (\hW^{(\infty)}/G) \cap (\cW^{(\infty)}/G)
	=  (\hW^{(\infty)} \cap  \cW^{(\infty)})/G  = 0.
\end{equation}
	
	To prove property {\bf [iii]} of a Darboux pair, we first prove that
\begin{equation}
	[\Lambda^2(\hW) + \Lambda^2(\cW)]_{\semibasic} =  \Lambda^2(\hW_{\semibasic}) + \Lambda^2(\cW_{\semibasic}).
\EqTag{SymRed7}
\end{equation}
	It suffices to check this locally.
	Since $G$ acts transversally to $\hW^{(\infty)} \cap \cW$, there is a locally defined sub-bundle $Z$ of  $\hW^{(\infty)} \cap \cW$
	such that the canonical pairing on $\Gamma\times Z$ is pointwise non-degenerate. For any such sub-bundle $Z$ we have that
	$\hW =  \hW_{\semibasic} \oplus Z$  and $\cW =  \cW_{\semibasic} \oplus Z$ and therefore
\begin{equation*}
	\Lambda^2(\hW) + \Lambda^2(\cW) =   \Lambda^2(\hW_{\semibasic}) + \Lambda^2(\cW_{\semibasic}) + T^*M \wedge Z.
\end{equation*}
	This (local) decomposition immediately leads to \EqRef{SymRed7}. To complete the proof of property {\bf [iii]}, let
	$\omega \in \hV \cap \cV$. Then $\bfq^*(\omega) \in \hW \cap \cW$ and hence, by the properties of the
	Darboux pair $\{\,\hW, \cW\, \}$ and \EqRef{SymRed7},  $\bfq^*(\extd \omega) \in   \Omega^2(\hW_{\semibasic}) + \Omega^2(\cW_{\semibasic})$.
	We pullback this last equation by a local cross-section of $\bfq$ and invoke \EqRef{SymRed3}  to conclude that
	$\extd \omega \in \Omega^2(\hV) + \Omega^2(\cV)$.
\end{proof}

\begin{Theorem}
\StTag{SymRed8}
	Let  $\{\,\hW, \cW \, \}$ be a Darboux pair on $M$. Suppose that $G$ is
	a regular symmetry group  of $\hW$ and $\cW$ and that,
	in addition, $G$ is transverse to  $\hW^{(\infty)} \cap   \cW$ and $\hW \cap \cW^{(\infty)}$. Then
\begin{equation}
	(\hCalW \mycap \cCalW)/  G =  (\hCalW/G) \mycap (\cCalW /G).
\EqTag{SymRed9}
\end{equation}
\end{Theorem}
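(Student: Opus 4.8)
The plan is to show that the $G$-reduction of the decomposable system $\CalI := \hCalW \mycap \cCalW$ is again decomposable, with singular systems the reductions of $\hW$ and $\cW$, and to recover from this the right-hand side of \EqRef{SymRed9} by means of Theorem \StRef{Dec6}. By Corollary \StRef{First20} applied to the Darboux pair $\{\hW,\cW\}$, the system $\CalI$ is decomposable with singular Pfaffian systems $\hW$ and $\cW$; fixing a $1$-adapted coframe $\{\bftheta,\bfhsigma,\bfheta,\bfcsigma,\bfceta\}$, its algebraic generators are those displayed in \EqRef{First20}. On the other side, the hypotheses here are precisely those of Theorem \StRef{SymRed1}, so $\{\hW/G,\cW/G\}$ is a Darboux pair on $\barM$ with $(\hW/G)^{(\infty)} = \hW^{(\infty)}/G$ and $(\cW/G)^{(\infty)} = \cW^{(\infty)}/G$ by fact {\bf [d]}; hence, by Corollary \StRef{First20}, the right-hand side $(\hCalW/G)\mycap(\cCalW/G)$ is itself decomposable with singular systems $\hW/G$ and $\cW/G$. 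By Theorem \StRef{Dec6}, applied on $\barM$, a decomposable system equals the $\mycap$ of its singular systems, so it suffices to prove that $(\hCalW\mycap\cCalW)/G$ is decomposable with singular systems $\hW/G$ and $\cW/G$; it will then equal $(\hCalW/G)\mycap(\cCalW/G)$, which is \EqRef{SymRed9}.

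The substance of the proof is the explicit reduction of the generators \EqRef{First20} through \EqRef{SymRed3}. Since $\hW^{(\infty)}\cap\cW \subset \hW\cap\cW$, fact {\bf [a]} shows that $G$ is transverse to the $1$-form part $\hW\cap\cW$ of $\CalI$, and likewise to $\hW^{(\infty)}$ and $\cW^{(\infty)}$. I would then refine the adapted coframe so that $\bfhsigma$, $\bfcsigma$ and $\bftheta$ are semi-basic. This is possible by a dimension count: transversality to $\hW^{(\infty)}\cap\cW$ gives $\dim(\hW^{(\infty)}\cap\cW)_\semibasic = \dim(\hW^{(\infty)}\cap\cW) - q$ and $\dim\hW^{(\infty)}_\semibasic = \dim\hW^{(\infty)} - q$, whence $\hW^{(\infty)}_\semibasic + (\hW^{(\infty)}\cap\cW) = \hW^{(\infty)}$, so the forms $\bfhsigma$ completing $\hW^{(\infty)}\cap\cW$ to $\hW^{(\infty)}$ may be chosen inside $\hW^{(\infty)}_\semibasic$; the symmetric choice, using transversality to $\hW\cap\cW^{(\infty)}$, makes $\bfcsigma$ semi-basic. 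In this coframe the $2$-form generators $\bfhF\,\bfhsigma\wedge\bfhsigma$, $\bfA\,\bfhsigma\wedge\bfhsigma$, $\bfcF\,\bfcsigma\wedge\bfcsigma$ and $\bfB\,\bfcsigma\wedge\bfcsigma$ of \EqRef{First20} are semi-basic, while the $1$-form generators span $\hW\cap\cW$.

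It then remains to identify $\CalI_\semibasic$ and push it down. Following the proof of Theorem \StRef{SymRed1}, I would choose a local sub-bundle $Z \subset \hW^{(\infty)}\cap\cW$ for which $\Gamma\times Z$ is pointwise non-degenerate and write each relevant factor as a semi-basic part plus $Z$; a decomposition of the ideal in the spirit of \EqRef{SymRed7} then shows that the semi-basic forms in $\CalI$ are generated, over the semi-basic forms, by the semi-basic generators already exhibited, with no new low-degree generators arising. Applying the cross-section pullback $\xi^*$ of \EqRef{SymRed3}, the $1$-form generators reduce by fact {\bf [b]} to $(\hW\cap\cW)/G = (\hW/G)\cap(\cW/G)$, while the semi-basic $2$-forms reduce to $\xi^*(\bfhF)\,\xi^*(\bfhsigma)\wedge\xi^*(\bfhsigma)$ and the analogous $\bfcsigma$-terms, with $\xi^*(\bfhsigma)$ a section of $\hW^{(\infty)}/G$ and $\xi^*(\bfcsigma)$ a section of $\cW^{(\infty)}/G$. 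These are exactly the generators produced by Corollary \StRef{First20} for a decomposable system on $\barM$ with singular systems $\hW/G$ and $\cW/G$, which is the required identification.

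The main obstacle is the bookkeeping of the third paragraph: verifying that passage to semi-basic forms followed by push-down introduces no spurious generators and respects the splitting into $\bfhsigma$- and $\bfcsigma$-parts. This is exactly where transversality to \emph{both} $\hW^{(\infty)}\cap\cW$ and $\hW\cap\cW^{(\infty)}$ is indispensable, since it is what permits $\bfhsigma$ and $\bfcsigma$ to be rendered semi-basic simultaneously and thus guarantees that the reduced decomposition has $\hW/G$ and $\cW/G$ --- rather than some coarser pair --- as its singular systems.
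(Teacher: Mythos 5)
Your proposal is correct in substance and rests on the same mechanism as the paper's proof: an adapted coframe, a complement to the semi-basic forms chosen inside $\hW^{(\infty)}\cap\cW$ (hence inside both $\hW$ and $\cW$ at once --- which, as you note, is exactly what the double transversality hypothesis buys), and the generator lists of Corollary \StRef{First20}. The organization is reversed, however. The paper pulls back a 1-adapted coframe from $\barM$ and adjoins forms $\bfhzeta$ spanning $\hZ\subset\hW^{(\infty)}\cap\cW$, obtaining $\hCalW=\{\,\bfq^*(\hCalV),\bfhzeta\,\}$ and --- via the key observation that the $\bfczeta$ lie in $\hW\cap\cW$ and so can be rewritten in terms of $\bfhzeta$ and $\bfq^*(\hV\cap\cV)$ --- also $\cCalW=\{\,\bfq^*(\cCalV),\bfhzeta\,\}$ with the \emph{same} supplementary forms; repeating the Corollary \StRef{First20} computation then yields $\hCalW\mycap\cCalW=\{\,\bfq^*(\hCalV\mycap\cCalV),\bfhzeta\,\}$, from which the semi-basic part is read off at once. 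You instead push down an upstairs 1-adapted coframe arranged so that $\bftheta$, $\bfhsigma$, $\bfcsigma$ are semi-basic and close with Theorem \StRef{Dec6}. Both routes work, but note two things. First, the step you defer --- that passing to semi-basic forms introduces ``no new low-degree generators'' --- is not mere bookkeeping; it is the entire content of the paper's argument, and the device that settles it is precisely the common complement $Z\subset\hW^{(\infty)}\cap\cW$ you mention, exploited by writing the whole ideal as $\{\text{semi-basic generators}\}\cup\{\bfhzeta\}$ so that its semi-basic part is manifest. Second, your final appeal to Theorem \StRef{Dec6} is redundant: if, as you assert, the reduced generator list coincides with the one Corollary \StRef{First20} produces for the Darboux pair $\{\hW/G,\cW/G\}$, the two ideals in \EqRef{SymRed9} are equal outright.
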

\begin{proof} We first remark that Theorem \StRef{SymRed1} implies that
         $\hV = \hW/G$ and $\cV = \cW/G$ define a Darboux pair and,  second,  that it suffices to check \EqRef{SymRed9} locally.

	Transversality implies that there are locally defined sub-bundles $\hZ\subset \hW^{(\infty)} \cap \cW$ and $\cZ \subset \hW \cap \cW^{(\infty)}$
	on which the natural pairing with $\Gamma$ (the infinitesimal generators for $G$) is non-degenerate at each point $x\in M$. It follows from \EqRef{SymRed5} that
\begin{subequations}
\begin{alignat}{2}
	\hW^{(\infty)} \cap \cW &= \bfq^*(\hV^{(\infty)} \cap   \cV)  \oplus \hZ,
&\quad
	\hW^{(\infty)} &= \bfq^*(\hV^{(\infty)}) \oplus \hZ,
\EqTag{SymRed10a}
\\[2\jot]
\hW \cap \cW^{(\infty)} &= \bfq^*(\hV \cap   \cV^{(\infty)})  \oplus \cZ,
&\quad
	\cW^{(\infty)} &= \bfq^*(\cV^{(\infty)}) \oplus \cZ,
\EqTag{SymRed10b}
\\[2\jot]
	\hW& = \bfq^*(\hV) \oplus \hZ,
&\quad
	\cW &= \bfq^*(\hV) \oplus \cZ,\quad\text{and}
\EqTag{SymRed10c}
\end{alignat}
\begin{equation}
	\cW \cap \hW = \bfq^*(\cV \cap \hV)  + \hZ =  \bfq^*(\cV \cap \hV)  + \cZ.
\EqTag{SymRed10d}
\end{equation}
\end{subequations}
	If $\bfhzeta$ and $\bfczeta$ are local basis of sections for $\hZ$ and $\cZ$ then,  because  $\bfhzeta \in \hW^{(\infty)}$ and
 $\bfczeta \in \cW^{(\infty)}$,  $\extd \bfhzeta \equiv 0 \mod  \hW^{(\infty)}$ and $\extd \bfczeta \equiv 0 \mod  \cW^{(\infty)}$, or
\begin{equation*}
	\extd \bfhzeta \equiv 0 \mod \{\, \bfq^*(\hV^{(\infty)}),\,\bfhzeta \, \}
\quad\text{and}\quad
	\extd \bfczeta \equiv 0 \mod \{\, \bfq^*(\cV^{(\infty)}),\, \bfczeta \, \}.
\end{equation*}
	These  equations, together with \EqRef{SymRed10c}, show that the Pfaffian systems $\hCalW$ and $\cCalW$ are algebraically generated as
\begin{equation}
	\hCalW = \{\,\bfq^*(\hCalV), \,\bfhzeta \,\}
	\quad\text{and}\quad
	\cCalW = \{\,\bfq^*(\cCalV),\, \bfczeta \,\}.
\end{equation}
	To complete the proof of the theorem, we now make the key observation that since  $\bfczeta \in \hW \cap \cW$ , \EqRef{SymRed10c} implies that
	we may express these forms as linear combinations of the $\bfhzeta$ and the forms in $\bfq^*(\hV \cap \cV)$.
	Hence $\cCalW$ is also algebraically generated as
	$\cCalW = \{\,\bfq^*(\cCalV), \bfczeta \,\}$.  We may list an explicit set of local generators for $\bfq^*(\hCalV)$ and  $\bfq^*(\cCalV)$
	using the 1-adapted coframe constructed for the Darboux pair $\{\,\hV, \cV\,\}$ in Theorem \StRef{First9}. The arguments given in the
	proof of Theorem \StRef{First20} can then  be repeated to show that
	$\hCalW \mycap \cCalW = \{\,\bfq^*(\hCalV \mycap \cCalV), \bfhzeta \,\}$
	and hence $(\hCalW \mycap \cCalW)_{\semibasic} = \bfq^*(\hCalV \mycap \cCalV)$, as required.
\end{proof}
\begin{Corollary}
\StTag{SymRed2}
	Let $W_1$ and $W_2$ be Pfaffian systems on manifolds $M_1$ and $M_2$
	with $W_1^{(\infty)} = W_2^{(\infty)} = 0$. Suppose that
\par
\medskip
\noindent
{\bf [i]} a Lie group $G$ acts freely on $M_1$ and $M_2$ and as  symmetry groups of
	both $W_1$ and $W_2$;
\par
\medskip
\noindent
{\bf [ii]} $G$  is transverse to both  $W_1$ and $W_2$; and
\par
\medskip
\noindent
{\bf [iii]} the diagonal action of $G$ on $M_1\times M_2$ is regular.
\par
\medskip
\noindent
	Then the quotient differential system
\begin{equation}
	\CalJ = (\CalW_1 + \CalW_2)/G
\end{equation}
	is a decomposable, Darboux integrable, differential system with respect to the Darboux pair
\begin{equation}
	\hU = (W_1 \oplus \Lambda^1(M_2))/G
	\quad\text{and}\quad
	\cU = (\Lambda^1(M_1) \oplus W_2)/G.
\EqTag{SymRed11}
\end{equation}
\end{Corollary}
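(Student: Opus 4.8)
The plan is to realize $\CalJ$ as the $G$-quotient of the contact union $\hCalW\mycap\cCalW$ coming from a \emph{trivial} Darboux pair and then to assemble the reduction theorems already proved. First I would apply Theorem \StRef{Trivial1} on $M_1\times M_2$: since $W_1^{(\infty)}=W_2^{(\infty)}=0$, the direct sum $J=W_1+W_2$ is a decomposable Pfaffian system whose singular systems $\hW=W_1\oplus\Lambda^1(M_2)$ and $\cW=\Lambda^1(M_1)\oplus W_2$ form a Darboux pair, and moreover $\CalW_1+\CalW_2=\hCalW\mycap\cCalW$. This reduces the problem to computing the $G$-quotient of $\hCalW\mycap\cCalW$ under the diagonal action.

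Next I would check the hypotheses needed to invoke Theorems \StRef{SymRed1} and \StRef{SymRed8} for the diagonal action of $G$ on $M_1\times M_2$. That this action preserves $\hW$ and $\cW$ is immediate, since $G$ preserves $W_1$ and $W_2$ and any diffeomorphism of a factor preserves the full $\Lambda^1$ of that factor; regularity of the diagonal action is hypothesis \textbf{[iii]}. The remaining requirement is transversality of $G$ to $\hW\cap\cW^{(\infty)}$ and $\hW^{(\infty)}\cap\cW$. Using $\hW^{(\infty)}=\pi_2^*\Lambda^1(M_2)$ and $\cW^{(\infty)}=\pi_1^*\Lambda^1(M_1)$ from \EqRef{Trivial4}, a short degree/factor bookkeeping identifies these intersections as the pullback subbundles $\hW\cap\cW^{(\infty)}=\pi_1^*W_1$ and $\hW^{(\infty)}\cap\cW=\pi_2^*W_2$.

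The step requiring the most care is verifying transversality of the diagonal action to these pullbacks; this is where hypotheses \textbf{[i]} and \textbf{[ii]} combine. At a point $(x_1,x_2)$ the infinitesimal generators are $\Gamma_{(x_1,x_2)}=\{(\gamma^{M_1}_{x_1},\gamma^{M_2}_{x_2}):\gamma\in\lieg\}$, while $\ann(\pi_1^*W_1)_{(x_1,x_2)}=\ann(W_1)_{x_1}\oplus T_{x_2}M_2$. If a generator lies in this annihilator then $\gamma^{M_1}_{x_1}\in\Gamma_{x_1}\cap\ann(W_1)_{x_1}=\{0\}$ by transversality of $G$ to $W_1$ (hypothesis \textbf{[ii]}); since $G$ acts freely on $M_1$ (hypothesis \textbf{[i]}) the infinitesimal action is injective, so $\gamma^{M_1}_{x_1}=0$ forces $\gamma=0$ and hence $\gamma^{M_2}_{x_2}=0$ as well. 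Thus $\Gamma_{(x_1,x_2)}\cap\ann(\pi_1^*W_1)_{(x_1,x_2)}=\{0\}$, and the symmetric argument using freeness on $M_2$ gives transversality to $\pi_2^*W_2$. I expect this interplay of the free action on one factor with transversality on that factor to be the only genuinely new computation; everything else is an application of prior results.

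To conclude, Theorem \StRef{SymRed1} shows that $\hU=\hW/G$ and $\cU=\cW/G$ define a Darboux pair on $\barM=(M_1\times M_2)/G$, with associated Pfaffian differential systems $\hCalU=\hCalW/G$ and $\cCalU=\cCalW/G$. Theorem \StRef{SymRed8} then gives $(\hCalW\mycap\cCalW)/G=(\hCalW/G)\mycap(\cCalW/G)$, so that, combining with Theorem \StRef{Trivial1},
\begin{equation*}
\CalJ=(\CalW_1+\CalW_2)/G=(\hCalW\mycap\cCalW)/G=(\hCalW/G)\mycap(\cCalW/G)=\hCalU\mycap\cCalU .
\end{equation*}
Finally, Corollary \StRef{First20} applied to the Darboux pair $\{\hU,\cU\}$ shows that $\hCalU\mycap\cCalU$ is a decomposable exterior differential system whose singular Pfaffian systems are exactly $\hU$ and $\cU$; by Definition \StRef{Intro4} this is precisely the assertion that $\CalJ$ is decomposable and Darboux integrable with respect to the Darboux pair \EqRef{SymRed11}.
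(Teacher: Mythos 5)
Your proposal is correct and follows essentially the same route as the paper's proof: Theorem \StRef{Trivial1} to set up the trivial Darboux pair and the identity $\CalW_1+\CalW_2=\hCalW\mycap\cCalW$, then Theorems \StRef{SymRed1} and \StRef{SymRed8} for the diagonal reduction, and Corollary \StRef{First20} to conclude. The only difference is that you spell out the transversality verification (freeness on a factor plus transversality to $W_i$ forces the generator to vanish), which the paper merely asserts in a parenthetical remark; that added detail is accurate.
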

\begin{proof} As in  Theorem \StRef{Trivial1}, define the Darboux pair
	$\hW = W_1 \oplus\Lambda^1(M_2)$ and $\cW = \Lambda^1(M_1) \oplus W_2$.
	To conclude from  Theorem \StRef{SymRed1} that
	$\hU = \hW/G$, $\cU = \cW/G$ are also a Darboux pair, we observe that {\bf [i]} and  {\bf[ii]} imply that
the diagonal action of $G$ on $M_1 \times M_2$ is transverse to the  Pfaffian systems $W_1$ and $W_2$ pulled back to
	$M_1\times M_2$ (a fact which is not true without {\bf [i]}. We also note that, by \EqRef{Trivial4},
\begin{equation}
	\hW \cap \cW^{(\infty)} = W_1
	\quad\text{and}\quad
	\hW^{(\infty)} \cap \cW = W_2.
\end{equation}
	We now use \EqRef{Trivial15} and Theorem \StRef{SymRed8} to deduce that
\begin{equation*}
	\CalJ = (\CalW_1 + \CalW_2)/G =  ( \hCalW \mycap \cCalW)/G =  (\hCalW/G) \mycap (\cCalW/G) = \hCalU \mycap\cCalU.
\end{equation*}
	An application of Corollary  \StRef{First20} completes the proof.
\end{proof}

\newpage
\section{The coframe adaptations for a Darboux pair}

	In this section we present a series of coframe adaptations for any Darboux pair $\{\,\hV,\, \cV \, \}$.
	These coframe adaptations lead to the proof of the following theorem.

\begin{Theorem}
\StTag{CofrAdapt1}
	Let  $\{\, \hV, \cV \,\}$ be a Darboux pair of Pfaffian differential systems on a manifold $M$.
	Then around each point of $M$ there are 2 coframes
\begin{equation}
	\{\, \htheta^i, \hpi^a, \cpi^\alpha\,\}
	\quad\text{and}\quad
	\{\, \ctheta^i, \hpi^a, \cpi^\alpha\,\}
\EqTag{CofrAdapt2}
\end{equation}
	such that
\begin{equation}
	\hV = \{\, \htheta^i, \hpi^a \,\}, \quad
	\hV^{\infty} =   \{\,\hpi^a \,\},\quad
	\cV= \{\,  \ctheta^i, \cpi^a \, \}, \quad
	\cV^{\infty} =   \{\,\cpi^a \,\}		
\EqTag{CofrAdapt3}
\end{equation}
	and with structure equations
\begin{equation}
\begin{aligned}
	\extd \htheta^i & = \frac12  G^i_{\alpha\beta} \cpi^\alpha \wedge \cpi^\beta + \frac12 C^i_{jk} \htheta^j \wedge \htheta^k \quad\text{and}
\\
	\extd \ctheta^i & = \frac12 H^i_{ab} \hpi^a \wedge \hpi^b - \frac12 C^i_{jk} \ctheta^j \wedge \ctheta^k.
\end{aligned}
\EqTag{CofrAdapt4}
\end{equation}
	The coefficients  $C^i_{jk}$ are the structure constants of a Lie algebra whose isomorphism class is an invariant of the
	Darboux pair  $\{\, \hV, \cV\,\}$.
\end{Theorem}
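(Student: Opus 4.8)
The plan is to produce the coframes \EqRef{CofrAdapt2}--\EqRef{CofrAdapt3} by a finite sequence of normalizations of the $1$-adapted coframe of Theorem \StRef{First9}, and then to extract the Lie-algebraic content of \EqRef{CofrAdapt4} by purely algebraic manipulation of the structure equations. Starting from a $1$-adapted coframe $\{\bftheta,\bfhsigma,\bfheta,\bfcsigma,\bfceta\}$, I would perform the successive adaptations of this section to produce forms $\hpi^a$, $\cpi^\alpha$ spanning $\hV^{(\infty)}$ and $\cV^{(\infty)}$ together with complementary forms $\htheta^i\in\hV$ and $\ctheta^i\in\cV$ realizing \EqRef{CofrAdapt3}; by integrability of the $V^{(\infty)}$ we then have $\extd\hpi^a\equiv0\bmod\{\hpi\}$ and $\extd\cpi^\alpha\equiv0\bmod\{\cpi\}$. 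The remaining freedom in $\htheta^i$ and $\ctheta^i$ is fixed step by step so as to kill in $\extd\htheta^i$ every monomial except those in $\cpi\wedge\cpi$ and $\htheta\wedge\htheta$ (and dually for $\ctheta^i$), using property \EqRef{Intro9} to exclude the $\hpi\wedge\cpi$ cross terms and $\extd^2=0$ together with Lemma \StRef{FirstLem1} to absorb the mixed $\htheta\wedge\hpi$ and $\htheta\wedge\cpi$ terms into redefinitions of $\htheta^i$. I expect the disentangling of the overlaps $\hV^{(\infty)}\cap\cV$ and $\hV\cap\cV^{(\infty)}$ and, above all, the verification that one can arrange the coefficient of $\htheta\wedge\htheta$ in $\extd\htheta^i$ and of $\ctheta\wedge\ctheta$ in $\extd\ctheta^i$ to be one and the same tensor $C^i_{jk}$ (with opposite signs) to be the principal obstacle; this matching is exactly what encodes the common symmetry group.

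Granting the normal form \EqRef{CofrAdapt4}, the first claim to establish is $\extd C^i_{jk}=0$. Differentiating the first equation of \EqRef{CofrAdapt4} and using $\extd^2\htheta^i=0$, I collect the component along $\htheta^j\wedge\htheta^k\wedge\cpi^\gamma$. Since $\cV^{(\infty)}$ is integrable, $\extd\cpi^\alpha$ carries a factor of $\cpi$, so neither the $G$-term nor the substitution $\extd\htheta^j=\tfrac12 G^j_{\alpha\beta}\cpi^\alpha\wedge\cpi^\beta+\tfrac12 C^j_{lm}\htheta^l\wedge\htheta^m$ produces a $\htheta\wedge\htheta\wedge\cpi$ monomial; the only such contribution is $\tfrac12(\partial_{\cpi^\gamma}C^i_{jk})\,\cpi^\gamma\wedge\htheta^j\wedge\htheta^k$, whence $\extd C^i_{jk}$ has no $\cpi$-component. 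By \EqRef{CofrAdapt3} the annihilator $\hH=\ann\hV$ is spanned by the frame fields dual to the $\cpi^\alpha$, so $\hH$ annihilates every $C^i_{jk}$ and Lemma \StRef{FirstLem1} yields $\extd C^i_{jk}\in\hV^{(\infty)}$. The identical computation applied to the second equation of \EqRef{CofrAdapt4} gives $\extd C^i_{jk}\in\cV^{(\infty)}$, and property \EqRef{Intro8} forces $\extd C^i_{jk}\in\hV^{(\infty)}\cap\cV^{(\infty)}=\{0\}$, so the $C^i_{jk}$ are locally constant.

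With the $C^i_{jk}$ constant, I re-examine $\extd^2\htheta^i=0$ in its $\htheta^j\wedge\htheta^k\wedge\htheta^l$ component, where the $G$-term cannot contribute and the $\extd C$-term now vanishes; what remains is $C^i_{mj}C^m_{kl}+C^i_{mk}C^m_{lj}+C^i_{ml}C^m_{jk}=0$, the Jacobi identity. Thus the $C^i_{jk}$ are the structure constants of a Lie algebra $\lieg$, and the reflected sign in the $\ctheta$-equation merely presents the opposite structure constants of the same $\lieg$.

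Finally, for invariance of the isomorphism class I would analyze the residual freedom: any two coframes satisfying \EqRef{CofrAdapt3}--\EqRef{CofrAdapt4} restrict to bases of $\hV/\hV^{(\infty)}$, hence their $\htheta$-parts are related by an invertible $P^i_j$ modulo $\{\hpi,\cpi\}$. Comparing the $\htheta\wedge\htheta$ terms of the two expressions for $\extd\htheta^i$ gives $C'^i_{jk}=P^i_l(P^{-1})^m_j(P^{-1})^n_k\,C^l_{mn}$; since both $C$ and $C'$ are constant arrays this exhibits the two Lie algebras as isomorphic, so the isomorphism class of $\lieg$ is independent of all choices. The only place I anticipate genuine difficulty, beyond this bookkeeping, is carrying the adaptations through so that the two equations in \EqRef{CofrAdapt4} share the single tensor $C^i_{jk}$; once that normal form is in hand, constancy, Jacobi, and invariance follow algebraically as above.
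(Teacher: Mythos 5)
Your handling of the consequences of the normal form is sound: given \EqRef{CofrAdapt4} with a single shared tensor $C^i_{jk}$, your extraction of $\extd C^i_{jk}=0$ from the $\htheta\wedge\htheta\wedge\cpi$ and $\ctheta\wedge\ctheta\wedge\hpi$ components of $\extd^2=0$, Lemma \StRef{FirstLem1}, and \EqRef{Intro8} is correct and even a little slicker than the paper, which proves $C^i_{jk}\in\Inv(\hV)$ and $K^i_{jk}\in\Inv(\cV)$ already at the $4$-adapted stage; your Jacobi and invariance arguments coincide with the paper's part {\bf[ii]}. The gap is that you never construct the normal form, and the mechanism you propose for it would fail. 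You assert that the $\hpi^a\wedge\hpi^b$ and $\hpi^a\wedge\htheta^j$ terms can be ``absorbed into redefinitions of $\htheta^i$'' using $\extd^2=0$ together with Lemma \StRef{FirstLem1}. Those identities only show that the coefficients $M^i_{aj}$ and $A^i_{ab}$ lie in $\Inv(\hV)$ and satisfy the compatibility conditions \EqRef{Fifth7}--\EqRef{Fifth9}; they do not remove the terms. The paper's Section 4.4 shows the removal is genuinely non-algebraic: one must split on the Levi decomposition of the Vessiot algebra, use that derivations of a semi-simple algebra are inner (Case I), and in the abelian/solvable cases integrate the linear Pfaffian system \EqRef{Fifth56} by the Frobenius theorem and solve $\extd\phi^i=-\chi^i$ for a closed $2$-form by quadrature --- the paper explicitly flags this as the only step in the whole construction requiring integration rather than differentiation and linear algebra.

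The second, more fundamental omission is the one you yourself call ``the principal obstacle'': arranging that the quadratic tensors in the two equations of \EqRef{CofrAdapt4} be one and the same constant array with opposite signs. This is not a terminal normalization; in the paper it is the output of the second and third coframe adaptations, which use iterated Lie brackets to build two coframes $\bfthetaX$, $\bfthetaY$ whose dual frames commute, $[\,\bfX,\bfY\,]=0$ (Theorem \StRef{Third7}). That commutativity, encoded as $X_j(Q^i_k)=C^i_{j\ell}Q^\ell_k$ for the transition matrix $Q$ of \EqRef{Fourth1}, forces the relation \EqRef{Fourth3} between the two structure tensors, and evaluation at a base point then yields the matrices $P(\hI^a)$ and $R(\cI^\alpha)$ of \EqRef{Fourth21}--\EqRef{Fourth22} that render both tensors equal to a single constant array up to sign. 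Without this input there is no a priori relation between the $\htheta\wedge\htheta$ coefficient and the $\ctheta\wedge\ctheta$ coefficient, and your constancy argument --- which needs both equations with the same $C^i_{jk}$ --- cannot begin. In short, the proposal correctly disposes of the easy algebraic half but omits the construction that is the actual content of the theorem.
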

	
\par
	We have already constructed the first coframe adaptation in Section 2.
	The second coframe (Section 4.1)  adaptation provides us with a stronger form for the decomposition of the
	structure equations than that initially  given by \EqRef{Dec5}.
	The third and fourth coframe adaptations (Section 4.2 and 4.3) lead to the definition
	of the Vessiot Lie algebra  $\vess(\hV, \cV)$, with structure constants  $C^i_{jk}$,
	as a fundamental invariant for any Darboux integrable differential system.
	The importance of this algebra was
	first observed by Vassiliou \cite{vassiliou:2001a} who introduced the notion of
	the tangential symmetry algebra for certain  special classes of
	Darboux integrable systems. The tangential symmetry algebra is a Lie algebra
	of vector fields $\Gamma$ on $M$ which is isomorphic (as an abstract Lie algebra) to the Vessiot Lie algebra.
	The tangential symmetry algebra  also plays a significant  role in
	Eendebak's projection method \cite{eendebak:2007a} for Darboux integrable equations.

	Of course, the tangential symmetry algebra $\Gamma$ determines a local transformation group
	on $M$ but this transformation group will generally not be the correct one  required for
	constructing the superposition formula (See Example 6.1).  One  final coframe adaptation  (Section 4.4) is needed to arrive
	at the proper transformation groups (See Definition \StRef{SuperForm33}) for this construction  --
	this is the coframe  that satisfies the structure equations given in  Theorem \StRef{CofrAdapt1}.

	The coframe adaptations 1 -- 4 involve only differentiations and elementary linear algebra operations.
	For the fifth coframe adaptation, in the
	case where the Vessiot algebra is not semi-simple, one must integrate a linear system of
	total differential equations (see \EqRef{Fifth56})
	and also solve the exterior derivative equation $d(\alpha) = \beta$,  where $\beta$
	is a closed 1 or 2-form (see \EqRef{Fifth66} and \EqRef{Fifth71}).
\subsection{The second adapted coframe for a Darboux pair.}

	Let $\{ \hV\, , \cV \}$ be a Darboux pair and let
	$\{\,\bftheta,\, \bfhsigma,\, \bfheta,\, \bfcsigma, \,\bfceta \, \}$ be a 1-adapted coframe with dual frame
	$\{\,  \bfvecttheta,\, \bfvecthsigma,\, \bfvectheta,\, \bfvectcsigma,\, \bfvectceta \, \}$.  From this  point forward, we are solely interested in
	adjustments to the forms $\bftheta$ that will simplify the 1-adapted structure equations
\begin{equation}
	\extd \bftheta	= \bfA\,  \bfhsigma \wedge \bfhsigma + \bfB\, \bfcsigma \wedge \bfcsigma
	\mod \{ \,\bftheta,\, \bfheta,\, \bfceta \,\}.
\EqTag{Second4}
\end{equation}
        Of the ``mixed"  wedge products
\begin{equation}
	\bfhsigma \wedge\bfcsigma,\quad  \bfhsigma\wedge\bfceta,
	\quad \bfheta \wedge\bfcsigma, \quad\text{and}\quad \bfheta \wedge\bfceta
\EqTag{Second3}
\end{equation}
	the products $\bfhsigma \wedge\bfcsigma$ are the only
	ones definitely not present in \EqRef{Second4}. We  shall now show that it is possible to make an adjustment to the 1-forms $\bftheta$
	of the type
\begin{equation}
	\bfthetapr = \bftheta + \bfP\, \bfheta + \bfQ \, \bfceta
\EqTag{Second20}
\end{equation}
	so that the structure equations for the modified forms $\bfthetapr$ are free of all the wedge products \EqRef{Second3}.   We begin with the following simple observation.
\begin{Lemma} If  $\{\,  \bfvecttheta,\, \bfvecthsigma,\, \bfvectheta,\, \bfvectcsigma,\, \bfvectceta \, \}$ is the dual frame to a
1-adapted coframe for the Darboux pair $\{\, \hV, \cV \, \}$, then
\begin{equation}
	[\, \bfvecthsigma, \,\bfvectcsigma \,] = 0.
\EqTag{Second9}
\end{equation}	
\end{Lemma}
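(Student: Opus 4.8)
The plan is to read the bracket off the structure equations of the $1$-adapted coframe by dualizing. For any coframe $1$-form $\omega$ and the two frame fields $\vecthsigma,\vectcsigma$, Cartan's formula $\extd\omega(\vecthsigma,\vectcsigma)=\vecthsigma\big(\omega(\vectcsigma)\big)-\vectcsigma\big(\omega(\vecthsigma)\big)-\omega([\vecthsigma,\vectcsigma])$ collapses: since $\omega(\vecthsigma)$ and $\omega(\vectcsigma)$ are constants (each $0$ or $1$), the first two terms drop and we are left with $\omega([\vecthsigma,\vectcsigma])=-\extd\omega(\vecthsigma,\vectcsigma)$. As $\{\bftheta,\bfhsigma,\bfheta,\bfcsigma,\bfceta\}$ is a coframe, it therefore suffices to verify $\extd\omega(\vecthsigma,\vectcsigma)=0$ as $\omega$ runs over these five blocks; this forces every component of $[\vecthsigma,\vectcsigma]$ to vanish. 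I will repeatedly use the single pairing fact supplied by the dual frame: $\vecthsigma$ is annihilated by every coframe form except $\bfhsigma$, and $\vectcsigma$ is annihilated by every coframe form except $\bfcsigma$; in particular the forms $\bftheta$, $\bfheta$, $\bfceta$ annihilate \emph{both} $\vecthsigma$ and $\vectcsigma$.

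Next I would dispatch the four ``easy'' structure equations of Theorem \StRef{First9}. The forms $\bfhsigma$ and $\bfcsigma$ are closed, so $\extd\bfhsigma$ and $\extd\bfcsigma$ contribute nothing. In $\extd\bfheta=\bfhE\,\bfheta\wedge\bfhsigma+\bfhF\,\bfhsigma\wedge\bfhsigma$ every factor of every term lies in $\{\bfheta,\bfhsigma\}$ and hence annihilates $\vectcsigma$; a decomposable $2$-form $\alpha\wedge\beta$ with $\alpha(\vectcsigma)=\beta(\vectcsigma)=0$ evaluates to $\alpha(\vecthsigma)\beta(\vectcsigma)-\alpha(\vectcsigma)\beta(\vecthsigma)=0$ on the pair $(\vecthsigma,\vectcsigma)$, so $\extd\bfheta(\vecthsigma,\vectcsigma)=0$. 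The symmetric computation applied to $\extd\bfceta=\bfcE\,\bfceta\wedge\bfcsigma+\bfcF\,\bfcsigma\wedge\bfcsigma$, whose factors all annihilate $\vecthsigma$, gives $\extd\bfceta(\vecthsigma,\vectcsigma)=0$.

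The only step that needs care is $\bftheta$, because Theorem \StRef{First9} records its structure equation only modulo $\{\bftheta,\bfheta,\bfceta\}$, namely $\extd\bftheta\equiv\bfA\,\bfhsigma\wedge\bfhsigma+\bfB\,\bfcsigma\wedge\bfcsigma\ \mathrm{mod}\ \{\bftheta,\bfheta,\bfceta\}$. The two displayed terms vanish on $(\vecthsigma,\vectcsigma)$ exactly as above ($\bfhsigma\wedge\bfhsigma$ kills $\vectcsigma$, $\bfcsigma\wedge\bfcsigma$ kills $\vecthsigma$). For the suppressed terms I use the key observation that every summand hidden in the congruence is a wedge $\gamma\wedge\delta$ with at least one factor, say $\gamma$, drawn from $\{\bftheta,\bfheta,\bfceta\}$; such a $\gamma$ annihilates both $\vecthsigma$ and $\vectcsigma$, so $(\gamma\wedge\delta)(\vecthsigma,\vectcsigma)=\gamma(\vecthsigma)\delta(\vectcsigma)-\gamma(\vectcsigma)\delta(\vecthsigma)=0$. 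Hence $\extd\bftheta(\vecthsigma,\vectcsigma)=0$ as well. Having shown $\omega([\vecthsigma,\vectcsigma])=-\extd\omega(\vecthsigma,\vectcsigma)=0$ for every coframe form $\omega$, I conclude $[\vecthsigma,\vectcsigma]=0$, i.e.\ $[\bfvecthsigma,\bfvectcsigma]=0$. The whole argument hinges on this ``mod'' step: once one notes that the ambiguity in $\extd\bftheta$ lives in the span of forms that annihilate the two frame fields in question, the incompleteness of the structure equation for $\bftheta$ causes no difficulty.
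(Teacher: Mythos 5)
Your proof is correct and is essentially the same argument the paper gives: the paper's one-line proof observes that no structure equation of a 1-adapted coframe contains a $\bfhsigma\wedge\bfcsigma$ term, and your write-up simply makes this explicit via $\omega([\bfvecthsigma,\bfvectcsigma])=-\extd\omega(\bfvecthsigma,\bfvectcsigma)$ together with a term-by-term check, including the correct handling of the terms suppressed by the congruence mod $\{\bftheta,\bfheta,\bfceta\}$.
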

\begin{proof}
	It suffices to note that none of the structure equations for a  1-adapted coframe contain any of the wedge products
	$\bfhsigma \wedge \bfcsigma$.
\end{proof}
	The construction of  the second adapted coframe is completely  algebraic in that only differentiations
	and linear algebraic operations are involved. Let
	$\hS_a =  \vecthsigma$ and $\hS_\alpha = \vectcsigma$
	and define two sequences of  vector fields inductively by 			
\begin{equation}
       \hS_{a_1a_2\cdots a_\ell} = [\,\hS_{a_1a_2\cdots a_{\ell-1}}, \, \hS_{a_\ell} ,\, ]
	\quad\text{and}\quad
	\cS_{\alpha_1\alpha_2\cdots \alpha_\ell} =
	[\,\cS_{\alpha_1\alpha_2\cdots \alpha_{\ell-1}},\, \cS_{\alpha_\ell}\,].
\EqTag{Second8}
\end{equation}
	Because the vector fields $\hS_{a_\ell} \in \hH$,
	the vector fields $S_{a_1a_2\cdots a_\ell}$ belong to $\hH^{(\infty)}$ and are therefore
	linear combinations of the vector fields $\{\bfvecttheta, \, \bfvecthsigma, \, \bfvectheta \}$. However, because
	$\extd \bf \bfhsigma = 0 $, one can deduce by a straightforward induction that in fact
\begin{equation}
	S_{a_1a_2\cdots a_\ell} \in \text{span}\{\,  \bfvecttheta,\, \bfvectheta \, \}
	\quad\text{and, likewise,} \quad
	\cS_{\alpha_1\alpha_2\cdots \alpha_\ell}\in \text{span}\{\,  \bfvecttheta,\, \bfvectceta \, \}.
\EqTag{Second15}
\end{equation}
	The Jacobi identity  also shows that
\begin{equation}
	\hH^{(\infty)} = \text{span} \{\, \hS_{a_1a_2\cdots a_\ell} \,\}_{\ell\geq 1}
	\quad\text{and} \quad
	\cH^{(\infty)} = \text{span} \{\, \cS_{\alpha_1\alpha_2\cdots \alpha_\ell} \,\}_{\ell\geq 1}.
\EqTag{Second10}
\end{equation}	
	From \EqRef{Second9} it also follows that
\begin{equation}
	[\, \hS_{a_1a_2 \cdots a_k}, \cS_{\alpha_1\alpha_2\cdots \alpha_\ell}\,] = 0.
\EqTag{Second7}
\end{equation}
\par
	for $k,l \geq 1$. By virtue  of \EqRef{Second10} we can  choose a specific collection of  iterated Lie brackets
	$\hS_{a_1a_2\cdots a_\ell}$ and   $\cS_{\alpha_1\alpha_2\cdots \alpha_\ell}$, $\ell \geq 2$,
	which complete $\{\,\bfvecttheta,\,\bfvecthsigma\, \}$ and $\{\,\bfvecttheta, \,\bfvectcsigma\, \}$
	to local bases for  $\hH^{(\infty)}$ and $\cH^{(\infty)}$ respectively. Denote the iterated brackets so chosen by
	$\{\,\bfvecthetapr\, \}$ and $\{\,\bfvectcetapr\, \}$.  Since
	$\text{span} \{\bfvecttheta, \, \bfvecthsigma, \, \bfvectheta \}$
	$= \text{span} \{\bfvecttheta, \, \bfvecthsigma, \, \bfvecthetapr \}$
	and
	$\text{span} \{\bfvecttheta, \, \bfvectcsigma, \,  \bfvectceta \}
	= \text{span} \{\bfvecttheta, $
	$\, \bfvectcsigma, \, \bfvectcetapr \}$,
	this gives us a preferred 0-adapted frame
	$\{\,  \bfvecttheta, \bfvecthsigma, \bfvecthetapr, \bfvectcsigma, \bfvectcetapr \, \}$ where, on account of \EqRef{Second7},
\begin{equation}
        [\,\bfvecthsigma,\, \bfvectcsigma\,] = 0, \quad  [\,\bfvecthsigma,\, \bfvectcetapr\,] = 0 \quad
	[\,\bfvecthetapr,\, \bfvectcsigma\,] = 0, \quad [\,\bfvecthetapr,\, \bfvectcetapr\,] = 0.
\EqTag{Second12}
\end{equation}
	It follows that the coframe $\{\,\bfthetapr,\, \bfhsigmapr,\, \bfhetapr,\, \bfcsigmapr,\,   \bfcetapr \, \}$ dual
	to $\{\,  \bfvecttheta,\, \bfvecthsigma,\, \bfvecthetapr,\, \bfvectcsigma,\, \bfvectcetapr \, \}$
	is a 0-adapted coframe for the Darboux pair $\{\hV,  \cV \}$.  Equations \EqRef{Second15} imply \EqRef{Second20}.

	On account of \EqRef{Second12}, the structure equations for the
	forms $\{\,\bfthetapr\, \}$ are free of all
	the wedge products $\bfhsigmapr \wedge\bfcsigmapr$,  $\bfhsigmapr\wedge\bfcetapr$,
	$\bfhetapr \wedge\bfcsigmapr$, and  $\bfhetapr \wedge\bfcetapr$.  We can express this result by writing
\begin{equation*}
	\extd \bfthetapr  \in \Omega^2(\hV^{(\infty)}) + \Omega^2(\cV^{(\infty)})
	\mod \{ \,\bfthetapr \,\}.
\end{equation*}
	The coframe
	$\{\, \bfthetapr,  \bfhsigma,\, \bfheta,\, \bfcsigma, \,\bfceta \, \}$ therefore satisfies
	the criteria of the following theorem.
\begin{Theorem}
\StTag{SecondThm1}
	Let $\{\, \hV, \cV\}$ be a Darboux pair on  a manifold $M$. Then about each point of $M$ there exists a
	1-adapted coframe   $\{\,\bftheta,\, \bfhsigma,\, \bfheta,\, \bfcsigma, \,\bfceta \, \}$
	with structure equations  \EqRef{First10} and 	
\begin{equation}
	\extd \bftheta
	=
	\bfA\, \bfhpi \wedge \bfhpi   +
	\bfB\, \bfcpi    \wedge \bfcpi
\quad
	\mod \{\,\bftheta\,\},
\EqTag{Second11}
\end{equation}
	where $\bfhpi$ and $\bfcpi$ denote the tuples of forms $(\,\bfhsigma,\, \bfheta\,)$ and $(\,\bfcsigma,\, \bfceta\,)$.
\end{Theorem}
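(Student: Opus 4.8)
The plan is to leave the forms $\bfhsigma, \bfheta, \bfcsigma, \bfceta$ of a 1-adapted coframe untouched and to modify only the tuple $\bftheta$, working on the dual frame side, where the obstruction to removing the mixed products \EqRef{Second3} from \EqRef{Second4} is carried by Lie brackets. First I would fix any 1-adapted coframe supplied by Theorem \StRef{First9}, pass to its dual 0-adapted frame, and record the two facts that drive everything: $\extd\bfhsigma = \extd\bfcsigma = 0$ from \EqRef{First10}, and the base commutation $[\bfvecthsigma, \bfvectcsigma] = 0$ of the Lemma above (equation \EqRef{Second9}).

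Next I would build the two families of iterated brackets \EqRef{Second8}. The key intermediate claim is \EqRef{Second15}: each higher bracket $\hS_{a_1\cdots a_\ell}$ lies in $\text{span}\{\bfvecttheta, \bfvectheta\}$ and each $\cS_{\alpha_1\cdots\alpha_\ell}$ in $\text{span}\{\bfvecttheta, \bfvectceta\}$. This is proved by induction: the brackets automatically remain in the integrable derived bundles $\hH^{(\infty)}, \cH^{(\infty)}$, and the closedness $\extd\bfhsigma = 0$ (resp. $\extd\bfcsigma = 0$) annihilates the $\bfvecthsigma$- (resp. $\bfvectcsigma$-) component of every bracket of length $\ell \geq 2$, via $\bfhsigma([W, \bfvecthsigma]) = -\extd\bfhsigma(W, \bfvecthsigma) = 0$ (the remaining Cartan-formula terms vanishing since $\bfhsigma(\bfvecthsigma)$ is constant and, inductively, $\bfhsigma(W) = 0$). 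Because these iterated brackets together with $\bfvecttheta$ and the sigma-vectors span the full derived bundles, I may select from them tuples $\bfvecthetapr$ and $\bfvectcetapr$ that complete $\{\bfvecttheta, \bfvecthsigma\}$ to a frame of $\cH^{(\infty)}$ and $\{\bfvecttheta, \bfvectcsigma\}$ to a frame of $\hH^{(\infty)}$. Propagating $[\bfvecthsigma, \bfvectcsigma] = 0$ through the Jacobi identity gives the vanishing of all cross-brackets between the two families (equation \EqRef{Second7}), so this preferred 0-adapted frame $\{\bfvecttheta, \bfvecthsigma, \bfvecthetapr, \bfvectcsigma, \bfvectcetapr\}$ satisfies the four commutation relations \EqRef{Second12}.

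Finally I would dualize and read off the structure equations. The dual coframe $\{\bfthetapr, \bfhsigma, \bfheta, \bfcsigma, \bfceta\}$ is again 0-adapted because only the $\bftheta$-directions were altered, and \EqRef{Second15} shows the change has exactly the admissible shape $\bfthetapr = \bftheta + \bfP\,\bfheta + \bfQ\,\bfceta$ of \EqRef{Second20}; in particular $\hV, \hV^{(\infty)}, \cV, \cV^{(\infty)}$ and the structure equations \EqRef{First10} are preserved, so the coframe remains 1-adapted. For a coframe 1-form $\omega$ and frame vectors $X, Y$ (whose pairings with $\omega$ are constant) one has $\extd\omega(X, Y) = -\omega([X, Y])$, so the coefficient of any mixed wedge product $\bfhpi \wedge \bfcpi$ in $\extd\bfthetapr$ equals $-\bfthetapr$ applied to one of the four cross-brackets in \EqRef{Second12} and therefore vanishes. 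This is precisely $\extd\bfthetapr \in \Omega^2(\hV^{(\infty)}) + \Omega^2(\cV^{(\infty)}) \bmod \{\bfthetapr\}$, i.e. \EqRef{Second11}.

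The step I expect to be the crux is \EqRef{Second15}, not the concluding dualization: one must check that the iterated brackets stay inside the complements $\text{span}\{\bfvecttheta, \bfvectheta\}$ and $\text{span}\{\bfvecttheta, \bfvectceta\}$ rather than merely inside $\cH^{(\infty)}$ and $\hH^{(\infty)}$, and it is exactly the closedness of $\bfhsigma$ and $\bfcsigma$ that removes the unwanted sigma-components. Once \EqRef{Second7} is established, the vanishing of the cross-commutators is automatic for any admissible choice of completing brackets, so the remaining linear-algebra bookkeeping is routine.
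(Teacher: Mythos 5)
Your proposal is correct and follows essentially the same route as the paper's own argument: the iterated brackets \EqRef{Second8}, the induction giving \EqRef{Second15} from $\extd\bfhsigma=\extd\bfcsigma=0$, the Jacobi-identity propagation of $[\bfvecthsigma,\bfvectcsigma]=0$ to \EqRef{Second7}, and the final dualization. Your explicit Cartan-formula justification of the induction step is a detail the paper leaves implicit, but the proof is the same.
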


	Coframes which satisfy the conditions of Theorem \StRef{FirstLem1} are call
	{\deffont 2-adapted coframes}.

\newpage
\subsection{The third adapted coframes for a Darboux pair}
\par
	Written out in full, the structure equations \EqRef{Second11} are
\begin{equation}
	\extd \bftheta	= \bfA\,  \bfhpi \wedge \bfhpi + \bfB\, \bfcpi\wedge \bfcpi  +
	\bfC \, \bftheta \wedge \bftheta + \bfM \, \bfhpi \wedge \bftheta + \bfN \, \bfcpi \wedge \bftheta.
\EqTag{Third1}
\end{equation}
	We obtain the third coframe reduction for a Darboux pair by showing that a change of coframe
	$\bfthetapr = \bfP \,\bftheta$ can be made so as to eliminate either all the
	wedge products  $\bfhpi \wedge \bftheta$  \underline{or} all the  wedge products  $\bfcpi \wedge \bftheta$ in \EqRef{Third1}.
        The construction of this coframe uses another set of iterated  Lie brackets.
\par
	We start with a 2-adapted coframe $\{\,\bftheta,\, \bfhsigma,\, \bfheta,\, \bfcsigma,\, \bfceta,\}$, where
	$\bfhpi = (\,\bfhsigma,\, \bfheta\,)$ and $\bfcpi= (\,\bfcsigma,\, \bfceta\,)$,
	and introduce the provisional frame $\{\,\bftheta,\, \bfhiota, \bfciota\}$,
	where  $\bfhiota = \extd \bfhI$ and $\bfciota = \extd \bfcI$.  This is not a 0-adapted coframe although it is the case that
\begin{equation}
	\text{span}\{\,  \bfhiota\,\} = \text{span}\{\,  \bfhpi\,\}
	\quad\text{and}\quad
	\text{span}\{\,  \bfciota\,\} = \text{span}\{\,  \bfcpi\,\}.
\EqTag{Third12}
\end{equation}
	From \EqRef{Third1} and \EqRef{Third12} we find that the structure equations for this coframe are
\begin{equation}
\begin{gathered}
	\extd\bfhiota  = 0, \quad \extd\bfciota = 0,
	\qquad\text{and}\qquad
\\
	\extd \bftheta = \bfA\, \bfhiota   \wedge  \bfhiota
	+ \bfB\,  \bfciota\wedge \bfciota
	+ \bfC \, \bftheta \wedge \bftheta + \bfM \, \bfhiota \wedge \bftheta + \bfN \, \bfciota \wedge \bftheta.
\end{gathered}
\EqTag{Third2}
\end{equation}
	Denote the dual to this provisional coframe by $\{\, \bfvecttheta, \bfhU, \bfcU\}$. Since
\begin{equation*}
	\text{span}\{\, \bfvecthsigma \, \} = \ann\, \cV = \ann\, \{ \bftheta, \bfhiota, \bfcsigma \}
	\subset \ann\, \{ \bftheta, \bfhiota \} =  \text{span} \{\,  \bfcU \, \},
\end{equation*}
	we have that
\begin{equation}
	\bfvecthsigma   \subset \text{span} \{\,  \bfcU \, \}
	\quad\text{and}\quad
	\bfvectcsigma  \subset \text{span} \{\, \bfhU\, \}.
\EqTag{Third3}
\end{equation}

	From  \EqRef{Third2} it follows that the   structure equations for the vectors fields $\bfhU$ and $\bfhU$ are
\begin{equation}
\begin{gathered}
\begin{aligned}{}
	[\, \bfhU,\, \bfhU  \,]& = -\bfA \bfvecttheta,
&	\quad [\, \bfhU, \, \bfvecttheta\,] & =  - \bfM \bfvecttheta,
\\
	[\, \bfcU, \, \bfcU\,]
&  	= - \bfB \bfvecttheta,
&	\quad [\, \bfcU, \, \bfvecttheta\,]
&	= - \bfN \bfvecttheta,
\end{aligned}
\\
\text{and}\quad [\, \bfcU, \, \bfhU\,] = 0.
\end{gathered}
\EqTag{Third4}
\end{equation}
	As in Section 4.2, define two sequences of vector  fields inductively by
\begin{equation}
       \hU_{a_1a_2\cdots a_\ell} = [\,\hU_{a_1a_2\cdots a_{\ell-1}}, \, \hU_{a_\ell} \, ]
	\quad\text{and}\quad
	\cU_{\alpha_1\alpha_2\cdots \alpha_\ell} =
	[\,\hU_{\alpha_1\alpha_2\cdots \alpha_{\ell-1}}\, \hU_{\alpha_\ell}\,].
\EqTag{Third5}
\end{equation}	
        A simple induction argument, based upon the last of \EqRef{Third4}  and the Jacobi identity, shows that
\begin{equation}
	[\, \hU_{a_1a_2 \cdots a_k}, \cU_{\alpha_1\alpha_2\cdots \alpha_\ell}\,] = 0.
\EqTag{Third6}
\end{equation}
	for $k,l \geq 1$. On account of \EqRef{First5}, \EqRef{First6} and \EqRef{Third3} the  iterated brackets $\hU_{a_1a_2\cdots a_\ell}$
	will span all of $\cH$ and therefore,
	by the first two equations in  \EqRef{First6}, we can  choose a basis  $\bfX$  for
	$K= \hH ^{(\infty)}  \cap \cH ^{(\infty)}$
	(see \EqRef{First5}) consisting of a linear 	
	independent set of the vector fields $\hU_{a_1a_2\cdots a_\ell}$.
	Alternatively,  we can choose  a basis  $\bfY$  for $K$ consisting
	of a linear independent set of the vector fields $\cU_{a_1a_2\cdots a_\ell}$.
	Due to \EqRef{Third6} 	these two bases for $K$ satisfy
\begin{equation}
	[\, \bfX,\, \bfcU\,] = 0, \quad [\,\bfX,\, \bfY\,] = 0, \quad [\,\bfY,\, \bfhU\,] = 0.
\EqTag{Third22}
\end{equation}
	Denote the dual of the coframe  $\{\bfX,  \bfhU,\, \bfcU\, \}$  by
	$\{\,\bfthetaX,\, \bfhiota\, , \bfciota\,\}$ and  the dual of the coframe $\{\bfY,  \bfhU,\, \bfcU\, \}$ by
	$\{\,\bfthetaY,\, \bfhiota\,  ,\bfciota\, \}$.

\begin{Theorem}
\StTag{Third7}
	Let $\{\, \hV, \cV\}$ be a Darboux pair on  a manifold $M$. Then about each point of $M$ there are two
	2-adapted coframes
\begin{equation*}
	\{\,\bfthetaX,\, \bfhsigma,\, \bfheta,\, \bfcsigma, \,\bfceta \, \}
	\quad\text{and}\quad
	\{\,\bfthetaY,\, \bfhsigma,\, \bfheta,\, \bfcsigma, \,\bfceta \, \}
\end{equation*}
	with structure equations   \EqRef{First10},	
\begin{align}
	\extd \bfthetaX
&	=
	\bfA\, \bfhpi \wedge \bfhpi  +
	\bfB\, \bfcpi \wedge \bfcpi
        +\bfC\,  \bfthetaX \wedge \bfthetaX
	+\bfM\, \bfhpi \wedge \bfthetaX,
\EqTag{Third8}
\\[-5\jot]
\intertext{and}\notag
\\[-12\jot]
	\extd \bfthetaY
&	=
	\bfE\, \bfhpi \wedge \bfhpi  +
	\bfF\, \bfcpi \wedge \bfcpi
        +\bfK\, \bfthetaY \wedge \bfthetaY
	+\bfN\, \bfcpi \wedge \bfthetaY .
\EqTag{Third9}	
\end{align}
	Moreover, $\text{\rm span} \{\,\bfthetaX \} = \text{\rm span} \{\,\bfthetaY \}$
	and the vector fields $\bfX$, $\bfY$ belonging to the dual frames
	$\{\, \bfX, \,\bfvecthsigma,\, \bfvectheta,\, \bfvectcsigma,\, \bfvectceta \, \}$ and
	$\{\, \bfY, \, \,\bfvecthsigma,\, \bfvectheta,\, \bfvectcsigma,\, \bfvectceta\, \}$  satisfy
\begin{equation}
[\,\bfX, \bfY] = 0.
\EqTag{Third10}
\end{equation}
\end{Theorem}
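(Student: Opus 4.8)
The plan is to read the theorem off the dual-frame construction just assembled, since $\bfthetaX$ and $\bfthetaY$ are already defined as the duals of the frames $\{\bfX,\bfhU,\bfcU\}$ and $\{\bfY,\bfhU,\bfcU\}$. First I would verify that $\{\bfthetaX,\bfhsigma,\bfheta,\bfcsigma,\bfceta\}$ and $\{\bfthetaY,\bfhsigma,\bfheta,\bfcsigma,\bfceta\}$ are genuine $2$-adapted coframes. The key observation is that $\bfX$ and $\bfY$ are each merely a new basis of $K=\hH^{(\infty)}\cap\cH^{(\infty)}=\text{span}\{\bfvecttheta\}$ (by \EqRef{First6}), while the remaining frame vectors $\bfhU,\bfcU$ are unchanged; hence the dual forms $\bfthetaX$ and $\bfthetaY$ are linear combinations of the original forms $\bftheta$ alone. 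This immediately yields $\text{span}\{\bfthetaX\}=\text{span}\{\bftheta\}=\text{span}\{\bfthetaY\}$, which is one of the asserted conclusions, and it shows that the spans defining $\hV$, $\cV$, $\hV^{(\infty)}$ and $\cV^{(\infty)}$ are untouched. Consequently the $0$-, $1$- and $2$-adaptedness of the starting coframe (equations \EqRef{First10}, \EqRef{First11} and \EqRef{Second11}) are inherited under the replacement of $\bftheta$ by $\bfthetaX$ or $\bfthetaY$, since each reduces modulo $\{\bfthetaX\}=\{\bftheta\}$ to the corresponding statement for $\bftheta$.

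Next I would derive the refined structure equations \EqRef{Third8} and \EqRef{Third9} using the standard dictionary between a coframe's structure equations and its dual frame's brackets: the coefficient of $\omega^j\wedge\omega^k$ in $\extd\omega^i$ is controlled by $\omega^i([e_j,e_k])$. Working in the provisional coframe $\{\bfthetaX,\bfhiota,\bfciota\}$ dual to $\{\bfX,\bfhU,\bfcU\}$, the general $2$-adapted equation \EqRef{Third1} applied to $\bfthetaX$ carries five families of terms, and the only one I must eliminate is $\bfcpi\wedge\bfthetaX$, whose coefficient is the $\bfthetaX$-component of $[\bfcU,\bfX]$. By the first relation of \EqRef{Third22}, namely $[\bfX,\bfcU]=0$, this term drops out, leaving exactly \EqRef{Third8}; passing from $\bfhiota,\bfciota$ back to $\bfhpi,\bfcpi$ via \EqRef{Third12} only reshuffles coefficients within each block and preserves the structural shape. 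The symmetric computation, using $[\bfY,\bfhU]=0$ (the third relation of \EqRef{Third22}), kills the $\bfhpi\wedge\bfthetaY$ term and produces \EqRef{Third9}. The commutation \EqRef{Third10}, $[\bfX,\bfY]=0$, is precisely the middle relation of \EqRef{Third22}, so no further work is needed there.

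The substance of the argument lies not in the theorem statement but in the bracket computations feeding it, and that is where I expect the only genuine difficulty. One must know that the iterated brackets $\hU_{a_1\cdots a_\ell}$ span all of $\cH$, so that a basis $\bfX$ of $K$ can be extracted from them (and dually a basis $\bfY$ from the $\cU_{\alpha_1\cdots\alpha_\ell}$), and that the cross-brackets vanish, $[\hU_{a_1\cdots a_k},\cU_{\alpha_1\cdots\alpha_\ell}]=0$, which is \EqRef{Third6}. Both facts rest on the inductive Jacobi-identity arguments built on the structure relations \EqRef{Third4} together with $[\bfhU,\bfcU]=0$; granting them, the vanishing brackets $[\bfX,\bfcU]=0$ and $[\bfY,\bfhU]=0$ follow at once. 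I would therefore present the proof as a short assembly of these already-established facts rather than as a fresh calculation, with the bracket dictionary of the previous paragraph doing the decisive work.
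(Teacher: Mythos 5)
Your proposal is correct and follows essentially the same route as the paper: the paper gives no separate proof block for Theorem \StRef{Third7} but instead derives it from the construction immediately preceding it, namely the choice of the bases $\bfX$, $\bfY$ for $K$ from the iterated brackets \EqRef{Third5}, the commutation relations \EqRef{Third6} and \EqRef{Third22}, and dualization to obtain $\bfthetaX$, $\bfthetaY$ with the vanishing of the $\bfcpi\wedge\bfthetaX$ and $\bfhpi\wedge\bfthetaY$ terms read off from $[\bfX,\bfcU]=0$ and $[\bfY,\bfhU]=0$ via the standard bracket--structure-equation dictionary. Your assembly of these already-established facts, including the inheritance of $2$-adaptedness from $\text{span}\{\bfthetaX\}=\text{span}\{\bftheta\}=\text{span}\{\bfthetaY\}$, is exactly the intended argument.
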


	Coframes which satisfy the conditions of Theorem \StRef{Third7} are called {\deffont 3-adapted coframes}.
\begin{Remark}
	In the special case where the pair $\{\, \hV, \cV \}$ admits an involution (see Remark \StRef{FirstRem2})
	the forms $\bfthetaY$ may be defined by $\bfthetaY = \Phi^*(\bfthetaX)$. Then the structure equations \EqRef{Third9}	
	can be immediately inferred from  \EqRef{Third8}.
\end{Remark}

\newpage
\subsection[The fourth adapted coframe  and the Vessiot algebra]{The fourth adapted coframe  and the Vessiot algebra for a Darboux pair.}
	We now show that any 3-adapted coframe may be adjusted so that the structure functions
	$C^i_{jk}$ and $K^i_{jk}$ (see \EqRef{Third8}--\EqRef{Third10}) are constants and satisfy $C^i_{jk} =- K^i_{jk}$.
\begin{Theorem}
\StTag{Fourth5}	
	Let $\{\hV, \cV\}$ be a Darboux pair on $M$.
\par
\medskip
\noindent
{\bf[i]}
\quad
	 Then, about  each point of $M$,  there exist  local coframes
	$\{\,\thetaX^i,\hpi^a, \cpi^\alpha \,\}$ and $\{\,\thetaY^i,\hpi^a, \cpi^\alpha\,\}$
	which are 3-adapted  and with structure equations
\begin{align}
	\extd \thetaX^i
&
	=  \frac12 A^i_{ab}\, \hpi^a \wedge \hpi^b  +
	\frac12 B^i_{\alpha\beta}\, \cpi^\alpha \wedge \cpi^\beta
        +\frac12 C^i_{jk}\thetaX^j\wedge \thetaX^k
	+M^i_{a j}\hpi^a\wedge \thetaX^j
\EqTag{Fourth6}
\\[-3\jot]
\intertext{and}
\notag
\\[-9\jot]
	\extd \thetaY^i
&
	=  \frac12 E^i_{ab}\, \hpi^a \wedge \hpi^b  +
	\frac12 F^i_{\alpha\beta}\, \cpi^\alpha \wedge \cpi^\beta
        -\frac12 C^i_{jk}\thetaY^j\wedge \thetaY^k
	+N^i_{\alpha j}\cpi^\alpha\wedge \thetaY^j.
\EqTag{Fourth7}	
\end{align}
	The  structure functions $C^i_{jk}=-C^i_{kj}$ are constant on $M$
	and are the structure constants
	of a real  Lie algebra.
\par
\smallskip
\noindent
{\bf[ii]}
\quad
	The  isomorphism class of the  Lie algebra defined by the structure constants $C^i_{jk}$ is an
	invariant of the Darboux pair  $\{\, \hV, \cV\}$.
\end{Theorem}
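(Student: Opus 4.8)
The plan is to read the structure functions $C^i_{jk}$ and $K^i_{jk}$ off the frames dual to the $3$-adapted coframes and then force them to be constant by exploiting the intrinsic distribution $K=\hH^{(\infty)}\cap\cH^{(\infty)}$. First I would dualize \EqRef{Third8} and \EqRef{Third9}. Since $\hH^{(\infty)}$ and $\cH^{(\infty)}$ are each involutive (they annihilate the integrable systems $\hV^{(\infty)}$ and $\cV^{(\infty)}$), their intersection $K=\text{span}\{\bfX\}=\text{span}\{\bfY\}$ is involutive, so $[\bfX_j,\bfX_k]$ and $[\bfY_j,\bfY_k]$ stay in $K$; and because $\hpi^a,\cpi^\alpha$ annihilate $K$, evaluating the dual frames against the structure equations leaves only $[\bfX_j,\bfX_k]=-C^i_{jk}\bfX_i$ and $[\bfY_j,\bfY_k]=-K^i_{jk}\bfY_i$. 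Thus $C$ and $K$ are exactly the structure functions of the two frames of $K$.

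Next I would use the commutation relations \EqRef{Third22}. Applying the Jacobi identity to $[\bfcU,[\bfX_j,\bfX_k]]$ and invoking $[\bfX,\bfcU]=0$ gives $\bfcU(C^i_{jk})=0$; since $\hH\subset\text{span}\{\bfcU\}$ (both annihilate $\text{span}\{\bftheta\}+\hV^{(\infty)}$), this yields $X(C^i_{jk})=0$ for every $X\in\hH$ and hence, by Lemma \StRef{FirstLem1}, $\extd C^i_{jk}\in\hV^{(\infty)}$. The mirror computation with $[\bfY,\bfhU]=0$ and $\cH\subset\text{span}\{\bfhU\}$ gives $\extd K^i_{jk}\in\cV^{(\infty)}$. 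In particular $C$ is constant along the leaves of $K$, so on each leaf $\bfX$ is a frame with constant structure functions; differentiating \EqRef{Third8} (equivalently, writing out the Jacobi identity for $\bfX$ and discarding the now-vanishing derivative terms) then shows that at every point the $C^i_{jk}$ satisfy the quadratic Jacobi identity.

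The genuine work is the fourth adaptation itself: replacing $\bfX$ by a frame $\bfX'=P\bfX$ so that the structure functions become constant and satisfy $K'=-C'$. The mechanism I would use is the remaining relation $[\bfX,\bfY]=0$: writing $\bfY_i=P^j_i\bfX_j$, this forces $\bfX_k(P^l_j)=P^i_jC^l_{ki}$, and comparing $[\bfY_i,\bfY_j]$ computed directly with $-K^l_{ij}\bfY_l$ produces the conjugation identity $-K^l_{ij}P^m_l=P^a_iP^b_jC^m_{ab}$ tying the two tensors together. Normalizing the two coframes by a constant linear change so that this identity reads $K^i_{jk}=-C^i_{jk}$ makes the common tensor simultaneously a first integral of $\hV^{(\infty)}$ (since $\extd C\in\hV^{(\infty)}$) and of $\cV^{(\infty)}$ (since it equals $-K$ and $\extd K\in\cV^{(\infty)}$); by \EqRef{Intro8}, $\hV^{(\infty)}\cap\cV^{(\infty)}=\{0\}$, so $\extd C^i_{jk}=0$ and the $C^i_{jk}$ are locally constant. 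With constancy in hand the frame Jacobi identity collapses to the ordinary one, so the $C^i_{jk}$ are the structure constants of a real Lie algebra. I expect this step to be the main obstacle: one must produce the normalization purely by linear algebra (no integration), check that it simultaneously yields constancy and the relation $K=-C$, and verify that the adjustment preserves the $3$-adapted normal forms \EqRef{Third8}--\EqRef{Third9}.

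For part {\bf[ii]}, I would argue that $K=\ann(\hV^{(\infty)}+\cV^{(\infty)})$ is intrinsic to the Darboux pair and that the construction makes each of its leaves a local Lie group whose Lie algebra carries the $C^i_{jk}$ as structure constants in the basis $\bfX$. Any other choice of adapted coframe produces another constant frame of the same intrinsic distribution, and two constant frames of $K$ differ by a constant invertible matrix; under such a change the $C^i_{jk}$ transform by a change of basis, that is, by a Lie algebra isomorphism. Hence the isomorphism class of the algebra is independent of all choices and is an invariant of $\{\hV,\cV\}$.
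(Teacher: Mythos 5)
Your reductions up to the point where $\extd C^i_{jk}\in\hV^{(\infty)}$ and $\extd K^i_{jk}\in\cV^{(\infty)}$ are sound and match the paper's (the paper gets $\cU_\alpha(C^i_{jk})=0$ and $\hU_a(K^i_{jk})=0$ from $\extd^2\thetaX^i=0$ and $\extd^2\thetaY^i=0$ rather than from the Jacobi identity, but these are the same computation), and your observation that the pointwise identity $K^i_{jk}=-C^i_{jk}$ together with $\hV^{(\infty)}\cap\cV^{(\infty)}=\{0\}$ would force $\extd C^i_{jk}=0$ is correct. The gap is in how you propose to \emph{achieve} that identity. What $[\bfX,\bfY]=0$ actually yields is the conjugation relation $Q^i_\ell K^\ell_{jk}=-C^i_{\ell m}Q^\ell_jQ^m_k$, where $Q$ is the (generally non-constant) change-of-frame matrix with $\thetaX^i=Q^i_j\thetaY^j$ and $X_j(Q^i_k)=C^i_{j\ell}Q^\ell_k$. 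A \emph{constant} linear renormalization of the two coframes cannot turn this into the pointwise equality $K=-C$: it preserves the functional dependence of $C$ on the $\hI^a$ and of $K$ on the $\cI^\alpha$, and since these are complementary sets of first integrals, the equality $K(\cI)=-C(\hI)$ can only hold if both sides are already constant. So your normalization step presupposes the very constancy it is meant to prove; the argument is circular at exactly the point you flag as ``the main obstacle.''

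The paper's resolution is a genuinely non-constant coframe change, still requiring no integration: writing everything in coordinates $(\hI^a,\cI^\alpha,z^m)$ and using $C=C(\hI)$, $K=K(\cI)$, one evaluates the conjugation relation at $(\hI^a_0,\cI^\alpha_0,z^m_0)$, at $(\hI^a,\cI^\alpha_0,z^m_0)$, and at $(\hI^a_0,\cI^\alpha,z^m_0)$ to manufacture matrices $P^i_j(\hI^a)=Q^i_\ell(\hI^a)(Q_0^{-1})^\ell_j\in\Inv(\hV)$ and $R^i_j(\cI^\alpha)\in\Inv(\cV)$ which conjugate $C(\hI^a)$ to the constant value $C(\hI^a_0)$ and $K(\cI^\alpha)$ to its negative. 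Because $P\in\Inv(\hV)$ and $R\in\Inv(\cV)$, the modified coframes retain the normal forms \EqRef{Third8}--\EqRef{Third9}; one must then check, using $X_j(Q^i_k)=C^i_{j\ell}Q^\ell_k$ again, that the new dual frames still commute, i.e.\ remain 3-adapted --- a verification your proposal flags but does not carry out. Finally, for part {\bf[ii]} your claim that two coframes adapted to the pair differ on $K$ by a \emph{constant} matrix is false (and unneeded): the paper shows only that the change-of-frame matrix lies in $\Inv(\hV)$ and satisfies $C^{\prime i}_{\ell m}R^\ell_jR^m_k=R^i_\ell C^\ell_{jk}$ pointwise, which already exhibits a Lie algebra isomorphism.
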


	Coframes which satisfy the conditions of Theorem \StRef{Fourth5}
	are called {\deffont 4-adapted coframes}. As with the 2 and 3-adapted  coframes defined previously, $\bfhpi = (\bfhsigma, \bfheta)$	
	and $\bfcpi = (\bfcsigma, \bfceta)$, with structure equations \EqRef{First10}.
	In passing from the 3-adapted coframes to the  4-adapted coframes only
	the forms $\thetaX^i$ and $\thetaY^i$ are modified. The (abstract) Lie algebra defined by the structure constants $C^i_{jk}$ is called
	the {\deffont Vessiot algebra} for the Darboux pair $\{\, \hV, \cV \, \}$ and is denoted by $\vess(\hV, \cV)$. The vectors fields
	$X_i$ in the dual basis $\{\, X_i, \hU_a,\cU_\alpha\,\}$ to the 4-adapted coframe $\{\,\thetaX^i,\hpi^a, \cpi^\alpha \,\}$
	define a realization for $\vess(\hV, \cV)$, with structure equations $[\,X_j, X_k\,] = - C^i_{jk} X_i$.

\begin{proof}[Proof of Theorem \StRef{Fourth5} ]
	 Theorem \StRef{Third7} provides us with two  locally defined 3-adapted frames
	$\{\, X_i, \hU_a,\cU_\alpha\,\}$ and  $\{\, Y_i, \hU_a,\cU_\alpha\,\}$
	and dual coframes $\{\,\thetaX^i,\hpi^a, \cpi^\alpha \,\}$ and $\{\,\thetaY^i,\hpi^a, \cpi^\alpha\,\}$.
	The structure equations are \EqRef{Third8}--\EqRef{Third10}.
	We begin the proof of part {\bf [i]} by showing that the $C^i_{jk}$ are functions
	only of the first integrals $\hI^a$ while the $K^i_{jk}$ are functions only of the first integrals
	$\cI^\alpha$.

	Since $\text{span}\{\, \bfthetaX \} =  \text{span}\{\, \bfthetaY \}$, there is an  invertible matrix $Q$
	such that
\begin{equation}
        \thetaX^i = Q^i_j \thetaY^j.
\EqTag{Fourth1}
\end{equation}
	On account of the  structure equations \EqRef{Third8} and the fact that the vector fields $X_i$ and $Y_j$ commute,
	the identity
\begin{equation*}
	\extd \thetaX^i(X_j, Y_k) = X_j(\thetaX^i(Y_k)) - Y_k(\thetaX^i(X_j)) - \thetaX^i([X_j,Y_k])
\end{equation*}
	leads to
\begin{equation}
	 X_j(Q^i_k) = C^i_{j\ell}Q^\ell_k .
\EqTag{Fourth2}
\end{equation}
	We shall use this  result  repeatedly in  what follows -- it is equivalent to the fact that the
	vector fields $X_i$ and $Y_j$ commute, a property of the two 3-adapted coframes  which is not encoded in the structure equations
	\EqRef{Third8} and \EqRef{Third9}.
\par
        We next substitute \EqRef{Fourth1} into \EqRef{Third8} and equate the
	coefficients of $\thetaY^j\wedge\thetaY^k$ to deduce that
\begin{equation*}
       X_\ell(Q^i_k) Q^\ell_j - X_l(Q^i_j) Q^l_k + Q^i_lK^l_{jk} = C^i_{\ell m} Q^\ell_j Q^m_k .
\end{equation*}
	By virtue of \EqRef{Fourth2}, this equation simplifies to
\begin{equation}
	Q^i_l K^l_{jk} = -C^i_{lm} Q^l_j Q^m_k.
\EqTag{Fourth3}
\end{equation}
	Also, by equating to zero the coefficients of $\cpi^\alpha\wedge\thetaX^i\wedge\thetaX^j$ and
	$\hpi^a\wedge\thetaY^i\wedge\thetaY^j$
        in the expansions of the identities $\extd^2 \thetaX^i=0$ and $\extd^2 \thetaY^i = 0$,
	we find that
\begin{equation*}
        \cU_a(C^i_{jk}) = 0 \quad\text{and}\quad \hU_\alpha(K^i_{jk}) = 0.
\end{equation*}
        By Lemma \StRef{FirstLem1}, these equations imply that
\begin{equation}
          C^i_{jk} \in  \Inv(\hV)\quad\text{and}\quad    K^i_{jk} \in  \Inv(\cV).
\EqTag{Fourth4}
\end{equation}
\par

	Our next goal  is to show that the  coframes
	$\thetaX^i$ and $\thetaY^i$  may be adjusted so that $K^i_{jk} = -C^i_{jk}$ while
	still preserving \EqRef{Third8}--\EqRef{Third10}. To this end
	we introduce  local coordinates  $(\hI^a, \cI^\alpha, z^m)$ on $M$
	satisfying \EqRef{First14} and  \EqRef{First15}. Then, on account of
	\EqRef{Fourth4}, equation \EqRef{Fourth3} becomes
\begin{equation}
        Q_k^\ell(\hI^a, \cI^\alpha,z^m)\,K^k_{ij}(\cI^\alpha)   =
	-C^\ell_{hk}(\hI^a) \, Q^h_i (\hI^a, \cI^\alpha,z^m)\, Q^k_j(\hI^a, \cI^\alpha,z^m).
\EqTag {Fourth9}
\end{equation}
	Evaluate this  equation, first at a fixed point $(\hI^a_0,\cI^\alpha_0, z^m_0)$ and then at the point
	$(\hI^a,\cI^\alpha_0, z^m_0)$.  With
\newcommand\Ko{{\kern-2.3pt}\stackrel{\scriptscriptstyle o}{K}{}{\kern-2.3pt}}
\newcommand\Co{{\kern-2.3pt}\stackrel{\scriptscriptstyle o}{C}{}{\kern-2.3pt}}
\newcommand\Qo{{\kern-2.3pt}\stackrel{\scriptscriptstyle o}{Q}{}{\kern-2.3pt}}
\newcommand\thetao{{\stackrel{\scriptscriptstyle o}{\theta}}{}}
\newcommand\Xo{{\stackrel{\scriptscriptstyle o}{X}}{\kern-1.3pt}}
\newcommand\Yo{{\stackrel{\scriptscriptstyle o}{Y}}{\kern-1.3pt}}
\begin{gather*}
  	\Ko^k_{ij}= K^k_{ij}(\cI^\alpha_0), \quad
	\Co^k_{ij}=   C^k_{ij}(\hI^a_0), \quad          		
             \Qo^j_i   =Q^j_i(\hI^a_0, \cI^\alpha_0,z^m_0),\quad\text{and}\quad
\\[2\jot]	
	Q^j_i(\hI^a) =Q^j_i(\hI^a, \cI^\alpha_0,z^m_0)
\end{gather*}
	the  results are
\begin{equation}
	\Qo_k^\ell \Ko^k_{ij} 	
	=
	-\Co^\ell_{hk}\, \Qo^h_i \Qo^k_j
	\quad\text{and}\quad
	Q_k^\ell(\hI^a)\, \Ko^k_{ij}	=
	-C^\ell_{hk}(\hI^a)\, Q^h_i (\hI^a)\, Q^k_j(\hI^a).
\end{equation}
	It then readily follows that the matrix
\begin{gather}
        P^i_j(\hI^a) =Q^i_\ell(\hI^a)  (\Qo^{-1})^\ell_j .
\EqTag{Fourth21}
\\[-5\jot]
\intertext{satisfies}\notag
\\[-8\jot]
        P^\ell_k(\hI^a) \Co^k_{ij}
	=  C^\ell_{hk}(\hI^a)\,  P^h_i(\hI^a)\, P^k_j(\hI^a) .
\EqTag{Fourth10}
\end{gather}
	The  1-forms  $\thetao{}_{\kern -1.5pt X}^i$ defined by
\begin{equation}
          \thetao{}_{\kern -1.5pt X}^j P^i_j = \thetaX^i
\EqTag{Fourth11}
\end{equation}
	then satisfy  structure equations of the  required form \EqRef{Fourth6},
	where the  structure functions $C^i_{jk}$ coincide with the
        constants $\Co^i_{jk}$.
	Finally, by evaluating \EqRef{Fourth9} at  $(\hI^a_0,\cI^\alpha,z^m_0)$, it follows that
\begin{gather}
    	R^i_j(\cI^\alpha) =(Q^{-1}(\hI^a_0,\cI^\alpha,z^m_0) )^i_j
\EqTag{Fourth22}
\\[-5\jot]
\intertext{satisfies}\notag
\\[-8\jot]
       -R^\ell_k(\cI^\alpha)\Co^k_{ij} =
	K^\ell_{hk}(\cI^\alpha)\, R^h_i(\cI^\alpha)\, R^k_j(\cI^\alpha)
\end{gather}
         and the 1-forms  $\thetao_{ Y}^i$  defined by
\begin{equation}
          \thetao{}_{ Y}^j R^i_j = \thetaY^i
\EqTag{Fourth13}
\end{equation}
	satisfy the required structure equations \EqRef{Fourth7},
	again with $C^i_{jk} =\Co^i_{jk}$.
\par
	The next step in the proof of {\bf [i]} requires us to check that the coframes we have just constructed are still 3-adapted.
	We must therefore show that the dual vector fields $\Xo_i$ and $\Yo_j$ commute and for this
	it suffices to show, because of our remarks at the beginning of this proof, that
\begin{equation}
	\Xo_j(\Qo^i_k) = \Co^i_{jl} \Qo^l_k
	\quad\text{where}\quad
	\thetao^i_{\kern -1.5 pt X} = \Qo^i_j \,\thetao^j_{Y},
\EqTag{Fourth23}
\end{equation}	
	The proof of this formula follows from three simple observations.
	We first  note that \EqRef{Fourth11} and \EqRef{Fourth13}  imply
	$ \Qo{\kern - 2pt}{}^i_k = (P^{-1})^i_\ell\, Q^\ell_m \, R^m_k.$
	Secondly, because $\{\, \Xo{}^i, \hpi^a, \cpi^\alpha\,\}$ is the dual frame to
	$\{\, \thetao^i_{\kern -1.5pt X}, \hpi^a,\cpi^\alpha\,\}$, we have $\Xo_i = P_i^j X{}_j$.
	Finally, because $P^i_j =P^i_j(\hI^a)$ and $R^i_j= R^i_j(\cI^\alpha)$, it follows that
	$\Xo{}_j(P^i_k) = \Xo{}_j(R^i_k) = 0$.
	A straightforward calculation based upon these three observations and equations
        \EqRef{Fourth2} and \EqRef{Fourth10} leads to \EqRef{Fourth23}, as required.
	
	Finally, by equating to zero the coefficients of
	$\thetaX^i\wedge\thetaX^j\wedge\thetaX^k$
        in the expansion of the equations $\extd^2 \thetaX^i=0$  one finds that  the $C^i_{jk}$
	satisfy the Jacobi identity and are therefore the structure constants of a real $r$-dimensional Lie algebra.
\par
        To prove  {\bf [ii]}, let  $\{\, \theta^{\prime i}_X , \hpi^{\prime a}, \cpi^{\prime \alpha} \, \}$ be
	another  coframe adapted to the Darboux pair $\{\,\hV, \cV\,\}$
        with structure equations
\begin{equation}
\extd \theta^{\prime i}_X =
	\frac12 A^{\prime i}_{ab}\, \hpi^{\prime a} \wedge \hpi^{\prime b}  +
	\frac12 B^{\prime i}_{\alpha\beta}\, \cpi^{\prime \alpha} \wedge \cpi^{\prime \beta} +
        \frac12 C^{\prime i}_{jk}\theta^{\prime j}\wedge \theta^{\prime k}
	+M^{\prime i}_{a j}\hpi^{\prime a}\wedge \theta^{\prime j}.
\EqTag{Fourth8}
\end{equation}
	From the definition of a 0-adapted coframe (see  \EqRef{First4b}) we have
\begin{equation*}
\theta^{\prime i}_X = R^i_j \theta^j  + {\hS}^i_p \heta^p + \cS^i_q\ceta^q,
\end{equation*}
	where the matrix $R^i_j$ is invertible.
	Substitute this equation into \EqRef{Fourth8} and then substitute from \EqRef{First10} and
	\EqRef{Fourth6}.
	From the coefficients of  $\bfhsigma \wedge \bftheta$ we deduce that $\bfvecthsigma R^i_j = 0$.
	By Lemma \StRef{FirstLem1} this implies that   $R^i_j \in \text{Inv}(\hV)$ in which case one finds
	from the coefficients of  $\theta^j \wedge\theta^k $ that	
\begin{equation*}
          C^{\prime i}_{lm} R^l_j R^m_k = R^i_l C^{l}_{jk}.
\end{equation*}
	Hence the  structure constants  $C^{i}_{jk}$ and $C^{\prime i}_{jk}$ define the
	same abstract Lie algebra and the proof of part {\bf [ii]} is complete. 		
\end{proof}

\begin{Corollary}
\StTag{Fourth14}
	Let $\{\,\hV,\cV\,\}$ and $\{\,\hW, \cW \,\}$ be Darboux pairs on  manifolds $M$ and $N$ respectively and suppose
	that $\phi\:M \to N$  is smooth map satisfying
\begin{equation}
	\phi^*(\hW) \subset \hV \quad\text{and}\quad \phi^*(\cW) \subset \cV.
\EqTag{Fourth15}
\end{equation}
        Then $\phi$ induces a Lie algebra homomorphism
\begin{equation}
	\tilde \phi\:\vess(\hV, \cV) \to \vess(\hW, \cW).
\EqTag{Fourth16}
\end{equation}
\end{Corollary}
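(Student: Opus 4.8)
The plan is to read off $\tilde\phi$ from the way the Vessiot $1$-forms of $\{\hW,\cW\}$ pull back to $M$. Fix $4$-adapted coframes (Theorems \StRef{Third7} and \StRef{CofrAdapt1}) $\{\htheta^i,\hpi^a,\cpi^\alpha\}$ on $M$, with dual frame $\{X_i,\hU_a,\cU_\alpha\}$, structure equations \EqRef{CofrAdapt4} and Vessiot constants $C^i_{jk}$, and $\{\vartheta^A,\varpi^a,\varrho^\alpha\}$ on $N$, with dual frame $\{Y_A,\dots\}$ and Vessiot constants $D^A_{BC}$; thus $[X_j,X_k]=-C^i_{jk}X_i$ and $[Y_B,Y_C]=-D^A_{BC}Y_A$. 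I would repeatedly use the elementary fact that a \emph{closed} $1$-form lying in $\hV$ automatically lies in the largest integrable subbundle $\hV^{(\infty)}$ (adjoin it to $\hV^{(\infty)}$ and use maximality). Applied to $\extd(f\circ\phi)=\phi^*\extd f$ for first integrals $f$ of $\hW^{(\infty)}$ and $\cW^{(\infty)}$, it gives $\phi^*(\hW^{(\infty)})\subset\hV^{(\infty)}$ and $\phi^*(\cW^{(\infty)})\subset\cV^{(\infty)}$, so by \EqRef{CofrAdapt3} $\phi^*\varpi^a\in\text{span}\{\hpi\}$ and $\phi^*\varrho^\alpha\in\text{span}\{\cpi\}$. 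Since the Vessiot forms generate $\hW\cap\cW$ on $N$ and $\hV\cap\cV=\text{span}\{\htheta\}$ on $M$, and $\phi^*(\hW\cap\cW)\subset\hV\cap\cV$, we obtain $\phi^*\vartheta^A=R^A_i\htheta^i$ for a matrix of functions $R^A_i$ on $M$.

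The crux — and the step I expect to be the main obstacle — is to prove that $R^A_i$ is locally constant. Differentiating $\phi^*\vartheta^A=R^A_i\htheta^i$ and using $\extd(\phi^*\vartheta^A)=\phi^*(\extd\vartheta^A)$, the right-hand side, by \EqRef{CofrAdapt4} on $N$ and the previous paragraph, is the sum of a term in $\Omega^2(\text{span}\{\cpi\})$ (from $\phi^*\varrho\wedge\phi^*\varrho$) and a term in $\Omega^2(\text{span}\{\htheta\})$ (from $\tfrac12 D^A_{BC}\phi^*\vartheta^B\wedge\phi^*\vartheta^C$), and so contains no $\hpi\wedge\htheta$ and no $\cpi\wedge\htheta$ terms. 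Matching against $\extd R^A_i\wedge\htheta^i+R^A_i\,\extd\htheta^i$, and noting that $\extd\htheta^i$ also has only $\cpi\wedge\cpi$ and $\htheta\wedge\htheta$ parts, forces the $\hpi$- and $\cpi$-components of $\extd R^A_i$ to vanish; hence $\extd R^A_i\in\text{span}\{\htheta\}\subset\hV$. But $\extd R^A_i$ is closed, so by the elementary fact above it lies in $\hV^{(\infty)}=\text{span}\{\hpi\}$; as $\text{span}\{\htheta\}\cap\text{span}\{\hpi\}=\{0\}$, we conclude $\extd R^A_i=0$.

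With $R^A_i$ constant, the coefficient of $\htheta^j\wedge\htheta^k$ in the same identity reduces to $C^i_{jk}R^A_i=D^A_{BC}R^B_j R^C_k$. This is exactly the statement that the linear map $\tilde\phi\colon\vess(\hV,\cV)\to\vess(\hW,\cW)$ sending $X_i\mapsto R^A_i Y_A$ satisfies $\tilde\phi([X_j,X_k])=[\tilde\phi(X_j),\tilde\phi(X_k)]$, i.e. that $\tilde\phi$ is a homomorphism of Lie algebras. Finally, because the Vessiot algebra of a Darboux pair is well defined up to isomorphism, independent of the chosen $4$-adapted coframe (Theorem \StRef{Fourth5}{\bf[ii]}), the map $\tilde\phi$ descends to a well-defined homomorphism of the abstract Vessiot algebras. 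Apart from the constancy of $R$, every step is structure-equation bookkeeping together with the closedness-and-maximality argument that collapses $\extd R$ to zero.
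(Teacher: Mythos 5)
Your overall strategy---pull back the distinguished $1$-forms, match structure equations, and read the homomorphism off the $\theta\wedge\theta$ coefficients---is the same as the paper's, but there is a genuine gap at the step you yourself flag as the crux. The decomposition $\phi^*\vartheta^A=R^A_i\htheta^i$ is not justified and is false in general: neither is $\text{span}\{\vartheta^A\}$ equal to $\hW\cap\cW$ nor is $\text{span}\{\htheta^i\}$ equal to $\hV\cap\cV$. Indeed $\hV\cap\cV=\text{span}\{\bfthetaX,\bfheta,\bfceta\}$ (see \EqRef{SuperForm42}), which has rank $\dim\vess+\rank(\hV^{(\infty)}\cap\cV)+\rank(\hV\cap\cV^{(\infty)})$, and the $\htheta^i$ of Theorem \StRef{CofrAdapt1} differ from the $\thetaX^i$ by terms in $\bfhsigma$ and $\bfheta$. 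So all you may conclude from \EqRef{Fourth15} is
\begin{equation*}
\phi^*(\vartheta^A)=R^A_i\,\htheta^i+\hS^A_a\,\hpi^a+\cS^A_\alpha\,\cpi^\alpha ,
\end{equation*}
and the extra terms destroy the bookkeeping you use to kill the $\hpi$- and $\cpi$-components of $\extd R^A_i$. Worse, the conclusion you are aiming for is actually false: the residual coframe freedom computed at the end of Section 4.4 (equations \EqRef{Fifth50}--\EqRef{Fifth52}, i.e.\ the case $N=M$, $\phi=\mathrm{id}$) produces transition matrices $\Lambda^i_j$ with $\extd\Lambda^i_j=C^i_{\ell k}\Lambda^\ell_j\sigma^k\neq0$; the missing $\sigma^i=S^i_\alpha\cpi^\alpha$ term is exactly what cancels the $\cpi\wedge\htheta$ contribution of $\extd\Lambda\wedge\htheta$ against the cross terms of $\tfrac12C^i_{jk}\tilde\theta^j\wedge\tilde\theta^k$. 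Your argument would ``prove'' $\extd\Lambda=0$ here, which is wrong.

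The repair is the paper's actual route, and it shows that constancy of $R$ was never needed. Work with the $4$-adapted coframes and structure equations \EqRef{Fourth6}, \EqRef{Fourth17}, write $\phi^*(\theta'^{\,s}_X)=R^s_i\theta^i_X+\hS^s_p\heta^p+\cS^s_q\ceta^q$ together with $\phi^*(\hpi'^{\,c})=\hT^c_a\hpi^a$ and $\phi^*(\cpi'^{\,\gamma})=\cT^\gamma_\alpha\cpi^\alpha$, and compare coefficients: the $\bfhsigma\wedge\bftheta$ terms give only $R^s_i\in\Inv(\hV)$ (not $\extd R=0$), and the $\theta^j\wedge\theta^k$ terms then give the pointwise identity $R^r_iC^i_{jk}=K^r_{st}R^s_jR^t_k$. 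Since the structure constants on both sides are already constant, this identity at any single point already exhibits the Lie algebra homomorphism $X_i\mapsto R^t_iX'_t$; your final appeal to Theorem \StRef{Fourth5}\textbf{[ii]} for well-definedness up to isomorphism is fine. The first paragraph of your argument (the closed-form/maximality observation giving $\phi^*(\hW^{(\infty)})\subset\hV^{(\infty)}$ and $\phi^*(\cW^{(\infty)})\subset\cV^{(\infty)}$) is correct and is a nice justification of an inclusion the paper merely asserts.
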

\begin{proof}
	The  proof of this corollary is  similar to that of part {\bf [ii]} of Theorem \StRef{Fourth5}.
	Let  $\{\, \theta^{\prime s}_X , \hpi^{\prime c}, \cpi^{\prime \gamma} \, \}$ be a 4-adapted
	coframe on $N$ for the Darboux pair $\{\,\hW, \cW\,\}$,
        with structure equations
\begin{equation}
\extd \theta^{\prime s}_X =
	\frac12 A^{\prime s}_{cd}\, \hpi^{\prime c} \wedge \hpi^{\prime d}  +
	\frac12 B^{\prime s}_{\gamma\delta}\, \cpi^{\prime \gamma} \wedge \cpi^{\prime \delta} +
        \frac12 K^ s_{rt}\theta^{\prime r}\wedge \theta^{\prime t}
	+M^{\prime s}_{a t}\hpi^{\prime a}\wedge \theta^{\prime t}.
\EqTag{Fourth17}
\end{equation}
	The constants $K^s_{rt}$ are the structure constants for the Lie algebra
	$\vess(\hW, \cW)$. Denote the dual frame by 	
	$\{\, X^{\prime}_r, U^{\prime}_c , U^{\prime}_\gamma \, \}$.

	Let $\{\,\thetaX^i,\hpi^a, \cpi^\alpha \,\}$ be a 4-adapted coframe on $M$
        for the Darboux pair$\{\, \hV, \cV \, \}$ with  the structure equations \EqRef{Fourth6}.
        The inclusions \EqRef{Fourth15} imply that
\begin{equation*}
        \phi^*(\hW \cap \cW) \subset \hV \cap \cV,	
	\quad
	\phi^*(\hW^{(\infty)})   \subset \hV^{(\infty)}
	\quad\text{and}\quad
	\phi^*(\cW^{(\infty)}) \subset \cV^{(\infty)}
\end{equation*}
	and therefore there are functions
	$R^s_i$, $\hS^s_p$,  $\cS^s_q$ $\hT^c_a$ and $\cT^\gamma_\alpha$  on $M$ such that
\begin{equation*}
	\phi^*(\theta^{\prime s}_X)   = R^s_i\theta^i_X  + {\hS}^s_p \heta^p + \cS^s_q\ceta^q,
	\quad
	\phi^*(\hpi^{\prime c}) = \hT^c_a \hpi^a
	\quad\text{and}\quad
	\phi^*( \cpi^{\prime \gamma}) = \cT^\gamma_\alpha\cpi^\alpha.
\end{equation*}
        We pullback  \EqRef{Fourth17} using these equations and substitute
	from \EqRef{Fourth6}.  The same arguments as in the proof of  {\bf [ii]} of
	Theorem \StRef{Fourth5} now show that $R^s_i\in \text{Inv}(\hV)$ and
	$R^r_iC^i_{jk} = K^r_{st}R^s_jR^t_k$.
        This proves that the Jacobian mapping  $\phi_*(X_i) = R^t_i X'_t$ induces a  Lie algebra
        homomorphism of Vessiot algebras.
\end{proof}

\begin{Remark}
	Let
	$\{\,\thetaX^i,\, \hpi^a,\, \cpi^\alpha \,\}$ and $\{\,\thetaY^i,\, \hpi^a,\, \cpi^\alpha\,\}$
	be two coframes  which are 4-adapted to the Darboux pair
	$\{\, \hV, \cV \, \}$ and which satisfy  the structure equations \EqRef{Fourth6} and
	\EqRef{Fourth7}. Then it is not difficult to check that the commutativity of the
        dual  vector fields $X_i$ and $Y_j$ is equivalent to the supposition
        that the change of frame \EqRef{Fourth1} defines an automorphism of the Vessiot algebra,
	that is,
\begin{equation}
	Q^i_\ell C^\ell_{jk} = C^i_{lm} Q^\ell_j Q^m_k.
\EqTag{Fourth18}	
\end{equation}
        In the  next section we shall need all the derivatives of the matrix $Q^i_j$. The  $\hpi^a$ and $\cpi^\alpha$
	components of $\extd Q^i_j$ are  easily  determined by substituting \EqRef{Fourth1} into \EqRef{Fourth6} and
	comparing the 	result with \EqRef{Fourth7}.  If  we then  take into account \EqRef{Fourth2} and \EqRef{Fourth18}
	we  find that
\begin{equation}
	\extd Q^i_j
	= Q^\ell_j M^i_\ell - Q^i_\ell N^\ell_j + C^i_{j\ell}Q^\ell_k \thetaX^j ,
\EqTag{Fourth19}
\end{equation}
	where $M^i_j =M^i_{aj}\hpi^a$ and $N^i_j = N^i_{\alpha j}\cpi^\alpha$. We note, also for future use, that
\begin{equation}
	A^i_{ab} = Q^i_j E^j_{ab}	
	\quad\text{and}\quad
        B^i_{\alpha\beta} =Q^i_j F^j_{\alpha\beta}.
\EqTag{Fourth20}
\end{equation}
	where $E^j_{ab}$ and $F^j_{\alpha\beta}$ are defined by \EqRef{First10}.

\end{Remark}	

\newpage
\subsection[The proof of Theorem \StRef{CofrAdapt1} ]{The fifth adapted coframe for a Darboux pair and the proof of Theorem \StRef{CofrAdapt1} }

	Let $\{\,\hV, \cV\,\}$ be a Darboux pair on $M$  and let
	$\{\,\thetaX^i,\hpi^a, \cpi^\alpha \,\}$ and $\{\,\thetaY^i,\hpi^a, \cpi^\alpha\,\}$ be local coframes on $M$
	which are 4-adapted to the Darboux pair $\{\,\hV, \cV\,\}$ and which
	therefore satisfy the structure equations \EqRef{Fourth6} and \EqRef{Fourth7}.	
        In this section we shall prove that it is possible to define forms
\begin{gather}
        \htheta^i = \hR^i_j\thetaX^j + \phi^i_a\,\hpi^a
	\quad\text{and}\quad
	\ctheta^i = \cR^i_j\thetaY^j + \psi^i_a\,\cpi^a,
\EqTag{Fifth1}
\\[2\jot]
	\text{where}\quad
	\hR^i_j,\,\phi^i_a\in \Inv(\hV)\quad\text{and}\quad
	\cR^i_j,\,\psi^i_\alpha\in \Inv(\cV),
\notag
\end{gather}
	which satisfy structure equations
\begin{align}
   \extd \htheta^i
&	=
	\frac12   G^i_{\alpha\beta}\, \cpi^\alpha \wedge \cpi^\beta
        +\frac12 C^i_{jk}\htheta^j\wedge \htheta^k
\EqTag{Fifth2}
\\[-3\jot]
\intertext{and} \notag
\\[-10\jot]
	\extd \ctheta^i
&	=
	\frac12 H ^i_{ab}\, \hpi^a \wedge \hpi^b
        -\frac12 C^i_{jk}\ctheta^j\wedge \ctheta^k.
\EqTag{Fifth3}
\end{align}
	These are the structure equations for the Darboux pair $\{\hV, \cV\,\}$ announced
	in Theorem \StRef{CofrAdapt1}.

	We shall focus on \EqRef{Fifth2} and simply note that  the proof of \EqRef{Fifth3} is similar.
	Our starting point  are the equations \EqRef{Fourth6} and \EqRef{Fourth7}, in particular,
\begin{equation}
	\extd \thetaX^i
	=  \frac12 A^i_{ab}\, \hpi^a \wedge \hpi^b  +
	\frac12 B^i_{\alpha\beta}\, \cpi^\alpha \wedge \cpi^\beta
        +\frac12 C^i_{jk}\thetaX^j\wedge \thetaX^k
	+M^i_{a j}\hpi^a\wedge \thetaX^j.
\EqTag{Fifth5}
\end{equation}
	In what follows it will be useful to introduce the 1-forms and 2-forms
\begin{equation}	
	M^\ell_j =  M^\ell_{aj}\hpi^a,\, N^\ell_j = N^\ell_{\alpha j} \cpi^\alpha,\,
	\,A^\ell = A^\ell_{ab}\hpi^a\wedge\hpi^b  \, F^\ell = F^\ell_{\alpha\beta} \cpi^\alpha\wedge\cpi^\beta.
\EqTag{Fifth70}
\end{equation}
\par
	By setting to  zero the  coefficients of
	$\hpi^a\wedge\cpi^\alpha\wedge \thetaX^i$ and $\cpi^\alpha\wedge \hpi^a\wedge \hpi^b$
	in the equations
        $d^2\thetaX^i =0$,  we  find that
	$\cU_\alpha(M^i_{aj}) = 0$  and  $\cU_\alpha(A^i_{ab})=0$.
	These equations imply that $M^i_{aj}\in \Inv(\hV)$ and $A^i_{ab}\in \Inv(\hV)$
	and therefore (see Lemma \StRef{FirstLem1})
\begin{equation}
	\extd M^i_{aj} = \hU_b(M^i_{aj})\, \hpi^b
	\quad\text{and}\quad
        \extd A^i_{ab} = \hU_c(A^i_{ab})\, \hpi^c.
\EqTag{Fifth6}
\end{equation}
        Bearing these two results in mind,  we then  respectively deduce
	from the  coefficients  of  $\hpi^a\wedge \thetaX^j\wedge \thetaX^k$,
        $\hpi^a\wedge \hpi^b\wedge \thetaX^j$
	and $\hpi^c\wedge\hpi^a\wedge \hpi^b$   in  $d^2\thetaX^i =0$ that
\begin{gather}
	M^\ell_{aj} C^i_{\ell k} + M^\ell_{ak}C^i_{j\ell} -M^i_{a\ell} C^\ell_{jk} = 0,
\EqTag{Fifth7}
\\[2\jot]
	\extd M^i_j -  M^i_\ell \wedge M^\ell_j + C^i_{\ell j} A^\ell = 0, \quad\text{and}
\EqTag{Fifth8}
\\[2\jot]
       \extd A^i - A^\ell M^i_{\ell} = 0.
\EqTag{Fifth9}
\end{gather}
	The proof of Theorem \StRef{CofrAdapt1} hinges upon a detailed analysis of
	equations \EqRef{Fifth7}--\EqRef{Fifth9}.
	We deal first with \EqRef{Fifth7} since this is a purely algebraic constraint.
	It states that for each fixed value of
	$a$, the linear transformation $M_a\: \vess \to \vess$ defined by
\begin{equation*}
        M_a(X_j) = M_{aj}^i X_i
\end{equation*}
	is a derivation or infinitesimal automorphism of the  Vessiot Lie algebra
	$\vess = \vess(\hV, \cV)$. Consequently, to analyze this equation we shall invoke some basic structure theory for 	
	Lie algebras. Specifically, we shall  consider separately the cases where $\vess$ is  semi-simple,
	where $\vess$ is abelian,  and where $\vess$ is
	solvable.  Then we shall  make use of the Levi decomposition of $\vess$ to solve the  general case.
\par
\medskip
\noindent
{\bf Case I.\quad}
	We first consider the case where the Lie algebra $\vess$ is semi-simple.   Here the proof of \EqRef{Fifth2} is
	relatively straightforward and is based upon the fact that every derivation of a semi-simple Lie algebra is
        an inner derivation (see, for example,  Varadarajan\cite{varadarajan:1984a}, page 215).
	In fact, because  the proof of this result is  constructive,
	we can deduce from \EqRef{Fifth7} that there are uniquely defined smooth functions $\phi^\ell_a \in \Inv(\hV)$ such that 	
\begin{equation}
	M^i_{aj} =  \phi^\ell_a C^i_{\ell j }.
\EqTag{Fifth10}
\end{equation}
	The forms $\htheta^i$ defined by
\begin{equation}
	\htheta^i = \thetaX^i + \phi^i _a\hpi^a
\EqTag{Fifth68}
\end{equation}
	then satisfy structure equations of the form
\begin{equation}
        \extd \htheta^i
	=
	\frac12   A^i_{ab}\, \hpi^a \wedge \hpi^b   +
	\frac12	  B^i_{\alpha\beta}\,\cpi^\alpha\wedge\cpi^\beta+
	\frac12   C^i_{jk}\htheta^j\wedge \htheta^k.
\EqTag{Fifth11}
\end{equation}
        For these structure equations, equations \EqRef{Fifth8} reduce to
	$C^i_{\ell j} A^\ell_{ab} =0$.
	Since we are assuming  that $\vess$ is semi-simple, the  center of $\vess$ is trivial
	and therefore this  equation
	implies that $ A^\ell_{ab} =0$.
	The structure equations \EqRef{Fifth11} then reduce to the form \EqRef{Fifth5}, as desired.
\par
\medskip
\noindent
{\bf Case II. \quad}
	Now we consider the  other extreme case, namely,
	the case  where $\vess$ is abelian.
	Strictly speaking, we need not treat this as a separate case but the analysis here will simplify
        our  subsequent discussion of the case where $\vess$ is solvable.  When $\vess$ is abelian
	the structure equations \EqRef{Fifth5} are
\begin{equation}
\extd \thetaX^i
	=  \frac12 A^i_{ab}\, \hpi^a \wedge \hpi^b  +
	\frac12 B^i_{\alpha\beta}\, \cpi^\alpha \wedge \cpi^\beta
	+M^i_{a j}\hpi^a\wedge \thetaX^j,
\EqTag{Fifth12}
\end{equation}
        equation \EqRef{Fifth7} is vacuous  and,
	by virtue of  \EqRef{Fifth6}, 	
        \EqRef{Fifth8} simplifies to
\begin{equation}
	\extd M^i_j -  M^i_\ell \wedge M^\ell_j = 0,
	\quad\text{where}\quad
         M^i_j = M^i_{aj} \hpi^a.
\EqTag{Fifth55}
\end{equation}
         By the Frobenius theorem
	 we conclude that there are locally defined
	 functions $R^i_j\in \Inv(\hV)$ such that
\newcommand\Ao{{\accentset{0\rule[-1pt]{0pt}{0pt}}{A}}{}}
\newcommand\Bo{{\accentset{0\rule[-1pt]{0pt}{0pt}}{B}}{}}
\newcommand\Co{{\accentset{0\rule[-1pt]{0pt}{0pt}}{C}}{}}
\newcommand\Mo{{\accentset{0\rule[-1pt]{0pt}{0pt}}{M}}{}}
\begin{equation}
	\extd( R^i_j)  +R^i_\ell \,  M^\ell_j = 0\quad\text{and}\quad
	\det(R^i_j)\neq 0.
\EqTag{Fifth56}
\end{equation}
	(For this precise application of the Frobenius theorem see, for example, Flanders \cite{flanders:1963a}, page 102.)
	We remark that this is the first (and only) step in all our coframe adaptations
	that require the solution  to systems of linear ordinary differential equations
\par
         The  forms $\htheta^i_0 = R^i_j \thetaX^j$ are then easily seen to satisfy
\begin{equation}
        \extd \htheta^i_0 =
	\frac12 \Ao^i_{ab}\, \hpi^a\wedge \hpi^b +
	\frac12 \Bo^i_{\alpha\beta} \, \cpi^\alpha \wedge \cpi^\beta.
\EqTag{Fifth13}
\end{equation}
	Since the structure functions $\Ao^i_{ab} = R^i_j \, A^j_{ab} \in \Inv(\hV)$
	it follows, either directly from \EqRef{Fifth13} or from
	\EqRef{Fifth9} (with $M^i_{a\ell} =0$)
	that the 2-forms
\begin{equation}
	\chi^i= \frac{1}{2}\Ao^i_{ab}\, \hpi^a\wedge \hpi^b
\EqTag{Fifth57}
\end{equation}
	are all $d$-closed.  If we pick 1-forms $\phi^i = \phi^i_a\,\hpi^a$, with $\phi^i_a\in \Inv(\hV)$,
	such that $\extd \phi^i =-\chi^i$, then the forms
\begin{equation}
        \htheta^i  = \htheta^i_0   + \phi^i_a\,\hpi^a =  R^i_j\thetaX^j + \phi^i_a\,\hpi^a
\EqTag{Fifth66}
\end{equation}
        will satisfy the  required structure equations \EqRef{Fifth2}, with $C^i_{jk}=0$. By applying the usual homotopy
	formula for the de Rham complex, we see that this last coframe adaptation can be implemented by quadratures.
\par
\medskip
\noindent
{\bf Case III. \quad}
	Now we suppose that $\vess$ is solvable.
	Recall that a  Lie algebra $\lieg$ is said to  be
	$p$-step solvable if the derived algebras $\lieg^{(i)}=[\lieg^{(i-1)}, \lieg^{(i-1)}]$ satisfy
\begin{equation*}
        \lieg = \lieg^{(0)} \supset \lieg^{(1)}\supset \lieg^{(2)}\dots
	\supset \lieg^{(p-1)} \supset \lieg^{(p)} = \{0\}.
\EqTag{Fifth14}
\end{equation*}
	The annihilating subspaces $\Lambda^{(i)} =\ann(\lieg^{(i)})$ therefore satisfy
\begin{equation}
        \{0\} = \Lambda^{(0)} \subset  \Lambda^{(1)} \subset \Lambda^{(2)}  \dots \subset \Lambda^{(p-1)}
	\subset \Lambda^{(p)} = \lieg^*
\EqTag{Fifth15}
\end{equation}
	and,  because the  $\lieg^{(i)}$ are all ideals,
\begin{equation}
	\extd 	\Lambda^{(i)} \subset   \Lambda^{(i-1)} \otimes \Lambda^{(i)}.
\EqTag{Fifth16}
\end{equation}
	Let $s = \dim(\lieg)$ and  $s_i = \dim \Lambda^{(i)}$.
        If  $M\:\lieg \to \lieg$ is a derivation,
	then a simple induction shows that $M\:\lieg^{(i)} \to \lieg^{(i)}$ and therefore
\begin{equation}
	M^*\: \Lambda^{(i)} \to \Lambda^{(i)}.
\EqTag{Fifth19}
\end{equation}
	Thus the matrix representing $M^*$,  with respect to any basis
	for $\lieg^*$ adapted to the flag $\EqRef{Fifth15}$, is block triangular.
\par
        We apply these observations to the forms $\{\, \thetaX^i\,\}$
	and the structure equations \EqRef{Fifth5}.  The forms  $\{\, \thetaX^i\, \}$,   restricted to the vectors $X_i$, define a basis for $\vess^*$
        and consequently, by a {\it constant} coefficient  change of basis,
	we can suppose that the basis $\{\, \thetaX^i\, \}$ is adapted to the derived
        flag \EqRef{Fifth15}. Since the notation for this adaptation becomes rather cumbersome  in the  general case
	of a  $p$-step 	Lie algebra, we shall consider
        just  the cases  where $\vess$  is a $2$-step  or a $3$-step  solvable Lie algebra.  The construction of the
	coframe  with structure equations \EqRef{Fifth2} in
	these two special cases will make the nature of the  general construction apparent.
\par
	In the case  where $\vess$ is a 2-step solvable Lie algebra,
	we  begin with a 4-adapted coframe
\begin{gather*}
       \{\,\theta^1_1,\dots, \theta^{s_1}_1,\theta^{s_1+1}_2,\dots ,\theta^{s_2}_2, \hpi^a, \cpi^\alpha \,\},
\\[-4\jot]\intertext{where}
\\[-11\jot]\notag
        \text{span}\{\,\thetaX^1,\dots, \thetaX^{s} \,\} =
	\text{span} \{\,\theta^1_1,\dots, \theta^{s_1}_1,\theta^{s_1+1}_2,\dots , \theta^{s_2}_2\,\}\quad \text{over $\real$},
\end{gather*}
         and  where $\{\,\theta^1_1,\dots, \theta^{s_1}_1\}$ is a basis for $\Lambda^{(1)}(\vess)$.
	 In this basis the structure equations  \EqRef{Fifth5} become
\begin{subequations}
\label{eq:Fifth18}
\begin{align}
       \extd \theta_1^r
	&= \frac12 A^r_{ab}\, \hpi^a \wedge \hpi^b  +
	\frac12 B^r_{\alpha\beta}\, \cpi^\alpha \wedge \cpi^\beta
	+M^r_{a s}\, \hpi^a\wedge \theta_1^s, \quad \text{and}
\EqTag{Fifth18a}
\\
	\extd \theta_2^i
	&=  \frac12 A^i_{ab}\, \hpi^a \wedge \hpi^b  +
	\frac12 B^i_{\alpha\beta}\, \cpi^\alpha \wedge \cpi^\beta
        +\frac12 C^i_{rs}\, \theta_1^r\wedge \theta_1^s  +C^i_{rj}\,\theta_1^r\wedge \theta_2^j
\EqTag{Fifth18b}
\\
	& + M^i_{a r}\,\hpi^a\wedge \theta^r_1 + M^i_{a j}\,\hpi^a\wedge \theta^j_2.
\notag
\end{align}
\end{subequations}
	In these equations the indices $r,s$ range from 1 to $s_1$ and the indices $i,j$ range from $s_1+1$ to $s_2$.
	On account of  \EqRef{Fifth16}, there are no
        quadratic  $\theta$ terms  in \EqRef{Fifth18a} because
	$\theta_1^r\in\Lambda^{(1)}(\vess)$  and there are no $\theta^i_2 \wedge \theta^j_2$ terms in
	\EqRef{Fifth18b} because $\theta_1^2\in\Lambda^{(2)}(\vess)$.
	There are no  $\hpi^a\wedge \theta^i_2$ terms in \EqRef{Fifth18a}
	by virtue of \EqRef{Fifth19}.
\par
        Since the structure equations \EqRef{Fifth18a} are identical in form to the structure equations  \EqRef{Fifth12}
	for the abelian  case,  we  can  invoke the  arguments there to define  new forms
\begin{equation}
	\htheta^r_1 =  R^r_s\theta^s_1 +  \phi^r_a\hpi^a
\EqTag{Fifth20}
\end{equation}
	so that the structure equations \EqRef{Fifth18} reduce to
\begin{subequations}
\label{eq:Fifth21}
\begin{align}
       \extd \htheta_1^r
	&=
	\frac12 \Bo^r_{\alpha\beta}\, \cpi^\alpha \wedge \cpi^\beta, \quad\text{and}	
\EqTag{Fifth22}
\\
	\extd \theta_2^i
	&=  \frac12 \Ao^i_{ab}\, \hpi^a \wedge \hpi^b  +
	\frac12 \Bo^i_{\alpha\beta}\, \cpi^\alpha \wedge \cpi^\beta
        +\frac12 \Co^i_{rs}\,\htheta_1^r\wedge \htheta_1^s  +\Co^i_{rj}\,\htheta_1^r\wedge \theta_2^j
\EqTag{Fifth23}
\\
	& + \Mo^i_{a r}\, \hpi^a\wedge \htheta^r_1 + \Mo^i_{a j}\, \hpi^a\wedge \theta^j_2.
\notag
\end{align}
\end{subequations}
	It is important to  track the coordinate dependencies of the  coefficients
	in these structure equations as we perform  these frame changes.
	Since the coefficients  $A^i_{ab}$, $M^i_{ar}$ and $M^i_{aj}$ in \EqRef{Fifth18b}  and the coefficients
	$R^r_s$ and $\phi^r_a$ in
        \EqRef{Fifth20} are all $\hV$ first integrals,  it is easily checked that the same is true of the coefficients
	$\Ao^i_{ab}$, $\Co^i_{rs}$, $\Co^i_{rj}$, $\Mo^i_{a r}$ and $\Mo^i_{a j}$  in
	$\EqRef{Fifth23}$.
\par
	The  coefficients  of $\hpi^a\wedge \hpi^b\wedge \theta^i_2$ and
	$\hpi^a\wedge \hpi^b \wedge \hpi^c$  in $d^2 \theta^i_2 = 0$   give the  same equations as we  had in the
        abelian case and consequently we can define
\begin{equation}
	\htheta^i_2 = R^i_j\theta^j_2 + \phi^i_a \hpi^a
\end{equation}
	so as to eliminate the   $\hpi^a \wedge \hpi^b $ and $\hpi^a\wedge \theta^j_2$
	terms in \EqRef{Fifth23}. The structure equations are now
\newcommand\Aone{{\accentset{1\rule[-1pt]{0pt}{0pt}}{A}}}
\newcommand\Bone{{\accentset{1\rule[-1pt]{0pt}{0pt}}{B}}}
\newcommand\Cone{{\accentset{1\rule[-1pt]{0pt}{0pt}}{C}}}
\newcommand\Mone{{\accentset{1\rule[-1pt]{0pt}{0pt}}{M}}}
\begin{subequations}
\label{eq:Fifth24}
\begin{align}
       \extd \htheta_1^r
	&=
	\frac12 \Bo{}^r_{\alpha\beta}\, \cpi^\alpha \wedge \cpi^\beta, \quad\text{and}	
\EqTag{Fifth25}
\\
	\extd \htheta_2^i
	&=
	\frac12 \Bone{}^i_{\alpha\beta}\, \cpi^\alpha \wedge \cpi^\beta
        +\frac12 \Cone{}^i_{rs}\,\htheta_1^r\wedge \htheta_1^s  +\Cone{}^i_{rj}\, \htheta_1^r\wedge \htheta_2^j +
	\Mone{}^i_{a r}\,\hpi^a\wedge \htheta^r_1 .
\end{align}
\end{subequations}
	Finally,  from the coefficients of
	$\hpi^a\wedge \hpi^b\wedge \htheta^r_1$ in  $d^2 \htheta^i_2 = 0$ we find that
\begin{equation}
	\extd \Mone{}^i_{b} = 0,
	\quad\text{where}\quad
	\Mone{}^i_{b} =  \Mone{}^i_{br}\, \hpi^r,
\EqTag{Fifth71}
\end{equation}
	and  therefore, again by the de Rham homotopy formula,
	there are functions $R^i_r \in \Inv(\hV)$ such that $M^i_{ar} = \hU_a(R^i_r)$.
	The  change of frame
\begin{equation}
        \htheta^i_2=  \Hat{\Hat \theta}^i_2 +R^i_r \htheta^r_1
\EqTag{Fifth58}
\end{equation}
	then leads to the desired structure equations
\newcommand\Btwo{{\accentset{2\rule[-1pt]{0pt}{0pt}}{B}}}
\newcommand\Ctwo{{\accentset{2\rule[-1pt]{0pt}{0pt}}{C}}}
\begin{subequations}
\label{eq:Fifth26}
\begin{align}
       \extd \htheta_1^r
	&=
	\frac12 \Bo{}^r_{\alpha\beta}\, \cpi^\alpha \wedge \cpi^\beta, \quad\text{and} 	
\EqTag{Fifth27}
\\
	\extd \Hat{\Hat \theta}_2^i
	&=
	\frac12 \Btwo{}^i_{\alpha\beta}\, \cpi^\alpha \wedge \cpi^\beta
        +\frac12 \Ctwo{}^i_{rs}\htheta_1^r\wedge \htheta_1^s  +\Ctwo{}^i_{rj}\htheta_1^r\wedge \Hat{\Hat \theta}_2^j.
\EqTag{Fifth28}
\end{align}
\end{subequations}
	At this  point  it is a simple matter to check that the equations $d^2\Hat{\Hat \theta}_2^i =0$ force the
        coefficients  $\Ctwo{}^i_{rs}$  and  $\Ctwo{}^i_{rj}$ (which belong to $\Inv(\hV)$)  to be  constant. Equations
	\EqRef{Fifth26}  establish \EqRef{Fifth2} for the case of a 2-step solvable Vessiot algebra.
\par
        When   $\vess$ is a 3-step solvable Lie algebra  we assume that the 4-adapted coframe
	$\{\, \theta^r_1,\, \theta^u_2,\, \theta^i_3,\, \hpi^a,\, \cpi^\alpha\,\}$
	is adapted to the flag \EqRef{Fifth15} in the sense that
\begin{equation*}
	\Lambda^{(1)}= \text{span}\{\, \theta^r_1 \, \}, \quad
	\Lambda^{(2)}= \text{span}\{\, \theta^r_1,\, \theta^u_2\, \} \quad\text{and}\quad
	\Lambda^{(3)}= \text{span}\{\, \theta^r_1,\, \theta^u_2,\, \theta^i_3\,\}.
\end{equation*}
	The structure equations \EqRef{Fifth32} are then of the form
\begin{subequations}
\label{eq:Fifth30}
\begin{align}
       \extd \theta_1^r
	&= \frac12 A^r_{ab}\, \hpi^a \wedge \hpi^b  +
	\frac12 B^r_{\alpha\beta}\, \cpi^\alpha \wedge \cpi^\beta
	+M^r_{a s}\,\hpi^a\wedge \theta_1^s,
\EqTag{Fifth31}
\\[2\jot]
	\extd \theta_2^u
	&=  \frac12 A^u_{ab}\, \hpi^a \wedge \hpi^b  +
	\frac12 B^u_{\alpha\beta}\, \cpi^\alpha \wedge \cpi^\beta
        +\frac12 C^u_{rs}\,\theta_1^r\wedge \theta_1^s  +C^u_{rv}\,\theta_1^r\wedge \theta_2^v
\EqTag{Fifth32}
\\
	& + M^u_{a r}\,\hpi^a\wedge \theta^r_1 + M^u_{a v}\,\hpi^a\wedge \theta^v_2, \quad\text{and}
\notag
\\[2\jot]
	\extd \theta_3^i
	&=  \frac12 A^i_{ab}\, \hpi^a \wedge \hpi^b  +
	\frac12 B^i_{\alpha\beta}\, \cpi^\alpha \wedge \cpi^\beta
\EqTag{Fifth33}
\\\notag
        &+\frac12 C^i_{rs}\,\theta_1^r\wedge \theta_1^s  + C^i_{ru}\,\theta_1^r\wedge \theta_2^u
	+  C^i_{rj}\,\theta^r_1\wedge \theta^j_3 +
	\frac12 C^i_{uv}\,\theta_2^u\wedge \theta_2^v +  C^i_{uj}\,\theta_2^u\wedge  \theta_3^j
\\
	& + M^i_{a r}\,\hpi^a\wedge \theta^r_1 +  M^i_{au}\,\hpi^a\wedge \theta^u_2 + M^i_{a j}\,\hpi^a\wedge \theta^j_3.
\notag
\end{align}
\end{subequations}
	In these equations $r,s$ range from 1 to $s_1$,
	$u,v$ from $s_1+1$ to $s_2$, and $i,j$ from  $s_2+1$ to $s_3$.	
	Note that the form of the   structure equations \EqRef{Fifth31}--\EqRef{Fifth33}
	is preserved by changes of  coframe of the type
\begin{gather*}
       \bftheta_1 \to  \bfR_{11} \bftheta_1 + \bfphi_1\bfhpi,\quad
       \bftheta_2 \to  \bfR_{12} \bftheta_1 + \bfR_{22}\bftheta_2 + \bfphi_2\bfhpi,\quad\text{and}
\\
       \bftheta_3 \to  \bfR_{13} \bftheta_1 +\bfR_{23} \bftheta_2 + \bfR_{33} \bftheta_3 + \bfphi_3 \bfhpi,
\end{gather*}
	where the coefficients  $\bfR_{ij}\in \Inv(\hV)$. Exactly as in the previous case of a
	2-step  solvable algebra, we  use such a change of coframe  for $\bftheta_1, \bftheta_2$ to reduce
	the  structure equations \EqRef{Fifth30} to
\begin{subequations}
\label{eq:Fifth35}
\begin{align}
       \extd \htheta_1^r
	&= \frac12 B^r_{\alpha\beta}\, \cpi^\alpha \wedge \cpi^\beta ,
\EqTag{Fifth36}
\\[2\jot]
	\extd \htheta_2^u
	&=
	\frac12 B^u_{\alpha\beta}\, \cpi^\alpha \wedge \cpi^\beta
        +\frac12 C^u_{rs}\,\htheta_1^r\wedge \htheta_1^s  +C^u_{rv}\,\htheta_1^r\wedge \htheta_2^v, \quad \text{and}
\EqTag{Fifth37}
\\[2\jot]
	\extd \theta_3^i
	&=  \frac12 A^i_{ab}\, \hpi^a \wedge \hpi^b  +
	\frac12 B^i_{\alpha\beta}\, \cpi^\alpha \wedge \cpi^\beta
\EqTag{Fifth38}
\\
        &+\frac12 C^i_{rs}\, \htheta_1^r\wedge \htheta_1^s  + C^i_{ru}\, \htheta_1^r\wedge \htheta_2^u
	+  C^i_{rj}\, \htheta^r_1\wedge \theta^j_3 +
	\frac12 C^i_{uv}\, \htheta_2^u\wedge \htheta_2^v +  C^i_{uj}\, \htheta_2^u\wedge  \theta_3^j
\notag
\\
	& + M^i_{a r}\, \hpi^a\wedge \htheta^r_1 +  M^i_{au}\, \hpi^a\wedge \htheta^u_2 + M^i_{a j}\, \hpi^a\wedge \theta^j_3.
\notag
\end{align}
\end{subequations}
	Again, as in Case {\bf II}, a change of frame $\bfhtheta_3  = \bfR_{33} \bftheta_3 + \bfphi_3 \bfhpi$ leads to the
	simplification of \EqRef{Fifth38} to
\begin{align}
\extd \htheta_3^i
	&=
	\frac12 B^i_{\alpha\beta}\, \cpi^\alpha \wedge \cpi^\beta
	+ M^i_{a r}\, \hpi^a\wedge \htheta^r_1 +  M^i_{au}\, \hpi^a\wedge \htheta^u_2
\EqTag{Fifth40}
\\
        &+\frac12 C^i_{rs}\, \htheta_1^r\wedge \htheta_1^s  + C^i_{ru}\, \htheta_1^r\wedge \htheta_2^u
	+  C^i_{rj}\, \htheta^r_1\wedge \htheta^j_3 +
	\frac12 C^i_{uv}\, \htheta_2^u\wedge \htheta_2^v +  C^i_{uj}\, \htheta_2^u\wedge  \htheta_3^j.
\notag
\end{align}
      Finally, just as in the reduction from \EqRef{Fifth24} to \EqRef{Fifth26}, we  use a change of  frame
      $\bfhtheta_3 \to \hat \bftheta_3 +\bfR_{13} \bfhtheta_1 +\bfR_{23} \bfhtheta_2$  to transform \EqRef{Fifth40} to
\begin{align}
\extd \htheta_3^i
	&=
	\frac12 B^i_{\alpha\beta}\, \cpi^\alpha \wedge \cpi^\beta
\EqTag{Fifth41}
\\	&+\frac12 C^i_{rs}\,\htheta_1^r\wedge \htheta_1^s  + C^i_{ru}\,\htheta_1^r\wedge \htheta_2^u   +
	C^i_{rj}\, \htheta^r_1\wedge \htheta^j_3 +
	\frac12 C^i_{uv}\, \htheta_2^u\wedge \htheta_2^v +  C^i_{uj}\, \htheta_2^u\wedge  \htheta_3^j.
\notag
\end{align}
	Equations  \EqRef{Fifth36}, \EqRef{Fifth37} and \EqRef{Fifth41} give the desired result.
	We note  once more that the structure functions $C^*_{**}$ in  \EqRef{Fifth38} and\EqRef{Fifth40} are not necessarily constant  but
	they are constant in \EqRef{Fifth41}.
\par
	The reduction of the
        structure  equations  for a general $p$-step solvable algebra follows this construction and can be formally defined
        by induction on $p$.
\par
\medskip
\noindent
{\bf Case IV:}  \quad Finally, we  consider the  case where $\vess$ is a generic Lie algebra.
	We  use the fact that  every  real Lie algebra
	$\lieg$ admits a  Levi decomposition into a semi-direct sum $\lieg= \lies \oplus \lier$, where $\lier$ is the radical of
        $\lieg$ and $\lies$ is a semi-simple  subalgebra of $\lieg$.  The radical $\lier$ is the unique maximal solvable ideal in
	$\lieg$ -- the  semi-simple component $\lies$ in the  Levi decomposition is not unique. The dual space $\lieg^*$
	then decomposes according to
\begin{equation}
	\lieg^* = \ann(\lier) \oplus \ann(\lies).
\EqTag{Fifth42}
\end{equation}
	The fact that $\lier$ is an ideal implies that
\begin{equation}
       \extd \ann(\lier) \subset \Lambda^2(\ann(\lier))
	\quad\text{and}\quad
        \extd \ann(\lies) \subset \lieg^* \otimes\ann(\lies).
\EqTag{Fifth43}
\end{equation}
	If  $M\: \lieg \to \lieg$ is a derivation, then  $M$ preserves $\lier$ and hence $M^*\:\ann(\lier) \to \ann(\lier)$.
\par
	Now choose, by a constant coefficient change of basis, 1-forms $\{\,\theta^r_1, \theta^i_2  \, \}$  adapted to the decomposition
	\EqRef{Fifth42} with $\ann(\lier) = \text{span}\{\,\theta^r_1  \,\}$ and
	$\ann(\lies) = \text{span}\{\,\theta^i_2  \,\}$. In view of \EqRef{Fifth43}, the  structure
	equations \EqRef{Fifth5} become
\begin{subequations}
\label{eq:Fifth44}
\begin{align}
       \extd \theta_1^r
	&= \frac12 A^r_{ab}\, \hpi^a \wedge \hpi^b  +
	\frac12 B^r_{\alpha\beta}\, \cpi^\alpha \wedge \cpi^\beta
	+ \frac12 C^r_{st}\,\theta_1^s \wedge \theta_1^t + M^r_{a s}\,\hpi^a\wedge \theta_1^s,
\EqTag{Fifth44a}
\\[2\jot]
	\extd \theta_2^i
	&=  \frac12 A^i_{ab}\, \hpi^a \wedge \hpi^b  +
	\frac12 B^i_{\alpha\beta}\, \cpi^\alpha \wedge \cpi^\beta
        +\frac12 C^i_{jk}\,\theta_2^j\wedge \theta_2^k  +C^i_{rj}\,\theta_1^r\wedge \theta_2^j
\EqTag{Fifth44b}
\\
	& + M^i_{a r}\,\hpi^a\wedge \theta^r_1 + M^i_{a j}\,\hpi^a\wedge \theta^j_2.
\notag
\end{align}
\end{subequations}
	In these  equations the  indices $r$, $s$, $t$ range from 1 to $s_0=\dim(\ann(\lier))$ and $i$, $j$, $k$ range from
	$s_0+1$ to $s=\dim(\lieg)$.
\par
	Since the structure constants  $C^r_{st}$ are those for the semi-simple Lie algebra $\lies$,
	we can return to the analysis	presented in Case {\bf I} to prove that there is a
	change of  coframe $\htheta_1^r =  \theta_1^r + \phi^r_a \hpi^a$  (see \EqRef{Fifth68}) which reduces
	the structure equations \EqRef{Fifth44} to
\begin{subequations}
\label{eq:Fifth46}
\begin{align}
       \extd \htheta_1^r
	&=
	\frac12 B^r_{\alpha\beta}\, \cpi^\alpha \wedge \cpi^\beta
	+ \frac12 C^r_{st}\,\htheta_1^s \wedge \htheta_1^t,  \quad\text{and}
\EqTag{Fifth46a}
\\[2\jot]
	\extd \theta_2^i
	&=  \frac12 A^i_{ab}\, \hpi^a \wedge \hpi^b  +
	\frac12 B^i_{\alpha\beta}\, \cpi^\alpha \wedge \cpi^\beta
        +\frac12 C^i_{jk}\,\theta_2^j\wedge \theta_2^k  +C^i_{rj}\,\htheta_1^r\wedge \theta_2^j
\EqTag{Fifth46b}
\\
	& + M^i_{a r}\,\hpi^a\wedge \htheta^r_1 + M^i_{a j}\,\hpi^a\wedge \theta^j_2.
\notag
\end{align}
\end{subequations}
	The  structure functions $C^r_{st}$, $C^i_{jk}$ and $C^i_{rj}$  in  \EqRef{Fifth46} are  identical to the  		
	corresponding structure constants in  \EqRef{Fifth44}.
	One  now  checks that the  $\htheta^r_1$ terms in \EqRef{Fifth46b} do not  effect the  arguments made in
	Cases {\bf II}  and {\bf III}. Thus, by a change of frame $\bfhtheta_2= \bfR\bftheta_2  + \bfphi \bfhpi$, one can reduce
	\EqRef{Fifth46} to
\begin{subequations}
\label{eq:Fifth47}
\begin{align}
       \extd \htheta_1^r
	&=
	\frac12 B^r_{\alpha\beta}\, \cpi^\alpha \wedge \cpi^\beta
	+ \frac12 C^r_{st}\,\htheta_1^s \wedge \htheta_1^t,  \quad\text{and}
\EqTag{Fifth47a}
\\[2\jot]
	\extd \htheta_2^i
	&=    \frac12 B^i_{\alpha\beta}\, \cpi^\alpha \wedge \cpi^\beta
        +\frac12 C^i_{jk}\,\htheta_2^j\wedge \htheta_2^k  +C^i_{rj}\,\htheta_1^r\wedge \theta_2^j
	+ M^i_{a r}\,\hpi^a\wedge \htheta^r_1 .
\EqTag{Fifth47b}
\end{align}
\end{subequations}
	Finally,  by  a now familiar  computation,
	one sees  that  a  change of frame  $\bfhtheta_2\to \hat \bftheta_2  + \bfR \bfhtheta_1$
	allows one to eliminate the  $\hpi^a\wedge \htheta^r_1$  terms in  \EqRef{Fifth47b}.
\par
	This  completes our derivation of the structure equations \EqRef{Fifth2}
	and the proof of Theorem \StRef{CofrAdapt1} is, at last, finished.

\begin{Remark}
\StTag{Fifth24}
	We collect together a few additional
	formulas which will be needed  in the next section for the proof of
	Lemma \StRef{SuperForm5} and the construction of the superposition formula.
	If the forms $\htheta^i$ and $\ctheta^i$ in \EqRef{Fifth1}
	satisfy \EqRef{Fifth2} and \EqRef{Fifth3}, then it easy to  check that coefficients
	$\hR^i_j$,  $\cR^i_j$,  $\phi^i = \phi^i_a\hpi^a$ and $\psi^i = \psi^i_\alpha\cpi^\alpha$ satisfy
	(see \EqRef{Fifth70})
\begin{subequations}
\label{eq:Fifth59}
\begin{gather}
	\extd\hR^i_j = -C^i_{\ell k}\, \hR^\ell_j\,\phi^k - \hR^i_\ell \, M^\ell_j,
	\quad
	\extd\cR^i_j = C^i_{\ell k}\,  \cR^\ell_j \psi^k - \cR^i_\ell N^\ell_j,
\EqTag{Fifth59a}
\\[2\jot]
	\hR^i_\ell B^\ell_{\alpha\beta} = G^i_{\alpha\beta},
	\quad
	\cR^i_\ell\, E^\ell_{ab} = H^i_{ab},
\EqTag{Fifth59b}
\\[2\jot]
	\extd \phi^i  =\frac12C^i_{jk} \phi^j\wedge \phi^k -\frac12 \hR^i_\ell \,A^\ell,
	\quad
	\extd \psi^i  =-\frac12C^i_{jk} \psi^j\wedge \psi^k -\frac12 \cR^i_\ell \, F^\ell.
\EqTag{Fifth59d}
\end{gather}
\end{subequations}
	The computations leading to \EqRef{Fifth59} also show that
	the matrices $\hR^i_j$ and $\cR^i_j$ define automorphisms of the Vessiot algebra, that is
\begin{equation}
	\hR^i_\ell \,C^\ell_{jk} =C^i_{\ell m}\, \hR^\ell_j\,\hR^m_k,
	\quad\text{and}\quad
	\cR^i_\ell \,  C^\ell_{jk} =C^i_{\ell m}\, \cR^\ell_j \, \cR^m_k.
\EqTag{Fifth53}
\end{equation}
	Finally, a series of straightforward computations,
	based on \EqRef{Fourth18}, \EqRef{Fourth19}, \EqRef{Fifth59a} and \EqRef{Fifth53}, shows  that the matrix
\begin{gather}
	\lambda = \hR Q \cR^{-1} \quad\text{satisfies}
\EqTag{Fifth60}
\\[2\jot]
	\lambda^i_\ell C^\ell_{jk} = C^i_{\ell m} \lambda^\ell_j \lambda^m_k
	\quad\text{and}\quad
	\extd\lambda^i_j
        =C^i_{\ell m}\, \lambda^m_j\, \htheta^\ell  + \lambda^i_h\,C^h_{mj}\,\psi^m.
\EqTag{Fifth61}
\end{gather}
\qed
\par
\end{Remark}

	We conclude this section by  determining  the residual freedom in the determination of  the
	1-forms  $\htheta^i$. Specifically, we compute
        the  group of coframe transformations  which fix the forms $\hpi^a$ and $\cpi^\alpha$ and
	which transform the $\htheta^i$ by
\begin{equation}
        \tilde \theta^i  = \Lambda^i_j \htheta^i + \sigma^i,   \quad\text{where}\quad  \sigma^i= S^i_a\cpi^a
\EqTag{Fifth50}
\end{equation}
        in such a manner as to  preserve the form of the structure equations \EqRef{Fifth2}.
\par	
	If we take the exterior derivative of \EqRef{Fifth50} and substitute into the structure equations for
	$\tilde\theta^i$ we  find  first, from the  $\cpi^\alpha \wedge \theta^i$ and the $\hpi^a\wedge \cpi^\alpha$ terms,
        that $\Lambda^i_j, S^i_a \in \Inv(\hV)$ and then that
\begin{equation}
       	\extd \Lambda^i_j = C^i_{\ell k}\,  \Lambda^\ell_j \sigma^k ,
	\quad
        \Lambda^i_\ell \,  C^\ell_{jk} =C^i_{\ell m}\, \Lambda^\ell_j \, \Lambda^m_k,
        \quad
	\extd \sigma^i  =\frac12C^i_{jk} \sigma^j\wedge \sigma^k .
\EqTag{Fifth51}
\end{equation}
\par	
	To integrate these equations, let $G$ be a local Lie group  whose Lie algebra is the
	Vessiot algebra $\vess$
        with structure constants $C^i_{jk}$. On   $G$  construct  a  coframe  $\eta^i$ of invariant 1-forms
        with structure equations  $\extd \eta^i  =\frac12C^i_{jk} \eta^j\wedge \eta^k$.
	Then there exists (\cite{griffiths:1974a}, Proposition 1.3)
	a map $\sigma \colon \Inv(\hV) \to G$ such that $\sigma^i = \sigma^*(\eta^i)$.
        Define $S(\hI) = \Ad(\sigma(\hI)$.   Then
\begin{equation*}
        \extd S^i_j=C^i_{\ell k} \sigma^\ell S^k_j
	\quad
	\text{and}\quad
	\extd(\Lambda^i_\ell (S^{(-1)})^\ell_j)  = 0.
\end{equation*}
	and hence the  general solution to  \EqRef{Fifth51} is
\newcommand\Lambdao{{\accentset{0\rule[-1pt]{0pt}{0pt}}{\Lambda}}{}}
\begin{equation}
	\sigma^i=\sigma^*(\eta^i), \quad  \Lambda^i_j = \Lambdao^i_\ell\, S^\ell_j,  \quad S= \Ad(\sigma(\hI)),
\EqTag{Fifth52}
\end{equation}
	where  the matrix $\Lambdao^i_\ell$ is a constant  automorphism  of the Vessiot algebra.
        When $\sigma$ is a constant map, $S$ is constant inner automorphism  so that as far as the  general solution
        \EqRef{Fifth52} is concerned, one can restrict the  $\Lambdao^i_\ell$ to a set of representatives
        of the group of outer automorphisms of the Vessiot algebra.

\begin{Remark}
\StTag{Fifth8}
	The bases for the  infinitesimal Vessiot transformation groups
	$\hGamma = \{\,\hX_i\,\}$ and $\cGamma = \{\, \cX_i \, \}$
	are related by $\cX_j = \lambda^i_j \hX_i$, where $\lambda$ is the matrix \EqRef{Fifth60}.
	Let $\hX_r$, $r= 1\ldots m$, be	
	a basis for the center of $\hGamma$ and pick a complementary set of vectors $\hX_u$, $u=m+1\ldots s$,
	which complete the $\hX_r$ to a basis for $\hGamma$. Choose a similar basis for $\cGamma$.
	Then, because $\lambda$ defines a Lie algebra automorphism, we have that
\begin{equation*}
	\cX_r =  \lambda^t_r \hX_t \quad\text{and}\quad
	\cX_u =   \lambda^v_u \hX_v  + \lambda^t_u \hX_t,
\end{equation*}
	where $t= 1\ldots m$ and $v = m+1\ldots s$.
	The second equation  in \EqRef{Fifth61} now implies that $d\, \lambda^r_s = 0$.
	Consequently, we may always pick bases for the
	infinitesimal Vessiot transformation groups  $\hGamma$ and $\cGamma$ so that
	the infinitesimals generators for  the center coincide.
	In turn, this implies that any vector in the
	center of either infinitesimal Vessiot transformation group is a infinitesimal
	symmetry of {\it both\/} $\hV$ and $\cV$. \qed
\end{Remark}

\newpage

\section{The Superposition Formula for Darboux Pairs}

\subsection{A preliminary result} The proof of the superposition formula for a Darboux pair will depend upon the following technical
	result concerning group actions and Maurer-Cartan forms.

\begin{Theorem}
\StTag{SuperForm1}
	Let $M$ be a manifold and  $G$ a connected Lie group.  Let $\omega^i_L$ and $\omega^i_R$ be
	the left and  right invariant Maurer-Cartan forms on $G$, with structure equations
\begin{equation}
	\extd \omega_L^i = \frac12 C^i_{jk}\, \omega_L^j \wedge \omega_L^k
	\quad\text{and}\quad
	\extd \omega_R^i = -\frac12 C^i_{jk}\, \omega_R^j \wedge \omega_R^k.
\EqTag{SuperForm2}
\end{equation}
	Let $X^L_i$ and $X^R_i$ be the dual basis of  left and right invariant vector fields on $G$.

\smallskip
\noindent
{\bf [i]}  Suppose that there are   right and left  commuting,  regular, free  group actions of $G$ on $M$,
\begin{equation}
	\hmu \colon G \times M \to M
	\quad\text{and}\quad
	\cmu \colon G \times M  \to M,
\end{equation}
	with common orbits.
	Denote the infinitesimal generators of these actions by
	$(\hmu_x)_*(X^L_i) = \hX_i$  and
	$(\cmu_x)_*(X^R_i) = \cX_i.$

\smallskip
\noindent
{\bf [ii]} Suppose there are 1-forms  $\homega^i$ and $\comega^i$  on $M$ such that
	{\bf [a]} $\homega^i(\hX_j) = \delta^i_j$ and $\comega^i(\cX_j) = \delta^i_j$;
	{\bf [b]} $\text{span} \{\, \homega^i\,\} = \text{span} \{\,\comega^i\,\}$ with
	$\homega^i(x_0) =   \comega^i(x_0)$
	at some fixed point of $x_0 \in M$  and;  {\bf [c]}
\begin{equation}
	\extd \homega^i = \frac12 C^i_{jk}\ \homega^j\wedge \homega^k
	\quad\text{and}\quad
	\extd \comega^i= -\frac12 C^i_{jk}\ \comega^j \wedge \comega^k.
\EqTag{SuperForm3}
\end{equation}
	Then about each point $x_0$ of $M$ there is an open $G$ bi-invariant neighborhood $\mathcal U$ of $M$and a
	mapping $\rho \colon \mathcal U  \to G $ such that
\begin{equation}
	\rho^*(\omega_L^i) =  \homega^i,
	\quad
	\rho^*(\omega_R^i) =  \comega^i,
\quad
         \rho(\hmu(x, g)) = \rho(x)\cdot g,
	\quad \rho(\cmu(g,x)) = g\cdot \rho(x).
\EqTag{SuperForm4}
\end{equation}
	If $M/G$ is simply connected, then one can take $\mathcal U = M$.	
\end{Theorem}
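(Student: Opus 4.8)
The plan is to reduce everything to the classical converse of the Maurer--Cartan theorem (Griffiths \cite{griffiths:1974a}, Proposition 1.3), which is already invoked elsewhere in the paper. First I would form the $\lieg$-valued $1$-form $\homega=\homega^i\otimes e_i$, where $\{e_i\}$ is a basis of $\lieg$ whose structure constants are the $C^i_{jk}$. Hypothesis {\bf[ii][c]} says precisely that $\homega$ satisfies the left Maurer--Cartan equation, so the fundamental theorem produces, on a neighbourhood of $x_0$, a smooth map $\rho\colon\mathcal U\to G$ with $\rho^*(\omega_L^i)=\homega^i$; the residual ambiguity is a left translation, which I fix by demanding $\rho(x_0)=e$.

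Next I would show that the \emph{same} $\rho$ pulls $\omega_R$ back to $\comega$, so that no second integration is needed. On $G$ one has the identity $\omega_R=\Ad(\cdot)\,\omega_L$, whence $\rho^*(\omega_R^i)=\Ad(\rho)^i_j\,\homega^j$. By {\bf[ii][b]} we may write $\comega^i=\lambda^i_j\,\homega^j$ for an invertible matrix function $\lambda$ with $\lambda(x_0)=\mathbf 1$ (the matching at $x_0$). Differentiating $\comega=\lambda\homega$ and comparing the two structure equations \EqRef{SuperForm3} shows that $\lambda$ is pointwise an automorphism of $\vess$ and satisfies a first-order total differential system of exactly the shape \EqRef{Fifth61}; the matrix $\Ad(\rho)$ obeys the same system, via the standard identity $\extd\,\Ad(\rho)=\Ad(\rho)\,\ad(\homega)$, with the same value $\mathbf 1$ at $x_0$. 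Uniqueness of solutions gives $\lambda=\Ad(\rho)$ and hence $\rho^*(\omega_R^i)=\comega^i$.

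The bi-equivariance is then handled orbit by orbit, again by uniqueness. A Lie-derivative computation using {\bf[ii][a]} and \EqRef{SuperForm3} yields $\mathcal L_{\hX_j}\homega^i=C^i_{jk}\homega^k$; integrating over the connected group $G$ determines how $\homega$ transforms under $\hmu_g$, and the same relation holds on $G$ for $\omega_L$ under right translation. Thus for each fixed $g$ the maps $\rho\circ\hmu_g$ and $R_g\circ\rho$ pull $\omega_L$ back to the same $\Ad(g^{\pm1})$-twist of $\homega$; since the two actions have common orbits and $\rho(x_0)=e$, they agree at one point of each orbit, so by uniqueness $\rho(\hmu(x,g))=\rho(x)\,g$. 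The parallel argument with $\omega_R$, $\comega$, $\cX_j$ and left translation gives $\rho(\cmu(g,x))=g\,\rho(x)$.

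Finally, for the global statement I would patch the local solutions $\rho_\alpha\colon U_\alpha\to G$. By the uniqueness in the fundamental theorem, on overlaps $\rho_\beta=L_{c_{\alpha\beta}}\circ\rho_\alpha$ with locally constant $c_{\alpha\beta}\in G$. The equivariance just proved shows each $\rho_\alpha$ is determined along an orbit by a single value, so the transition constants are constant along orbits and descend to a flat $G$-valued \v{C}ech cocycle on $M/G$. When $M/G$ is simply connected this cocycle is a coboundary, and absorbing it into left translations glues the $\rho_\alpha$ into a single $\rho\colon M\to G$ with all the required properties, so $\mathcal U=M$. I expect the main obstacle to be precisely this descent step: one must verify that the monodromy of the flat cocycle is governed by $\pi_1(M/G)$ rather than $\pi_1(M)$, which relies on freeness and regularity of the actions together with $G$ being connected, so that orbits are connected and contribute nothing to the monodromy. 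Keeping the $\Ad$-twist and the left/right conventions consistent throughout the two middle steps is the other delicate point.
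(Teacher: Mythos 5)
Your proposal is correct and its core is the same as the paper's: the decisive middle steps --- writing $\comega^i$ in terms of $\homega^i$ via a matrix $\lambda$, showing by differentiating \EqRef{SuperForm3} that $\lambda$ satisfies the same total differential system as $\Ad(\rho)$ (the paper's $\Lambda^i_j$, equations \EqRef{SuperForm10}--\EqRef{SuperForm12}) with the same initial value, and then deriving the two equivariance identities from the uniqueness of maps pulling back $\omega_L$ to a prescribed solution of the Maurer--Cartan equation --- are exactly Lemmas \StRef{SuperFormLem1}--\StRef{SuperFormLem2} and the final lemma of Section 5.1. Where you genuinely diverge is at the two ends. For existence you start from Griffiths's fundamental theorem on a connected neighborhood and only later enlarge the domain by a \v{C}ech patching argument over $M/G$; the paper instead invokes Kobayashi--Nomizu's trivialization of a flat principal connection, which hands over a $\hmu$-invariant (hence, by the common-orbits hypothesis, bi-invariant) domain immediately, and the simply connected case for free. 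Your patching route works and is arguably more self-contained, but it makes you responsible for the monodromy descent from $M$ to $M/G$, which the flat-bundle formulation absorbs; the paper's Remark \StRef{SuperForm3} explicitly notes these are two views of the same fact. One step you should tighten: in the equivariance argument, ``they agree at one point of each orbit'' is not the right invocation of uniqueness --- uniqueness is per connected domain, and the required agreement point is $\sigma_1(x_0)=\rho(\hmu(g,x_0))=g=\sigma_2(x_0)$, which must first be established by noting that $\hmu_{x_0}^*(\homega^i)=\omega_L^i$ (from {\bf[ii][a]}), so $\rho\circ\hmu_{x_0}\colon G\to G$ pulls $\omega_L$ back to itself, is therefore a left translation, and fixes $e$; this is precisely the content of the paper's Lemma \StRef{SuperFormLem2} and should appear explicitly in your write-up.
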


\begin{Remark}
\StTag{SuperForm3}	
	This theorem can be viewed as a simple extension (or refinement) of  three different, well-known results.
	First, suppose that {\bf [i]} holds. Then, because the actions  $\hmu$ and $\cmu$ are free and regular, we may
	construct $\hmu$ and $\cmu$ invariant sets  $\hat {\mathcal U}$ and $\check {\mathcal U}$ and maps
\begin{equation}
	\hrho \colon \hat {\mathcal U}\to G
	\quad\text{and}\quad
	\crho  \colon \check {\mathcal U}  \to G
\end{equation}
	which are $G$ equivariant (see, for example,  \cite{fels-olver:1998a})
	with respect to the actions $\hmu$ and $\cmu$, respectively.  	 	
	Theorem \StRef{SuperForm1} states  that these maps can be chosen such that they are equal on a common bi-invariant domain,
	$\hrho^*(\omega^i_L) = \homega^i$ and $\crho^*(\omega^i_R) = \comega^i$.
	Secondly, assume {\bf [ii]}. Then  a fundamental 	theorem in
	Lie theory (see  Griffiths \cite{griffiths:1974a} or Sternburg \cite{sternburg:1984a}, page 220)  asserts that there always exists maps
	$\hvarphi : M \to G$ and $\cvarphi : M \to G$ such that
\begin{equation}
	\hvarphi^*(\omega_L^i) = \homega^i \quad\text{and}\quad
	\cvarphi^*(\omega_R^i) = \comega^i.
\end{equation}
	From this viewpoint, Theorem \StRef{SuperForm1} states that these  maps can be taken to be equal and equivariant with respect to the
	actions $\hmu$ and $\cmu$.  Thirdly, suppose that just one of the actions in {\bf[i]} is given, that $G$ acts regularly on $M$ and
	the corresponding forms (say $\homega^i$) in {\bf[ii]} are given. Then $\pi\:M \to M/G $ may be viewed as  a principal $G$ bundle
	 with  (by {\bf[ii]}) flat connection 1-forms  $\homega^i$.
	Then Kobayashi and Nomizu \cite{kobayashi-nomizu:1963a}(Theorem 9.1, Volume I, page 92) state that there is an open set $U$ in $M/G$ and a diffeomorphism
	$\psi:\pi^{-1}(U) \to U \times G$  such that $\psi^*(\omega^i_L) = \homega^i$ (where the Maurer-Cartan forms $\omega^i_L$ define
	the canonical flat connection on $U \times G$). Moreover when  $M/G$ is simply connected,
	then $M$ is globally the trivial $G$ bundle with canonical flat connection.
	Compare this result with Corollary \StRef{SuperForm4} to Theorem \StRef{SuperForm1}.
\end{Remark}
	We establish Theorem  \StRef{SuperForm1} with the following sequence of lemmas.

\begin{Lemma}
\StTag{SuperFormLem1}

	Under the hypothesis of Theorem \StRef{SuperForm1} there is a bi-invariant, connected neigborhood $\mathcal U$ around $x_0$ and a mapping
	$\rho \:\mathcal U \to G$ such that  $\rho^*(\omega_L^i) =  \homega^i$ and $\rho^*(\omega_R^i) =  \comega^i$.
	The map $\rho$ is uniquely defined for a given domain $\mathcal U$.
\end{Lemma}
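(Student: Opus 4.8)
The plan is to integrate the left Maurer--Cartan data first, producing a map $\rho$ with $\rho^*(\omega^i_L)=\homega^i$, and then to \emph{verify} that this same $\rho$ automatically carries $\omega^i_R$ to $\comega^i$. The genuine content is the compatibility of these two integrations, which is forced by the normalization in hypothesis \textbf{[ii][b]} together with the commutativity of $\hmu$ and $\cmu$.

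First I would work on the product $M\times G$, with projections $\pi_M,\pi_G$, and consider the Pfaffian system generated by the $s=\dim G$ one-forms $\beta^i=\pi_G^*\omega^i_L-\pi_M^*\homega^i$. Using \EqRef{SuperForm2} and \EqRef{SuperForm3} and substituting $\omega^i_L=\beta^i+\homega^i$ gives $\extd\beta^i=\tfrac12 C^i_{jk}\beta^j\wedge\beta^k+C^i_{jk}\beta^j\wedge\homega^k$, so $\extd\beta^i\equiv 0$ modulo the $\beta$'s and the system is Frobenius integrable. Since the $\omega^i_L$ restrict to a coframe on each fibre $\{x\}\times G$, the $\beta^i$ are independent and annihilate no nonzero vertical vector; hence the leaf through $(x_0,e)$ projects diffeomorphically onto a neighbourhood of $x_0$ and is the graph of a smooth $\rho\colon\mathcal U\to G$ with $\rho(x_0)=e$ and $\rho^*(\omega^i_L)=\homega^i$. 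Uniqueness is the standard uniqueness for this completely integrable system: two solutions on a connected domain differ by a left translation, and normalizing $\rho(x_0)$ pins $\rho$ down. This already yields the first of \EqRef{SuperForm4}.

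It remains to show $\rho^*(\omega^i_R)=\comega^i$. Since $\omega_R=\Ad(g)\,\omega_L$ on $G$, we have $\rho^*\omega_R=\Ad(\rho)\,\homega$, so the claim is \emph{equivalent} to $\Ad(\rho(x))=\Lambda(x)$, where $\Lambda$ is the invertible matrix of functions defined by $\comega^i=\Lambda^i_j\homega^j$ (available since $\text{span}\{\homega^i\}=\text{span}\{\comega^i\}$ by \textbf{[ii][b]}), normalized by $\Lambda(x_0)=I$; contracting with the generators gives $\Lambda^i_j=\comega^i(\hX_j)$ and $(\Lambda^{-1})^i_j=\homega^i(\cX_j)$. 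The crux is then a uniqueness argument for a first-order total differential equation. A short Cartan-formula computation using \textbf{[ii][a]} and \EqRef{SuperForm3} gives $\CalL_{\hX_k}\homega^i=C^i_{kl}\homega^l$ and $\CalL_{\cX_k}\comega^i=-C^i_{kl}\comega^l$; feeding these into $\hX_k\bigl(\homega^i(\cX_j)\bigr)$ and $\cX_k\bigl(\comega^i(\hX_j)\bigr)$ and using $[\hX_k,\cX_j]=0$ produces the clean identities $\hX_k(\Lambda)=-\Lambda C_k$ and $\cX_k(\Lambda)=-C_k\Lambda$, where $(C_k)^i_j=C^i_{kj}$. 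On the other hand, $\extd(\Ad\rho)=\Ad(\rho)\,\ad(\homega)$ (from $\rho^*\omega_L=\homega$) gives $\hX_k(\Ad\rho)=-(\Ad\rho)C_k$, while, because $\homega$ kills every $Z$ tangent to $\ker\homega$, the transverse derivatives $Z(\Ad\rho)$ vanish; differentiating $\comega=\Lambda\homega$ and matching the independent products $\nu^\alpha\wedge\homega^j$ forces $Z(\Lambda)=0$ as well. Hence $P:=(\Ad\rho)^{-1}\Lambda$ satisfies $\extd P=\sum_l[\,C_l,P\,]\,\homega^l$ on all of $\mathcal U$, a completely integrable linear system with the constant solution $P\equiv I$ and $P(x_0)=I$; uniqueness on the connected domain gives $P\equiv I$, i.e. $\Ad(\rho)=\Lambda$ and $\rho^*\omega_R=\comega$, as required.

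I expect the last compatibility step to be the main obstacle: the left and right integrations are a priori independent, and forcing a \emph{single} $\rho$ to realize both Maurer--Cartan coframes requires controlling $\Ad\circ\rho$ in the orbit directions (handled by $[\hX,\cX]=0$ and \textbf{[ii][a]}) \emph{and} in the directions transverse to the common orbits (handled by the wedge-independence argument), with the initial match $\Lambda(x_0)=I=\Ad(e)$ supplied by \textbf{[ii][b]}. A secondary point to watch is the centre of $G$, which is the only source of ambiguity in the normalization of $\rho$ at $x_0$ and hence in the stated uniqueness.
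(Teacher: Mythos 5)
Your proposal is correct and follows essentially the same route as the paper: first obtain $\rho$ with $\rho^*(\omega^i_L)=\homega^i$ by integrating the flat left Maurer--Cartan system (the paper cites Kobayashi--Nomizu/Sternberg where you run the Frobenius graph argument on $M\times G$, but this is the standard proof of the cited result), then deduce $\rho^*(\omega^i_R)=\comega^i$ by showing that $\Ad\circ\rho$ and the transition matrix between $\homega^i$ and $\comega^i$ satisfy the same completely integrable linear system and agree at $x_0$. Your splitting of that last computation into orbit directions (via $[\hX_i,\cX_j]=0$ and \textbf{[ii][a]}) and transverse directions (via wedge-independence) is just a more explicit version of the paper's identity $\extd\bigl(\rho^*(\Lambda^i_j)(\lambda^{-1})^j_k\bigr)=0$.
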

\begin{proof}
	As is customary, we suppose  that $\omega_L^i(e) = \omega_R^i(e)$, where $e$ is the identity of $G$. In accordance with the aforementioned
	Theorem 9.1 in \cite{kobayashi-nomizu:1963a} there is a $\hmu$ invariant open set $\mathcal U$ in $M$ and a unique smooth map
	$\rho\: \mathcal U \to G$ such that
\begin{equation}
	\rho^*(\omega_L^i) =  \homega^i
	\quad\text{and}\quad
	\rho(x_0) =e.
\EqTag{SuperForm6}
\end{equation}
	Since the $\hmu$ and $\cmu$ orbits coincide, the set $\mathcal U$ is actually bi-invariant. Because $G$ is connected, we may assume that
	$\mathcal U$ is connected. The uniqueness of $\rho$ is established  in Theorem 2.3 (page 220) of  \cite{sternburg:1984a}.
	
	In order to prove that $\rho$ satisfies $\rho^*(\omega_R^i) =  \comega^i$, let  $\Lambda^i_j(g)$
	be the matrix for the linear transformation
\begin{equation*}
	\text{Ad}^*(g) = R^*_g \circ L^*_{g^{-1}} \: T^*_eG \to T^*_eG
\end{equation*}
	with respect to the basis $\homega^i(e)$. Then an easy computation shows that
\begin{equation}
	\omega_L^i(g) = \Lambda^i_j(g)\, \omega^j_R(g) \quad\text{and}\quad \Lambda^i_k(g_1g_2) = \Lambda^i_j(g_2) \Lambda^j_k(g_1).
\EqTag{SuperForm7}
\end{equation}
	The Jacobian of the multiplication map $m \: G\times G \to G$ may be computed in terms of $\Lambda^i_j$ as
\begin{equation}
	m^*(\omega^i_L)(g_1 g_2) =  \Lambda^i_j(g_2)\,\omega^j_L(g_1) +  \omega^i_L(g_2).
\EqTag{SuperForm8}
\end{equation}
	We take the Lie derivative of the first equation in \EqRef{SuperForm7} with respect to  $X^L_k$ to find that
	$L_{X^L_k} \Lambda^i_j= C^i_{k \ell} \Lambda^\ell_j$ and hence
\begin{equation}
	\extd \Lambda^i_j = C^i_{k\ell}\, \Lambda^\ell_j\,\omega^k_L.
\EqTag{SuperForm10}
\end{equation}
	Similarly,  by  {\bf [ii][b]}, we may write
	$\homega^i = \lambda^i_j \comega^j$ and calculate from \EqRef{SuperForm3}
\begin{equation}
	\extd \lambda^i_j = C^i_{k\ell}\, \lambda^\ell_j\,\homega^k.	
\EqTag{SuperForm11}
\end{equation}
	The combination of equations  \EqRef{SuperForm6}, \EqRef{SuperForm10} and \EqRef{SuperForm11} (and the fact that
	$\Lambda^i_j(e) = \lambda^i_j(x_0)$) now shows that
\begin{equation}
	\extd \big(\rho^*(\Lambda^i_j) (\lambda^{-1})^j_k  \big) = 0
	\quad\text{and hence}\quad
	\rho^*(\Lambda^i_j) = \lambda^i_j.
\EqTag{SuperForm12}
\end{equation}
	(Here we use the connectivity of $\mathcal U$).
	The equations $\rho^*(\omega_R^i) =  \comega^i$ now follows immediately from \EqRef{SuperForm6},
	\EqRef{SuperForm7}, and  \EqRef{SuperForm12}.
\end{proof}

	We note, for future use, that \EqRef{SuperForm11} also implies
\begin{equation}
	\CalL_{\hX_i} \comega^j= \CalL_{\hX_i} (\lambda^{-1})^j_k \  \homega^k +  (\lambda^{-1})^j_k\, C^k_{i\ell}\, \homega^\ell = 0.
\EqTag{SuperForm54}
\end{equation}
	This shows, by virtue of the connectivity of $G$, that
\begin{equation}
	(\hmu_g)^*(\comega^j) = \comega^j.
\EqTag{SuperForm44}
\end{equation}
\par

\begin{Lemma}
\StTag{SuperFormLem2}
	With $\rho: \mathcal U \to G$ as in Lemma \StRef{SuperFormLem1},
	the maps $\hvarphi = \rho  \circ \hmu_{x_0} \: G \to G$ and $\cvarphi = \rho  \circ \cmu_{x_0} \: G \to G$
	satisfy
\begin{equation}
	\hvarphi^*(\omega^i_L) = \omega^i_L
\quad\text{and}\quad
	\cvarphi^*(\omega^i_R) = \omega^i_R
\EqTag{SuperFormA1}
\end{equation}
	and hence  $\hvarphi = \cvarphi = \text{id}_G$.
\end{Lemma}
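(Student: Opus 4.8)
The plan is to reduce both assertions of \EqRef{SuperFormA1} to a single pullback computation on the group and then to invoke the rigidity of maps preserving a Maurer--Cartan form. First I would observe that, since $\hvarphi = \rho\circ\hmu_{x_0}$ and $\rho^*(\omega^i_L)=\homega^i$ by \EqRef{SuperForm6}, we have $\hvarphi^*(\omega^i_L)=(\hmu_{x_0})^*(\homega^i)$, where $\hmu_{x_0}\colon G\to M$, $g\mapsto\hmu(x_0,g)$, is the orbit map. Thus it suffices to identify the pullback of $\homega^i$ along this orbit map.

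The key step is to evaluate $(\hmu_{x_0})^*(\homega^i)$ on the global frame of left invariant vector fields $\{X^L_j\}$ on $G$. For the right action $\hmu$ the orbit map carries each left invariant field to the corresponding fundamental vector field, that is $(\hmu_{x_0})_*(X^L_j)=\hX_j$ at \emph{every} point of $G$; this is the infinitesimal datum $(\hmu_x)_*(X^L_i)=\hX_i$ of hypothesis \textbf{[i]}, read off along the whole orbit rather than only at $x_0$. Combining this with the normalization $\homega^i(\hX_j)=\delta^i_j$ of \textbf{[ii][a]} gives
\[
(\hmu_{x_0})^*(\homega^i)(X^L_j)=\homega^i\bigl((\hmu_{x_0})_*X^L_j\bigr)=\homega^i(\hX_j)=\delta^i_j=\omega^i_L(X^L_j).
\]
Since the $X^L_j$ frame $TG$, this forces $(\hmu_{x_0})^*(\homega^i)=\omega^i_L$, whence $\hvarphi^*(\omega^i_L)=\omega^i_L$. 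The identical argument applied to the left action $\cmu$, its generators $\cX_j$ (now matched to the right invariant fields $X^R_j$), and the right invariant forms $\comega^i$ yields $\cvarphi^*(\omega^i_R)=\omega^i_R$, establishing \EqRef{SuperFormA1}.

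To finish, I would deduce $\hvarphi=\cvarphi=\text{id}_G$ from the uniqueness of maps preserving a Maurer--Cartan form. Because $G$ is connected and $\hvarphi^*(\omega^i_L)=\omega^i_L=\text{id}_G^*(\omega^i_L)$, the same rigidity result cited for $\rho$ in Lemma \StRef{SuperFormLem1} (Theorem~2.3 of \cite{sternburg:1984a}) shows $\hvarphi$ must be a left translation $L_a$. Evaluating at the identity and using that $e\in G$ fixes $x_0$ together with $\rho(x_0)=e$ gives $a=\hvarphi(e)=\rho(\hmu(x_0,e))=\rho(x_0)=e$, so $\hvarphi=\text{id}_G$. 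The right invariant form of the theorem shows $\cvarphi$ is a right translation, and the analogous evaluation $\cvarphi(e)=\rho(\cmu(e,x_0))=\rho(x_0)=e$ gives $\cvarphi=\text{id}_G$.

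I expect the only delicate point to be the intertwining relation $(\hmu_{x_0})_*(X^L_j)=\hX_j$ along the \emph{entire} orbit: hypothesis \textbf{[i]} records it base-pointwise, and one must check that for a free, regular right action the orbit map carries the left invariant field $X^L_j$ at $g$ to the fundamental field $\hX_j$ at $\hmu(x_0,g)$. This is the short computation with the curve $t\mapsto\hmu\bigl(x_0,\,g\exp(t\xi)\bigr)=\hmu(x_0,g)\cdot\exp(t\xi)$, and once it is in hand everything else is routine linear algebra on the frame $\{X^L_j\}$ together with the cited uniqueness theorem.
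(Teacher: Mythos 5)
Your proof is correct and follows essentially the same route as the paper: combine $\rho^*(\omega^i_L)=\homega^i$ from Lemma \StRef{SuperFormLem1} with the identity $\hmu_{x_0}^*(\homega^i)=\omega^i_L$ (obtained from $(\hmu_x)_*(X^L_i)=\hX_i$ and the normalization \textbf{[ii](a)}), then use connectedness and $\hvarphi(e)=e$ to conclude the translation is the identity. The paper states the pullback step in one line; your explicit frame evaluation and your remark about extending the generator relation along the whole orbit merely fill in details the paper leaves implicit.
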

\begin{proof}
	The definitions $(\hmu_x)_*(X^L_i) = \hX_i$ and $(\cmu_x)_*(X^R_i) = \cX_i$, together with {\bf [ii](a)},
	imply that $\hmu^*_{x_0}(\homega^i) = \omega^i_L$ and  $\cmu^*_{x_0}(\comega^i) = \omega^i_R$. Equations
	\EqRef{SuperFormA1} then follow directly from Lemma \StRef{SuperFormLem1}. These equations imply that
	 $\hvarphi$  and  $\cvarphi$ are translations in $G$ and therefore, since $G$ is connected and $\hvarphi(e) = \cvarphi(e) = e$,
	we have that  $\hvarphi = \cvarphi = \text{id}_G$.
\end{proof}

\begin{Lemma}
	The maps $\rho\: \mathcal U \to G$ satisfies $\rho(\hmu(x, g)) = \rho(x)\cdot g$
	and  $\rho(\cmu(x, g)) = g \cdot \rho(x)$ for all $x \in \mathcal U$ and $g \in G$.
\end{Lemma}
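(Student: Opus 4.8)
The plan is to prove the two equivariance relations separately, in each case by exhibiting two maps $\mathcal U \to G$ that pull back a Maurer--Cartan form to the same 1-forms and that agree at the base point $x_0$, and then invoking uniqueness. For the right action I would work with $\omega_R^i$ (because \EqRef{SuperForm44} already records the $\hmu$-invariance of the forms $\comega^i$), and for the left action with $\omega_L^i$. The only input beyond Lemmas \StRef{SuperFormLem1} and \StRef{SuperFormLem2} is the standard fact that a map into $G$ is determined, on a connected domain, by the pullback of $\omega_L^i$ (resp.\ $\omega_R^i$) together with its value at one point: the $\omega_L$ version is exactly the uniqueness already cited from \cite{sternburg:1984a} in the proof of Lemma \StRef{SuperFormLem1}, and the $\omega_R$ version follows from it by composing with group inversion.

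For the first relation, fix $g\in G$ and set $\Phi(x)=\rho(\hmu(x,g))$ and $\Psi(x)=\rho(x)\cdot g = R_g\circ\rho$, where $R_g$ denotes right translation; both are defined on all of $\mathcal U$ since $\mathcal U$ is bi-invariant. Writing $\hmu_g$ for the diffeomorphism $x\mapsto\hmu(x,g)$, I compute $\Phi^*(\omega_R^i)=\hmu_g^*\rho^*(\omega_R^i)=\hmu_g^*(\comega^i)=\comega^i$, the last step by \EqRef{SuperForm44}; and $\Psi^*(\omega_R^i)=\rho^*R_g^*(\omega_R^i)=\rho^*(\omega_R^i)=\comega^i$, using that $\omega_R^i$ is right-invariant. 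Thus $\Phi$ and $\Psi$ have the same $\omega_R$-pullback, and they agree at $x_0$: by Lemma \StRef{SuperFormLem2} one has $\rho(\hmu(x_0,g))=g$, while $\Psi(x_0)=\rho(x_0)\cdot g=g$ by \EqRef{SuperForm6}. Uniqueness then forces $\Phi=\Psi$, i.e.\ $\rho(\hmu(x,g))=\rho(x)\cdot g$.

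The second relation is the mirror image, but it requires one genuinely new computation: the left-action analog of \EqRef{SuperForm44}, namely $(\cmu_g)^*(\homega^i)=\homega^i$ (where $\cmu_g$ is the diffeomorphism given by the left action of $g$). This is not stated in the text, but it follows from $\CalL_{\cX_i}\homega^j=0$, the exact counterpart of \EqRef{SuperForm54}: writing $\homega^j=\lambda^j_k\comega^k$, using $\homega^k(\cX_i)=\lambda^k_i$, and invoking the automorphism property of $\lambda$ recorded in \EqRef{Fifth61}, the two contributions to $\CalL_{\cX_i}\homega^j$ cancel, and connectivity of $G$ upgrades this to $(\cmu_g)^*(\homega^i)=\homega^i$. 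With this in hand I set $\Phi'(x)=\rho(\cmu(g,x))$ and $\Psi'(x)=g\cdot\rho(x)=L_g\circ\rho$ and compute $\Phi'^*(\omega_L^i)=\cmu_g^*(\homega^i)=\homega^i$ together with $\Psi'^*(\omega_L^i)=\rho^*L_g^*(\omega_L^i)=\rho^*(\omega_L^i)=\homega^i$, by left-invariance of $\omega_L^i$. Since $\Phi'(x_0)=\rho(\cmu(g,x_0))=g$ by Lemma \StRef{SuperFormLem2} and $\Psi'(x_0)=g\cdot\rho(x_0)=g$, uniqueness for $\omega_L$ yields $\Phi'=\Psi'$, that is $\rho(\cmu(g,x))=g\cdot\rho(x)$.

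The main obstacle is precisely this one new identity $(\cmu_g)^*(\homega^i)=\homega^i$; everything else is bookkeeping with the invariance of the Maurer--Cartan forms and the normalization supplied by Lemma \StRef{SuperFormLem2}. The remaining points needing care are that $\mathcal U$ is bi-invariant, so that $\Phi$ and $\Phi'$ are defined on all of $\mathcal U$, and that $\mathcal U$ is connected, so that the uniqueness theorem applies; both were arranged in Lemma \StRef{SuperFormLem1}. I note finally that the choice of which Maurer--Cartan form to pair with each action is forced: for $\hmu$ only the invariance of $\comega$ is available, making $\omega_R$ the correct device, and dually for $\cmu$.
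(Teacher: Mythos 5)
Your proof is correct and takes essentially the same route as the paper's: the paper compares $R_{g^{-1}}\circ\rho\circ\hmu_g$ (resp.\ $L_{g^{-1}}\circ\rho\circ\cmu_g$) with $\rho$ via the uniqueness of a map pulling back a Maurer--Cartan form with a prescribed value at $x_0$, which is just your comparison of $\rho\circ\hmu_g$ with $R_g\circ\rho$ rewritten. Your explicit verification of $(\cmu_g)^*(\homega^i)=\homega^i$ from \EqRef{Fifth61} correctly supplies the one step the paper dismisses with ``the proof of the second equation is similar.''
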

\begin{proof}
	To prove these   equivariance properties  of $\rho$, define maps
\begin{equation*}
	\hrho = R_{g^{-1}} \circ \rho \circ\hmu_g \: M \to G
	\quad\text{and}\quad
	\crho = L_{g^{-1}} \circ \rho \circ\cmu_g \: M \to G,
\end{equation*}
	where $\hmu_g(x) = \hmu(g, x)$ and  $\cmu_g(x) = \cmu(g, x)$.  We wish to prove that $\hrho = \check \crho =\rho$.
	
	Lemma \StRef{SuperFormLem2}
	shows that $\hat \rho(x_0) =  \check \rho(x_0) = e$ so that  it suffices to show that
\begin{equation}
	\hrho^*(\omega^i_R) = \comega^i \quad\text{and}\quad \crho^*(\omega^i_L) = \comega^i
\EqTag{SuperForm14}
\end{equation}
	and then to appeal to  the uniqueness statement in  Lemma \StRef{SuperFormLem1}.
	The first equation \EqRef{SuperForm14} follows from the fact that the forms $\omega_R^i$ are
	right invariant, the second equation in \EqRef{SuperForm4} and \EqRef{SuperForm44}.
	The proof of the second equation
	in \EqRef{SuperForm14} is similar.
\end{proof}

\begin{Corollary}[Corollary to Theorem \StRef{SuperForm1}]
\StTag{SuperForm4}
	Let $\CalS$ be the submanifold of $M$ defined by $\CalS = \rho^{-1}(e)$, where $\rho$
	is the map \EqRef{SuperForm4}. Then the map
\begin{equation*}
	\Psi\colon \CalU \to \CalS \times G
	\quad\text{defined by}\quad
	\Psi(x) = (\hmu (\rho(x)^{-1}, x), \rho(x))
\end{equation*}
	is  a  $G$ bi-equivariant diffeomorphism  to an open subset of
	$\mathcal S \times G$ and satisfies
\begin{equation}
	\Psi_*(\hX_i) = X^R_i, \quad
	\Psi_*(\cX_i) = X^L_i, \quad
	\Psi^*(\omega^i_L) = \homega^i, \quad
	\Psi^*(\omega^i_R) = \comega^i.
\EqTag{SuperForm15}
\end{equation}
	If $M/G$ is simply-connected, then $\Psi\colon M \to \CalS \times G$ is a
	global diffeomorphism
\end{Corollary}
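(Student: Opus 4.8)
The plan is to exhibit $\Psi$ as a diffeomorphism by writing down its inverse explicitly, and then to read off the equivariance and coframe identities \EqRef{SuperForm15} directly from the properties of $\rho$ already furnished by Theorem \StRef{SuperForm1}. Throughout I write $x\cdot g = \hmu(x,g)$ for the right action and $g\cdot x = \cmu(g,x)$ for the left action, so that the relations \EqRef{SuperForm4} read $\rho(x\cdot g)=\rho(x)\,g$ and $\rho(g\cdot x)=g\,\rho(x)$, and I set $s(x)=\hmu(\rho(x)^{-1},x)=x\cdot\rho(x)^{-1}$, so that $\Psi(x)=(s(x),\rho(x))$.

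First I would check that $\CalS=\rho^{-1}(e)$ is an embedded submanifold which meets each orbit in exactly one point. Since $\rho^*(\omega_L^i)=\homega^i$ and the $\homega^i$ are pointwise independent (by hypothesis \textbf{[ii]} and freeness, $\homega^i(\hX_j)=\delta^i_j$), the map $\rho$ is a submersion, so $e$ is a regular value and $\CalS$ has codimension $\dim G$. The relation $\rho(x\cdot g)=\rho(x)g$ shows $s(x)\in\CalS$ and that $s$ is constant along $\hmu$-orbits, so every orbit meets $\CalS$; and if $y_1,y_2\in\CalS$ lie on one orbit then $y_2=y_1\cdot g$ forces $e=\rho(y_2)=\rho(y_1)g=g$, whence $y_1=y_2$. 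Because $\homega^i(\hX_j)=\delta^i_j$, the orbits are transverse to $\CalS$ and $T_yM=T_y\CalS\oplus\mathrm{span}\{\hX_i(y)\}$ for $y\in\CalS$.

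Next I would verify that $\Psi$ is a diffeomorphism onto $\CalS\times G$ by producing the inverse $\Phi(y,g)=y\cdot g$. This is well defined with values in $\CalU$, because $\CalU$ is bi-invariant and hence a union of orbits. Using the action axioms and $\rho(x\cdot g)=\rho(x)g$ one gets $\Phi(\Psi(x))=s(x)\cdot\rho(x)=(x\cdot\rho(x)^{-1})\cdot\rho(x)=x$, while for $y\in\CalS$ one has $\rho(y\cdot g)=g$ and therefore $s(y\cdot g)=(y\cdot g)\cdot g^{-1}=y$, so $\Psi(\Phi(y,g))=(y,g)$. As $\rho$, $s$, and the action maps are smooth, $\Psi$ and $\Phi$ are mutually inverse diffeomorphisms; in particular $\Psi$ is onto $\CalS\times G$ (an open subset of itself), and when $M/G$ is simply connected Theorem \StRef{SuperForm1} gives $\CalU=M$, so $\Psi$ is global.

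Finally I would obtain the equivariance and the identities \EqRef{SuperForm15}. The first component of $\Psi(x\cdot g)$ is $s(x\cdot g)=(x\cdot g)\cdot(\rho(x)g)^{-1}=s(x)$, so $\Psi(x\cdot g)=(s(x),\rho(x)g)$, i.e.\ $\hmu$ corresponds to right translation on the $G$-factor. For the left action, $\rho(g\cdot x)=g\rho(x)$ and $s(g\cdot x)$ lies in $\CalS$ on the same orbit as $s(x)$, so by the cross-section property $s(g\cdot x)=s(x)$ and $\Psi(g\cdot x)=(s(x),g\rho(x))$. Differentiating these two relations at $g=e$ yields $\Psi_*(\hX_i)=X^R_i$ and $\Psi_*(\cX_i)=X^L_i$, while $\pi_G\circ\Psi=\rho$ together with $\rho^*(\omega_L^i)=\homega^i$ and $\rho^*(\omega_R^i)=\comega^i$ gives $\Psi^*(\omega_L^i)=\homega^i$ and $\Psi^*(\omega_R^i)=\comega^i$. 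The one genuinely substantive point, and the step I expect to require the most care, is the cleanness of the left action on $\CalS\times G$: that $s(g\cdot x)=s(x)$, equivalently that the conjugated point $g\cdot s(x)\cdot g^{-1}$ returns to $s(x)$. This rests entirely on $\CalS$ being a one-point-per-orbit cross-section, which in turn rests on the submersivity and equivariance of $\rho$; everything else is formal manipulation of the action axioms and the relations \EqRef{SuperForm4}.
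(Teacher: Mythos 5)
Your proof is correct and, if anything, more complete than the paper's own: the printed proof only checks that $\Psi$ lands in $\CalS\times G$ and verifies the bi-equivariance by direct computation, leaving the diffeomorphism property, the identities \EqRef{SuperForm15}, and the global statement implicit, whereas you supply the explicit inverse $\Phi(y,g)=y\hstar g$ and the one-point-per-orbit property of $\CalS$. The one step where your route genuinely differs is the left-equivariance. The paper uses the common-orbit hypothesis to write $g\cstar x=x\hstar g'$ and then computes $s(g\cstar x)=(x\hstar g')\hstar(\rho(x)g')^{-1}=s(x)$ directly from \EqRef{SuperForm4}; you instead observe that $s(g\cstar x)$ and $s(x)$ both lie in $\CalS$ and on a common orbit and invoke the cross-section property. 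Both arguments rest on the same hypothesis; yours makes the slice structure explicit, which you need anyway for injectivity of $\Psi$, so nothing is lost. Two small caveats. First, in the intended application the actions are only local, which is why the corollary claims a diffeomorphism onto an \emph{open subset} of $\CalS\times G$; your inverse $\Phi(y,g)=y\hstar g$ is defined only where $\hmu$ is, so surjectivity onto all of $\CalS\times G$ should be weakened accordingly in the non-simply-connected case. Second, differentiating $\Psi(x\hstar g)=\Psi(x)*g$ at $g=e$ produces the generators of right translation on the $G$ factor, which are the \emph{left}-invariant fields; with the duality $\omega^i_L(X^L_j)=\delta^i_j$ used in Lemma \StRef{SuperFormLem2}, this computation yields $\Psi_*(\hX_i)=X^L_i$, which is what is forced by $\Psi^*(\omega^i_L)=\homega^i$ together with $\homega^i(\hX_j)=\delta^i_j$. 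The labels in the first two identities of \EqRef{SuperForm15} thus appear to be interchanged relative to the last two, so rather than asserting that the printed identities ``follow by differentiation,'' you should state which convention you are using and note the resulting pairing explicitly; this is a matter of matching conventions, not a gap in your argument.
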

	
\begin{proof}
	It is easy to check that $\Psi$ sends the intersection of $\mathcal U$ with
	the domain of $\hmu$ in $M$  into $\mathcal S \times G$.
	To check the equivariance of $\Psi$, it is convenient to write $\hmu(g,x) = x \hstar g$ and $\cmu(g,x) = g \cstar x$.
	Because the orbits for the two actions coincide, we know that for each $g \in G$ there is  a $g'$
	such that  $g \cstar x  =  x \hstar g'$.  We then use \EqRef{SuperForm4} to calculate
\begin{align*}
	\Psi(x\hstar g)
& 	= \big((x\hstar g)\hstar \rho(x\hstar g)^{(-1)}, \, \rho(x\hstar g)\big)
	= \big(x \hstar \rho(x)^{(-1)},\, \rho(x) \hstar g\big)
\\[2\jot]
&	= \Psi(x)*g , \quad\text{and}
\\[2\jot]
	\Psi(g\cstar x)
& 	= \big((g\cstar x)\hstar \rho(g\cstar x)^{(-1)},\, \rho(g \cstar x)\big)  =
	 \big(x \hstar g')\hstar \rho(x \hstar g')^{(-1)},\, g\cstar\rho(x)\big)
\\[2\jot]
&	=  \big((x \hstar \rho(x)^{(-1)},\, g\cstar\rho(x)\big) =  g * \Psi(x),
\end{align*}
as required.
\end{proof}

\newpage
\subsection{The Superposition Formula for Darboux Pairs}
	
	We are now ready to establish, using the 5-adapted coframe of  Section 4.4 and the technical results of  Section 5.1,
	the superposition formula for any Darboux pair.

	By way of summary, let us  recall that our starting point is the local (0-adapted) coframe
	$\{\,\bftheta,\ \bfheta,\, \bfhsigma,\, \bfceta,\, \bfcsigma\,\}$ which is  adapted to a given Darboux pair
	$\{\, \hV, \cV\,\}$
	in the sense that
\begin{equation}
	I = \text{span} \{ \bftheta,\, \bfheta,\,  \bfceta\},
	\quad
	\hV = \text{span} \{\bftheta,\, \bfhsigma,\, \bfheta,\, \bfceta \},
	\quad
	\cV = \text{span} \{\bftheta,\, \bfheta,\, \bfcsigma,\, \bfceta \}.
\EqTag{SuperForm16}
\end{equation}
	In our first coframe adaption we adjusted the forms $\bfheta$ and $\bfceta$ to the form
\begin{equation}
	\bfheta = \extd \bfhI_2 + \bfhR\, \bfhsigma
	\quad\text{and}\quad
	\bfceta = \extd \bfcI_2 + \check \bfR\, \bfcsigma.
\end{equation}
	The fourth coframe adaptations
	$\{\, \bfthetaX,\, \bfhsigma,\, \bfheta,\, \bfcsigma,\, \bfceta \,\}$ and	
	$\{\,\bfthetaY,\, \bfhsigma,\, \bfheta,\, \bfcsigma,\, \bfceta \, \}$ allowed us to associate
	to the Darboux pair $\{\, \hV, \cV \,\}$
	an abstract Lie algebra $\vess(\hV, \cV)$. We have  $\thetaX^i = Q^i_j \thetaY^j$ and we can write
\begin{equation}
	\hV \cap \cV = \text{span} \{ \bfthetaX,\, \bfheta,\,  \bfceta\},
\EqTag{SuperForm42}
\end{equation}
	From the 4-adapted coframes we then constructed the final coframes
\begin{equation}
	\htheta^i = \hR^i_j \thetaX^j  + \phi^i_a\hpi^a,
	\quad
	\ctheta^i = \cR^i_j \thetaY^j  + \psi^i_\alpha \cpi^\alpha,
\EqTag{SuperForm17}
\end{equation}
	where $\bfhpi = [ \bfhsigma,\, \bfheta],$ and $\bfcpi = [\bfcsigma,\, \bfceta]$,
	with structure equations
\begin{equation}
	\extd \htheta^i =
	\frac12   G^i_{\alpha\beta}\, \cpi^\alpha \wedge \cpi^\beta
        +\frac12 C^i_{jk}\htheta^j\wedge \htheta^k ,
\quad
	\extd \ctheta^i =
	\frac12 H ^i_{ab}\, \hpi^a \wedge \hpi^b
        -\frac12 C^i_{jk}\ctheta^j\wedge \ctheta^k.
\EqTag{SuperForm18}
\end{equation}
	  The coefficients $G^i_{\alpha\beta}$ are functions
	of the first integrals $\cI^\alpha$ while the  $H ^i_{ab}$ are functions of the $\hI^a$.
	The two 5-adapted coframes
\begin{equation}
	\{\, \bfhtheta,\ \bfhsigma,\ \bfheta,\ \bfcsigma,\ \bfceta \, \}
	\quad\text{and}\quad
	\{\,\bfctheta,\  \bfhsigma,\, \bfheta,\ \bfcsigma,\ \bfceta \, \}
\EqTag{SuperForm40}
\end{equation}
	are {\it not} adapted to the Darboux pair $\{\,\hV, \cV\,\}$  (in the sense of \EqRef{First4})
	although we do have
\begin{equation}
       \hV  = \text{span}\{\, \bfhtheta,\  \bfhsigma,\ \bfheta,\ \bfceta \, \}
	\quad\text{and}\quad
	\cV  = \text{span}\{\, \bfctheta,\  \bfheta,\ \bfcsigma,\ \bfceta \,\}.
\EqTag{SuperForm41}
\end{equation}

\begin{Definition}	
\StTag{SuperForm33}
	Let $\{\, \hV, \cV,\}$ be a Darboux pair with 5-adapted coframes \EqRef{SuperForm40}. Let
	$\{\, \hX_i, \hV_a, \cV_\alpha\,\}$ and $\{\,\cX_i,\cW_a,\cW_\alpha\,\}$
	be the dual frames corresponding to the 5 adapted coframes so that, in particular,
\begin{equation}
	\htheta^i(\hX_j) = \delta^i_j,\  \hpi^a(\hX_j) = 0,\
	\ctheta^i(\cX_j) = \delta^i_j,\ \cpi^a(\cX_j) = 0.\ \
\EqTag{SuperForm19}
\end{equation}
	The {\deffont infinitesimal Vessiot group actions} for the
	Darboux pair  $\{\,\hV, \cV\,\}$ are defined by the Lie algebras of vector fields $\hX_i$ and $\cX_i$.
	These define (local) groups actions
\begin{equation}
	\hmu\: G \times M \to M \quad\text{and}\quad  \cmu\: G\times M \to M.
\end{equation}
	We  take $\hmu$ to be a {\it right} action on $M$ and $\cmu$ to be a {\it left} action.
\end{Definition}

The vectors fields $\hX_i$ and $\cX_i$ are related by
	(see \EqRef{Fifth60} and \EqRef{Fifth61})
\begin{equation}
	\hX_i = \lambda^j_i \cX_j\quad\text{and satisfy}\quad [\,\hX_i, \, \cX_j\,] =0.
\EqTag{SuperForm120}
\end{equation}	
	From the structure equations \EqRef{SuperForm18} we find that
\begin{gather}
	\quad\CalL_{\hX_j} \htheta^i = C^i_{jk} \htheta^k \quad \text{and} \quad
	\CalL_{\cX_j }\ctheta^i = C^i_{jk} \ctheta^k.
\EqTag{SuperForm101}
\\[2\jot]
\quad\CalL_{\hX_j}\hpi^a   = \CalL_{\hX_j}\cpi^\alpha   = \CalL_{\cX_j}\hpi^a   = \quad\CalL_{\cX_j} \cpi^\alpha = 0.
\EqTag{SuperForm102}
\end{gather}
	The actions $\hmu$  and $\cmu$   commute because $[\, \hX_i, \cX_j] = 0$.  In what follows
	we  shall assume that these actions are regular.
\par
	
	Now let $G$ be a Lie group with Lie algebra $\vess(\hV, \cV)$.  Let $\omega^i_L$ and $\omega^i_R$ be
	the left and right invariant Maurer-Cartan forms on $G$, with structure equations \EqRef{SuperForm2}.
	We  shall assume that these two coframes on $G$ coincide at the  identity $e$.
	Let $X^L_i$ and $X^R_i$ be the dual basis of  left and right invariant vector fields.

	To apply Theorem \StRef{SuperForm1} we need to identify the forms $\homega^i$ and $\comega^i$.
\begin{Lemma}
\StTag{SuperForm5}
	 The  forms  $\homega^i$ and $\comega^i$  defined in terms of the 5-adapted coframes (\EqRef{Fifth1} or \EqRef{SuperForm17}) by	
\begin{equation}
	\homega^i = \htheta^i + \lambda^i_j \psi^j_\alpha \cpi^\alpha
	\quad\text{and}\quad
	\comega^i = \ctheta^i + \mu^i_j \phi^j_a \hpi^a,
\EqTag{SuperForm20}
\end{equation}
	where $\lambda = \hR Q \cR ^{-1}$ and $\mu = \lambda^{-1}$ (see Remark \StRef{Fifth24}),
	have the same span and satisfy the structure equations
\begin{equation}
	\extd \homega^i = \frac12 C^i_{jk}\, \homega^j\wedge \homega^k
	\quad\text{and}\quad
	\extd \comega^i= -\frac12 C^i_{jk}\, \comega^j \wedge \comega^k.
\EqTag{SuperForm21}
\end{equation}
\end{Lemma}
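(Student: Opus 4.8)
The plan is to reduce both assertions to the single algebraic identity $\homega^i = \lambda^i_j\comega^j$, and then to verify the structure equations by a direct exterior-derivative computation.

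First I would establish $\homega^i = \lambda^i_j\comega^j$. Substituting $\comega^j = \ctheta^j + \mu^j_k\phi^k_a\hpi^a$ and using $\mu=\lambda^{-1}$, the $\hpi^a$-terms collapse to $\phi^i_a\hpi^a$; then substituting $\htheta^i=\hR^i_j\thetaX^j+\phi^i_a\hpi^a$ and $\ctheta^j=\cR^j_k\thetaY^k+\psi^j_\alpha\cpi^\alpha$ (see \EqRef{SuperForm17}), together with $\thetaX^j=Q^j_k\thetaY^k$ from \EqRef{Fourth1}, the $\cpi^\alpha$-terms match on both sides and the remaining comparison reduces to the matrix identity $\hR Q=\lambda\cR$, which is exactly the definition $\lambda=\hR Q\cR^{-1}$ in \EqRef{Fifth60}. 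Since $\lambda$ is invertible, this identity gives $\text{span}\{\homega^i\}=\text{span}\{\comega^i\}$ at once.

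For the structure equation for $\homega^i$, I would write $\homega^i=\htheta^i+\bar\psi^i$ with $\bar\psi^i=\lambda^i_j\psi^j$ and $\psi^j=\psi^j_\alpha\cpi^\alpha$, and expand $\extd\homega^i=\extd\htheta^i+\extd\lambda^i_j\wedge\psi^j+\lambda^i_j\,\extd\psi^j$, inserting \EqRef{SuperForm18} for $\extd\htheta^i$, \EqRef{Fifth61} for $\extd\lambda^i_j$, and \EqRef{Fifth59d} for $\extd\psi^j$. I would then sort the result by form type against the expansion of the target, $\frac12 C^i_{jk}\homega^j\wedge\homega^k=\frac12 C^i_{jk}\htheta^j\wedge\htheta^k+C^i_{jk}\htheta^j\wedge\bar\psi^k+\frac12 C^i_{jk}\bar\psi^j\wedge\bar\psi^k$. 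The $\htheta\wedge\htheta$ term is immediate from \EqRef{SuperForm18}; the mixed $\htheta\wedge\bar\psi$ term comes from the $\htheta^\ell\wedge\psi^j$ part of $\extd\lambda^i_j\wedge\psi^j$ after rewriting $\lambda^m_j\psi^j=\bar\psi^m$; and the $\bar\psi\wedge\bar\psi$ term is obtained by combining the remaining $\psi\wedge\psi$ contributions (the $\lambda^i_h C^h_{mj}\psi^m$ piece of $\extd\lambda^i_j\wedge\psi^j$ with the $-\tfrac12\lambda^i_j C^j_{kl}\psi^k\wedge\psi^l$ piece of $\lambda^i_j\,\extd\psi^j$) and applying the automorphism identity $\lambda^i_\ell C^\ell_{jk}=C^i_{\ell m}\lambda^\ell_j\lambda^m_k$ from \EqRef{Fifth61}. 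The only terms left over are the pure $\cpi\wedge\cpi$ forms, namely $\frac12 G^i_{\alpha\beta}\cpi^\alpha\wedge\cpi^\beta$ from $\extd\htheta^i$ and $-\frac12\lambda^i_j\cR^j_\ell F^\ell$ from $\lambda^i_j\,\extd\psi^j$; these cancel because $\lambda\cR=\hR Q$ together with $G^i_{\alpha\beta}=\hR^i_\ell B^\ell_{\alpha\beta}$ from \EqRef{Fifth59b} and $B^\ell_{\alpha\beta}=Q^\ell_m F^m_{\alpha\beta}$ from \EqRef{Fourth20} force $\lambda^i_j\cR^j_\ell F^\ell_{\alpha\beta}=G^i_{\alpha\beta}$.

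The companion equation for $\comega^i$ then follows by the same computation with the roles of hat and check (and the sign of $C^i_{jk}$) interchanged, or equivalently by differentiating $\comega^i=\mu^i_j\homega^j$ and using $\extd\mu=-\mu\,\extd\lambda\,\mu$ to recover the $-\frac12 C^i_{jk}\comega^j\wedge\comega^k$ form. I expect the main obstacle to be the bookkeeping in sorting the $\psi\wedge\psi$ contributions: one must antisymmetrize the structure constants correctly and invoke the automorphism property of $\lambda$ at precisely the right place, and separately chain \EqRef{Fifth59b}, \EqRef{Fourth20} and \EqRef{Fifth60} to eliminate the residual $\cpi\wedge\cpi$ terms. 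Everything else is routine once $\homega^i=\lambda^i_j\comega^j$ is in hand.
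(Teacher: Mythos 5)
Your proposal is correct and follows essentially the same route as the paper: the identity $\homega^i=\lambda^i_j\comega^j$ is obtained from $\hR Q=\lambda\cR$ exactly as in \EqRef{SuperForm22}, and the structure equation is verified by the same expansion of $\extd\homega^i$ using \EqRef{Fifth61}, \EqRef{Fifth59d} and the automorphism property of $\lambda$, with the residual $\cpi\wedge\cpi$ terms cancelling via $\lambda\cR F=\hR Q F=\hR B=G$ as you indicate. The paper likewise disposes of the $\comega^i$ equation by symmetry.
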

\begin{proof}
	To  check that the  forms $\homega^i$ and $\comega^i$ have the same span we
	simply use the various  definitions  given here to calculate
\begin{align}
	\homega^i
	&= \htheta^i  + \lambda^i_j\,\psi^j_\alpha\,\cpi^\alpha = \hR^i_j\,\thetaX^j +\phi^i_a \hpi^a + \lambda^i_j\psi^j_\alpha \cpi^\alpha
\notag
\\[2\jot]
	&= \hR^i_j\, Q^j_k \, \thetaY^k  + \phi^i_a \hpi^a + \lambda^i_j \psi^j_\alpha \cpi^\alpha =
	(\hR \, Q \, \cR^{-1})^i_j \, \cR^j_k \, \thetaY^k +\phi^i_a\, \hpi^a + \lambda^i_j\, \psi^j_\alpha \,\cpi^\alpha
\notag
\\[2\jot]
	&= \lambda^i_j\,  ( \cR^j_k\, \thetaY^k + \psi^j_a \cpi^a + (\lambda^{-1})^j_k\, \phi^k_\alpha\, \hpi^\alpha) =
	\lambda^i_j\,( \ctheta^j + \mu^j_k \,\phi^k_\alpha\,\hpi^\alpha)
        = \lambda^i_j \, \comega^j.
\EqTag{SuperForm22}
\end{align}
	We now calculate $\extd \homega^i$ using \EqRef{Fifth2},  \EqRef{Fifth59},   \EqRef{Fifth53} and \EqRef{Fifth61}
	to find that
\begin{align*}
       \extd \homega^i
&      = \extd \htheta^i + \extd \lambda^i_j\wedge \psi^j + \lambda^i_j \wedge \extd \psi^j
\\[2\jot]
&      = \frac12   G^i_{\alpha\beta}\, \cpi^\alpha \wedge \cpi^\beta
        +\frac12 C^i_{jk}\,\htheta^j\wedge \htheta^k  + (C^i_{\ell m}\,\lambda^m_j\, \htheta^\ell  +\lambda^i_h\,C^h_{mj}\,
	\psi^m) \wedge \psi^j
\\[2\jot]
&	\qquad + \lambda^i_j \,(-\frac12C^j_{hk} \psi^h\wedge \psi^k -\frac12 \cR^j_\ell\,
	F^\ell_{\alpha\beta}\,\cpi^\alpha\wedge\cpi^\beta)
\\[2\jot]
&       =  \frac12 C^i_{jk}\,\htheta^j\wedge \htheta^k + C^i_{\ell m}\,\lambda^m_j\, \htheta^\ell \wedge\psi^j +
	 \frac12   \lambda^i_\ell\, C^\ell_{hk}\,\psi^h\wedge\psi^k = \frac12C^i_{jk}\homega^j \wedge \homega^k.
\end{align*}
	The derivation of the structure equations for $\comega^i$ is similar.
\end{proof}

	Let $x_0 \in M$. By a change of frame at $x_0$ we may suppose that $\homega^i(x_0) = \comega^i(x_0)$.
	At this point all the hypothesis for Theorem \StRef{SuperForm1} are satisfied and, in accordance with this  	
	theorem and Corollary \StRef{SuperForm4}, we can construct maps
\begin{equation}
	\rho \colon  \mathcal U \to G  \quad\text{and}\quad  \Psi\colon \mathcal U \to \CalS \times G
\end{equation}
	which satisfy \EqRef{SuperForm4}, \EqRef{SuperForm15} and $\rho(x_0) = e$.

	Define  $\CalS_1$ and $ \CalS_2$ to be the integral manifolds  through $x_0$ for the restriction
	of $\hV^{(\infty)}$ and $\cV^{(\infty)}$ to $\CalS$.  Because
\begin{equation}
	T^*\CalS_x = \hV^{(\infty)}_x \oplus \cV^{(\infty)}_x \quad\text{for all $x \in \CalS$},
\end{equation}
	we may choose $\CalS_1$ and $ \CalS_2$ small enough so as to be assured of the existence of a local diffeomorphism
\begin{equation}
	\chi \colon  \CalS_1 \times \CalS_2 \to \CalS_0,
\EqTag{SuperForm23}
\end{equation}
	where $S_0$ is an open set in $\CalS$. The map $\chi$ satisfies
	(with $s= \chi(s_1, s_2)$ and $s_i  \in \CalS_i$)
\begin{equation}
	\chi^*(\hV^{(\infty)}_s) = T^*_{s_2} \CalS_2
	\quad\text{and}\quad
	\chi^*(\cV^{(\infty)}_s) = T^*_{s_1} \CalS_1 .
\EqTag{SuperForm24}
\end{equation}
	Set $\chi(x^0_1, x^0_2) = x_0$.
	Hence, for any first integrals $\hI$ and $\cI$ of $\hV$ and $\cV$, we have
\begin{equation}
	\hI(\chi(s_1, s_2) = \hI(\chi(x^0_1, s_2))
	\quad\text{and}\quad
	\cI(\chi(s_1, s_2)) = \cI(\chi(s_1, x^0_2)).
\EqTag{SuperForm25}
\end{equation}
	Put $\CalU_0 = \Phi^{-1}(\CalS_0)$.	
\par	
	To define the superposition formula for the Darboux pair $\{\hV, \cV\}$, we take  $M_1$ to be the (maximal, connected) integral manifold of $\hV^{(\infty)}$ through $x_0$ in
	$\CalU_0$ and  $M_2$ to be the (maximal, connected) integral manifold of $\cV^{(\infty)}$ through $x_0$ in $\CalU_0$.
	The group actions $\hmu$  and $\cmu$ and the maps $\rho$ and $\Psi$ all restrict to maps
\begin{equation}
\begin{gathered}
	\hmu_i \colon G\times M_i \to M_i,
	\quad \cmu_i \colon G\times M_i \to M_i,
\\
	 \rho_i \colon  M_i \to G,\quad\text{and}\quad
	\Psi_i \colon  M_i \to \CalS_i \times G
\end{gathered}
\end{equation}
	for $i = 1, 2$.
\par
	With the maps $\Psi$ and $\chi$ defined by
	\EqRef{SuperForm4} and \EqRef{SuperForm23} and with these choices for  $M_1$ and $M_2$, we define
\begin{equation}
	\Sigma \colon  M_1 \times M_2 \to M
	\quad\text{by}\quad
	\Sigma(x_1, x_2) = \hmu\big(g_1 \cdot g_2, \chi(s_1, s_2)\big),
\EqTag{SuperForm26}
\end{equation}
where $x_i \in M_i$, and $\Psi(x_i) = ( s_i, g_i)$.
\par
	
        To prove that $\Sigma$ defines a superposition formula, we must calculate the  pullback by  $\Sigma$ of an adapted coframe on $M$.
	To this end, we introduce the following notation.
	For any differential form $\alpha$  on $M$, denote the restriction of $\alpha$ to $M_i$
	by $\alpha_i$.  If $f$ is a function on $M$ and
	$x_i\in M_i$, then $f(x_i)$ denotes the value of $f$ at $x_i$, viewed as a point of $M$.
	We also remark that the equivariance of $\rho$ with respect to the
	group action $\hmu$ and $\cmu$ implies that $\Sigma $ satisfies
\begin{equation}
\begin{aligned}
	(\rho \circ \Sigma)(x_1, x_2)
	& = \rho( \hmu(g_1\cdot g_2, \chi(s_1, s_2)) = g_1 \cdot g_2 \cdot \rho(\chi(s_1, s_2))
\\[2\jot]
	& = g_1 \cdot g_2 = \rho_1(x_1) \cdot \rho_2(x_2) = (m\circ (\rho_1 \times \rho_2))(x_1, x_2).
\end{aligned}
\EqTag{SuperForm27}
\end{equation}
\par

\begin{Lemma} The
	pullback by $\Sigma$ of the 4-adapted coframe (see Theorm \StRef{Fourth5}) on $M$ is given in terms of the restrictions
	of the 5-adapted coframes \EqRef{SuperForm40} on $M_1$ and $M_2$ by
\StTag{SuperForm25}
\begin{gather}
	\Sigma^*(\bfhsigma) = \bfhsigma_2,
	\quad
	\Sigma^*(\bfcsigma) = \bfcsigma_1,
	\quad
	\Sigma^*(\bfheta) =  \bfheta_2,
	\quad
	\Sigma^*(\bfceta) =   \bfceta_1,
\EqTag{SuperForm30}
\\[2\jot]
	\Sigma^*(\bfR\bfthetaX) = \boldsymbol{\lambda} \,(\bfhtheta_1 + \bfctheta_2).	
\EqTag{SuperForm31}
\end{gather}
\end{Lemma}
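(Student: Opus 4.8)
The plan is to treat $\Sigma^*$ on the two families of forms separately: the \emph{semibasic} forms $\hpi^a=(\bfhsigma,\bfheta)$ and $\cpi^\alpha=(\bfcsigma,\bfceta)$, handled by invariance of first integrals; and the frame forms $\hR^i_j\thetaX^j=\htheta^i-\phi^i_a\hpi^a$, handled through the Maurer--Cartan forms $\homega^i,\comega^i$. Throughout I would use that $M_1$ and $M_2$ are integral manifolds of $\hV^{(\infty)}=\{\hpi^a\}$ and $\cV^{(\infty)}=\{\cpi^\alpha\}$, so that $\hpi^a_1=0$ and $\cpi^\alpha_2=0$, and I read every ``restriction'' subscript back to $M_1\times M_2$ through the projections $\pi_1,\pi_2$.

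First I would dispose of the semibasic forms. The dual frame vectors $\hX_i,\cX_i$ annihilate every $\hpi^a,\cpi^\alpha$ (\EqRef{SuperForm19} and \EqRef{SuperForm102}), so every $f\in\Inv(\hV)=\Inv(\hV^{(\infty)})$ obeys $\hX_j(f)=\cX_j(f)=0$ and is therefore invariant under both $\hmu$ and $\cmu$. Feeding this into the definition \EqRef{SuperForm26} of $\Sigma$ and the factorization \EqRef{SuperForm25} of $\chi$ gives $f\circ\Sigma(x_1,x_2)=f(\chi(s_1,s_2))=f(x_2)$ for $f\in\Inv(\hV)$, and symmetrically $g\circ\Sigma(x_1,x_2)=g(x_1)$ for $g\in\Inv(\cV)$. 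Since $\bfhsigma,\bfheta$ are assembled from $\extd\hI$ with coefficients in $\Inv(\hV)$ (\EqRef{First14}--\EqRef{First15}), this yields $\Sigma^*\bfhsigma=\bfhsigma_2$, $\Sigma^*\bfheta=\bfheta_2$, and the mirror computation gives $\Sigma^*\bfcsigma=\bfcsigma_1$, $\Sigma^*\bfceta=\bfceta_1$, establishing \EqRef{SuperForm30}.

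For \EqRef{SuperForm31} I would work through the left Maurer--Cartan form. The engine is the equivariance $\rho\circ\Sigma=m\circ(\rho_1\times\rho_2)$ of \EqRef{SuperForm27}, combined with $\rho^*\omega^i_L=\homega^i$ and $\rho^*\Lambda^i_j=\lambda^i_j$ from \EqRef{SuperForm4} and \EqRef{SuperForm12}. Pulling the multiplication formula \EqRef{SuperForm8} back by $\rho_1\times\rho_2$ gives $\Sigma^*\homega^i=\lambda^i_j\,\homega^j_1+\homega^i_2$ with $\lambda$ evaluated on $M_2$, while the cocycle \EqRef{SuperForm7} gives $\lambda\circ\Sigma=(\lambda\circ\pi_2)(\lambda\circ\pi_1)$ as a matrix product. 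Substituting $\htheta^i=\homega^i-\lambda^i_j\psi^j_\alpha\cpi^\alpha$ (\EqRef{SuperForm20}) and using the semibasic results $\Sigma^*\cpi^\alpha=\cpi^\alpha_1$ and $\psi^j_\alpha\circ\Sigma=\psi^j_\alpha(x_1)$, the cross terms should collapse through the identity $\lambda^k_j\psi^j_\alpha\cpi^\alpha=\homega^k-\htheta^k$, leaving $\Sigma^*\htheta^i=\homega^i_2+\lambda^i_k(x_2)\,\htheta^k_1$. Subtracting $\Sigma^*(\phi^i_a\hpi^a)=(\phi^i_a\hpi^a)_2$ and simplifying $\homega^i_2-(\phi^i_a\hpi^a)_2$ on $M_2$ — where $\cpi^\alpha_2=0$ forces $\homega^i_2=\htheta^i_2=\lambda^i_j(x_2)\ctheta^j_2$ via $\homega=\lambda\comega$ and $\lambda\mu=\mathrm{id}$ — produces $\Sigma^*(\hR^i_j\thetaX^j)=\lambda^i_j(x_2)(\htheta^j_1+\ctheta^j_2)$, which is \EqRef{SuperForm31}.

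I expect the main obstacle to be the cancellation in the frame-form step rather than the semibasic step. Two things must be tracked with care: first, that each scalar coefficient carries the correct single-factor dependence ($\phi\in\Inv(\hV)$ pulls back to $M_2$, $\psi\in\Inv(\cV)$ to $M_1$, and $\lambda=\Lambda\circ\rho$ is genuinely group-like under $\Sigma$); second, that the $\homega^j_1$ terms arising from $\Sigma^*\homega^i$ and from $\Sigma^*(\lambda^i_j\psi^j_\alpha\cpi^\alpha)$ cancel exactly, which is precisely where the $\Ad^*$-cocycle \EqRef{SuperForm7}--\EqRef{SuperForm8} and the relation $\homega=\lambda\comega$ are indispensable. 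Once that cancellation is in hand, the reduction to \EqRef{SuperForm31} is forced by $\lambda\mu=\mathrm{id}$ and the vanishing of $\cpi^\alpha$ on $M_2$.
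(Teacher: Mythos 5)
Your proposal is correct and follows essentially the same route as the paper: invariance of the first integrals under $\hmu$ and $\cmu$ (hence of everything in $\Inv(\hV)$ and $\Inv(\cV)$) for \EqRef{SuperForm30}, and the equivariance $\rho\circ\Sigma=m\circ(\rho_1\times\rho_2)$ together with the cocycle \EqRef{SuperForm7}--\EqRef{SuperForm8} and the relation $\homega=\lambda\comega$ for \EqRef{SuperForm31}. The only blemish is the intermediate chain $\homega^i_2=\htheta^i_2=\lambda^i_j(x_2)\,\ctheta^j_2$: since $\htheta^i_2=\lambda^i_j(x_2)\,\ctheta^j_2+\phi^i_a(x_2)\,\hpi^a_2$, the correct identity is $\homega^i_2-\phi^i_a(x_2)\,\hpi^a_2=\lambda^i_j(x_2)\,\ctheta^j_2$; as you do subtract $(\phi^i_a\hpi^a)_2$ at exactly that point, the cancellation and the conclusion are unaffected.
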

\begin{proof}
We begin with the observation that the first integrals
	$\hI^a$ and $\cI^\alpha$ are invariants of  $\hmu$ and therefore, by \EqRef{SuperForm25},
\begin{align}
	\Sigma^*(\hI^a) (x_1, x_2) &=  \hI^a(\hmu( g_1 g_2, \chi(s_1, s_2))) = \hI^a( \chi(s_1, s_2))
\notag
\\
	&=  \hI^a( \chi(x_{01}, s_2)) =  \hI^a( \hmu(g_2,\chi(x_{01}, s_2)))
\notag
\\	
	&= \hI^a(x_2),\qquad\text{and similarly}
\EqTag{SuperForm38}
\\
\Sigma^*(\cI^\alpha)(x_1, x_2) &= \cI^\alpha(x_1).
\EqTag{SuperForm29}
\end{align}
	For future reference,  we note (see \EqRef{SuperForm17}) that
\begin{equation}
	\Sigma^*(G^i_{\alpha\beta})(x_1, x_2) = G^i_{\alpha\beta}(x_1) \quad\text{and}\quad
	\Sigma^*(H^i_{ab})(x_1, x_2) = G^i_{ab}(x_2).
\end{equation}
	From equations  \EqRef{SuperForm38} and  \EqRef{SuperForm29} it follows directly that
\begin{equation}
	\Sigma^*(\bfhsigma)= \bfhsigma_2,
	\qquad
	\Sigma^*(\bfcsigma)= \bfcsigma_1,
\end{equation}
\begin{equation}
	\Sigma^*(\bfheta) = \Sigma^*( d \bfhI + \bfhR \, \bfhsigma) = \bfheta_2
	\quad\text{and}\quad
	\Sigma^*(\bfceta) = \Sigma^*( d \bfcI + \bfcR \, \bfcsigma) = \bfceta_1 .
\end{equation}
	Equations  \EqRef{SuperForm30} are therefore established.
\par		
	Equations \EqRef{SuperForm17}   and \EqRef{SuperForm20} imply that
\begin{equation}
	\hR^i_j \theta^j_X = \homega^i   - \phi^i_a\, \hpi^a - \lambda^i_k\psi^k_a\,\cpi^\alpha.
\EqTag{SuperForm32}
\end{equation}
	We use this equation to  calculate $\Sigma^*(\hR^i_j \thetaX^j)$.

	Since $\phi^i_a \in  \Inv(\hV)$ and $\psi^i_\alpha \in  \Inv(\cV)$,  equations
	\EqRef{SuperForm38} and  \EqRef{SuperForm29} imply that
\begin{equation}
	\Sigma^*(\phi^i_a \, \hpi^a)(x_1, x_2) =  \phi^i_a(x_2)\, \hpi^a_2
	\quad\text{and}\quad
	\Sigma^*(\psi^i_a\, \cpi^\alpha)(x_1, x_2) =  \psi^i_\alpha(x_1) \,\hpi^\alpha_1.
\EqTag{SuperForm33}
\end{equation}
	We deduce from equations  \EqRef{SuperForm7}, \EqRef{SuperForm12} and \EqRef{SuperForm27} that
\begin{equation}
\begin{aligned}
	\Sigma^*(\lambda^i_k)(x_1, x_2)
	& = \Sigma^* (\rho^* \Lambda^i_k)(x_1, x_2) = \Lambda^i_k(g_1 \cdot  g_2) =
\\[2\jot]
	& = \Lambda^i_j(g_2) \Lambda^j_k(g_1) = \lambda^i_j(x_2) \lambda^j_k(x_1).
\end{aligned}
\EqTag{SuperForm34}
\end{equation}
\par			
	To calculate $\Sigma^*(\homega^i)$ it is helpful to introduce the
	projection maps $\pi_i\colon G\times G \to G$ and to note that
        $\pi_i(\rho_1\times\rho_2)(x_1, x_2) = g_i$,  $i = 1, 2$. We can then
	re-write \EqRef{SuperForm8} as
\begin{equation*}
	m^*(\omega^i_L) = \pi_2^*(\lambda^i_j) \pi_1^*(\omega^j_L) + \pi^*_2(\omega^i_L).
\end{equation*}
 	This equation, together with \EqRef{SuperForm4} and \EqRef{SuperForm27} leads to
\begin{equation}
\begin{aligned}
	\Sigma^*(\homega^i)(x_1, x_2)
&	=  \Sigma^*(\rho^*(\omega^i_L))(x_1, x_2)  = (\rho_1^* \times \rho_2^*)(m^*(\omega^i_L))(x_1, x_2)
\\[2\jot]
&	= (\rho_1^* \times \rho_2^*)(\pi_2^*(\lambda^i_j) \pi_1^*(\omega^j_L) + \pi^*_2(\omega^i_L))(x_1, x_2) = 	
\\	
	& = \lambda^i_j(x_2)\, \homega^j_1   +\homega^i_2.	
\end{aligned}
\EqTag{SuperForm35}
\end{equation}
	Finally, the combination of equations \EqRef{SuperForm22} and \EqRef{SuperForm32} -- \EqRef{SuperForm35} allows us to calculate
\begin{align}
	\Sigma^*(\hR^i_j\,\theta^j_X)
&	= \Sigma^*\bigl(\homega^i  - \phi^i_a\,\hpi^a - \lambda^i_k\,\psi^k_\alpha\, \cpi^\alpha\bigr)
\notag
\\[2\jot]
&	=\lambda^i_j(x_2)\,\homega^j_1 +\homega^i_2 - \phi^i_a(x_2)\,\pi^a_2 - \lambda^i_j (x_2)\lambda^j_k(x_1)\psi^k_\alpha(x_1)\,\cpi^\alpha_1
\notag
\\[2\jot]
&	= \lambda^i_j(x_2)\,\bigl(\homega^j_1 - \lambda^j_k\,(x_1)\psi^k_\alpha(x_1) \,\cpi^\alpha_1 \,\bigr) +
	\lambda^i_j(x_2)\,\bigl (\comega^j_2 -\mu^j_k(x_2) \,\phi^k_a(x_2)\, \hpi^a_2\,\bigr)
\notag
\\[2\jot]
&         =\lambda^i_j(x_2)\,\htheta^j_1 + \lambda^i_j(x_2)\,\ctheta^j_2,
\EqTag{SuperForm36}
\end{align}
	which proves \EqRef{SuperForm31}.
\end{proof}

	Recall that for any Darboux pair $\{\, \hV, \cV \, \}$,  the EDS $\hCalV \mycap \cCalV$ is given by  Definition \StRef{Dec5}.
\begin{Theorem}[\sc The Superposition Formula]
\StTag{SuperForm6}
	Let  $\{\, \hV, \cV \, \}$ define a Darboux pair  on M. Define $M_1$
	and $M_2$ as above and let $W_1$ and  $W_2$ be the
	restrictions of $\cV$ and $\hV$ to $M_1$ and $M_2$, respectively.
	Then the map $\Sigma \colon M_1\times  M_2 \to M$, defined by \EqRef{SuperForm26}, satisfies
\begin{equation}
	\Sigma^*(\hV \cap \cV) \subset  W_1 +   W_2
	\quad
	\text{and}
	\quad
	\Sigma^*(\hCalV \mycap \cCalV) \subset  \CalW_1 +   \CalW_2.
\EqTag{SuperForm66}
\end{equation}
	Thus $\Sigma$ defines a superposition formula for
	$\hV \mycap \cV$ with respect to the Pfaffian systems $\CalW_1$  and $\CalW_2$.
\end{Theorem}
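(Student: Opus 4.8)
The plan is to read the superposition formula directly off the pullback identities of Lemma \StRef{SuperForm25}, after first recording how $W_1$ and $W_2$ appear in the restricted $5$-adapted coframes. Since $M_1$ is an integral manifold of $\hV^{(\infty)}=\text{span}\{\bfhsigma,\bfheta\}$ and $M_2$ is an integral manifold of $\cV^{(\infty)}=\text{span}\{\bfcsigma,\bfceta\}$, the tuple $\bfhpi=[\bfhsigma,\bfheta]$ restricts to zero on $M_1$ and $\bfcpi=[\bfcsigma,\bfceta]$ restricts to zero on $M_2$. Combining this with \EqRef{SuperForm41} gives $W_1=\cV|_{M_1}=\text{span}\{\bfctheta_1,\bfcsigma_1,\bfceta_1\}$ and $W_2=\hV|_{M_2}=\text{span}\{\bfhtheta_2,\bfhsigma_2,\bfheta_2\}$; in particular $\bfcsigma_1,\bfceta_1\in W_1$ and $\bfhsigma_2,\bfheta_2\in W_2$.

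For the first inclusion $\Sigma^*(\hV\cap\cV)\subset W_1+W_2$, I would use the generators $\{\hR\bfthetaX,\bfheta,\bfceta\}$ of $\hV\cap\cV$ coming from \EqRef{SuperForm42} (note $\hR$ is invertible). By \EqRef{SuperForm30}, $\Sigma^*(\bfheta)=\bfheta_2\in W_2$ and $\Sigma^*(\bfceta)=\bfceta_1\in W_1$. For the last generator \EqRef{SuperForm31} gives $\Sigma^*(\hR\bfthetaX)=\lambda(\bfhtheta_1+\bfctheta_2)$, so the crux is to show $\bfhtheta_1\in W_1$ and $\bfctheta_2\in W_2$. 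Here I would combine the defining relations $\htheta^i=\hR^i_j\thetaX^j+\phi^i_a\hpi^a$ and $\ctheta^i=\cR^i_j\thetaY^j+\psi^i_\alpha\cpi^\alpha$ of \EqRef{SuperForm17}, the identity $\thetaX^i=Q^i_j\thetaY^j$, and $\lambda=\hR Q\cR^{-1}$ from \EqRef{Fifth60}. On $M_1$ the term $\phi\,\bfhpi$ collapses, so $\bfhtheta_1=\hR Q\bfthetaY_1$; eliminating $\bfthetaY_1$ through $\cR\bfthetaY_1=\bfctheta_1-\psi\,\bfcpi_1$ yields $\bfhtheta_1=\lambda(\bfctheta_1-\psi\,\bfcpi_1)$, which lies in $W_1$ because $\bfctheta_1\in W_1$ and $\bfcpi_1=[\bfcsigma_1,\bfceta_1]$ is built from forms in $W_1$. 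The symmetric computation on $M_2$, where $\bfcpi$ collapses, gives $\bfctheta_2=\mu(\bfhtheta_2-\phi\,\bfhpi_2)\in W_2$ with $\mu=\lambda^{-1}$.

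For the second inclusion $\Sigma^*(\hCalV\mycap\cCalV)\subset\CalW_1+\CalW_2$, I would invoke Corollary \StRef{First20} together with \EqRef{First20}, which exhibit $\CalI=\hCalV\mycap\cCalV$ as the algebraic ideal generated by the $1$-forms of $\hV\cap\cV$ and by $2$-forms that are purely quadratic in $\bfhsigma$ or purely quadratic in $\bfcsigma$. The $1$-form part is already handled by the first inclusion. For the $2$-form generators, \EqRef{SuperForm30} gives $\Sigma^*(\bfhsigma)=\bfhsigma_2\in W_2$ and $\Sigma^*(\bfcsigma)=\bfcsigma_1\in W_1$, with the coefficients (being first integrals) pulling back to honest functions on the appropriate factor; hence every $\bfhsigma$-quadratic generator maps into $\Omega^2(W_2)\subset\CalW_2$ and every $\bfcsigma$-quadratic generator maps into $\Omega^2(W_1)\subset\CalW_1$. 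Because $\Sigma^*$ is an algebra homomorphism and $\CalW_1+\CalW_2$ is an ideal, the image of every generator lying in $\CalW_1+\CalW_2$ forces $\Sigma^*(\CalI)\subset\CalW_1+\CalW_2$. The superposition property \EqRef{Intro2} is then precisely this last inclusion, so $\Sigma$ is a superposition formula for $\hV\mycap\cV$ with respect to $\CalW_1$ and $\CalW_2$.

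I expect the main obstacle to be the verification $\bfhtheta_1\in W_1$ (and symmetrically $\bfctheta_2\in W_2$): a priori $\bfhtheta$ lies in $\hV$, not in $\cV$, so there is no evident reason for its restriction to $M_1$ to lie in $W_1=\cV|_{M_1}$. What rescues the argument is the collapse of the $\bfhpi$ directions on $M_1$ together with the automorphism relation $\lambda=\hR Q\cR^{-1}$ linking the $\bfhtheta$- and $\bfctheta$-coframes. This is exactly the payoff of the careful $5$-adapted coframe bookkeeping of Section~4, and it is the step at which the Vessiot-group structure genuinely enters the proof; the remaining manipulations are routine once these relations are in hand.
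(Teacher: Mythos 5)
Your proof takes the theorem's wording at face value and sets $W_1=\cV|_{M_1}$, $W_2=\hV|_{M_2}$, which you correctly compute to be $\text{span}\{\bfctheta_1,\bfcsigma_1,\bfceta_1\}$ and $\text{span}\{\bfhtheta_2,\bfhsigma_2,\bfheta_2\}$. But these are all of $T^*M_1$ and $T^*M_2$: your own identity $\bfhtheta_1=\lambda(\bfctheta_1-\psi\,\bfcpi_1)$ shows that $\bfhtheta_1$ already lies in your $W_1$, and $\{\bfhtheta_1,\bfceta_1,\bfcsigma_1\}$ is a coframe on $M_1$ (this is \EqRef{SuperForm43}). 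With that reading both inclusions in \EqRef{SuperForm66} are vacuous and the resulting ``superposition formula'' carries no information, since the only integral manifolds of $\Lambda^1(M_1)+\Lambda^1(M_2)$ are points. The statement the paper actually proves --- and the one used in Corollary \StRef{SuperForm8} and throughout Section 6 --- is the one recorded in item {\bf [v]} of Section 5.3: $W_1=\hV|_{M_1}=\text{span}\{\bfhtheta_1,\bfceta_1\}$ and $W_2=\cV|_{M_2}=\text{span}\{\bfctheta_2,\bfheta_2\}$, which are proper subbundles. (The wording of the theorem and of the introduction is inconsistent with Section 5.3 and with the proof; you should flag the discrepancy rather than silently adopt the degenerate reading.) With the corrected $W_1,W_2$ the first inclusion is in fact easier than you make it: $\bfhtheta_1$ and $\bfctheta_2$ are generators of $W_1$ and $W_2$ by definition, so \EqRef{SuperForm30} and \EqRef{SuperForm31} finish it immediately and your ``crux'' computation is not needed.

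The second inclusion, however, does not survive the correction as you have argued it. You place the pulled-back $2$-form generators $\bfhF_2\,\bfhsigma_2\wedge\bfhsigma_2$ and $\bfH_2\,\bfhsigma_2\wedge\bfhsigma_2$ in $\Omega^2(W_2)$, but $\bfhsigma_2\notin W_2=\text{span}\{\bfctheta_2,\bfheta_2\}$, so these forms are not quadratic in $W_2$. They belong to the differential ideal $\CalW_2$ for a different reason: by the structure equations \EqRef{First10} and \EqRef{SuperForm18} restricted to $M_2$ (where $\bfcpi_2=0$) one has $\extd\bfheta_2\equiv\bfhF_2\,\bfhsigma_2\wedge\bfhsigma_2$ and $\extd\bfctheta_2\equiv\frac12\bfH_2\,\bfhsigma_2\wedge\bfhsigma_2$ modulo $\{\bfctheta_2,\bfheta_2\}$, and symmetrically for the $\bfcsigma_1$-quadratic generators on $M_1$. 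This is exactly the computation the paper performs to arrive at the list \EqRef{SuperForm52} of algebraic generators for $\CalW_1+\CalW_2$, and it is the step your argument is missing. Once it is supplied, comparing \EqRef{SuperForm52} with the pullback \EqRef{SuperForm55} of the generators of $\hCalV\mycap\cCalV$ completes the proof along the lines you intend.
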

\begin{proof}
	To prove this theorem  we need only explicitly list the generators for
	$W_1 + W_2$, $\CalW_1 +\CalW_2$, $\hV \cap \cV$ and $\hCalV \mycap \cCalV$ and check that, using Lemma
	\EqRef{SuperForm15}, that the latter pullback into the former by $\Sigma$.
	For this we use the definitions of $\hV$ and $\cV$
	in terms of the 4-adapted and 5-adapted coframes by\EqRef{SuperForm42} and \EqRef{SuperForm41}.

	From the definition of $M_1$ and $M_2$ as integral manifolds  of $\hV^{(\infty)}$ and $\cV^{(\infty)}$, we have that
\begin{equation*}
	\bfheta_1 =0 ,\ \bfhsigma_1 = 0\text{ on $M_1$}
	\quad\text{and}\quad \bfceta_2 = 0,\ \bfcsigma_2 = 0
	\text{ on $M_2$}
\end{equation*}
	and hence the two 5-adapted coframes on $M$ restrict to coframes
\begin{equation}
	\text{ $\{\,\bfhtheta_1,\, \bfceta_1,\, \bfcsigma_1\, \}$ for  $M_1$ }
	\quad\text{and} \quad
	\text{ $\{\, \bfctheta_2,\,  \bfheta_2,\, \bfhsigma_2 \,\}$ for  $M_2$.}
\EqTag{SuperForm43}
\end{equation}
	These naturally combine to give a coframe on $M_1\times M_2$.
	The 1-form generators for  the Pfaffian systems $W_1$ and $W_2$ are then given by
\begin{equation}
	W_1 = \text{span}\{\,\bfhtheta_1,\ \bfceta_1 \,\}
	\quad\text{and}\quad
	W_2 = \text{span}\{\,\bfctheta_2,\ \bfheta_2 \,\} .
\EqTag{SuperForm51}
\end{equation}

	The differential system $\CalW_1 + \CalW_2$ is  algebraically generated
	by the 1-forms \EqRef{SuperForm51} and their
	exterior derivatives (see \EqRef{First10}, \EqRef{SuperForm18}), that is,
\begin{equation*}
	\CalW_1 + \CalW_2 = \{\, \bfhtheta_1,\  \bfceta_1,\ \bfctheta_2,\
	\bfheta_2,\ \extd \bfceta_1,\
	\extd \bfheta_2,\ \extd \bfhtheta_1,\  \extd \bfctheta_2 \, \}.
\end{equation*}
	The restrictions of \EqRef{SuperForm18} to $M_1$ and $M_2$ lead to the structure equations
\begin{align*}
	\extd \bfhtheta_1
	& = \frac12\bfG_1\bfcpi_1 \wedge\ \bfcpi_1 \mod \{\bfhtheta_1\}
	=  \frac12\bfG_1\bfcsigma_1 \wedge\ \bfcsigma_1 \mod \{\bfhtheta_1, \bfceta_1\}	\quad\text{and}
\\[2\jot]
	\extd \bfctheta_2
	&= \frac12\bfH_2\bfhpi_2 \wedge\ \bfhpi_2 \mod \{\bfctheta_2\}
	=  \frac12\bfH_2\bfhsigma_2 \wedge\ \bfhsigma_2 \mod \{\bfhtheta_2, \bfheta_2\}
\end{align*}
	from which it follows that $\CalW_1 + \CalW_2$  is algebraically generated by
\begin{equation}
	\CalW_1 + \CalW_2 = \{\, \bfhtheta_1,\  \bfceta_1,\ \bfctheta_2,\ \bfheta_2,\
	\bfcF_1 \bfcsigma_1 \wedge \bfcsigma_1,\  \bfhF_2 \bfhsigma_2 \wedge \bfhsigma_2,
	\bfH_2\bfhsigma_2 \wedge\ \bfhsigma_2  \}.
\EqTag{SuperForm52}
\end{equation}

	The Pfaffian system $\hV \cap \cV$ is generated by the forms $\{ \bftheta,\, \bfheta,\,  \bfceta\}$
	or equivalently  (see \EqRef{SuperForm42}) by $\{ \bfthetaX,\, \bfheta,\,  \bfceta\}$. The
	first inclusion in \EqRef{SuperForm66}  follows immediately from the Lemma \StRef{SuperForm25} and \EqRef{SuperForm51}.
\par
	By definition, the differential system $\hCalV \mycap \cCalV$ is algebraically generated,
	in terms of the 1-adapted coframe (see Theorem \StRef{First9}),
	 as
\begin{equation*}
	\hCalV \mycap \cCalV  =
	\{\, \bftheta,\ \bfheta,\ \bfceta,\  \extd \bfheta,\ \extd \bfceta,\
	\bfA\, \bfhsigma \wedge \bfhsigma,\ \bfB\, \bfcsigma    \wedge \bfcsigma \,\}
\end{equation*}
	or, equally as well, in terms of the 4-adapted coframe by
\begin{equation*}	
	\hCalV \mycap \cCalV  =
	\{\, \bftheta,\ \bfheta,\ \bfceta,\ \bfcF \bfcsigma \wedge \bfcsigma,\  \bfhF \bfhsigma \wedge \bfhsigma,
	\bfA\, \bfhsigma \wedge \bfhsigma,\ \bfB\, \bfcsigma    \wedge \bfcsigma \,\}.
\end{equation*}
	In this latter equation the coefficients $\bfA$ and $\bfB$ are those appearing in \EqRef{Fourth6}.
        By  \EqRef{Fourth20}  and  \EqRef{Fifth59}, we also have that
\begin{equation}
	\hCalV \mycap \cCalV = \{\, \bfR\bfthetaX,\ \bfheta,\ \bfceta,\
	\bfcF \bfcsigma \wedge \bfcsigma,\  \bfhF \bfhsigma \wedge \bfhsigma,
	\bfH\, \bfhsigma \wedge \bfhsigma,\ \bfG\, \bfcsigma    \wedge \bfcsigma \,\}.
\end{equation}
	With the algebraic generators for $\hCalV \mycap \cCalV$ in this form, it is easy to calculate the pullback of
	$\hCalV \mycap \cCalV$ to $M_1 \times M_2$ by $\Sigma$. By using
	\EqRef{SuperForm30}, \EqRef{SuperForm31}, \EqRef{SuperForm36}   we find that
\begin{equation}
	\Sigma^*(\hCalV \mycap \cCalV) = \{\,\bfhtheta_1 + \bfctheta_2,\ \bfheta_2,\ \bfceta_1,\
	\bfcF_1 \bfcsigma_1 \wedge \bfcsigma_1,\  \bfhF_2 \bfhsigma_2 \wedge \bfhsigma_2, \
	\bfH_2\, \bfhsigma_2 \wedge \bfhsigma_2,\ \bfG_1\, \bfcsigma_1    \wedge \bfcsigma_1 \,\}
\EqTag{SuperForm55}
\end{equation}
	and the second inclusion in \EqRef{SuperForm66} is firmly established.	
\end{proof}
	One may use the first integrals of $\hV$ and $\cV$ and
	the map $\rho\: \CalU \to G$, and local coordinates $z^i$ on $G$  to define  local coordinates
\begin{equation}
	(\hI^a = \hI^a(x),\ \cI^\alpha =\cI^\alpha(x),\ z^i= \rho^i(x))
\end{equation} on $M$ and induced coodinates $(\cI^\alpha_1, z^i_1)$ on
	$M_1$ and $(\hI^a_2, z^i_2)$ on $M_2$. In these coordinates the superposition formula becomes
\begin{equation}
\Sigma\big(( \cI^\alpha_1, z^j_1),  (\hI^a_2, z^k_2 ) \big) = \big ( \hI^a = \hI^a_2, \cI^\alpha = \cI^\alpha_1, z^i = (z_1^j)\cdot (z_2^k) \big) .
\EqTag{SuperForm99}
\end{equation}
	We shall use this formula extensively in the examples in Section 6.
	
	To prove that the superposition map $\Sigma$ is subjective, at the level of integral manifolds,
	we  use the concept of an integrable extension of a differential system $\CalI$ on $M$
	\cite{bryant-griffiths:1995b}.
	This is a differential system  $\CalJ$ on a manifold $N$ together with a submersion $\varphi\: N \to M$ such that
	$\varphi^*(\CalI) \subset \CalJ$ and such that the quotient differential system
	$\CalJ/\varphi^*\CalI$ is a completely integrable Pfaffian system.
	This means that there are 1-forms $\vartheta^\ell\in \CalJ$ such that $\CalJ$ is algebraically generated by the
	forms $\{\,\vartheta^\ell\,\} \cup \varphi^*\CalI$ and
\begin{equation}
	\extd \vartheta^\ell \equiv 0 \quad \mod \{\,\vartheta^\ell\,\} \cup \varphi^*\CalI
\end{equation}
	or, equivalently, modulo the 1-forms in $\CalJ$ and the 2-forms in $\varphi^*\CalI$.
	Under these conditions the map $\varphi$ is guaranteed to be a local surjection from the integral manifolds of $\CalJ$  to
	the integral elements of $\CalI$.
	Indeed, as argued in \cite{bryant-griffiths:1995b},
	let $P\subset M$ be an integral manifold of $\CalI$ defined in a neighborhood of $x\in M$. Choose a point $y \in N$ such that
	$\varphi(y)=x$.  Then  $\tilde P = \varphi^{-1}(P)$ is a submanifold on $N$ containing $y$  and the restriction of $\CalJ$ to $\tilde P$ is
	a completely integrable Pfaffian system $\tilde \CalJ$.  By the Frobenius theorem (applied to $\tilde \CalJ$ as a Pfaffian system on $\tilde P$)
	there is locally a unique integral manifold $Q$ for
	$\tilde \CalJ$ through $y$.  The manifold $Q$ is then an integral manifold of $\CalJ$ which projects by $\varphi$ to
	the original integral manifold $P$ on some open neighborhood of $x$.	
\begin{Corollary}
\StTag{SuperForm8}
	Let $\{\hV, \cV\}$ be a Darboux pair on $M$.
	Then the EDS\  $\CalW_1 + \CalW_2$ on  $M_1 \times M_2$ is an integrable extension of $\hCalV \mycap \cCalV$
	on  $M$ with respect to the superposition formula $\Sigma \colon M_1 \times M_2 \to M$.
\end{Corollary}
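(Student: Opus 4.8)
The plan is to verify, directly from the definition of integrable extension recalled just before the statement, the three required properties for the triple $(\CalW_1 + \CalW_2,\ \Sigma,\ \hCalV \mycap \cCalV)$, reading off everything from the explicit generators already produced in the proof of Theorem \StRef{SuperForm6}.

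First I would check that $\Sigma$ is a submersion. This is immediate from the coordinate description \EqRef{SuperForm99}: in the coordinates $(\hI^a, \cI^\alpha, z^i)$ on $M$ and the induced coordinates $(\cI^\alpha_1, z^i_1)$ on $M_1$ and $(\hI^a_2, z^i_2)$ on $M_2$, the map $\Sigma$ sends $\hI^a = \hI^a_2$, $\cI^\alpha = \cI^\alpha_1$, and $z^i = (z_1^j)\cdot(z_2^k)$. Since group multiplication $G \times G \to G$ is a submersion (both left and right translations are diffeomorphisms) and the remaining two blocks are coordinate projections, the Jacobian of $\Sigma$ is everywhere surjective. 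The inclusion $\Sigma^*(\hCalV \mycap \cCalV) \subset \CalW_1 + \CalW_2$ demanded by the definition is exactly the second assertion of Theorem \StRef{SuperForm6}.

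The substance of the argument is to exhibit the quotient differential system $(\CalW_1 + \CalW_2)/\Sigma^*(\hCalV \mycap \cCalV)$ as a completely integrable Pfaffian system. I would compare the generators \EqRef{SuperForm52} of $\CalW_1 + \CalW_2$ with the generators \EqRef{SuperForm55} of $\Sigma^*(\hCalV \mycap \cCalV)$. Both lists contain exactly the same 2-forms and the same 1-forms $\bfheta_2$, $\bfceta_1$; the sole difference is that $\CalW_1 + \CalW_2$ is generated by the two tuples $\bfhtheta_1$ and $\bfctheta_2$ separately, whereas $\Sigma^*(\hCalV \mycap \cCalV)$ contains only their sum $\bfhtheta_1 + \bfctheta_2$. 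Hence I would take $\vartheta = \bfhtheta_1$ as the 1-forms generating the quotient, observing that $\bfctheta_2 \equiv -\bfhtheta_1 \pmod{\Sigma^*(\hCalV \mycap \cCalV)}$, so that $\CalW_1 + \CalW_2$ is indeed algebraically generated by $\{\vartheta\} \cup \Sigma^*(\hCalV \mycap \cCalV)$.

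It then remains to verify the closure condition $\extd \vartheta \equiv 0$ modulo the 1-forms in $\CalW_1 + \CalW_2$ and the 2-forms in $\Sigma^*(\hCalV \mycap \cCalV)$. Restricting the first structure equation in \EqRef{SuperForm18} to $M_1$ (where $\bfheta_1 = \bfhsigma_1 = 0$) and reducing modulo the 1-forms $\bfhtheta_1$ and $\bfceta_1$, both of which lie in $\CalW_1 + \CalW_2$, gives $\extd \bfhtheta_1 \equiv \tfrac12 \bfG_1\, \bfcsigma_1 \wedge \bfcsigma_1$; the discarded terms are the quadratic $\bfhtheta_1 \wedge \bfhtheta_1$ piece and the $\bfceta_1$-terms. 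The surviving 2-form $\bfG_1\, \bfcsigma_1 \wedge \bfcsigma_1$ appears among the generators of $\Sigma^*(\hCalV \mycap \cCalV)$ in \EqRef{SuperForm55}, so $\extd \vartheta \equiv 0$ as required, and the Corollary follows. I expect the only genuine subtlety to be this last bookkeeping step: recognizing that the 2-form produced by differentiating $\bfhtheta_1$ already lies in $\Sigma^*(\hCalV \mycap \cCalV)$ and not merely in $\CalW_1 + \CalW_2$. It is precisely this fact that makes the quotient a Pfaffian system that is completely integrable, rather than only a differential ideal, and hence guarantees the local surjectivity of $\Sigma$ on integral manifolds via the Frobenius argument sketched before the statement.
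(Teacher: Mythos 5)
Your proof is correct and follows essentially the same route as the paper's: both identify $\bfhtheta_1$ as the complement to $\Sigma^*(\hCalV \mycap \cCalV)$ in $\CalW_1 + \CalW_2$ by comparing the generator lists \EqRef{SuperForm52} and \EqRef{SuperForm55}, and both verify the closure condition from the restricted structure equation $\extd\bfhtheta_1 \equiv \tfrac12\bfG_1\,\bfcsigma_1\wedge\bfcsigma_1 \bmod \{\bfhtheta_1, \bfceta_1\}$, noting that the right-hand side already sits in $\Sigma^*(\hCalV \mycap \cCalV)$. Your explicit check that $\Sigma$ is a submersion via \EqRef{SuperForm99} is a small addition the paper leaves implicit, and you correctly work modulo $\bfceta_1$ where the paper's text has an apparent slip ($\bfheta_1$, which vanishes on $M_1$).
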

\begin{proof}
	In view of \EqRef{SuperForm52} and \EqRef{SuperForm55} we may take  the {\it differential ideal} generated by  $\CalF = \{\, \bfhtheta_1\}$
	as a complement to $\Sigma^*(\hCalV \mycap \cCalV)$ in $\CalW_1 + \CalW_2$.
	The 1-forms $\bfhtheta_1$ are closed modulo
	$\bfhtheta_1$, $\bfheta_1$  and  $\bfG_1 \bfcsigma_1 \wedge\bfcsigma_1$ and therefore
	the  generators for $\CalF$ are closed,  modulo $\CalF$ and  modulo
	the 1-forms and 2-forms in $\Sigma^*(\hCalV \mycap \cCalV)$.
	Hence $\CalW_1 + \CalW_2$ is an integral extension of $\hCalV \mycap \cCalV$.
\end{proof}

\begin{Corollary}
	Let $\CalI$ be a decomposable, Darboux integrable Pfaffian system. Then the map
	$\Sigma \: M_1 \times M_2 \to M$ is a superposition formula for $\CalI$ with respect to the EDS $\CalW_1 + \CalW_2$ which is
	locally surjective on integral manifolds.
\end{Corollary}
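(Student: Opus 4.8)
The plan is to assemble results already in hand rather than to construct anything new: the substantive work has been done in Theorems \StRef{Dec6} and \StRef{SuperForm6} and in Corollary \StRef{SuperForm8}, and what remains is to check that the hypotheses of the present corollary feed correctly into these statements and to match up the definitions of ``superposition formula'' and ``surjective on integral manifolds.''

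First I would reduce to the Darboux-pair setting. Since $\CalI$ is decomposable and Darboux integrable, Definition \StRef{Intro4} says precisely that its associated singular Pfaffian systems $\{\,\hV,\cV\,\}$ form a Darboux pair (and by Corollary \StRef{Dec4} they are genuinely Pfaffian), so the entire sequence of coframe adaptations of Section 4 and the superposition construction of Section 5.2 apply verbatim. Moreover, by Theorem \StRef{Dec6} the decomposable system reconstructs from its singular systems as $\CalI = \hCalV \mycap \cCalV$. Consequently the manifolds $M_1$, $M_2$, the Pfaffian systems $W_1$, $W_2$, and the map $\Sigma$ produced in Section 5.2 for the Darboux pair $\{\,\hV,\cV\,\}$ are exactly the data attached to $\CalI$ itself.

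Next I would invoke Theorem \StRef{SuperForm6} for the pair $\{\,\hV,\cV\,\}$, which gives $\Sigma^*(\hCalV \mycap \cCalV) \subset \CalW_1 + \CalW_2$. Rewriting the left-hand side via $\CalI = \hCalV \mycap \cCalV$ yields $\Sigma^*(\CalI) \subset \CalW_1 + \CalW_2$, which is exactly the defining inclusion \EqRef{Intro2} for $\Sigma$ to be a superposition formula for $\CalI$ with respect to the EDS $\CalW_1 + \CalW_2$; in particular the composite of integral manifolds as in \EqRef{Intro3} produces integral manifolds of $\CalI$. For surjectivity I would then appeal to Corollary \StRef{SuperForm8}, which states that $\CalW_1 + \CalW_2$ is an \emph{integrable extension} of $\hCalV \mycap \cCalV = \CalI$ along $\Sigma$, with completely integrable complementary Pfaffian system generated by $\CalF = \{\,\bfhtheta_1\,\}$. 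Feeding this into the general integrable-extension mechanism recalled in the paragraph preceding Corollary \StRef{SuperForm8} completes the argument: given any local integral manifold $P$ of $\CalI$ near a point $x$ and any $y \in \Sigma^{-1}(x)$, the restriction of $\CalW_1 + \CalW_2$ to $\Sigma^{-1}(P)$ is a completely integrable Pfaffian system whose unique integral leaf through $y$ projects by $\Sigma$ onto $P$. Hence every local integral manifold of $\CalI$ arises as the $\Sigma$-image of an integral manifold of $\CalW_1 + \CalW_2$, which is the asserted local surjectivity.

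The step I expect to require the most care is the surjectivity conclusion, and this is precisely why the preceding corollary was cast in the language of integrable extensions: the Frobenius-type argument of Bryant--Griffiths guarantees surjectivity onto local integral \emph{manifolds} of $\CalI$ (not merely onto its integral elements), and it is the complete integrability of the complement $\CalF$ that makes the leaf through $y$ both existent and unique. Everything else in the proof is bookkeeping that identifies the abstract superposition data of Section 5 with the concrete singular systems of the given decomposable, Darboux integrable $\CalI$.
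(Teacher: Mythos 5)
Your proposal is correct and follows essentially the same route as the paper's own (very brief) proof: identify $\CalI = \hCalV \mycap \cCalV$ via Theorem \StRef{Dec6}, obtain the superposition inclusion from Theorem \StRef{SuperForm6}, and derive local surjectivity on integral manifolds from the integrable-extension statement of Corollary \StRef{SuperForm8} together with the Frobenius argument recalled before it. The extra detail you supply about the leaf through $y$ projecting onto $P$ is exactly the mechanism the paper relies on implicitly.
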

\begin{proof}
	If  $\{\, \hV, \cV \, \}$ is the Darboux pair for $\CalI$, then $\CalI =  \hCalV \mycap \cCalV$
	and the corollary follows from Theorem \StRef{SuperForm6} and Corollary \StRef{SuperForm8}.
\end{proof}

\subsection[Superposition Formulas and Quotients of EDS]{Superposition Formulas and Symmetry Reduction of Differential Systems}

	In this section we shall use the superposition formula established in Theorem \StRef{SuperForm6} to prove that if $\CalI$
	is any decomposable differential system for which the singular Pfaffian systems $\{\hV , \cV\}$ form a Darboux pair,
	then $\CalI$ can be realized as a quotient differential system, with respect to its Vessiot group,
	by the construction given in Corollary \StRef{SymRed2}.

	To precisely formulate this result, it is useful to first summarize the  essential results of  Sections 4 and 5.1--5.2.  We have constructed,
	through the coframe adaptation of Section 4, a local Lie group $G$ and local right and left group actions
	$\hmu , \cmu : G\times M \to M.$
	For the sake of simplicity, let us suppose that $G$ is a Lie group and that these actions are globally 	
	defined. The infinitesimal generators for these actions are the vector fields $\hX_i$ and $\cX_i$, 	
	defined by the duals of the 5-adapted coframes.

\begin{itemize}
\item[{\bf[i]}]
	The actions  $\hmu$ and $\cmu$ are free actions  with the same orbits (the vector fields $\hX_i$ and $\cX_i$
	are pointwise independent and related by $\hX_i = \lambda^j_i \cX_j$).
\item[{\bf[ii]}]	
	The actions $\hmu$ and $\cmu$ commute (see \EqRef{SuperForm120}).
\item[{\bf[iii]}]
	The actions $\hmu$ and $\cmu$ are symmetry groups of $\hV$ and $\cV$, respectively
	(see \EqRef{SuperForm101}).
\item[{\bf[iv]}]
	Each integral manifold of $\hV^{\infty}$ or $\cV^{\infty}$ is fixed by both actions $\hmu$ and $\cmu$
	(see \EqRef{SuperForm102}).
\item[{\bf[v]}]
	We have defined $\iota_1 \colon M_1 \to M$ and $\iota_2 \colon M_2 \to M$ to be fixed integral manifolds of $\hV^{\infty}$ and $\cV^{\infty}$
	and $W_1$ and $W_2$ to be the restrictions of $\hV$ and $\cV$ to these integral manifolds.
\item[{\bf[vi]}]
	Properties {\bf [i]} -- {\bf [iv]} imply that the actions $\hmu$ and $\cmu$ restrict to
	actions on  $\hmu_i$ and $\cmu_i$  on $M_i$. These restricted actions are  free,
	$\hmu_1$ and $\cmu_2$ are symmetries of $W_1$ and $W_2$ respectively. The actions $\hmu_1$ and $\cmu_2$  are transverse to
	 $W_1$ and $W_2$  (see \EqRef{SuperForm19}).
\end{itemize}
\par
\noindent
	The diagonal action $\delta \colon G \times (M_1\times M_2) \to M_1 \times M_2$
	is defined as the left action
\begin{equation}
	\delta(h, (x_1, x_2)) = \big( \hmu_1(h^{-1}, x_1), \,  \cmu_2(h , x_2) \big).
\EqTag{SuperForm46}
\end{equation}
	Note that the infinitesimal generators for $\delta$ are
\begin{equation}
	 Z_i = -\hX_{1i} + \cX_{2i}.
\EqTag{SuperForm70}
\end{equation}
	Granted that the action $\delta$ is regular, we then have that all the hypothesis of
	Corollary \StRef{SymRed2} are satisfied.  We can therefore construct the
	quotient manifold $\bfq \colon M_1 \times M_2 \to (M_1\times M_2)/G$, the
	quotient differential system  $\CalJ = (\CalW_1 + \CalW_2)/G$ and the Darboux pairs $\{\,\hU, \cU \}$ (see \EqRef{SymRed11}).
\par	
	We use the superposition formula  \EqRef{SuperForm26} to identify the manifold $M$ with $(M_1 \times M_2)/G$ and the original differential system
	$\CalI$ with the quotient system $\CalJ$.

\begin{Theorem}
\StTag{SuperForm22}
	Let $\CalI$ be a decomposable differential system on $M$ whose singular Pfaffian systems
	$\{\, \hV, \, \cV\,\}$ define a Darboux pair.
	Let $G$ be the Vessiot group for $\{\,\hV, \cV \,\}$ and define Pfaffian systems $W_1$ on $M_1$ and $W_2$ on $M_2$ as above.
	Then the manifold $M$ can be identified as the quotient of $M_1\times M_2$ by the diagonal action $\delta$ of the Vessiot group $G$,
	the superposition formula $\Sigma$ is the quotient map, and $\CalI = (\CalW_1 + \CalW_2)/G$.
\end{Theorem}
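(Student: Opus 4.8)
The plan is to prove the three assertions simultaneously by showing that the superposition map $\Sigma$ of \EqRef{SuperForm26} is constant on the orbits of the diagonal action $\delta$, that the induced map on the quotient is a local diffeomorphism onto $M$, and that it carries the quotient system $(\CalW_1+\CalW_2)/G$ onto $\hCalV\mycap\cCalV=\CalI$ (the last equality being Theorem \StRef{Dec6}). Since the discussion preceding the theorem already produces, via Corollary \StRef{SymRed2}, the quotient map $\bfq\colon M_1\times M_2\to (M_1\times M_2)/G$ together with the system $\CalJ=(\CalW_1+\CalW_2)/G$, the remaining task is purely to match these against $M$ and $\CalI$ through $\Sigma$. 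The essential inputs are Lemma \StRef{SuperForm25} (the pullback of an adapted coframe) and the explicit generators \EqRef{SuperForm55} of $\Sigma^*(\hCalV\mycap\cCalV)$.

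First I would verify that $\Sigma$ factors through $\bfq$. The infinitesimal generators of $\delta$ are $Z_i=-\hX_{1i}+\cX_{2i}$ (see \EqRef{SuperForm70}), so it suffices to check that $\Sigma_*Z_i=0$, i.e.\ that $\Sigma^*(\alpha)(Z_i)=0$ for every $\alpha$ in the $4$-adapted coframe $\{\,\bfhsigma,\bfheta,\bfcsigma,\bfceta,\bfthetaX\,\}$ on $M$. By Lemma \StRef{SuperForm25} one has $\Sigma^*(\bfhsigma)=\bfhsigma_2$, $\Sigma^*(\bfheta)=\bfheta_2$, $\Sigma^*(\bfcsigma)=\bfcsigma_1$, $\Sigma^*(\bfceta)=\bfceta_1$, and $\Sigma^*(\bfR\bfthetaX)=\boldsymbol{\lambda}\,(\bfhtheta_1+\bfctheta_2)$. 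The first four are annihilated by $Z_i$ because $\hX_i$ and $\cX_i$, being dual-frame vectors to the $5$-adapted coframes, annihilate every component of $\bfhpi$ and $\bfcpi$; the last is annihilated because the duality relations \EqRef{SuperForm19} give $(\htheta^i_1+\ctheta^i_2)(-\hX_{1j}+\cX_{2j})=-\delta^i_j+\delta^i_j=0$. As $\bfR$ is invertible these forms constitute a coframe, so $\Sigma_*Z_i=0$. With $\delta$ regular and its orbits connected, $\Sigma$ is constant on orbits and descends to a smooth $\bar\Sigma$ with $\Sigma=\bar\Sigma\circ\bfq$.

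Next I would argue that $\bar\Sigma$ is a local diffeomorphism. The six forms $\{\,\bfhtheta_1,\bfceta_1,\bfcsigma_1,\bfctheta_2,\bfheta_2,\bfhsigma_2\,\}$ are a coframe on $M_1\times M_2$ (see \EqRef{SuperForm43}), and their $\delta$-semibasic span is exactly $\{\,\bfhsigma_2,\bfheta_2,\bfcsigma_1,\bfceta_1,\bfhtheta_1+\bfctheta_2\,\}$ — neither $\bfhtheta_1$ nor $\bfctheta_2$ is separately semibasic. This semibasic subbundle is precisely the image of $\bfq^*$ on $1$-forms. Since $\Sigma^*=\bfq^*\circ\bar\Sigma^*$ carries the coframe on $M$ isomorphically onto this semibasic coframe, $\bar\Sigma^*$ is a pointwise linear isomorphism of cotangent spaces; hence $\bar\Sigma$ is a local diffeomorphism, the equality of dimensions being forced by the cotangent isomorphism. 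This establishes assertion {\bf[i]} and the statement that $\Sigma$ is the quotient map.

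Finally, for the identification $\CalI=(\CalW_1+\CalW_2)/G$, I would compare generators. The ideal $\CalW_1+\CalW_2$ is generated as in \EqRef{SuperForm52}; its $\delta$-semibasic $1$-form generators are $\bfhtheta_1+\bfctheta_2$, $\bfceta_1$, $\bfheta_2$, and its semibasic $2$-forms are those displayed there. This list is identical to the generators of $\Sigma^*(\hCalV\mycap\cCalV)$ recorded in \EqRef{SuperForm55}, whence $\Sigma^*(\CalI)=(\CalW_1+\CalW_2)_{\semibasic}$. Applying the semibasic description \EqRef{SymRed3} of the quotient to a local cross-section of $\bfq$ and transporting through $\bar\Sigma$ then yields $\CalI=(\CalW_1+\CalW_2)/G$, i.e.\ assertion {\bf[ii]}. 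The main obstacle is precisely this last identification: Theorem \StRef{SuperForm6} supplies only the inclusion $\Sigma^*(\CalI)\subset\CalW_1+\CalW_2$, and one must upgrade it to equality with the \emph{full} semibasic part, checking that no semibasic generator is lost. The side-by-side comparison of \EqRef{SuperForm52} and \EqRef{SuperForm55} furnishes exactly this, and the compatibility of all these quotient constructions with Corollary \StRef{SymRed2} closes the argument.
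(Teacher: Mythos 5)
Your proposal is correct and follows the same skeleton as the paper's own proof: establish that $\Sigma$ is invariant under the diagonal action $\delta$, identify $\ker\Sigma_*$ with $\text{span}\{Z_i\}$ via Lemma \StRef{SuperForm25}, conclude that the induced map on $(M_1\times M_2)/G$ is a diffeomorphism onto $M$, and then match the differential systems using the generator lists \EqRef{SuperForm52} and \EqRef{SuperForm55}. Two steps differ in route, though not in substance. For invariance, the paper argues at the group level, using the bi-equivariance of $\rho$ to compute $\Sigma\bigl(\delta(h,(x_1,x_2))\bigr)=\Sigma(x_1,x_2)$ directly, whereas you derive it infinitesimally from $\Sigma_*Z_i=0$ plus connectivity of the orbits; since $G$ is connected this is legitimate, and it lets you fold the paper's facts {\bf[i]} and {\bf[ii]} into one computation. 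For the identification $\CalI=(\CalW_1+\CalW_2)/G$, the paper proves $\Sigma^*(\hV)=[W_1\oplus\Lambda^1(M_2)]_{\semibasic}$ and $\Sigma^*(\cV)=[\Lambda^1(M_1)\oplus W_2]_{\semibasic}$, so that $\hCalV=\hCalU$ and $\cCalV=\cCalU$, and then invokes Corollary \StRef{SymRed2} (hence Theorem \StRef{SymRed8}) to conclude $\CalJ=\hCalU\mycap\cCalU=\hCalV\mycap\cCalV=\CalI$; you instead compare the $\delta$-semibasic generators of $\CalW_1+\CalW_2$ directly with those of $\Sigma^*(\hCalV\mycap\cCalV)$. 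Your shortcut is more self-contained at this spot, but note that it tacitly assumes the semibasic part of the ideal is \emph{algebraically generated} by the listed semibasic generators rather than merely containing them --- that is exactly the point the paper secures by the decomposition argument in the proof of Theorem \StRef{SymRed8} --- so your final step should either cite that computation or reproduce it. With that caveat the argument is sound.
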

	
	All the manifolds, actions, and differential systems appearing in Theorem \StRef{SuperForm22}
	are presented in the following diagram:

\medskip

\begin{proof}[Proof of Theorem \StRef{SuperForm22}] The following elementary facts are  needed.
\par
\medskip
\noindent
{\bf [i]} The superposition map $\Sigma: M_1\times M_2 \to M$ is invariant with respect to the diagonal action $\delta$ of the
	Vessiot group $G$ on $M_1\times M_2$.
\par
\medskip
\noindent
{\bf [ii]} For each point $x = (x_1, x_2) \in M_1\times M_2$, $\ker \Sigma_*(x) = \ker \bfq_*(x)$.
	
\par
\medskip
\noindent
{\bf [iii]} $\Sigma^*(\hV) = [W_1 \oplus \Lambda^1(M_2)]_{\text{\bf sb}}$ and $\Sigma^*(\cV) = [\Lambda^1(M_1) \oplus W_2]_{\text{\bf sb}} $.
\par
\medskip
\noindent	
	Facts {\bf[i]}  and {\bf [ii]} show that $\Sigma\colon M_1 \times M_2 \to M $ can be identified with
	$\bfq\colon M_1 \times M_2 \to (M_1\times M_2)/G$. Fact  $\bf [iii]$ shows that $\hCalV = \hCalU$ and $\cCalV = \cCalU$ (in the notation of
	Corollary  \StRef{SymRed2}) so that $\CalI = \hCalV \mycap \cCalV =  \hCalU \mycap \cCalU = \CalJ$.
\par
	To prove {\bf [i]},  let $x_1 \in M_1$, $x_2\in M_2$ and $h \in G$. If $\Psi(x_1) = (s_1, g_1)$ and $\Psi(x_2) = (s_2, g_2)$ then
	it is a simple matter to check, using the $G$ bi-equivariance of $\rho$, that
	$ \hmu_1(h^{-1},x_1) = (s_1, g_1 \cdot h^{-1})$ and  $\cmu_2(h, x_2) = (s_2, h \cdot g_2)$.
	The invariance of $\Sigma$ then follows immediately from its definition \EqRef{SuperForm26}.
\par
	Lemma \StRef{SuperForm25} and \EqRef{SuperForm70} show that
\begin{equation}
 \ker \Sigma_* = \text{span}\{\partial_{\htheta_1} - \partial_{\ctheta_2}\} = \text{span}\{ -\hX_{1i} + \cX_{2i} \} = \text{span}\{\,Z_i\,\}
\EqTag{SuperForm71}
\end{equation}
	which proves {\bf[ii]}.
\par
Lemma \StRef{SuperForm25} and  equations
	\EqRef{SuperForm16} show that
\begin{equation}
	\Sigma^*(\hV) = \Sigma^*(\{\, \bfhtheta,\, \bfhsigma,\, \bfceta,\, \bfheta\,\}	
	= \{\, \bfhtheta_1 +\bfctheta_2,\, \bfhsigma_2,\, \bfheta_2,\, \bfceta_1 \, \}
\end{equation}
	while \EqRef{SuperForm43} and \EqRef{SuperForm51} give
\begin{equation}
	W_1 \oplus \Lambda^1(M_2) = \{\, \bfhtheta_1,\, \bfceta_1,\, \bfctheta_2,\, \bfheta_2,\, \bfhsigma_2 \,\}
\end{equation}
	in which case {\bf [iii]} follows immediately from \EqRef{SuperForm71}.
\par
	To make precise the identification of  $M$ with the quotient manifold $\barM =(M_1\times M_2)/G$,	
	define a map  $\Upsilon \colon \barM \to M$ as follows.
	For each point $\bar x \in \barM$,  pick a point $(x_1, x_2) \in M_1\times M_2$  such that
	$\bfq(x_1, x_2) = \bar x$ and let  $\Upsilon(\bar x) = \Sigma(x_1, x_2)$. The diagram
\begin{equation*}
\begindc{3}
	\obj(0,0){$M_1 \times M_2$}[M1M2]
	\obj(-12,-12){$\barM$}[barM]
	\obj(12,-12){$M$}[M]
	\mor{M1M2}{barM}{$\bfq$}[\atright,\solidarrow]	
	\mor{M1M2}{M}{$\Sigma$}
	\mor{barM}{M}{$\Upsilon$}[\atright, \solidarrow]	
\enddc
\end{equation*}
	commutes and,  on the domain of any (local) cross-section $\zeta$ of $\bfq$, one has
	$\Upsilon = \Sigma \circ \zeta$. This observation and facts {\bf [i]}
	and {\bf [ii]} then suffice to  show that $\Upsilon$ is a well-defined,  smooth diffeomorphism.
	Moreover, the same computations used to establish fact {\bf [i]} show that $\Sigma$, and hence $\Upsilon$, is $G$ equivariant
	with respect to the action $\cmu_1$ on $M_1\times M_2$ and $\cmu$
	on $M$ and also $G$ equivariant with respect to the action $\hmu_2$ on $M_1\times M_2$ and $\hmu$ on $M$.
\par
	Finally, we recall that
	$\hU$ and $\cU$ may be calculated  from the cross-section $\zeta$ and the $\delta$ semi-basis forms by
\begin{equation}
	\hU = \zeta^*([W_1 \oplus T^*(M_2)]_{\text{\bf sb}}) \quad \text{and}\quad \cU = \zeta^*(T^*(M_1) \oplus W_2]_{\text{\bf sb}})
\end{equation}
 	in which case  {\bf[iii]} implies that $\Upsilon^*(\hCalV) = \hCalU$ and $\Upsilon^*(\cCalV) = \cCalU$.
\end{proof}

\begin{Remark}
	Finally we remark that all the results of this section remain valid in so long as the singular systems for $\CalI$ have  algebraic generators
	
\begin{equation}
	\hCalV
	 = \{\, \bftheta,\ \bfheta,\ \bfceta,\  \bfhsigma,\ \bfP \, \bfcsigma \wedge  \bfcsigma \,\}
	\quad\text{and}\quad
	\cCalV
	  = \{\, \bftheta,\ \bfheta,\ \bfceta,\   \bfcsigma,\ \bfQ\, \bfhsigma \wedge \bfhsigma,\,\}.
\end{equation}
	where $\bfP \in \Inv(\cV)$ and $\bfQ \in \Inv(\hV)$.
	Such systems need not be Pfaffian.
\end{Remark}

\newpage
\section{Examples}
	In this section we illustrate our general theory  with a variety of examples.
        Examples \StRef{Ex1} and \StRef{Ex2} are taken from the classical literature and
	are simple enough that most of the computations can be explicitly given.
	We consider PDE where  the unknown functions take values in a group or in a
	non-commutative algebra in Examples \StRef{Ex3} and \StRef{Ex4}.
	In Example \StRef{Ex5} we present some novel examples of Darboux integrable systems constructed by
	the coupling of a nonlinear, Darboux integrable scalar equation to a linear or Moutard-type equation.
	A Toda lattice system and a wave map system are explicitly integrated in Examples \StRef{Ex6} and  \StRef{Ex7}.
	In Example \StRef{Ex8}, we  solve some non-linear, over-determined systems in 3
	independent variables.

\begin{Example}
\StTag{Ex1}
	 As our first example we shall find the closed-form, general solution to
\begin{equation}
\EqTag{ExIII1}
	u_{xy} = \frac{u_x  u_y}{u-x}.
\end{equation}
	This example is taken from  Goursat's well-known classification (Equation VI) of Darboux integrable equations \cite{goursat:1899a}
	and is simple enough that all the steps leading to the solution can be explicitly given.
	See also Vessiot \cite{vessiot:1942a} (pages 9--22) or Stomark \cite{stormark:2000a} (pages 350--356).

	The   canonical Pfaffian system for  \EqRef{ExIII1} is $I = \{\ \alpha^1, \alpha^2, \alpha^3 \ \}$, where
\begin{equation*}
\EqTag{ExIII2}
	\alpha^1 = du - p\,dx -q\,dy,
	\quad
	\alpha^2 = dp -r \,dx -vpq\, dy,
	\quad
	\alpha^3 = dq -vpq \, dx -t \, dy	
\end{equation*}
	and $v = 1/(u-x)$. The associated singular Pfaffian systems are
\begin{equation}
	\hV = \{\, \alpha^i,\  dx,\ dr - q(vr +v^2p)\, dy\, \}
	\quad\text{and}\quad
	\cV =  \{\, \alpha^i,\  dy,\ dt - tpv\, dy\,\}.
\end{equation}
	The  first integrals for $\hV$ and $\cV$ are
\begin{equation}
\EqTag{ExIII4}
	\hI^1= x,\quad \hI^2 = vp,\quad \hI^3 =  vr + v^2p, \quad\cI^1=y, \quad \cI^2 = \frac{t}{q} - y
\end{equation}
	and we easily calculate that  $\hV^{(\infty)} \cap \cV = \{\, \heta^1 \, \}$ and $\hV \cap \cV^{(\infty)} = \{\,0\,\}$,  where
\begin{equation}
	\heta^1 = d \hI^2 +((\hI^2)^2 - \hI^3)\,d\hI^1 = v \alpha^2   - v^2 p \alpha^1.
\end{equation}
\end{Example}
	A 1-adapted coframe \EqRef{First4} is therefore given by
\begin{equation}
	\theta^1 = \alpha^1,\
	\theta^2 = \alpha^3,\
	\hsigma^1 = d\hI^1, \
	\hsigma^2 = d\hI^3, \
	\heta^1, \
	\csigma^1= d\cI^1, \
	\csigma^2 = d \cI^2.
\end{equation}
	We relabel the coframes elements by $\hpi^1 = \sigma^1$,  $\hpi^2=\sigma^2$,
	$\hpi^3=\heta^1$,  $\cpi^1=\csigma^1$, $\cpi^2=\csigma^2$ and calculate
\begin{equation}
\EqTag{ExIII5}
\begin{aligned}
	d \hpi^3
&	= -2\,\hI^2\,\hpi^1 \wedge \hpi^3 + \hpi^1\wedge\hpi^2,
	\quad
\\
	d \theta^1
&	= (u-x) \, \hpi^1 \wedge \hpi^3 + \hI^2\,  \hpi^1 \wedge \theta^1 + \cpi^1 \wedge \theta^2,
\\
	d \theta^2
&	=
        \ q\, \hpi^1 \wedge \hpi^3 + q \,\cpi^1\wedge\cpi^2 + \hI^2\,\hpi^1\wedge \theta^2 +  \cI^2\, \cpi^1 \wedge \theta^2.
 \end{aligned}
\end{equation}
	This  coframe satisfies the structure equations \EqRef{Second11} and is therefore 2-adapted.

	The next step, described in Section 4.2,  is to eliminate the $\cpi ^\alpha \wedge \theta^i$ terms from \EqRef{ExIII5}.
	We calculate the distributions $\hU$ and $\cU$ (see \EqRef{Third12}) and their derived flags to be
\begin{alignat}{2}
	\kern -8 pt \hU
&	= \{\partial_{\displaystyle \hpi^1} + ((\hI^2)^2 - \cI^3)\partial_{\textstyle \hpi^3},\  \partial_{\displaystyle\hpi^3},\ \partial_{\displaystyle\hpi^2} \, \},&
	\cU &= \{\partial_{\displaystyle\cpi^1},\  \partial_{\displaystyle\cpi^2}\},
\\	
	\kern -8 pt\hU^{(\infty)}
&	=  \hU \cup \{ \, (u-x)\,\partial_{\displaystyle\theta^1} +q\partial_{\displaystyle\theta^2},\ \partial_{\displaystyle\theta^1} \},&
	\cU^{(\infty)}
	&= \cU \cup \{\, q\,\partial_{\displaystyle\theta^2},\  q\,\partial_{\displaystyle\theta^1}\}.
\notag
\end{alignat}
	and then, in accordance  with \EqRef{Third5}-\EqRef{Third22}, define
\begin{equation}
	X_1=  (u-x)\,\partial_{\displaystyle\theta^1} + q\partial_{\displaystyle\theta^2},
	\quad
	X_2 =  \partial_{\displaystyle\theta^1},
	\quad
	Y_1 = 	q\,\partial_{\displaystyle\theta^2},
	\quad
	Y_2 =  \partial_{\displaystyle\theta^1},	
\EqTag{ExIII25}
\end{equation}
	The coframes dual to the  vector fields $\{\, X_i, \hU, \cU\, \}$ and $\{\,Y_i,   \hU, \cU\, \}$ are the 3-adapted coframes
\begin{gather}
	\thetaX^1 = \frac{1}{q}\theta^2,\quad
	\thetaX^2 =  \theta^1 -\frac{u-x}{q} \theta^2,\quad
	\thetaY^1 =  \frac{1}{q}\theta^2, \quad
	\thetaY^2 =   \frac{1}{q}\theta^1 \quad\text{with}
\EqTag{ExIII8}
\\
\begin{aligned}
	d \thetaX^1
&	= \hpi^1\wedge \hpi^3 + \cpi^1\wedge \cpi^2 = d \thetaY^1,
\\
	d \thetaX^2
&	= -(u-x)\,\cpi^1 \wedge \cpi^2 + \thetaX^1 \wedge \thetaX^2  +\hpi^1 \wedge \thetaX^1 + \hI^2\,\hpi^1 \wedge \thetaX^2, 	
\\
	d \thetaY^2
&	= \frac{u-x}{q}\, \hpi^1 \wedge \hpi^3	-\thetaY^1 \wedge \thetaY^2 +\cpi^1\wedge\thetaY^1 - \cI^2\,\cpi^1 \wedge \thetaY^2.		
\EqTag{ExIII9}
\end{aligned}
\end{gather}
	The coframe \EqRef{ExIII8} is in fact 4-adapted and hence
	the Vessiot algebra for \EqRef{ExIII1} is a 2 dimensional non-abelian Lie algebra.
	
	 We may skip the adaptations given in Section 4.3 and move on to the final adaptations
	given in Section 4.4. The Vessiot algebra is 1-step solvable and the structure equations \EqRef{ExIII9} are precisely of the
	form \EqRef{Fifth18}. The change of coframe
	$\htheta^1= \thetaX^1+ \hI^2\pi^1$ transforms the structure equations \EqRef{ExIII9} to the form \EqRef{Fifth21}.
	The change of coframe $\htheta^2 = \thetaX^2 - x\htheta^1$
	leads to the $\hat 5$-adapted coframe $\{\htheta^1 ,\htheta^2\}$ with structure equations
\begin{equation}
	d \htheta^1 = \cpi^1 \wedge \cpi^2,
	\quad
	d \htheta^2 = -u\,\cpi^1 \wedge \cpi^2 +\htheta^1 \wedge \htheta^2.
\end{equation}
	Similarly, the $\check 5$-adapted coframe
	$\ctheta^1 = \thetaY^1 + \cI^2\cpi^1$,
	$\ctheta^2 = \thetaY^2 -y\ctheta^1$
	satisfies
\begin{equation}
	d\ctheta^1 = \hpi^1 \wedge \hpi^3,
	\quad
	d\ctheta^2 = \frac{u-x-yq}{q}\,\hpi^1 \wedge \hpi^3 - \ctheta^1 \wedge \ctheta^2.
\end{equation}

	Before continuing we remark that  the vector fields $X_1$, $X_2$, defined by \EqRef{ExIII25}, are given in terms
	of the dual vector fields $\hX_1$ and $\hX_2$, computed from the $\hat 5$-adapted  coframe, by
	$X_1 = \hX_1 - x\hX_2$ and $X_2 = \hX_2$.
	These vector field systems have the same orbits and structure equations but
	the actions are evidently different and it is the latter action
	that is needed to properly construct the superposition formula.

	The forms \EqRef{SuperForm20} are
\begin{equation}
\begin{alignedat}{2}
	\homega^1
&	= \htheta^1 +\cI^2\,\cpi^1 = \frac{dq}{q},
&\quad
	\homega^2
& = \htheta^2 - u\cI^2\,\cpi^1 =  du - qdy - u  \frac{dq}{q},
\\
	\comega^1
&	= \ctheta^1 + \hI^2\,\hpi^1= \frac{dq}{q},
&\quad
	\comega^2
&	= \ctheta^2 + (\frac{p}{q} +  yvp)\,\hpi^1 = \frac{du}{q} - dy - y \frac{dq}{q}.
\end{alignedat}
\end{equation}
	The Vessiot group for \EqRef{ExIII1}  is the matrix group
	$\begin{bmatrix} 1 & b \\ 0  & a  \end{bmatrix}$ with Maurer-Cartan forms
\begin{equation*}
	\omega^1_L = \frac{da}{a},\ \omega^1_L= db - \frac{b}{a} da,\
	\omega^1_R = \frac{da}{a}, \ \omega^2_R = \frac{db}{a}.
\end{equation*}
	The map $\rho\:M \to G$ defined by $a = q $ and $b = u - yq $ satisfies \EqRef{SuperForm4}.

	Finally, if  we introduce coordinates $y_1 = y$,  $u_1 = u$, $q_1 = q$, $t_1 = t$ on the $\hV^{(\infty)}$ integral manifold
	$M_1 = \{\, \hI^1=0,\ \hI^2 =0,\ \hI^3 =0\, \}$ and
	$x_2 = x$, $u_2= u$, $p_2=p$, $q_2 =u$, $r_2 = r$ on  the $\cV^{(\infty)}$ integral manifold
	$M_2 = \{\, \cI^1=0, \cI^2 =0 \, \}$,
	then the  superposition formula  \EqRef{SuperForm99} is
\begin{gather*}
	x = x_2,
	\quad
	\frac{p}{u-x} =\frac{p_2}{u_2 -x_2},
	\quad
	\frac{r}{u-x} + \frac{p}{(u-x)^2} =  \frac{r_2}{u_2-x_2} + \frac{p_2}{(u_2-x_2)^2}
\\
       y = y_1,
	\quad	
	\frac{t}{q} = \frac{t_1}{q_1},
	\quad
	q= q_1q_2,
	\quad
	u - yq = u_2 + (u_1 - y_1 q_1)q_2,
\end{gather*}			
	or, explicitly in terms of the original coordinates $\{x, y, u, p, q, r, t\}$ on M,
\begin{equation}
\begin{gathered}
	x= x_2, \quad y =y_1, \quad u =u_2 +q_2u_1, \quad p= (1 + \frac{u_1q_2}{u_2 - x_2})p_2, \quad q =q_1q_2,
\\
	r = (1 + \frac{u_1q_2}{u_2 - x_2})r_2+ \frac{u_1p_2q_2}{(u_2 -x_2)^2}, \quad t= t_1q_2.	
\end{gathered}
\EqTag{ExIII18}
\end{equation}

	It remains to find the integral manifolds for $\hW$ and $\cW$. Restricted to $M_1$,
	the Pfaffian system $\hV$ becomes $\hW = \{\, du_1 - q_1 \,dy_1, dq_1 -t_1 dy_1\,\}$  with integral manifolds
\begin{equation}
	y_1 = \beta,\ u_1 = f(\beta),\ q_1= f'(\beta),\  t_1 = f''(\beta).
\EqTag{ExIII15}
\end{equation}
	Restricted to $M_2$, the Pfaffian system $\cV$ becomes
	$\cW = \{\, du_2 - p_2 dx_2,\ dp_2-r_2dx_2,\ dq_2 - \dfrac{p_2q_2}{u_2 - x_2} dx_2  \}$.
	To find the integral manifolds of $\cW$ we calculate the second derived Pfaffian system to be
	$\cW^{(2)} = \{\, dq_2 -  \dfrac{q_2}{u_2 - x_2}\, du_2 \,\}$
	which leads to the equation  $q_2 \,du_2 - u_2 \,d q_2 + x_2\, d q_2 =0$ or
\begin{equation*}	
	d\,\left(\frac{u_2}{q_2}\right) -x_2\, d \, \left(\frac{1}{q_2}\right) =0
	\quad \text{or} \quad
	d \,\left( \frac{u_2 -x_2}{q_2} \right) +\frac{1}{q_2} \,dx_2 =0.
\end{equation*}
	The integral manifolds for $\cW$ are therefore given by
\begin{equation}
        x_2 = \alpha, \quad  u_2 = x_2 -g(\alpha)/ g'(\alpha), \quad  q_2 = -1/g'(\alpha),
\EqTag{ExIII16}
\end{equation}
	with $p_2$ and $r_2$ determined algebraically from the vanishing of the first and second forms in $\cW$.  The substitution of
	\EqRef{ExIII15} and \EqRef{ExIII16} into the superposition formula \EqRef{ExIII18}  leads to the closed form general solution
\begin{equation}
	u = \frac{-f(y) - g(x)}{g'(x)} + x
\end{equation}	
	for \EqRef{ExIII1}.
\newpage
\begin{Example}
\StTag{Ex2}
	In this example we shall construct the superposition formula for the  Pfaffian system
	$I = \{\alpha^1, \alpha^2, \alpha^3 \}$,  where
\begin{equation}
\begin{aligned}
	\alpha^1
&	=  du-p\,dx - q\, dy,
\quad
	\alpha^2
	= dp + \frac{1}{b^3}\,(\tan b\tau- b\tau)\,dx - s\,dy,
\\
\quad
	\alpha^3
&=	dq -s\,dx - b(b\tau +\cot b\tau)\,dy.
\EqTag{ExII1}
\end{aligned}
\end{equation}
	The coordinates for this example are $(x,y,u,p,q,s,\tau)$ and $b$ is a parameter.
	This example nicely illustrates the various coframe adaptations in Sections 4  and has a surprising	
	connection with some of Cartan's results in the celebrated 5 variables paper \cite{cartan:1910a}.
	The values $b=1$, $b= \sqrt{-1}$ (the Pffaffian system \EqRef{ExII1} remains real) and the
	limiting value $b=0$ give the three Pfaffian systems for the  equations
\begin{equation*}
	u_{xx} = f(u_{yy})
\end{equation*}
	which are Darboux integrable at the 2-jet level (\cite{anderson-juras:1997a}, pages 373-374)  and \cite{boer:1893a}, pages 400--411).
	The case $b=0$  is treated in Goursat(\cite{goursat:1897a} Vol 2, page 130-132.

	One easily calculates $\hV^{\infty} \cap \cV  =  \hV \cap \cV^{\infty} = \{\,0\,\}$
	and that the first integrals for $\hV$  and $\cV$ are
\begin{equation}
\begin{alignedat}{2}
	\hI{\,}^1 &= s + \tau,
	&\quad
	\hI{\,}^2 &= -(x +b^2y)\hI{\,}^1 + q+b^2p,\
\\
	\cI{\,}^1 &= s - \tau,
	&\quad
	\cI{\,}^2 &= -(x -b^2y)\cI{\,}^1 + q-b^2p.
\end{alignedat}
\EqTag{ExII2}
\end{equation}	
	We immediately arrive at the 2-adapted coframe
\begin{equation}
\begin{gathered}
	\hpi^1 = \extd \hI{\,}^1,\quad \hpi^2 = \extd \hI{\,}^2,\quad
	\cpi^1 = \extd \hI{\,}^1, \quad \cpi^2 = \extd \cI{\,}^2,\quad \theta^1= 2\alpha^1,
\\	
	\theta^2 =  -b \cot b\tau \, \alpha^2 + \frac{1}{b}\tan b\tau \, \alpha^3, \quad
	\theta^3 =  -b \cot b\tau\, \alpha^2  - \frac{1}{b}\tan b\tau \, \alpha^3,
\end{gathered}
\EqTag{EXII3}
\end{equation}
	with structure equations
\begin{equation}
\begin{aligned}
	\extd \theta^1
& 	= (x+b^2y)\, \hpi^1 \wedge \theta^2  + \hpi^2 \wedge \theta^2
	-(x- b^2y)\,\cpi^1\wedge \theta^3 -\cpi^2 \wedge \theta^3,
\\
	\extd \theta^2
&	 = \hpi^1\wedge \hpi^2 -b\cot 2b\tau\, \hpi^1\wedge\theta^2 + b\csc 2 b\tau \,\cpi^1 \wedge \theta^3,
\\
	\extd  \theta^3
& 	=\cpi^1\wedge \cpi^2 -b\csc 2 b\tau \,\hpi^1 \wedge \theta^2  + b\cot 2b\tau\, \cpi^1\wedge\theta^3.
\end{aligned}
\EqTag{ExII4}
\end{equation}
	
	To compute the 3-adapted coframe $\thetaX^i$, we
	simply calculate the derived flag for the 2 dimensional distribution
	$\hU = \{\, \partial_{\hpi^1}, \partial_{\hpi^2} \,\}$ (see  Section 4.2).
	From the  structure equations  \EqRef{ExII4} we find
\begin{equation}
\begin{aligned}
	\hU^{(1)}
&	= \hU \cup \{\, -\partial_{\theta^2} \,\} \quad\text{and}
\\
	\hU^{(2)}
&	= \hU \cup
 	\{\,  -\partial_{\theta^2}, (x+b^2y)\, \partial_{\theta^1} -b\cot 2b\tau \,\partial_{\theta^2}-b\csc 2b\tau\,
	\partial_{\theta^3}, \partial_{\theta^1}\,\}.
\EqTag{ExII7}
\end{aligned}
\end{equation}
	We take $\{\, X_1, X_2, X_3\, \}$ to be the last 3 vectors in $\hU^{(2)}$
	and calculate
\begin{equation}
	[\, X_1,X_2\,] = b^2X_3.
\end{equation}
	Therefore the Vessiot algebra for the Pfaffian system \EqRef{ExII1} is abelian if $b=0$ and
	nilpotent otherwise. The 3-adapted coframe $\bfthetaX$ (see Theorem \StRef{Third7}) is
\begin{equation*}
\begin{gathered}
	\thetaX^1
	= -\theta^2 + \cos 2b\tau \, \theta^3,
	\
	\thetaX^2
	= -\frac{1}{b}\sin 2b\tau\, \theta^3,
	\
	\thetaX^3
	=\theta^1 + \frac{1}{b}(x+b^2y)\sin 2b\tau \, \theta^3,
\end{gathered}		
\end{equation*}	
	and the  structure equations \EqRef{Third8} are
\begin{equation}
\begin{aligned}
	\extd \thetaX^1
&	= - \hpi^1 \wedge \hpi^2 + \cos 2b\tau\, \cpi^1\wedge \cpi^2 + b^2\hpi^1 \wedge \thetaX^2,
\\
	\extd \thetaX^2
&	= -\frac{1}{b} \sin 2b\tau\,\cpi^1 \wedge \cpi^2  -\hpi^1 \wedge \thetaX^1,	
\\
	\extd \thetaX^3
&	= \frac{1}{b}(x +b^2y)\,\sin 2b\tau \, \cpi^1 \wedge \cpi^2 -\hpi^2\wedge \thetaX^1 - b^2 \thetaX^1 \wedge \thetaX^2.
\end{aligned}
\EqTag{ExII6}		
\end{equation}
	This coframe is actually 4-adapted.

	We labeled the vectors in the derived flag \EqRef{ExII7} so that the
	last vector $X_3$ spans the derived algebra of the Vessiot algebra. By doing so we are assured that the
	structure equations \EqRef{ExII6} are of the precise form \EqRef{Fifth18a}-\EqRef{Fifth18b}. The analysis at
	this point refers back to  Case II and the  structure equations \EqRef{Fifth12},
	as applied to just the first two equations in  \EqRef{ExII6}.
	The matrices $M$ and $R$ (see equations \EqRef{Fifth55} and \EqRef{Fifth56}) are found to be
\begin{equation}
	M =
\begin{bmatrix}
	0 &  b^2\hpi^1
\\
	-\hpi^1 & 0
\end{bmatrix}
\quad\text{and}\quad
	R =
\begin{bmatrix}
	\cos b\hI{}\,^1 & -b\sin b\hI{\,}^1
\\
	\dfrac{1}{b} \sin b \hI{\,}^1  &  \cos b \hI{\,}^1
\end{bmatrix}.
\end{equation}
	We then compute the 2-forms $\chi^i$  and the 1-forms $\phi^i$ (see \EqRef{Fifth57}) to be
\begin{equation}
\begin{gathered}
	\chi^1	= -\cos b\hI{\,}^1 \,\hpi^1 \wedge \hpi^2,
	\quad
	\chi^2  = -\frac{1}{b} \sin b\hI{\,}^1 \, \hpi^1\wedge \hpi^2,
\\
	\phi^1  = -\hI{\,}^2 \cos b\hI{\,}^1\, \hpi^1,\quad \phi^2  = -\frac{1}{b} \hI{\,}^2\sin b\hI{\,}^1\, \hpi^1,
\end{gathered}
\end{equation}
	so that the forms \EqRef{Fifth20} are
\begin{equation}
\begin{aligned}
	\htheta^1_1
&	= \ \ \cos b\hI{\,}^1\,\thetaX^1 - b\sin b\hI{\,}^1\,\thetaX^2  - \  \hI^2 \cos b\hI{\,}^1 \, \hpi^1,
\\
	\htheta^2_1
&	= \frac{1}{b}{\sin b\hI{\,}^1}\, \thetaX^1 + \ \cos b\hI{\,}^1 \thetaX^2 -\frac{1}{b}\hI{\,}^2\sin b\hI{\,}^1\, \hpi^1.
\end{aligned}
\end{equation}
	The  structure equations \EqRef{ExII6} are now reduced  to the form \EqRef{Fifth21}.
	The final required frame change \EqRef{Fifth58} leads to the $\hat 5$-adapted coframe
\begin{equation}
	\htheta^1 = \htheta^1_1,
	\quad
	\htheta^2
	=\htheta^2_1,
	\quad
	\htheta^3
	= \theta^3_X + \hI{\,}^2\cos b\hI{\,}^1 \, \htheta^1_1  + b \hI{\,}^2\sin b \hI^1\, \theta^2_1  +\frac{1}{2} (\hI{\,}^2)^2\,\hpi^2
\end{equation}
	with structure equations
\begin{equation}
\begin{aligned}
	\extd \htheta^1
	&= \cos b\cI{\,}^1\, \cpi^1 \wedge \cpi^2,
	\quad
	\extd \htheta^2
	= \frac{1}{b} \sin b\cI{\,}^1 \,  \cpi^1 \wedge \cpi^2,
\\
	\extd \htheta^3
	&=  \frac{1}{b}\big( (x+ b^2y)\sin 2b\tau  + b\hI{\,}^2\cos 2b\tau \big) \,\cpi^1 \wedge \cpi^2      -b^2 \htheta^1 \wedge \htheta^2.
\end{aligned}
\end{equation}
	
	The  diffeomorphism $\Phi(x,y,u,p,q,s,\tau) = (x,-y,u,p,-q,-s,\tau)$
	is an involution for \EqRef{ExII1} in the sense of Remark \StRef{FirstRem2} and can therefore be used to find the $\check 5$ adapted coframe.
	
	The forms \EqRef{SuperForm20} are
\begin{equation}
	\homega^1 = \htheta^1 + \cI{\,}^2 \cos b\cI{\,}^1 \cpi^1,
	\quad
	\homega^2 = \htheta^2  + \cI{\,}^2 \frac{1}{b}\sin b\cI{\,}^1 \cpi^1,
	\quad
	\homega^3 = \htheta^3 + A\cpi^1,
\end{equation}
	where $A= \cI^2(\hI{\,}^2 \cos 2b\tau +\dfrac{1}{b}(x+b^2y) \sin 2b\tau  -\dfrac{1}{2}\cI{\,}^2)$.

	The Vessiot group for this example is the matrix group
\begin{equation}
\begin{bmatrix}
	1 & z^1  & z^3 + b^2z^1z^2/2 \\
	0 &  1   & b^2 z^2 \\
	0 &  0   & 1
\end{bmatrix}
\end{equation}
with multiplication
\begin{equation}
	z^1= z^1_1 +z^1_2,
	\quad
	z^2= z^2_1 +z^2_2,
	\quad
	z^3= z^3_1 +z^3_2 -\frac{\ b^2}{2}(z^1_1z^2_2 - z^2_1z^1_2)
\EqTag{ExII19}
\end{equation}
	and  left invariant forms
\begin{equation}
	\homega^1 = dz^1, \quad
	\homega^2 = dz^2, \quad
	\homega^3 = dz^3  + \frac{\ b^2}{2}(z^1 dz^2 - z^2 dz^1).
\EqTag{ExII15}
\end{equation}
	The map  $\rho$ (Theorem \StRef{SuperForm1}) is then found to be
\begin{align}
	z^1
&	=
	\frac{1}{b^2} \,(-2x\sin bs\sin b\tau    +2b^2y \cos b s  \cos b \tau - b\hI{\,}^2 \sin b\hI{\,}^1  +b\cI{\,}^2 \sin b \cI{\,}^1),
\notag
\\
	z^2
&	= \frac{1}{b^3} \,(\ 2bx\cos bs \sin b \tau + 2y b^2\sin bs \cos b\tau \, + \hI{\,}^2b\cos b\hI{\,}^1 -  \cI{\,}^2 b\cos \cI{\,}^1),
\EqTag{ExII18}
\\
	z^3
&	=2u  -\frac{1}{2 b^3}(\sin 2b\tau  - 2b\tau\cos 2 b \tau)(x^2 - b^4y^2) -2p x\cos^2 2b\tau
\notag
\\
	&\qquad -2 qy\sin^2 2b\tau  -\frac{1}{2b}\hI^2\cI^2\sin 2b \tau.
\notag
\end{align}

	Finally, the combination of equations \EqRef{ExII2}, \EqRef{ExII19} and  \EqRef{ExII18}  leads to the superposition formula
\begin{equation}
\begin{aligned}
	x
&	= \frac{1}{2}{(x_1 +x_2 +  b^2(y_2 -y_1))}
	- \frac {\ \sin b(\tau_1-\tau_2)}{\sin b(\tau_1+\tau_2)}\xi,
\\
	y
&	= \frac{1}{2b^2}(x_2 -x_1 +b^2(y_1+y_2)) - \frac{\cos b(\tau_1-\tau_2)}{b^2\cos b(\tau_1+\tau_2)}\xi,
\\
	u
& 	= u_1 + u_2
	+ 2\,\frac{p_1 \sin 2b\tau_2- p_2\sin 2b\tau_1)}{\sin 2b(\tau_1+\tau_2}\, \xi
\\
&	+\frac{1}{b^2} \left( \frac{2 \tau_1 \sin^2(2b\tau_2)}{\sin^2(2b(\tau_1+\tau_2))}+
	\frac{2 \tau_2 \sin^2(2b\tau_1)}{\sin^2(2b(\tau_1+\tau_2))}
	- \frac{\sin2b\tau_1\sin2b\tau_2}{b\sin 2b(\tau_1+\tau_2)} \right)\, \xi^2,
\\
	p
&	= p_1+p_2+ 2\,\frac{\tau_1\sin 2b\tau_2-\tau_2\sin2b\tau_1}{b^2\sin 2b(\tau_1+\tau_2)}\,\xi ,
\\
	q
&	= -b^2p_1+b^2p_2 - 2\frac{ \tau_1\sin 2b\tau_2 +\tau_2\sin 2b\tau_1 }{\sin 2b(\tau_1+\tau_2)}\,\xi,
\\
	s
&	= -\tau_1 +\tau_2, \quad \tau = \tau_1 + \tau_2, \quad\text{where}\quad \xi = \frac{1}{2}(x_2-x_1-b^2(y_1+y_2)).
\end{aligned}
\end{equation}

	The restriction of $\hV$ to  the manifold $M_1 = \{\, \hI^1 = \hI^2 = 0\,\}$ gives	
\begin{align}
	\hW  = \{\, &du_1 - p_1\,dx_1 +b^2\,dy_1,\  dp_1 -1/b^3(\tan b\tau_1 - b\tau_1)dx_1 + \tau_1 dy_1,
\\
	\,  &\tau_1\,dx_1-b(b\tau_1+\cot(b \tau_1))\,dy_1-b^2dp_1\, \}.
\EqTag{ExII25}
\end{align}
	This is a rank 3 Pfaffian system on a 5 manifold  whose derived flag has dimensions $[3,2,0]$. The equivalence problem for such systems
	was analyzed in detail by Cartan \cite{cartan:1910a}, where it is established that the  fundamental invariant for such systems
	is a certain rank 4 symmetric tensor $T$ in two variables.  For  $b= 0$, this  tensor vanishes while for $b\neq 0$
	we find that $T$ is the 4-th symmetric power of a 1-form. In accordance with  Cartan's result the symmetry algebra of $\hW$ when $b=0$
	is the  14 dimensional exceptional Lie algebra $g_2$ and, indeed, it is not difficult to  transform $\hW$ to the
	canonical Pfaffian system for the Hilbert-Cartan equation  $z' =(y'')^2$. For  $b\neq 0$ the symmetry algebra  of $\hW$
	is the 7 dimensional solvable Lie algebra with infinitesimal generators
\begin{equation}
\{\, \partial_{x_1}, \partial_{y_1},  \partial_{u_1},  x_1\partial_{x_1}  +y_1\partial_{y_1} + 2u_1\partial_{u_1} + p_1 \partial_{p_1},
	(x_1- b^2y_1) \partial_{u_1} - \partial_{p_1}, Y_1, Y_2 \,\},
\end{equation}
	where  $Y_2 = [\partial_{y_1}, Y_1]$ and
\begin{align}
	Y_1 = &
	(x_1 +b^2y_1)\big(b\cot b\tau_1 \partial_{x_1} - \frac{1}{b}\tan b\tau_1 \partial_{y_1}  +
	2p_1b \csc b\tau_1 \partial_{u_1}  + 2\tau_1\csc 2 b\tau_1 \partial_{p_1} \big)
\notag
\\
	&  + 2x_1y_1\partial_{u_1} + (y_1 -\frac{x_1}{b^2})\partial_{p_1} - \partial_{\tau_1}.
\end{align}
\end{Example}
	The Pfaffian \EqRef{ExII25} with $b\neq 0$ may be transformed into Cartan \cite{cartan:1910a}, page 170, equation (5').
\begin{Example}
\StTag{Ex3}
	For our next example, let $G$ be an $n$-parameter matrix group and,
	for the  mapping $(x,y)\to U(x,y) \in G$, consider the system of differential equations
\begin{equation}
	U_{xy} = U_x U^{-1} U_y.
\EqTag{ExI5}
\end{equation}
	The general solution to these equations  is well-known to be $U(x,y) = A(x)B(y)$,
	with $A(x), B(y) \in G$. In the case when $U$ is a $1 \times 1$ matrix,
	this system reduces to the wave equation $v_{xy}=0$ under the change of variable $u = \exp(v)$.
	We show how our integration method leads directly to
	the general solution and, in the process, we calculate the Vessiot algebra of \EqRef{ExI5}
	to be the Lie algebra of $G$.
	
	The Pfaffian system for $\EqRef{ExI5}$ is $I = \{\Theta, \Theta^1, \Theta^2\}$, where
\begin{equation}
\begin{aligned}	
	\Theta &= dU - U_x\, dx - U_y\, dy,
\\
	\Theta^1 &= dU_x - U_{xx}\, dx - U_x U^{-1} U_y\, dy,
\\
	\Theta^2 &= dU_y - U_x U^{-1} U_y \,dx - U_{yy}\, dy.
\end{aligned}
\end{equation}
	The  first integrals for the singular systems are
\begin{equation}
\begin{alignedat}{3}
	\hI^1=y,  \quad& \hI^2 &= U^{-1}U_y, \quad \hI^3 &= D_y(\hI^2) = U^{-1}U_{yy} - U^{-1}U_y U^{-1}U_y,
\\
	\cI^1=x,  \quad& \cI^2 &= U_xU^{-1}, \quad \cI^3 &= D_x(\cI^2) = U_{xx}U^{-1} - U_x U^{-1}U_xU^{-1},
\end{alignedat}
\end{equation}
	and  our 0-adapted coframe for $I$ is $\{\Theta, d\hI^1, d\hI^3, \heta,   d\cI^1, d\cI^3, \ceta \}$, where
\begin{align*}
	\heta
&	= d \hI^2 - \hI^3 d  \hI^1 = U^{-1}\Theta^2- U^{-1}\, \Theta \, \hI^2,
	\quad\text{and}\quad
\\
	\ceta
&	= d \cI^2 - \cI^3 d \cI^1 =  \Theta^1 U^{-1}- \cI^2\, \Theta \, U^{-1}.
\end{align*}
	The structure equations are
\begin{equation}
	\extd \Theta
	=  d \hI^1 \wedge (U\ \heta + \Theta\,\hI^2)  +  d\cI^1 \wedge (\ceta \ U +\cI^2\Theta).
\EqTag{ExI7}
\end{equation}
	This coframe satisfies \EqRef{Second11} and is therefore 2-adapted.
	
	The next step  is to eliminate
	either the   $ d\cI^1 \wedge (\hI^2\Theta)$ or the  $ d\hI^1 \wedge (\Theta \,  \cI^2)$ terms in \EqRef{ExI7}.
	By inspection, we see that the forms $\Theta_X = U^{-1} \Theta$ and $\Theta_Y = \Theta \,U^{-1}$ provide us with the
	required 4-adapted coframes, with structure equations
\begin{equation}
\begin{aligned}
	\extd \Theta_X
&	=   d\hI^1 \wedge \ceta  +  d \cI^1 \wedge ( U^{-1}\heta  \ U)
	\, - \Theta_X \wedge \Theta_X      + d \hI^1 \wedge (\Theta_X \hI^2-\hI^2\Theta_X),
\\
	\extd \Theta_Y
&	=  d \hI^1 \wedge (U \ \ceta\ U^{-1}) +   d \cI^1 \wedge \ceta +\Theta_Y \wedge \Theta_Y
	- d \cI^1 \wedge (\Theta_Y\cI^2 - \cI^2\Theta_Y) .
\end{aligned}
\end{equation}
	Since the  forms $\Theta_X$  and $\Theta_Y$  are Lie algebra valued, these structure equations show
	that the Vessiot algebra for \EqRef{ExI5} is the Lie algebra of $G$ (Theorem \StRef{Fourth5}).

	The final coframe adaptation  in Section 4.4 is given by
\begin{equation}
\begin{aligned}
	\accentset{\Largehat}{\Theta} &=  \Theta_X + \hI^2 d \hI^1  = U^{-1} dU  - U^{-1} U_x dx \quad\text{and}
\\
	\accentset{\Largecheck}{\Theta} &= \Theta_Y + \cI^2  d \cI^1 = dU\, U^{-1} -  U_y\, U^{-1} dy,
\end{aligned}
\end{equation}
	with structure equations
\begin{equation}
	\extd \accentset{\Largehat}{\Theta}
	=   d \cI^1 \wedge ( U^{-1}\ceta  \ U)  - \accentset{\Largehat}{\Theta} \wedge \accentset{\Largehat}{\Theta}
	\quad\text{and}\quad
	\extd \accentset{\Largecheck}{\Theta}
	=  d \hI^1 \wedge (U \ \heta\ U^{-1}) +  \accentset{\Largecheck}{\Theta}\wedge \accentset{\Largecheck}{\Theta}.
\end{equation}	
	The form \EqRef{SuperForm20} are then found to be precisely the left and right invariant forms on $G$, that is,
\begin{equation}
	\homega = U^{-1}\, dU 	
	\quad\text{and}\quad
	\comega = dU\, U^{-1},
\end{equation}
	so that the map $\rho$ constructed in Theorem \StRef{SuperForm1} is simply $\rho(x,y,U,\ldots) = U$. 	

	With respect to  coordinates $x, U_1, U_{1x}, U_{1xx}$  on the $M_1 = \{\,\hat I^a =0\,\}$ and
	coordinates $y, U_{2y}, U_{2yy}$ on the level set $M_2 = \{\, \cI^a= 0\,\}$, the Pfaffian systems $\hW$ and $\cW$ are
\begin{equation*}
	\hW = \{\, dU_1 - U_{1x} dx,\, dU_{1x} - U_{1xx} dx \}
	\quad\text{and}\quad
	\cW = \{\, dU_2 - U_{2y} dy,\, dU_{2y} - U_{2yy} dy \}
\end{equation*}
	and the superposition formula is
\begin{gather*}
	U = U_1 U_2, \ U_x =U_{1x} U_2,\  U_y = U_1U_{2y},\   U_{xx} = U_{1xx} U_2, \ U_{yy} = U_1U_{2yy}.	
\end{gather*}
\end{Example}
\begin{Example}
\StTag{Ex4}
	It is an open problem to determine which scalar Darboux integrable equations admit generalizations wherein
	the dependent variable $U$ takes values in an arbitrary  non-commutative, finite dimensional algebra $\CalA$.
	Here are two such examples which provide us with many Darboux integrable systems amenable to the methods presented
	in this paper.
\begin{multline*}
\shoveright{
\begin{alignedat}{2}
\qquad{\bf I.}
	\quad  U_{xy} &= (U_x  +I)\,U^{-1}\, U_y&
\\
{\bf II.}\quad  U_{xy} &= U_x \,(U-y)^{-1}\,U_y + U_y\,(U-x)^{-1}\,U_x.&
\end{alignedat}}
\end{multline*}
	The  first integrals for the singular systems  (excluding $x$ and $y$) and general solutions are
	\footnote{We  remark that the   solution to  {\bf II}  given  by Vessiot
	\cite{vessiot:1942a} (equations $C^2_{II}$ and (70), pages 6 and 45) in the scalar case  is  incorrect.}
	
\begin{multline*}
\shoveright{
\begin{aligned}
\qquad{\bf I.}
	\quad  \hI^1 &= (U_x +I)\,U^{-1}, \quad\hI^2 = D_x(\hI^1),\quad \cI^2 = U_y^{-1}\,U_{yy},
\\
	\quad U & = (F')^{-1}\,(-F+G),	
\\[2\jot]
{\bf II.}
	\quad  \hI^1& = (U-x)^{-1}\,[\,U_{xx}\, U_x^{-1}\,(U-x) - 2U_x + I\,],
\\
	\quad \cI^1 &= (U-y)^{-1}\,[\,U_{yy}\, U_y^{-1}\,(U-y) - 2U_y  +I\,] ,
\\
	\quad U & = (xF' +yG'-F- G)\,(F' +G')^{-1},
\end{aligned}}
\end{multline*}
	where $F = F(x)$ and $G = G(y)$ take values in $\CalA$.
	For both systems {\bf I} and {\bf II} the Vessiot algebra is the tensor product of $\CalA$
	with the Vessiot algebra for the corresponding scalar equation.
	We conjecture that all equations of Moutard type (\cite{goursat:1897a} Volume II, page 250, equation 19)
	admit non-commutative  generalizations.
\end{Example}

\begin{Example}
\StTag{Ex5}
	Some of the simplest examples of Darboux integrable systems can be constructed by the coupling of a
	Darboux integrable scalar equation to a linear or Moutard-type equation.  As examples, we give
\begin{multline*}
\shoveright{
\begin{alignedat}{2}
\qquad{\bf I.}
	\quad u_{xy} &= e^{2u}, &\quad v_{xy} &= n(n+1)e^{2 u} v,
\\
{\bf II.}
	\quad u_{xy} &= e^{u} u_y, &\quad v_{xy} & +  ((n- \alpha)e^{u}  + \alpha u_x)v_x = 0,
\\
{\bf III.}
	\quad u_{xy} &= e^{u} u_y, &\quad v_{xy}& -e^uv_y +(n+1)\,u_y v_x  +(n+1)!\,e^u u_y =0,
\\
{\bf IV.}
	\quad u_{xy} &= e^{u} u_x & \quad v_{xy} &  + (e^v)_x - (nB e^{-v})_y +(n-2)B = 0,
\end{alignedat}}
\end{multline*}
	where $B = e^uu_x$ and $n$ is a positive integer.
	The system {\bf I} appears in \cite{leznov-saveliev:1980a}(page 116); systems {\bf II}--{\bf IV} do not seem
	to have appeared in the literature.
	
	For each of these systems the restricted Pfaffian systems $\hW$ and $\cW$
	are jet spaces for two functions of a single variable ($x$ or $y$).
	The Vessiot algebra for {\bf I} is the semi-direct product of $\mathfrak{sl}(2)$ and an Abelian Lie algebra
	of dimension $2n+1$, as determined by the  (unique)
	$(2n+1)$-dimensional irreducible representation of $\mathfrak{sl}(2)$.
	The infinitesimal action of the Vessiot group  for {\bf I}, restricted to $\hW$ or $\cW$, is the action
	listed in \cite{gonzalez-kamran-olver:1992a}
	as number 27 (where now the variables $x$, $y$ in \cite{gonzalez-kamran-olver:1992a}
	serve as the dependent variables for the jet spaces   $\hW$ and $\cW$).

	For {\bf II} the Vessiot algebra is a
	semi-direct product of the 2-dimensional solvable algebra with an $(n+1)$-dimensional Abelian algebra.
	The infinitesimal action of the Vessiot group, restricted to $\hW$ or $\cW$,
        is number 24 in \cite{gonzalez-kamran-olver:1992a}.  The  infinitesimal Vessiot groups for {\bf III} and
	{\bf IV}  have dimensions $n+3$  and $n+4$ and coincide, respectively,  with
	numbers 25 and 26 in  \cite{gonzalez-kamran-olver:1992a}.

	For $n = 1$ the general solutions to these systems are
\begin{multline*}
\shoveright{
\begin{aligned}
\quad{\bf I.}
	\quad u &=  \frac{1}{2}\ln \frac{F_1'G_1'}{(F_1 +G_1)^2},
	\quad v  =  2\frac{F_2 - G_2}{F_1 +G_1} - \frac{F_2'}{F_1'} + \frac{G_2'}{G_1'}
\\
{\bf II.}
	\quad u &= \ln\dfrac{F'_1}{G_1-F_1},
	\quad v  = \dfrac{1}{F_1^\alpha}\, \big( F_2 -G_2 -  (F_1 -G_1)\dfrac{G'_2}{G'_2}\,\big),
\\
{\bf III.}\quad  u &= \ln\dfrac{F'_1}{G_1-F_1},\quad v = \frac{F_2 - G_2}{(F_1 -G_1)^2} -\frac{G_2'}{(F_1 -G_1)G_2'} - \ln(G_1 - F_1),
\\
{\bf IV.}
	\quad  u &= \ln\dfrac{G'_1}{F_1-G_1},
\\
	\quad  v  &= \ln\big(\frac{\ (G_1'\,(F_2 - G_2) - G_2'\,(F_1 - G_1)\,) F_1'\,\hfill }
	{(F_1'\,(F_2 -G_2) -F_2'\,(F_1 - G_1))\,(F_1 -G_1) }\big) .
\end{aligned}
}
\end{multline*}
	The general solutions for arbitrary $n$ can be obtained in closed compact form by the method of Laplace.

	In addition,  any non-linear Darboux integrable system can be coupled to its
	formal linearization to obtain another Darboux integrable system. For example,
	if we prolong the partial differential equations
\begin{multline*}
	\shoveright{\quad{\bf V.}\quad  3u_{xx}u_{yy}^3 +1 =0, \quad v_{xx} - \frac{1}{u_{yy}^4} v_{yy}= 0}
\end{multline*}
	to order 3 in the derivatives of $u$, we obtain a rank 8 Pfaffian system on a 14-dimensional manifold which is Darboux integrable,
	with 4 first integrals for each  associated singular Pfaffian system.
	
	The Vessiot algebra is Abelian and of dimension 6.  The two Lie algebras of vector fields dual to the forms $\htheta$ and $\ctheta$ for the
	5-adapted coframe coincide and are given by
\begin{equation*}
	\{\partial_y,\ \partial_u,\ x\partial_u +\partial_{u_x},\
	\partial_v,\ x\partial_{v} + \partial_{v_x},
	u_y\partial_v+ u_{xy}\partial_{v_x}+ u_{yy}\partial_{v_y}+u_{xyy}\partial_{v_{xy}}+u_{yyy}\partial_{v_{yy}}\}.
\end{equation*}
	In accordance with Remark \StRef{Fifth8}, these  vector fields are also infinitesimal symmetries for {\bf V}.
	In terms of the arbitrary functions $\phi(\alpha)$ and $\psi(\beta)$ appearing in the  general solution to $3u_{xx}u_{yy}^3 +1 =0$
	(Goursat(\cite{goursat:1897a} Vol. 2, page 130), the general solution for $v$ is given implicitly as
\begin{gather}
	x = \frac12\frac{\phi'' - \psi''}{\alpha- \beta}, \quad
	y = \frac12(\beta-\alpha)(\phi'' + \psi'') +\phi' - \psi', \quad
	t = \frac1{\alpha -\beta},
\\
	v = F  + G - \frac{\phi'' - \psi'' +(\beta -\alpha)\phi'''}{(\beta -\alpha)\phi''''} F'
         - \frac{\phi''- \psi'' +(\beta-\alpha)\psi'''}{(\beta -\alpha)\psi''''}G',
\end{gather}
	where $F= F(\alpha)$ and $G = G(\beta)$.

	Finally, we remark that the Laplace transformation (not the integral one, Darboux \cite{darboux:1896a} , Forsyth \cite{forsyth:1959a}, pages 45--59.)
	can be applied to the linear components of any of the above systems to obtain new Darboux integrability systems.
	As well, the linear component in any of these systems can be replaced by their formal adjoint to obtain yet other
	Darboux integrable systems.
\end{Example}

\begin{Example}
\StTag{Ex6}
	Although we are unaware of an explicit general proof
	it is  generally acknowledged that the Toda lattice systems (see, for example, \cite{leznov-saveliev:1980a}, \cite{sokoliv-ziber:1995a})
	are Darboux integrable.
	In this example we shall check that the $B_2$ Toda lattice equations
\begin{equation}
	u_{xy} = 2e^u - 2e^v, \quad  v_{xy} =-e^u + 2e^v
\EqTag{ExVI1}
\end{equation}
	are Darboux integrable and find the closed-form, general solution.
	We use this example to illustrate a slightly different computational approach, one based upon the
	symmetry reduction interpretation of the superposition formula given in Sections 3 and 5.3.

	The canonical Pfaffian system  for \EqRef{ExVI1} satisfies our definition of  Darboux integrable upon prolongation to 4-th order,
	that is, as a rank 14 Pfaffian system $I$ on a 20 dimensional manifold. The diffeomorphism $x \leftrightarrow y$ interchanges the
	singular Pfaffian systems $\hV$ and $\cV$.
	The first integrals for the singular Pfaffian system  $\hV$ (containing $dx$) are 	
\begin{align*}
	\hI^1 &= x,\
	\hI^2 =  v_{xx} + \frac{2}{3}u_{xx} - \frac{1}{3}u_xv_x - \frac{1}{6}u_x^2 - \frac{1}{3}v_x^2,\
	\hI^3 = D_x\hI^2,\ \hI^4 = D_x \hI^3,\ \text{and} 	
\\
	\hI^5 &=
	u_{xxxx}+2v_{xxxx} -2v_xv_{xxx}-u_{xx}u_x^2-2u_xu_{xx}v_x+ \frac{1}{8}u_x^4+\frac{1}{2}u_{xx}^2+
\notag
\\
	& \qquad \frac{1}{2}v_x^2u_x^2 - v_x^2u_{xx}-\frac{1}{2}v_{xx}u_x^2+v_{xx} u_{xx}+ \frac{1}{2}u_x^3v_x-u_xv_{xxx}.
\end{align*}
	Let $\hW$ be the restriction of $\hV$ (or, equivalently,  $I$) to
	$M_1 = \{\,\hI^a = 0\}$. We find that
	$\hW$ is a rank 12 Pfaffian system on a 15 manifold. The derived flag of $\hW$ has dimensions $[12, 10, 8, 6, 4, 2, 1, 0]$
        while the dimensions of the space of Cauchy characteristics for these derived Pfaffian sytems are [0, 2, 4, 6, 8, 10, 12, 15].
	By  using the invariants of these Cauchy characteristics as new coordinates, we are able to write $\hW$ in the canonical form
\begin{equation}
	\hW = \{\, du_2 - \dot u_2 du_1, \ d\dot u_2 - \ddot u_2 du_1,\ \dots, \ du_1 - u_1' dx,\  du_1' - du_1'' dx,\  \dots \,\} .
\end{equation}
       Here there are 7 contact forms for $u_2$ and 5 for $u_1$. In these coordinates the integral manifolds of $\hW$ are given
	by $u_1 = F_1(x)$, $u_1' = F_1'(x)$, \dots  and $u_2 = F_2(F_1(x))$,  $\dot u_2 = (\dot F_2)(F_1(x))$,\ \dots.

	The  10 dimensional infinitesimal  Vessiot  group, restricted to $M_1$, now takes a remarkably simple and
	well-known form  -- it is the infinitesimal conformal group $o(3,2)$ acting on the 3-dimensional space
	$(u_1, u_2, \dot u_2)$ by contact transforms.  (See, for example, Olver \cite{olver:1995a} page 473.)
	Explicitly, the generating functions for the infinitesimal action of the Vessiot group on $M_1$  are
\begin{align}
	Q &= [u_2, -u_2+u_1\dot u_2, -u_1\dot u_2^2+2u_2\dot u_2, \frac{1}{2}, \dot u_2, \frac{1}{2}\dot u_2^2, u_1,
\notag
\\
&\quad
	-\frac{1}{2}u_1^2\dot u_2^2-2u_2^2+2u_1u_2\dot u_2, -2u_1u_2+u_1^2\dot u_2, \frac{1}{2}u_1^2].
\end{align}
	To obtain the infinitesimal generator $\hX_q$ corresponding to a function $q\in Q$, first construct the vector field
	$X^0_q = -q_{\dot u_2} \partial_{u_1} + (q - \dot u_2 q_{\dot u_2}) \partial_{u_2} +(q_u + \dot u_2 q_{u_2}) \partial_{u_2}$
	and then prolong $X^0_q$ to the vector field $\hX_q$ on $M_1$ by requiring it to be a symmetry of $\hW$.
	We remark that the basis for $o(3,2)$ so obtained is the canonical Chevalley basis in the sense that the first 2 vectors
	define the Cartan subalgebra, the next 4 correspond to the positive roots, and the last 4 to the negative roots.

	By Theorem \StRef{SuperForm22}, the superposition formula for the $B_2$ Toda lattice can therefore
	be constructed from the joint invariants for the diagonal action of the conformal algebra $\mathfrak{o}(3,2)$ on $M_1 \times M_2$. We use coordinates
        $[y, v_1 , v_1', v_1'' \dots, v_2, \dot v_2, \ddot v_2, \dots ]$ on $M_2$. To compactly describe these joint invariants
	we first calculate the joint differential invariants in the variables
\begin{equation*}
	\{ u_1,\ u_1',\ v_1,\ v_1',\ \dot u_2,\ \dot v_2,\ \ddot u_2,\ \ddot v_2,\ \dddot u_2,\ \dddot v_2\, \}
\end{equation*}
	for the 7 dimensional subalgebra of $o(3,2)$ generated by $Q_1$, $Q_2$, $Q_4$, $Q_5$, $Q_6$, $Q_7$, $Q_{10}$. These are
\begin{align}
	J_1 &= v_1' (\dddot u_2)^{1/3}(\dddot v_2)^{2/3}/(\ddot u_2- \ddot v_2),
\quad 	J_2 =   u_1' (\dddot u_2)^{2/3}(\dddot v_2)^{1/3} /(\ddot u_2 -\ddot v_2),
\notag
\\
	J_3 &= (\ddot u_2 v_1 - \ddot u_2 u_1 + \dot u_2 - \dot v_2) (\dddot u_2)^{1/3}(\dddot v_2)^{2/3}/(\ddot v_2- \ddot u_2)^2,
\notag
\\
\quad   J_4 &= ( \ddot v_2 u_1 - \ddot v_2 v_1 +\dot v_2 -\dot u_2)(\dddot u_2)^{2/3}(\dddot v_2)^{1/3}/(\ddot v_2- \ddot u_2)^2, \quad\text{and}
\\
	J_5 &= -(\ddot u_2 \ddot v_2  v_1^2 - 2\ddot u_2 \ddot v_2v_1u_1+
	\ddot u_2 \ddot v_2 u_1^2- 2\ddot u_2 \dot u_2u_1 + 2\ddot u_2 \dot u_2v_1+2\ddot u_2 u_2
\notag
\\	
	 &\qquad-2\ddot u_2 v_2  -2\ddot v_2 \dot v_2 v_1+  2 \ddot v_2\dot v_2 u_1+2 \ddot v_2 v_2-2\ddot v_2 u_2
\\
	&\qquad +\dot u_2^2-2\dot u_2 \dot v_2+\dot v_2^2)\dddot u_2\dddot v_2/(2(\ddot v_2 -\ddot u_2)^4).
\notag
\end{align}
	Then, in terms of these partial invariants  the (lowest) order  joint differential invariants for $o(3,2)$ are
\begin{equation}
	K_1=   -\frac{J_1J_2(J_3J_4 -2J_5)^2}{(J_3J_4 -J_5)^2}
	\quad\text{and}\quad
	K_2 = -\frac{J_1J_2(J_3J_4-J_5)}{(J_3J_4- 2J_5)^2}
\end{equation}
and the solutions to the $B_2$ Toda lattice are
\begin{gather}
	u = \ln(K_1/4) \quad\text{and} \quad v = \ln(2K_2), \quad\text{where}
\\
	u_1= F_1(x),\ u_2= F_2(F_1(x)),\ v_1 = G_1(y),\ v_2 = G_2(G_1(y)).
\end{gather}
	It is hoped that a more transparent representation of these solutions, similar to that available for the $A_n$ Toda lattice will be
	be obtained. 	
\end{Example}

\begin{Example}
\StTag{Ex7}
	Let $P$ denote the 2-dimensional Minkowski plane with metric $dx \odot dy$ and
	let $N$ be a pseudo-Riemannian manifold with metric $g$.  A mapping  $\varphi: P \to N$ which is a
	solution to the Euler-Lagrange equations for the Lagrangian
\begin{equation}
	L = g(\frac{\partial \varphi}{\partial x}, \frac{\partial \varphi}{\partial y})\, dx \wedge dy
\end{equation}
	is  called a wave map.  There are precisely two inequivalent, non-flat  metrics
	(up to constant scaling)
	in 2 dimensions, namely
\begin{equation}
	g_1 = \frac{1}{1 +e^{-u} }(du^2 + dv^2) \quad\text{and}\quad
	g_2 =  \frac{1}{1 -e^{-u} }(du^2 + dv^2)
\end{equation}
	which define Darboux integrable wave maps at  the 2-jet level
	  (that is, without prolongation).
	Surprizingly, these metrics are not constant curvature. It is not difficult to
	check that  under the change of coordinates
\begin{equation}
	x = x-y,\quad
	t = x+y, \quad
	\theta =  \arctan(\sqrt{e^u -1}), \quad
	\chi = v/2
\end{equation}
	the  differential equations  (4.13) in \cite{barbashov-nesterenko-chervyakov:1982a}
	become the wave map equations for the metric $g_2$.  The Vessiot algebras for the wave map equations for
	$g_1$ and $g_2$ are $\mathfrak{sl}(2)\times R$  and $\mathfrak{so}(3)\times R$ respectively.

	The wave map equations for $g_1$ are
\begin{equation}
	u_{xy} = \frac{v_x v_y - u_x u_y}{2e^{u} +2},\quad v_{xy} = -\frac{u_xv_y + u_y v_x}{2e^{u} +2}.
\end{equation}
	The standard encoding of these equations as a Pfaffian system results in a rank 6 Pfaffian system
	on a 12 manifold. There are 4 first integrals for each singular  Pfaffian system -- for the singular Pfaffian system  $\hV$ containing $dx$
	the first integrals are $\hI^1 =x$,\ $\hI^2 = \dfrac{e^u(u_x^2 +v_x^2)}{1+e^u}$,  $\hI^3 = D_x\hI^2$, and
\begin{equation}
	\hI^4 = \frac{v_x}{u_x^2+v_x^2} u_{xx} -\frac{u_x}{u_x^2+v_x^2} v_{xx}
	-\frac{(1+2e^u)v_x}{2+2e^u}.
\end{equation}
	After considerable computation,
	the superposition is obtained and we find the general solution,
	in terms of the four arbitrary functions $F_1(x)$, $F_2(x)$, $G_1(y)$, $G_2(y)$ to be
\begin{equation}
	2e^u
	=-1 + \sqrt{1+A^2} \sqrt{1+B^2} +AB\sin(\Delta)
\end{equation}
and
\begin{align}
	&v = F_1(x)-F_2(x) +G_1(y)-G_2(y)
\\
&	 +\arctan\left(\frac{AF_2'\sqrt{1+A^2}}{A'}\right)
	+\arctan \left(\frac{BG_2'\sqrt{1+B^2}}{B'}\right)
\notag
\\
& 	+\arctan \left(\frac{AB'\cos(\Delta)+G_2' B^2\sqrt{1+A^2}\sqrt{1+B^2}+G_2'AB(1+B^2)\sin(\Delta)}
	{G_2'AB\sqrt{1+B^2} \cos(\Delta)-BB'\sqrt{1+A^2} -AB'\sqrt{1+B^2}\sin(\Delta)} \right)
\notag
\\
& 	+\arctan \left( \frac{A'B\cos(\Delta)-F_2' A^2\sqrt{1+A^2}\sqrt{1+B^2}-F_2'AB(1+A^2)\sin(\Delta)
	}{F_2'AB\sqrt{1+A^2} \cos(\Delta)+AA'\sqrt{1+B^2} +A'B\sqrt{1+A^2}\sin(\Delta)}
	\right),
\notag
\end{align}
where
\begin{equation}
	A(x) = \sqrt{ \left( \frac{F_1'}{F_2'} \right)^2-2 \frac{F_1'}{F_2'}},
\quad 	B(y) = \sqrt{ \left( \frac{G_1'}{K'} \right)^2-2 \frac{G_1'}{G_2'}},
\quad   \Delta = F_2 -G_2.
\end{equation}
	Note that
\begin{equation}
	F_1'(x) = F_2'(x)(1+ \sqrt{1+A(x)^2}), \quad G_1'(y) = G_2'(y) (1+\sqrt{ 1+B(y)^2}).
\end{equation}


\end{Example}

\newpage

\begin{Example}
\StTag{Ex8}
	We turn now to some simple examples of overdetermined systems for a single unknown function of 3 independent variables,
	beginning with the system
\begin{equation}
	u_{xz} = u u_x, \quad    u_{yz} =u u_y.
\EqTag{ExV1}
\end{equation}
	The structure equations for the canonical encoding of this system  as a rank 4 Pfaffian system
	$I= \{\alpha^1, \alpha^2, \alpha^3, \alpha^4\}$ on an 11-manifold
	are (modulo $I$), $d\alpha^1 \equiv 0$,
\begin{equation}
	d\alpha^2 \equiv \hpi^1 \wedge \hpi^3 + \pi^2\wedge \pi^4, \quad
	d\alpha^3 \equiv \hpi^1 \wedge \hpi^4 + \pi^2\wedge \pi^5, \quad
	d\alpha^4 \equiv  \cpi^1 \wedge \cpi^2,
\end{equation}
	where $\hpi^1= dx$, $\hpi^2 = dy$,  $\cpi^1 =dz$,
\begin{equation}
\begin{alignedat}{2}
	\hpi^3 &=  du_{xx} - (u_{xx}u+u_x^2)\, dz,
&\quad
	\hpi^4 &= du_{xy} - (u_{xy}u + u_y u_x)\, dz,\quad
\\
	\hpi^5 &= du_{yy} - (u_{yy}u+ u_y^2)\,dz,
&\quad
	\cpi^2 &= du_{zz} - (u_z + u^2)(u_x\,dx +u_y\,dy).
\end{alignedat}
\end{equation}
	The  first integrals for the singular Pfaffian systems $\hV = I \cup \{ \hpi^1, \ldots, \hpi^5\}$
	and $\cV = I \cup \{\cpi^1, \cpi^2\}$ are	$\hI^1 =x$,  $\hI^2= y$, $\cI^1= z$,
\begin{equation}
	\hI^3= \frac{u_y}{u_x},\quad \hI^4 = D_x \hI^3,\quad \hI^5= D_y \hI^3,\quad \cI^2 = u_z - u^2/2,\quad \cI^3 = D_z \cI^2.
\EqTag{ExV3}
\end{equation}	
	The form $\hpi^3$ is not in $\hV^{\infty} + \cV$ and therefore \EqRef{ExV1} is not Darboux integrable
	on the 2-jet.  The prolongation of \EqRef{ExV1} defines a decomposable rank 8 Pfaffian system $I^{[1]}= \{ \alpha^1, \ldots, \alpha^8\}$
	on a 16 dimensional manifold. In addition to the first integrals \EqRef{ExV3}, we now also have
\begin{equation}
	\hI^6 = D_x^2 \hI^3,\
	\hI^7 = D_{xy} \hI^3,\
	\hI^8 = D_y^2\hI^3,\
	\hI^9 = \frac{u_{xxx}}{u_x}  - \frac{3u_{xx}^2}{2 u_x^2},\
	\cI^4 = D_z^2\cI^2	
\EqTag{ExV4}
\end{equation}
	and  the conditions \EqRef{Intro7}--\EqRef{Intro9} for Darboux integrability of $I^{[1]}$ are  now satisfied.

	A 1-adapted coframe is
	$\hsigma^1 = d \hI^1$,\
	$\hsigma^2 = d \hI^2$,\
        $\hsigma^6 = d \hI^6$,\
        $\hsigma^7 = d \hI^7$,\	
        $\hsigma^8 = d \hI^8$,\
	$\hsigma^9 = d \hI^9$,\
        $\csigma^1 = d \cI^1$,\
	$\csigma^2 = d \cI^4$,\
\begin{align*}
	\heta^1 &= \frac{1}{u_x}\,\alpha^3 -\frac{u_y}{u_x^2}\,\alpha^2 = d\hI^3 - \hI^4d\hI^1- \hI^5d\hI^2,
\\
	\heta^2 &= \frac{1}{u_x}\, \alpha^6 -\frac{u_y}{u_x^2}\,\alpha^5 - \frac{u_{xx}}{u_x^2}\,\alpha^3
	- \frac{u_{xy}u_x - 2u_yu_{xx}}{u_x^3}\,\alpha^2 =  d\hI^4 - \hI^6d\hI^1- \hI^7d\hI^2,
\\
	\heta^3 &= \frac{1}{u_x}\,\alpha^7 -\frac{u_y}{u_x^2}\,\alpha^6 - \frac{u_{xy}}{u_x^2}\,\alpha^3 -
	\frac{-2u_yu_{xy} + u_{yy}u_x}{u_x^3}\alpha^2 = d \cI^5 -  \hI^7d\hI^1- \hI^8 d\hI^2,
\\
	\ceta^1	&= \alpha^4 - u\,\alpha^1 = d\cI^2 -  \cI^3 d\cI^1,
\quad
	\ceta^2 = \alpha^8 - u\,\alpha^4 -u_z\alpha^1 = d\cI^3- \cI^4 d \cI^1,
\\
	\theta^1 & = \alpha^1 = du - u_x\, dx - u_y\, dy -u_z\, dz,
\\
	\theta^2 &= \alpha^2 = du_x - u_{xx}\, dx - u_{xy}\, dy - u u_x\, dz,
 \\
	\theta^3 & = \alpha^3 = du_{xx} - u_{xxx}\, dx - u_{xxy}\, dy - (uu_{xx} +u_x^2)\,dz.
\end{align*}
	This frame is in fact 2-adapted. We next calculate  $\hU^{(\infty)} = \hU \cup \{X_1, X_2, X_3\}$  and
	$\cU^{(\infty)} = \cU \cup \{Y_1, Y_2, Y_3\}$, where
\begin{align}
	X_1 &=  -u_x\hI^4\partial_{\theta^1} -(u_{xx} \hI^4 +u_x\hI^6)\partial_{\theta^2} -\frac{3u_{xx}^2\hI^4 + 4u_{xx}u_x \hI^6 -
	2u_x^2\hI^4\hI^9}{2u_x}\partial_{\theta^3},
\notag
\\
	X_2 &=  -u_x\partial_{\theta^3},
\
	X_3 = 	-u_x\partial_{\theta^1} -u_{xx}\partial_{\theta^2} -\frac{3u_{xx}^2 + 2u_x^2\hI^9}{2u_x}\partial_{\theta^3},\
	Y_1 = - \partial_{\theta^1},
\\
	Y_2 &=  u\partial_{\theta^1}  +u_x\partial_{\theta^2} + u_{xx}\partial_{\theta^3},
\
	Y_3 =  (-\frac{u^2}{2} +\cI^2)\partial_{\theta^1} - u u_x\partial_{\theta^2} -(u_x^2 + uu_{xx})\partial_{\theta^3}.
\notag
\end{align}
	For the computations of Section 4.3  we use the  base point defined by setting
	$u=0$, $u_x=1$, $u_{xx} = 0$, $\hI^6=1$, and all other first integrals \EqRef{ExV3}--\EqRef{ExV4} to 0,
	The matrices \EqRef{Fourth21} and \EqRef{Fourth22} are  then given by
\begin{equation}
	P =
\begin{bmatrix}
	\dfrac{1}{\hI^6} &  \dfrac{2\hI^4\hI^9}{\hI^6} & -\dfrac{\hI^4}{\hI^6}
\\
        0 & 1 &0
\\ 	0  &-\hI^9  & 1
\end{bmatrix}
\quad\text{and}\quad	
	R = \begin{bmatrix} 0 & -1  & 0 \\ \cI^2 &0 &1 \\ 1 & 0 & 0  \end{bmatrix}.
\end{equation}	
	From these matrices we  calculate the 4 adapted coframes
\begin{alignat}{2}
	\thetaX^1 & = \frac{u_{xx}}{u_x^2}\theta^1 - \frac{1}{u_x}\theta^2,
&	\quad
	\thetaY^1 & = -\frac{u_x^2 + uu_{xx}}{u_x^3}\theta^2 + \frac{u}{u_x^2}\theta^3, \quad
\notag
\\
	\thetaX^2  &= -\frac{u_{xx}^2}{2u_x^3}\theta^1 +\frac{2 u_{xx}}{u_x^2}\theta^2 - \frac{1}{u_x}\theta^3,
&\quad
	\thetaY^2  &= \frac{u_{xx}}{u_x^3}\theta^2 - \frac{1}{u_x^2}\theta^3 , \quad
\EqTag{ExV12}
\\
	\thetaX^3  &= -\frac{1}{u_x} \theta^1,
&\quad
	\thetaY^3  &= -\theta^1 + \frac{u(2 u_x^2 + u u_{xx})}{2u_x^3}\theta^2 -\frac{u^2}{2u_x^2}\theta^3.
\notag
\end{alignat}

	The Vessiot algebra is $\mathfrak{sl}(2)$. Because this algebra is semi-simple,
	we can use Case {\bf I} of Section 4.4, Theorem \StRef{SuperForm1}, and  \EqRef{ExV12}
	to directly determine the map $\rho:M \to \text{Aut}(\mathfrak{sl}(2)$ as

\begin{equation}
	\rho (u, u_x, u_{xx}) = \lambda =
\begin{bmatrix}
	\dfrac{u_x^2-uu_{xx}}{u_x^2} &  -\dfrac{u}{u_x}  & -\dfrac{u_{xx}(uu_{xx}-2u_x^2)}{2u_x^3}
\\[2\jot]
	\dfrac{u_{xx}}{u_x^2}  & \dfrac{1}{u_x} & \dfrac{u_{xx}^2}{2u_x^3}
\\[2\jot]
	\dfrac{u(uu_{xx}-2u_x^2)}{2u_x^2}  & \dfrac{u^2}{2u_x}  & \dfrac{(u u_{xx} - 2u_x)^2}{4u_{x}^3}
\end{bmatrix}.
\end{equation}

	To obtain the superposition formula we introduce local coordinates  $(z$, $w$, $w_x$, $w_{xx}$, $w_z$, $w_{zz}$, $w_{zzz})$ for $M_1$ and
	$(x$, $y$, $v$, $v_x$, $v_y$, $v_z$, $v_{xx}$, $v_{xy}$, $v_{yy}$, $v_{yz}$, $v_{xxx}$, $v_{xxy}$, $v_{xyy}$, $v_{yyy}$, $v_{yyz})$ for $M_2$.
	The inclusions
	$\iota_1\: M_1 \to M$ and $\iota_2\: M_2  \to M$ are fixed by
\begin{equation*}
	\iota_1(w_I) = u_I, \quad
	\iota^*_1(\hI^a) = 0, \quad
	\iota_2(v_I) = u_I,  \quad
	\iota^*_2(\cI^a) = 0.
\end{equation*}
	The superposition formula is then found by solving the equations
\begin{equation}
	\iota^*_1(\cI^a) = \cI^a, \quad
	\iota^*_2(\hI^a) = \hI^a, \quad
	\rho(u, u_x, u_{xx}) = \rho(w, w_x, w_{xx})  \cdot \rho(v, v_x, v_{xx})
\end{equation}
	for the coordinates of $M$.  We find that
\begin{equation}
	u = w-  \frac{2 v w_x^2}{-2w_x + v w_{xx}} .
\EqTag{ExV17}
\end{equation}
	
        Finally,  we calculate the integral manifolds for $\hW$ and $\cW$. It is immediate that $\cW$ is the
	canonical Pfaffian system on  $J^3(\Real, \Real^2)$ and hence the integral manifolds are defined by  $v = V(x,y)$.
	The last non-zero form in the derived flag for
\begin{align*}
	\hW &= \{\,dw -w_z dz,\ dw_z -w_{zz}\,dz,\ dw_{zz} -w_{zzz}\,dz,\  dw_x - w_x w dz,
\\
	&\quad dw_{xx} + (-w_x^2-w w_{xx}) dz \, \}
\end{align*}
yields the Pfaffian equation
\begin{equation*}
	d w_{xx} - \frac{w_{xx}}{w_x}dw_x - w_x^2  dz = 0
\end{equation*}
	from which it follows that  $w_x = G'(z)$  and  $w_{xx} = G(z)G'(z)$. One of the remaining equations in $\hW$ then gives $w =  G''(z)/ G'(z)$.
	On  replacing $V(x,y)$ by $-2/F(x,y)$,  the superposition formula \EqRef{ExV17} becomes
\begin{equation}
	u(x, y, z) = \frac{G''(z)}{G'(z)} - \frac{2 G'(z)}{F(x,y) +G(z)},
\end{equation}
	which gives the general solution to \EqRef{ExV1}.

        We continue this example by considering two variations on  \EqRef{ExV1}.
	First we observe that to \EqRef{ExV1} we may add any  equation of the form
\begin{equation}
	F(x,y, \frac{u_y}{u_x}, \frac{u_yu_{xx} - u_x u_{xy}}{u_x}, \frac{u_yu_{xy} - u_xu_{tyy}}{u_x^2} ) = 0
\EqTag{ExV15}
\end{equation}	
	to obtain a rank 4 Pfaffian system  $I= \{\alpha^1, \alpha^2, \alpha^3, \alpha^4\}$ on a 10 dimensional manifold. The
	structure equations become ($\mod I$) $d\alpha^1 \equiv 0$,
\begin{equation}
	d\alpha^2 \equiv \hpi^1 \wedge \hpi^2,  \quad
	d\alpha^3 \equiv \hpi^3 \wedge \hpi^4,  \quad
	d\alpha^4 \equiv  \cpi^1 \wedge \cpi^2
\end{equation}
	and hence all such systems are involutive with Cartan character $s_1 =3$.

	For example, consider the system of  3 equations
\begin{equation}
        u_yu_{xy} - u_x u_{yy}= 0, \quad u_{xz} = u u_x, \quad    u_{yz} =u u_y
\EqTag{ExV16}
\end{equation}
	The foregoing calculations can be repeated, almost without modification, to arrive at the
	same superposition formula \EqRef{ExV17} -- the only difference
	is that now $\cW$ is the (prolonged) canonical  Pfaffian system for the equation   $v_yv_{xy} - v_x v_{yy}= 0$, an equation which
	is itself  Darboux integrable. Thus, in more complicated situations, the method of Darboux, can be used to integrate the Pfaffian
	systems $\hW$ and $\cW$ and the superposition formula for the original system is  given by a composition of superposition formulas.
	In the case of the present example, the calculation of the first integrals for $\cW$  reveals that this system is contact equivalent
	to the wave equation (via  $X = x$, $Y= v$, $V= y$, $V_X = - v_x/v_y$, $V_Y = 1/v_y$) and leads to  the parametric solution
\begin{equation}
	x= \sigma,\quad y = f(\sigma) + g(\tau),\quad u =  \frac{G''(z)}{G'(z)} - \frac{2 G'(z)}{\tau +G(z)}.
\end{equation}

	Our second variation of \EqRef{ExV1} is obtained by the differential substitution $u_x = \exp(v)$. This leads to the equations
\begin{equation}
	v_{xz} = \exp(v)  \quad\text{and}\quad
	v_{yzz} = v_{yz} v_z.
\EqTag{ExV18}
\end{equation}	
	It is surprising that the canonical Pfaffian system  for these equations (obtained by the restriction of the contact ideal on $J^3(\Real, \Real^2)$)
	does {\it not} define a decomposable Pfaffian system. The following theorem resolves this difficulty.
\begin{Theorem}
\StTag{Examples10}
	The system of differential equations
\begin{equation}
\begin{aligned}
	u_{xz}  & = F(x,y,z,u, u_z, u_{yz}, u_{zz}),
\\
        u_{yzz} & = G(x,y,z,u, u_x, u_y, u_z, u_{yz}, u_{zz}, u_{yyz}, u_{zzz})
\end{aligned}
\EqTag{ExV25}
\end{equation}
	determines the  rank 4 Pfaffian system
\begin{equation}
\begin{aligned}
	\alpha^1 & = du - u_xdx -u_y dy -u_zdz,
	\quad
	\alpha^2  = du_z - Fdx - u_{yz}dy -u_{zz}dz,
\\
	\alpha^3  & = du_{yz} - D_y(F)dx - u_{yyz} dy - G dz,
\\
	\alpha^4  & = du_{zz} -  D_z(F)dx - G dy -u_{zzz}dz,
\end{aligned}
\end{equation}
	on an 11 dimensional manifold.  If the compatiblity  conditions for \EqRef{ExV25} hold, then
	this Pfaffian system is decomposable and involutive with Cartan characters $s_1=1$ and $s_2 =1$.
\end{Theorem}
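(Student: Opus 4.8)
The plan is to prove the theorem by directly computing the structure equations of the rank-$4$ system $I=\{\alpha^1,\alpha^2,\alpha^3,\alpha^4\}$ and then applying Cartan's test for involution. First I would fix the eleven coordinates $(x,y,z,u,u_x,u_y,u_z,u_{yz},u_{zz},u_{yyz},u_{zzz})$ and observe that, modulo $I$, the relations $\alpha^i=0$ solve for $\extd u,\extd u_z,\extd u_{yz},\extd u_{zz}$ in terms of $\extd x,\extd y,\extd z$, leaving $\extd u_x,\extd u_y,\extd u_{yyz},\extd u_{zzz}$ as the seven complementary coframe directions. The computational engine is that for any function $\Phi$ on $M$ one has $\extd\Phi\equiv \Phi_{u_x}\extd u_x+\Phi_{u_y}\extd u_y+\Phi_{u_{yyz}}\extd u_{yyz}+\Phi_{u_{zzz}}\extd u_{zzz}+(D_x\Phi)\extd x+(D_y\Phi)\extd y+(D_z\Phi)\extd z\mod I$, where $D_x,D_y,D_z$ are the total derivative operators restricted to $M$; in particular $\extd F\equiv D_xF\,\extd x+D_yF\,\extd y+D_zF\,\extd z$ since $F$ does not involve the four free jet variables. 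A first payoff is that $\extd\alpha^2\equiv 0\mod I$ identically, the contributions of $\extd F\wedge\extd x$, $\extd u_{yz}\wedge\extd y$ and $\extd u_{zz}\wedge\extd z$ cancelling in pairs once the coefficients $D_yF,D_zF,G$ of $\alpha^3,\alpha^4$ are inserted.

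Next I would exhibit the decomposable coframe demanded by Definition \StRef{Intro1}, taking $\{\alpha^1,\dots,\alpha^4\}$ as the $\tilde\theta^i$ and splitting the seven complementary forms into an $(x,y)$-block and a $z$-block by contact completion: $\hsigma=\{\extd x,\ \extd y,\ \extd u_x-F\,\extd z,\ \extd u_y-u_{yz}\,\extd z,\ \extd u_{yyz}-(D_yG)\,\extd z\}$ and $\csigma=\{\extd z,\ \extd u_{zzz}-(D_z^2F)\,\extd x-(D_zG)\,\extd y\}$, the subtracted coefficients being the higher derivatives $u_{xz},u_{yz},u_{yyzz},u_{xzzz},u_{yzzz}$ determined by the equations. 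A calculation then rewrites $\extd\alpha^1$ and $\extd\alpha^3$ as $2$-forms in $\Omega^2(\hsigma)$ and $\extd\alpha^4$ as a $2$-form in $\Omega^2(\csigma)$, which is exactly decomposability (one may instead verify the hypotheses of Theorem \StRef{Dec3} and Lemma \StRef{Dec1}). The terms that obstruct this splitting are the mixed wedge products coupling the two blocks; the decisive one is the $\extd x\wedge\extd y$ term of $\extd\alpha^4$, whose coefficient is $D_yD_zF-D_xG$ and which vanishes precisely by the compatibility condition $D_xG=D_yD_zF$ arising from the two computations of $u_{xyzz}$. The remaining cross terms, those pairing $\extd u_{zzz}$ with $\extd x,\extd y$ and governed by $F_{u_{zz}}$ and $G_{u_{zzz}}$, are eliminated by the differential consequences of the same compatibility conditions.

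To establish involutivity I would impose the independence condition $\extd x\wedge\extd y\wedge\extd z\neq0$ and assemble the tableau $\extd\alpha^i\equiv\varpi^i_1\wedge\extd x+\varpi^i_2\wedge\extd y+\varpi^i_3\wedge\extd z$ in the free forms $\varpi\in\{\extd u_x,\extd u_y,\extd u_{yyz},\extd u_{zzz}\}$. Computing the polar spaces along a generic flag $0\subset\langle\partial_x\rangle\subset\langle\partial_x,\partial_y\rangle\subset\langle\partial_x,\partial_y,\partial_z\rangle$, I would read off the reduced characters $(s_1,s_2,s_3)=(1,1,0)$, matching the asserted $s_1=s_2=1$, and then confirm Cartan's test, that the space of admissible $3$-dimensional integral elements over the independence condition has exactly the dimension predicted by an involutive tableau. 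As an independent consistency check this matches the general solution, which depends on one arbitrary function of the two variables $(x,y)$ and one arbitrary function of the single variable $z$.

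The principal obstacle is the bookkeeping of $\extd\alpha^3$ and $\extd\alpha^4$ for general $F$ and $G$: the expansions of $\extd(D_yF)$, $\extd(D_zF)$ and $\extd G$ produce many terms, and the delicate point is isolating exactly which combinations must cancel and checking that each is annihilated by the compatibility conditions and their prolongations (this is transparent in the applications, where $F_{u_{zz}}=0$ and $G_{u_{zzz}}=0$ make the offending cross terms disappear outright). A secondary, more routine difficulty is confirming that Cartan's inequality is an equality, that is, that no additional integrability conditions on the tableau surface at the next prolongation.
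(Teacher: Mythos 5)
The paper states Theorem \StRef{Examples10} without proof, so there is no argument of record to compare yours against; your route --- direct computation of the structure equations modulo $I$, the identification of the $\extd x\wedge \extd y$ torsion of $\extd\alpha^4$ with the compatibility condition $D_xG=D_yD_zF$, and the splitting of the seven complementary forms into a five-form block $\hsigma$ and a two-form block $\csigma$ --- is the natural one and checks out on the cases the paper actually uses ($F=e^u$, $G=u_{yz}u_z$). Two steps, however, would fail as written. First, the characters: modulo $I$ and the independence forms the tableau involves all four free forms $\extd u_x$, $\extd u_y$, $\extd u_{yyz}$, $\extd u_{zzz}$, so the Cartan characters along a generic flag must sum to $4$; the computation gives $s_1=3$, $s_2=1$, $s_3=0$, and Cartan's test reads $\dim V_3=s_1+2s_2+3s_3=5$, which indeed matches the five free parameters $u_{xx},u_{xy},u_{yy},u_{yyyz},u_{zzzz}$ of an admissible integral element. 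The values $s_1=s_2=1$ in the statement are the counts of arbitrary functions of one and of two variables in the general solution (whose invariant content is only the last nonzero character, $s_2=1$); if you literally run the test with $(1,1,0)$ you will predict $\dim V_3=3$ and wrongly conclude the system is not involutive.

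Second, your inventory of the obstructing cross terms is incomplete. Besides the terms governed by $F_{u_{zz}}$ and $G_{u_{zzz}}$, the summand $-\extd G\wedge \extd z$ of $\extd\alpha^3$ produces $-G_{u_x}(\extd u_x-F\extd z)\wedge\extd z-G_{u_y}(\extd u_y-u_{yz}\extd z)\wedge\extd z-G_{u_{yyz}}(\extd u_{yyz}-D_yG\,\extd z)\wedge\extd z$, each the product of an $\hsigma$ form with a $\csigma$ form, and $-\extd G\wedge\extd y$ in $\extd\alpha^4$ behaves similarly. Killing these requires extracting from the identity $D_xG=D_yD_zF$ --- read as an identity in the undetermined jet coordinates $u_{xx}$, $u_{xy}$, $u_{zzzz}$, and so on --- the pointwise conditions $G_{u_x}=0$, $G_{u_y}=0$, etc.\ (for instance, the coefficient of $u_{xx}$ in $D_xG$ is $G_{u_x}$ while $D_yD_zF$ is free of $u_{xx}$). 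You also need the observation that the residual pure-$\csigma$ content of $\extd\alpha^3$ is the multiple $G_{u_{zzz}}$ of that of $\extd\alpha^4$, so the $2$-form generators must be recombined (replace $\extd\alpha^3$ by $\extd\alpha^3-G_{u_{zzz}}\extd\alpha^4$) before the split into $\Omega^2(\hsigma)$ and $\Omega^2(\csigma)$ demanded by Definition \StRef{Intro1} becomes visible. This bookkeeping is the actual content of the decomposability claim, and your plan only gestures at it.
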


	We use  Theorem \StRef{Examples10}  to write \EqRef{ExV18} as a rank 4 Pfaffian system on an 11-manifold. The
	prolongation of this system is  Darboux integrable and calculations, virtually identical to
	those provided 	for \EqRef{ExV1}, lead directly to the general solution
\begin{equation}
	v(x, y, z) = \ln\bigl( \frac{2 F_x(x,y) G'(z)}{(F(x,y) + G(z))^2}\bigr).
\end{equation}
\end{Example}
\begin{bibdiv}
\begin{biblist}

\bib{anderson-fels:2005a}{article}{
  author={Anderson, I. M.},
  author={Fels, M. E.},
  title={Exterior Differential Systems with Symmetry},
  journal={Acta. Appl. Math.},
  year={2005},
  volume={87},
  pages={3--31},
}

\bib{anderson-juras:1997a}{article}{
  author={Anderson, I. M.},
  author={Jur\'a\v s, M.},
  title={Generalized Laplace Invariants and the Method of Darboux},
  year={1997},
  volume={89},
  journal={Duke J. Math},
  pages={351--375},
}

\bib{barbashov-nesterenko-chervyakov:1982a}{article}{
  author={B. M. Barbashov},
  author={V. V. Nesterenko},
  author={A. M. Chervyakov},
  title={General solutions of nonlinear equations in the geometric theory of the relativistic string},
  journal={Commun. Math Physics},
  volume={84},
  year={1982},
  pages={471--481},
}

\bib{boer:1893a}{article}{
  author={F. De Boer},
  title={Application de la m\'ethode de Darboux \`a l'int\'egration de l'\'equation diff\'erentielle s = f(r,t). },
  journal={Archives Neerlandaises},
  volume={27},
  year={1893},
  pages={355--412},
}

\bib{bryant-griffiths:1995b}{article}{
  author={Bryant, R. L},
  author={Griffiths, P. A.},
  title={Characteristic cohomology of differential systems (I)},
  journal={Selecta Math. (N.S.)},
  volume={1},
  year={1995},
  pages={21--112},
}

\bib{bryant-griffiths-hsu:1995a}{article}{
  author={Bryant, R. L.},
  author={Griffiths, P. A.},
  author={Hsu, L.},
  title={Hyperbolic exterior differential systems and their conservation laws, Parts {I} and {I}{I}},
  journal={Selecta Math., New series},
  year={1995},
  volume={1},
  pages={21--122 and 265--323},
}

\bib{cartan:1910a}{article}{
  author={Cartan, \'E},
  title={Les syst\`emes de {P}faff \`a cinq variables et les \'equations aux d\'eriv\'ees partielles du second ordre},
  year={1910},
  volume={3},
  number={27},
  journal={Ann. Sci. \'Ecole Norm.},
  pages={109--192},
}

\bib{cartan:1911a}{article}{
  author={Cartan, \'E},
  title={Sur les syst\`emes en involution d'\'equations aux d\'eriv'ees partielles du second ordre \`a une fonction inconnue de trois variable ind\`ependantes},
  year={1911},
  volume={39},
  journal={Bull Soc. Math France},
  pages={352-443},
}

\bib{darboux:1896a}{book}{
  author={Darboux, G.},
  title={Le\c cons sur la th\'eorie g\'en\'erale des surfaces et les applications g\'eom\'etriques du calcul infinit\'esimal},
  publisher={Gauthier-Villars},
  address={Paris},
  year={1896},
}

\bib{eendebak:2007a}{thesis}{
  author={P. T. Eendebak},
  title={Contact Structures of Partial Differential Equations},
  publisher={Utrecht University},
  year={2007},
}

\bib{flanders:1963a}{book}{
  author={H. Flanders},
  title={Differential Forms with Applications to the Physical Sciences},
  publisher={Dover},
  address={New York},
  year={1963},
}

\bib{fels-olver:1998a}{article}{
  author={Fels, M. E.},
  author={Olver, P. J.},
  title={Moving coframes I. A practical algorithm},
  journal={Acta. Appl. Math.},
  year={1998},
  volume={51},
  pages={161-213},
}

\bib{forsyth:1959a}{book}{
  author={A. Forsyth},
  title={Theory of Differential Equations, Vol 6},
  publisher={Dover Press},
  address={New York},
  year={1959},
}

\bib{gonzalez-kamran-olver:1992a}{article}{
  author={Gonz\'alez-L\'opez, A.},
  author={Kamran, N.},
  author={Olver, P. J.},
  title={Lie algebras of vector fields in the real plane},
  journal={Proc. London Math. Soc.},
  volume={64},
  year={1992},
  pages={339-368},
}

\bib{goursat:1897a}{book}{
  author={Goursat, E.},
  title={Lecon sur l'int\'egration des \'equations aux d\'eri\'ees partielles du second ordre \'a deux variables ind\'ependantes, Tome 1, Tome 2},
  publisher={Hermann},
  address={Paris},
  year={1897},
}

\bib{goursat:1899a}{article}{
  author={E. Goursat, E.},
  title={Recherches sur quelques \'equations aux d\'eri\'ees partielles du second ordre},
  journal={Ann. Fac. Sci. Toulouse},
  volume={1},
  year={1899},
  pages={31--78 and 439--464.},
}

\bib{griffiths:1974a}{article}{
  author={Griffiths, P.},
  title={On Cartan's method of Lie groups and moving frames as applied to uniqueness and existence questions in differential geometry},
  journal={Duke Math. J.},
  year={1974},
  volume={41},
  pages={775--814},
}

\bib{kobayashi-nomizu:1963a}{book}{
  author={S.S. Kobyashi and K. Nomizu},
  title={Foundations of Differential Geometry},
  publisher={John Wiley},
  year={1963},
}

\bib{leznov-saveliev:1980a}{article}{
  author={Leznov, A. N.},
  author={Saleliev, M. V.},
  title={Representation theory and integration of nonlinear spherically symmetric equations to gauge theories},
  year={1980},
  volume={74},
  journal={Commun. Math. Phys.},
  pages={111-118},
}

\bib{leznov-saveliev:1983a}{article}{
  author={Leznov, A. N.},
  author={Saleliev, M. V.},
  title={Two-dimensional exactly and completely integrable dynamical systems},
  year={1983},
  volume={89},
  journal={Commun. Math. Phys.},
  pages={59--75},
}

\bib{olver:1995a}{article}{
  author={Olver, P. J.},
  title={Equivalence, Invariants, and Symmetry},
  year={1995},
  publisher={Cambridge},
  address={University Press},
}

\bib{sokoliv-ziber:1995a}{article}{
  author={Sokolov, V. V. },
  author={Ziber, A. V.},
  title={On the {D}arboux integrable hyperbolic equations},
  journal={Phys Lett. A},
  volume={208},
  pages={303--308},
}

\bib{sternburg:1984a}{book}{
  author={Sternberg, S},
  title={Lectures on Differential Geometry},
  edition={Second},
  year={1984},
  publisher={Chelsa},
  address={New York},
}

\bib{stormark:2000a}{article}{
  author={Stormark, O.},
  title={Lie's structural approach to PDE systems},
  series={Encyclopedia of Mathematics and its Applications},
  volume={80},
  publisher={Cambridge Univ. Press},
  address={Cambridge, UK},
  year={2000},
}

\bib{varadarajan:1984a}{book}{
  author={Varadarajan, V. S.},
  title={Lie Groups, Lie Algebras and Their Representations},
  publisher={Springer-Verlag},
  address={New York},
  year={1984},
}

\bib{vassiliou:2001a}{article}{
  author={Vassiliou, P. J.},
  title={{V}essiot structure for manifolds of $(p,q)$-hyperbolic type: {D}arboux integrability and symmetry},
  journal={Trans. Amer. Math. Soc.},
  volume={353},
  year={2001},
  pages={1705--1739},
}

\bib{vessiot:1939a}{article}{
  author={Vessiot, E.},
  title={Sur les \'equations aux d\'eriv\'ees partielles du second ordre, F(x,y,z,p,q,r,s,t)=0, int\'egrables par la m\'ethode de {D}arboux},
  year={1939},
  volume={18},
  journal={J. Math. Pure Appl.},
  pages={1--61},
}

\bib{vessiot:1942a}{article}{
  author={Vessiot, E.},
  title={Sur les \'equations aux d\'eriv\'ees partielles du second ordre, F(x,y,z,p,q,r,s,t)=0, int\'egrables par la m\'ethode de {D}arboux},
  year={1942},
  volume={21},
  journal={J. Math. Pure Appl.},
  pages={1--66},
}

\end{biblist}
\end{bibdiv}
\end{document}